\documentclass[a4paper,12pt]{article}

\newcommand\blfootnote[1]{%
  \begingroup
  \renewcommand\thefootnote{}\footnote{#1}%
  \addtocounter{footnote}{-1}%
  \endgroup
}
\usepackage{color}
\usepackage[normalem]{ulem}

\usepackage{amsmath,amsfonts,amssymb,amsthm,amscd}

\font\cmssl=cmss10 at 12 pt  

%

\newtheorem{thm}{Theorem}
\newtheorem{lem}[thm]{Lemma}
\newtheorem{prop}[thm]{Proposition}
\newtheorem{defn}[thm]{Definition}
\newtheorem{cor}[thm]{Corollary}
\newtheorem{rem}[thm]{Remark}
\newtheorem{notation}[thm]{Notation}
\newtheorem{exa}[thm]{Example}

\begin{document}

\title{Twist, elementary deformation, and the KK correspondence in generalized complex geometry}
\author{Vicente Cort\'es and Liana David}
\maketitle
\begin{abstract}
\noindent
We define the operations of conformal change and elementary deformation
in the setting of generalized complex geometry. Then we apply Swann's twist construction
to generalized (almost) complex and Hermitian structures obtained by these operations
and establish necessary and sufficient conditions for the Courant integrability of the resulting 
twisted structures. In particular,  we associate to any appropriate generalized K\"{a}hler manifold $(M, G, \mathcal J)$  
with a  Hamiltonian Killing vector field a new generalized K\"{a}hler manifold, depending on the choice of a pair of non-vanishing functions and compatible twist data. 
We study this construction when $(M, G, \mathcal J )$ is (diagonal) toric, with emphasis on the four-dimensional case. 
In particular, we apply it to  deformations of the standard flat K\"{a}hler metric on $\mathbb{C}^{n}$, the Fubini-Study
K\"{a}hler metric on $\mathbb{C}P^{2}$ and the so called admissible K\"{a}hler metrics on Hirzebruch surfaces.  As a further application, we  recover the KK (K\"{a}hler-K\"{a}hler) correspondence, which is obtained by 
specializing to the case of an ordinary K\"ahler manifold.\blfootnote{{\it 2000 Mathematics Subject Classification} MSC  53D18 (primary); 53D20, 53C55 (secondary).\

{\it Keywords}:  Generalized complex structure, (toric) generalized K\"ahler structure, Hamiltonian vector field,  twist, elementary deformation, 
 K\"ahler-K\"ahler correspondence}
\end{abstract}

\newpage

\tableofcontents

\section{Introduction}

Swann's twist construction is a powerful method to construct
new manifolds with geometrical structures from given 
ones \cite{swann}.
It was applied successfully 
to construct explicit examples of 
compact SKT, hyper-complex and HKT manifolds.   Combined with the so-called elementary deformation of hyper-K\"{a}hler structures,  it provides an elegant approach to the 
HK/QK (hyper-K\"{a}hler/quaternionic-K\"{a}hler) correspondence
\cite{swann-short}. The HK/QK correspondence basically  associates  to a hyper-K\"{a}hler manifold with a certain type of Killing vector field
a quaternionic-K\"{a}hler manifold of the same dimension.
It was introduced  by Haydys 
in \cite{haidis}  (see also \cite{hitchin-paper} for recent related work) and was
extended to allow for indefinite hyper-K\"{a}hler metrics  in \cite{ACM, ACDM},  
without losing control over the signature of the resulting quaternionic-K\"{a}hler metrics, for which a 
simple formula was given in \cite{ACDM}. This result was applied 
in \cite{ACDM} to prove that the one-loop quantum correction of the supergravity c-map metric is quaternionic-K\"ahler, leading to the first completeness results for the quaternionic-K\"{a}hler metric in the HK/QK correspondence (see \cite{CDS} for the state of the art). The approach from \cite{ACM,ACDM}
yields also 
the so-called KK (K\"{a}hler-K\"{a}hler) correspondence.
This is a method to associate to a K\"{a}hler manifold with a Hamiltonian Killing vector field a new K\"{a}hler manifold, of the same dimension.

Despite a rich literature on the twist construction in complex and Hermitian geometry, 
applications   in generalized complex geometry 
seem to be missing in the literature. In this paper we 
fill 
this gap, by studying how various notions from 
generalized complex geometry behave under the  twist construction. 
Our main goal is to extend, 
 in the spirit of \cite{swann-short}, 
the KK correspondence to the setting of generalized K\"{a}hler geometry.\

The plan of the paper is the following. Section \ref{preliminary} is  mainly  intended to fix notation. Here we recall Swann's  twist construction 
(see Section
\ref{twist-section})
and  the material  we need from generalized complex geometry (see 
Section
\ref{complex-geometry}).

In Section \ref{algebraic} we define 
two basic algebraic tools for this paper: 
the conformal change 
and elementary deformation in the setting of generalized complex geometry
(see Definitions  \ref{def-conf} and \ref{def-elem}). Our definition of elementary deformation is 
 inspired by \cite{swann-short}, where 
 elementary deformations of hyper-K\"{a}hler structures were introduced.
The integrability of a generalized K\"{a}hler structure is encoded in 
the Courant integrability of $L_{1}$ and of the intersections   $L_{1} \cap L_{2}$ and $L_{1} \cap \bar{L}_{2}$, where $L_{i}$ ($i=1,2$)  are the  $(1,0)$-bundles of its two generalized complex structures.  
We study how these intersections vary under elementary deformations of generalized almost Hermitian structures 
(see Lemma \ref{elem-bundles}).

In Section \ref{twist} we study how the Courant bracket behaves under the twist construction and we determine conditions on the twist data which 
ensure
that a
generalized almost complex structure $\mathcal J$ becomes  integrable under  this construction
(see Theorem \ref{cond-int-complex}).
In Section \ref{examples-twist} we present various particular cases:  when
$\mathcal J$  is a  symplectic 
structure (this  was already considered in \cite{swann});   when $\mathcal J$ 
is the deformation of a complex structure by a Poisson bivector $\Pi$ (when $\Pi=0$ this was also
considered in \cite{swann}); when $\mathcal J$   
is  the  interpolation between  a complex
and a symplectic structure; and finally, when $\mathcal J$ is a conformal change of a generalized almost complex structure  (we shall encounter this situation later in the paper).

In Section \ref{KK-section} we develop  what we call the
generalized KK correspondence. Its  statement in  highest generality is
Theorem \ref{gen-KK}.  Let $(M,G, \mathcal J )$ be a generalized K\"{a}hler manifold with a Hamiltonian Killing vector field $X_{0}.$ 
Let $f, h\in C^{\infty}(M)$ be two non-vanishing functions, $(G^{\prime}, \mathcal J )$ the elementary deformation of $(G, \mathcal J )$ by $X_{0}$ and $f$ and 
$\tau_{h} ( G^{\prime},  \mathcal J )$ the conformal change of $(G^{\prime}, \mathcal J )$ by $h$.  Theorem \ref{gen-KK} expresses the conditions which 
ensure 
that the twist 
$[\tau_{h} ( G^{\prime}, \mathcal J )]_{W}$ of $\tau_{h} ( G^{\prime},  \mathcal J )$  is generalized K\"{a}hler. 
The idea of the proof is the  following. As a first stage, 
we determine   in Section \ref{ah-section} conditions which ensure
that the twist of a generalized almost Hermitian manifold  is generalized
K\"{a}hler (see Theorem \ref{prop-twist}). Then 
 we compute various useful Courant brackets in Section \ref{brackets-section}, 
 using  in an essential way that $X_{0}$ is a Hamiltonian Killing
vector field.  In Sections \ref{invol-sect} and \ref{diff-section} we apply 
Theorem \ref{prop-twist}   to the generalized almost Hermitian manifold $\tau_{h} (G^{\prime}, \mathcal J )$ and we conclude the proof of Theorem \ref{gen-KK}.

In Section \ref{examples} we  develop  examples of the generalized KK correspondence. 
As a first particular case of Theorem \ref{gen-KK}  we recover in our  setting  the KK correspondence  of 
\cite{ACM}   (see  Proposition \ref{concise}). 
After a short discussion on various   conditions from Theorem \ref{gen-KK} and their
impact on the generalized K\"{a}hler manifold $(M, G, \mathcal J)$,   we apply
the generalized KK correspondence under the additional assumption that 
$\mathcal J_{3} X_{0}\in \Omega^{1}(M)$
(see Proposition \ref{gen-applied-1}).  
 In Section \ref{toric-section}, after a brief review of the theory of toric generalized K\"{a}hler manifolds
developed in \cite{boulanger}, we show that 
 examples of  (non-K\"{a}hler) generalized K\"{a}hler  manifolds with such Hamiltonian Killing vector fields   exist  in the toric setting (see Examples \ref{pr-t-ex}). In Section \ref{4dim-sect}
we treat in detail the generalized KK correspondence when $(M, G, \mathcal J )$ is toric and four-dimensional
(see Proposition \ref{dim2}, Corollaries \ref{corolar-1} and \ref{corolar-2}). 
In particular, we apply it to a suitable $1$-parameter family of deformations
of the Fubini-Study K\"{a}hler structure on $\mathbb{C}P^{2}$ and of  the class of admissible K\"{a}hler structures on Hirzebruch surfaces $\mathbb{F}_{k}$ (see Examples \ref{cp} and \ref{hz}).  

It would be interesting to study the properties of the generalized K\"{a}hler structures produced by the generalized KK correspondence. Owing to the length of this paper, this will be postponed for a future project. Some preliminary observations  in this direction are presented in Remark \ref{future}.

\section{Preliminaries}
\label{preliminary}

We begin by fixing various conventions we shall use in the paper. 
We work in the smooth setting. 
All our manifolds, vector bundles, functions, tensor fields, etc.\ are smooth.  
We denote by $\Gamma (E)$ the space of  (smooth) sections of a vector bundle $E\rightarrow M.$ 
For every manifold $M$, we denote by $\mathrm{pr}_{T}$ and $\mathrm{pr}_{T^{*}}$ the natural projections from 
the generalized tangent bundle $\mathbb{T}M= TM \oplus T^{*}M$ onto its components $TM$ and $T^{*}M$.
The Lie derivative in the direction of a vector field $X$ is denoted by $\mathcal L_{X}$. For a real form
$\alpha \in \Omega^{k}(M)$, we use the same notation $\alpha$ for its complex linear extension
to any (subbundle) of $(TM)^{\mathbb{C}}.$   For a non-degenerate $2$-form $\omega \in \Omega^{2}(M)$,  $\omega : TM \rightarrow T^{*}M$ is the map  $X\rightarrow i_{X}\omega $.   Similarly,
$g : TM \rightarrow T^{*}M$, $X \rightarrow g(X, \cdot )$ denotes  the Riemannian duality defined by a Riemannian metric $g$.
 In our conventions,  $J^{*}\alpha := \alpha \circ J$, for any $1$-form $\alpha$ and almost complex structure $J$.\\

\subsection{Twist construction}\label{twist-section}

Swann's twist construction associates to a manifold $M$ with a circle action a new manifold $W$. 
Following \cite{swann}, we now recall this construction. The starting point is a twist data: a tuple   $(X_{0},F, a)$ with the following properties:\\

$\bullet$ 
$X_{0}\in {\mathfrak X}(M)$ is a vector field which  generates a circle action on $M$.
Along the paper by an invariant tensor field, we mean a tensor field invariant under this action. 
We denote by $\Gamma (E)^{\mathrm{inv}}$ the space of  invariant sections of a tensor bundle $E\rightarrow M$;\\

$\bullet$   $F\in \Omega^{2} (M)$  is an invariant closed $2$-form. It is the curvature  of a connection $\mathcal H$
on a principal $S^{1}$-bundle $\pi: P \rightarrow  M$. We denote by $X^{P}\in {\mathfrak X}(P)$ 
the principal vector field of $\pi$ 
and by $\theta \in \Omega^{1}(M)$ the connection form of $\mathcal H$. 
We denote by $\tilde{X}\in {\mathfrak X} (P)$ the $\mathcal H$-horizontal lift
of any $X\in {\mathfrak X}(M)$;\\

$\bullet$ $a\in C^{\infty}(M)$ is non-vanishing and satisfies
\begin{equation}
 da = - i_{X_{0}} F. 
\end{equation}

In this setting, we assume that the vector field  $X^{\prime}_{0}:= \tilde{X}_{0} +\pi^{*}(a) X^{P}$
generates a free and proper group action 
of a $1$-dimensional Lie group. Let  
$W:= P /\langle X^{\prime}_{0}\rangle$ be the quotient manifold and 
$\pi_{W}: P \rightarrow W$ the natural projection.
Since $a$ is non-vanishing, $X^{\prime}_{0}$ is transversal to $\mathcal H$. We denote by $\widehat{X}\in \mathcal H$ the  horizontal lift of 
any $X\in {\mathfrak X}(W).$ 
Identifying (by means of the projections 
$\pi_{*}: \mathcal H_{p} \rightarrow  T_{\pi (p)}M$ and 
$(\pi_{W})_{*}: \mathcal H_{p} \rightarrow  T_{\pi_{W}(p)}W$),
$T_{\pi (p)}M$ and 
$T_{\pi_{W}(p)}W$ with $\mathcal H_{p}$, we can transfer any invariant tensor field $A$ on $M$ to a tensor field (of the same type) $A_{W}$ on 
$W$ (the invariance of $A$ 
ensures 
that  $A_{W}$ is well-defined). We say that $A$ and $A_{W}$ are $\mathcal H$-related
and we write $A_{W}\sim_{\mathcal H}A.$ The tensor field 
$A_{W}$ is called the twist of $A$.
In particular, $\tilde{X} = \widehat{X}_{W}$ and, for any invariant form $\alpha \in \Omega^{k}(M)$, $\pi^{*}(\alpha ) \vert_{\mathcal H}  
= \pi_{W}^{*} (\alpha_{W})\vert_{\mathcal H}.$ In fact, as proved in \cite{swann},
\begin{equation}\label{swann-pull}
\pi_{W}^{*} (\alpha_{W}) = \pi^{*} (\alpha ) -\frac{1}{a} \theta \wedge \pi^{*} ( i_{X_{0}} \alpha ).
\end{equation}

The exterior derivative behaves under twist as :
\begin{equation}\label{s1}
d \alpha_{W}\sim_{\mathcal H} d_{W}\alpha :=  d\alpha -\frac{1}{a} F\wedge i_{X_{0}} \alpha ,
\end{equation}
for any invariant form $\alpha \in \Omega^{k}(M)$. 
In particular, 
if $\alpha$ is closed, then its twist  $\alpha_{W}$ is also closed if and only if $F\wedge i_{X_{0}} \alpha =0.$ 

Similarly, the Lie bracket behaves under twist as:
\begin{equation}\label{s2}
[X_{W}, Y_{W}]\sim_{\mathcal H}  [X, Y] +\frac{1}{a} F(X, Y) X_{0},
\end{equation}
for any  invariant vector fields $X, Y\in {\mathfrak X}(M)$.
Using (\ref{s2}), one can show
that the twist  $J_{W}$ of an invariant 
complex structure $J$  on $M$ is integrable if and only if $F$ is of type $(1,1)$ with respect to $J$ (see \cite{swann}).

\subsection{Generalized complex geometry}\label{complex-geometry}

{\bf Generalized almost complex structures.}
A generalized almost complex structure on a manifold $M$ is a field of endomorphisms
$\mathcal J\in\Gamma   \mathrm{End}(\mathbb{T}M)$ of the generalized tangent bundle $\mathbb{T}M = TM\oplus T^{*}M$
which satisfies $\mathcal J^{2} = -\mathrm{Id}$ and is skew-symmetric with respect to the 
canonical metric $\langle \cdot , \cdot \rangle$ of $\mathbb{T}M$
of neutral signature, defined by
$$
\langle X+\xi , Y+\eta\rangle = \frac{1}{2} \left( \eta (X) + \xi (Y) \right) ,\ X+\xi , Y+ \eta\in \mathbb{T}M. 
$$
The $(1,0)$-bundle $L \subset (\mathbb{T}M)^{\mathbb{C}}$ (the $i$-eigenbundle) of a generalized almost 
complex structure $\mathcal J$  is maximal isotropic with respect to $\langle\cdot , \cdot \rangle$,  satisfies $L \oplus \bar{L} = (\mathbb{T}M)^{\mathbb{C}}$, and
(when $\mathcal J$ is of constant type),  has the form
$$
L = L(E, \epsilon )= \{ X+\xi \in E \oplus (T^{*}M)^{\mathbb{C}},\ \xi\vert_{E}= i_{X} \epsilon \} 
$$
where $E\subset (TM)^{\mathbb{C}}$ is a complex subbundle and $\epsilon\in\Gamma ( \Lambda^{2}E^{*})$ is a (complex) 2-form, 
such that 
$\mathrm{Im} (\epsilon\vert_{\Delta} )$ is non-degenerate, where
$\Delta \subset TM$ is defined by $\Delta^{\mathbb{C}} := E \cap \bar{E}$. 
The corank of $E\subset (TM)^{\mathbb{C}}$ is called the type of $\mathcal J $. 
For later use, we note that
\begin{equation}\label{epsilon}
\epsilon (X, Y) = 2 \langle X+\xi , Y\rangle = \xi (Y), 
\end{equation}
for $X+\xi \in L$ and $Y\in E.$

Usual almost complex and almost symplectic structures define generalized almost complex structures by
$$
\mathcal J_{J}:= \left( \begin{tabular}{cc}
                         $J$ & $0$\\
                         $0$ & $-J^{*}$
                        \end{tabular}\right) ,\quad \mathcal J _{\omega } := \left(\begin{tabular}{cc}
                        $0$ & $-\omega^{-1}$ \\
                        $ \omega$ & $0$
                        \end{tabular}\right) 
                        $$
with $(1,0)$-bundles $L( T^{1,0}M, 0)$ and $L((TM)^{\mathbb{C}}, - i\omega  )$ respectively.
For any generalized almost complex structure $\mathcal J$ and 
$2$-form $B\in \Omega^{2}(M)$, $\mathrm{exp}(B) \cdot \mathcal J := \mathrm{exp}(B)\circ \mathcal J \circ 
\mathrm{exp}(-B)$ is also a generalized almost complex structure, where $\mathrm{exp} (B) \in \mathrm{Aut} (\mathbb{T}M)$ is given by $\mathrm{exp} (B) (X+\xi )
:= X +\xi + i_{X}B$, for any $X+\xi \in \mathbb{T}M.$  
The generalized almost complex structure $\mathrm{exp}(B)\cdot \mathcal J$ is called the $B$-field transformation of
$\mathcal J .$ If $L= L(E, \epsilon )$ is the $(1,0)$-bundle of $\mathcal J$, then 
$L(E, B  + \epsilon  )$ is the $(1,0)$-bundle of $\mathrm{exp}(B)\cdot \mathcal J .$\\  

{\bf Generalized complex structures.}
A generalized almost complex structure $\mathcal J$ is integrable (i.e. is a generalized complex structure) if its 
$(1,0)$-bundle $L \subset (\mathbb{T}M)^{\mathbb{C}}$  is closed under the Courant bracket, given by
$$
[X+\xi , Y+\eta ] = [X, Y] + {\mathcal L}_{X}\eta  - {\mathcal L}_{Y} \xi  -\frac{1}{2}d \left (\eta (X) - \xi (Y)\right) .
$$
This reduces to the usual notions of integrability of almost complex and almost symplectic structures.
If $\mathcal J$ is integrable and $dB =0$, then $\mathrm{exp}(B)\cdot \mathcal J$ is also integrable. 
In terms of the $(1,0)$-bundle $L$, a generalized almost complex structure is integrable if and only if $E\subset (\mathbb{T}M)^{\mathbb{C}}$ is involutive and
$d\epsilon =0.$ As in the case of ordinary complex structures, a generalized almost complex structure $\mathcal J$
gives rise  to a tensor field $N_{\mathcal J} \in \Gamma ( \Lambda^{2} \mathbb{T}^{*}M\otimes \mathbb{T}M)$
(called the Nijenhuis tensor) defined by
\begin{equation}\label{def-nij}
N_{\mathcal J}(u,v):=  [ \mathcal J u, \mathcal J v] - [u,v] - \mathcal  J ( [\mathcal J u, v] + [u, \mathcal J v])
\end{equation}
and $\mathcal J$ is integrable if and only if $N_{\mathcal J }=0.$

The Courant bracket does not satisfy the Jacobi identity but has the following properties which we shall use in our computations (see e.g. \cite{hitchin}):
\begin{equation}\label{courant1}
[u, fv] = f [u,v] +\mathrm{pr}_{T} (u) (f) v - \langle u,v\rangle df
\end{equation}
and
\begin{equation}\label{uvw}
\mathrm{pr}_{T} (u) \langle v,w\rangle = \langle [u,v],w\rangle +  \langle v, [u,w]\rangle 
+ \langle d \langle u,v\rangle ,w\rangle + \langle v , d\langle u,w\rangle \rangle  ,
\end{equation}
for all $u,v,w\in \Gamma (\mathbb{T}M)$. Also,
the  Courant bracket  $L_{X}:= [ X,\cdot ]$  
is related to the Lie bracket ${\mathcal L}_{X}$ 
by
\begin{equation}\label{L-C}
L_{X} (Y+\eta ) = {\mathcal L}_{X} (Y+\eta )  - \frac{1}{2} d( \eta (X) ) =  {\mathcal L}_{X} (Y+\eta )  
- d \langle X , Y+\eta \rangle
\end{equation}
for any  $X\in {\mathfrak X}(M)$ and  $Y+\beta \in \Gamma (\mathbb{T}M)$.\\

{\bf Generalized almost Hermitian structures.}
A generalized almost Hermitian structure \cite{gualtieri} on a manifold $M$ is a Hermitian structure $(G, \mathcal J)$  on the bundle
$\mathbb{T}M$, such that
$\mathcal J$ is skew symmetric with respect to $\langle \cdot , \cdot \rangle$ (i.e. is a generalized almost complex structure on $M$)
and  $G^{\mathrm{end}}\in\Gamma  \mathrm{End} ( \mathbb{T}M)$, defined by
\begin{equation}\label{G}
G(u, v ) = \langle G^{\mathrm{end}} u , v \rangle ,
\end{equation}
satisfies $(G^{\mathrm{end}})^{2} =\mathrm{Id}.$  
The endomorphism $G^{\mathrm{end}}$ commutes with $\mathcal J$ and  
is of the form \cite{gualtieri}
\begin{equation}\label{gend}
G^{\mathrm{end}} = \left( \begin{tabular}{cc}
$A$ & $g^{-1}$\\
$\sigma$ & $ A^{*}$
\end{tabular}\right) ,
\end{equation}
where $g$ and $\sigma$ are Riemannian  metrics, 
$A\in \Gamma \mathrm{End} (TM)$ is skew-symmetric with respect to both $g$  and
$\sigma = g - bg^{-1} b$, where  $b:= - g  A\in \Omega^{2} (M)$. 
It has eigenvalues $\pm 1$ and its  associated eigenbundles $C_{\pm}$  are the graphs of 
$b\pm g : TM \rightarrow T^{*}M$.
By means of the isomorphism $\mathrm{pr}_{T}\vert_{C_{+}} : C_{+}\rightarrow TM$, 
the metric $g$ and the $2$-form $b$ correspond to 
$\langle \cdot , \cdot \rangle|_{C_+\times C_+}$ and, respectively, to  
$( \cdot , \cdot )|_{C_+\times C_+}$, where
$$
(X+\xi , Y+\eta ) :=\frac{1}{2} ( \xi (Y) - \eta (X)).
$$
Let $J_{\pm}\in\Gamma  \mathrm{End}(TM)$ be the $g$-orthogonal almost complex structures, which correspond to $\mathcal J\vert_{C_{\pm}}$ via the isomorphisms
$\mathrm{pr}_{T}\vert_{C_{\pm}} :C_{\pm}\rightarrow TM$. The generalized almost Hermitian structure
$(M, G, \mathcal J )$ is uniquely determined by  the data $(J_{+}, J_{-}, g, b)$.

On a generalized almost Hermitian manifold $(M, G, \mathcal J)$ there is a second generalized almost complex structure 
$\mathcal J_{2} := G^{\mathrm{end}} \mathcal J $, which commutes with $\mathcal J_{1}:= \mathcal J .$ 
The generalized almost complex structures $\mathcal J_{1}$ and $\mathcal J_{2}$  are on  equal footing: 
one can alternatively define a generalized almost Hermitian structure as a
pair of commuting generalized almost complex structures $(\mathcal J_{1}, \mathcal J_{2})$, such that
the metric $G$ defined by  (\ref{G}) with 
$G^{\mathrm{end}} := - \mathcal J_{1} \mathcal J_{2}$ is positive definite.
In analogy with the quaternionic case, we shall use the notation 
$\mathcal J_{3}:= \mathcal J_{1} \mathcal J_{2} = - G^{\mathrm{end}}.$
Note that the structures $\mathcal J_{1}, \mathcal J_{2}, \mathcal J_{3}$ commute and that 
\begin{equation} \mathcal{J}_{1}^2=\mathcal{J}_{2}^2=-\mathcal{J}_{3}^2=-\mathrm{Id}.
\end{equation}

Any almost Hermitian structure $(g, J)$ determines a generalized almost Hermitian structure with generalized 
almost complex structures $\mathcal J_{1} := \mathcal J_{J}$, $\mathcal J_{2} := \mathcal J_{\omega}$
(where $\omega := g\circ J$ is the K\"{a}hler form), metric $G$ and endomorphism $G^{\mathrm{end}}$ given by
\begin{equation}
G(X+\xi , Y+\eta ) = \frac{1}{2} \left( g(X,Y) + g^{-1}(\xi , \eta ) \right) ,\quad 
G^{\mathrm{end}} = \left( \begin{tabular}{cc}
$0$ & $g^{-1}$\\
$g$ & $ 0$
\end{tabular}\right) .
\end{equation}

{\bf Generalized K\"{a}hler structures.}
A generalized K\"{a}hler structure  \cite{gualtieri} is a generalized almost Hermitian structure $(G, \mathcal J)$ 
on a manifold $M$
for which $\mathcal J_{1}$ and 
$\mathcal J_{2}$ (defined as above) 
are generalized complex structures. 
As proved by Gualtieri \cite{gualtieri}, a generalized almost Hermitian structure 
$(G, \mathcal J)$ is generalized K\"{a}hler if and only if  
$\mathcal J_{1}$  (or $L_{1}$) is Courant integrable and the bundles 
$L_{1} \cap L_{2}$ and $L_{1} \cap \bar{L}_{2}$ are  also Courant integrable, 
where $L_{i}$ are the $(1,0)$-bundles  of $\mathcal J_{i}$ ($i=1,2$).   
In terms of the data $(J_{+}, J_{-}, g, b)$ associated to $(G, \mathcal J )$, the generalized K\"{a}hler condition is equivalent to the integrability of the almost complex structures $J_{\pm}$,
together with the relation
$$
db ( X, Y, Z) = d\omega_{+} (J_{+}X, J_{+}Y, J_{+}Z) = - d\omega_{-}(J_{-} X, J_{-}Y, J_{-} Z),
$$
for all $X, Y, Z\in \mathfrak{X}(M)$, where $\omega_{\pm}:= g (J_{\pm}\cdot , \cdot )$ are the K\"{a}hler forms.

 The  generalized almost Hermitian structure 
$(\mathcal J_{J}, \mathcal J_{\omega })$  
defined by an
almost Hermitian structure $(g, J)$, 
with K\"{a}hler form $\omega$,   is generalized K\"{a}hler, if and only if $(g, J)$ is K\"{a}hler.
The $B$-field transformation $(\mathrm{exp}(B)\cdot\mathcal J_{1}, \mathrm{exp}(B)\cdot \mathcal J_{2})$ 
(with $dB=0$) of a generalized K\"{a}hler structure $(\mathcal J_{1}, \mathcal J_{2})$ is  generalized 
K\"{a}hler.\\

{\bf Generalized K\"{a}hler structures of symplectic type.} 
Let  $(M, \omega )$ be a 
symplectic manifold. 
The map $\mathcal J \rightarrow J_{+}$ is  a one to one correspondence  from the space of generalized K\"{a}hler structures 
$(G, \mathcal J )$ with second generalized complex structure $\mathcal J_{2}= \mathcal J_{\omega }$, and the space of 
(integrable) 
complex structures $J_{+}$ which 
tame $\omega$,   which means that $\omega (X, J_{+}X) >0$, for all 
$X\neq 0$, and 
the $\omega$-adjoint of $J_{+}$, given by $J_{+}^{*\omega} = \omega^{-1}\circ J_{+}^{*}\circ \omega$, 
is  integrable \cite{fino}. The complex structure $J_{-}$, Hermitian metric $g$ and $2$-form $b$  associated to $(G,\mathcal J)$ 
are 
given by:
\begin{equation}\label{hermitian-p}
J_{-} = - J_{+}^{*\omega},\ g = -\frac{1}{2} \omega \circ ( J_{+} + J_{-}),\  b= -\frac{1}{2} \omega \circ (J_{+} - J_{-}).
\end{equation}
A generalized K\"{a}hler structure with $\mathcal J_{2} = \mathcal J_{\omega}$ is called of  symplectic type.\\

{\bf Hamiltonian vector fields on generalized K\"{a}hler manifolds.}
This is a particular case of the more general notion of  Hamiltonian (real) actions of groups on generalized K\"{a}hler manifolds \cite{tolman}.

\begin{defn}\label{def-HK} Let $(M, G, \mathcal J )$ be a generalized K\"{a}hler manifold, with generalized complex structures $\mathcal J_{1}:= \mathcal J$
and $\mathcal J_{2}:= G^{\mathrm{end}} \mathcal J .$
 A vector field $X_{0}$ is called {\cmssl Hamiltonian Killing} if
${\mathcal L}_{X_{0}} ({\mathcal J })=0$, ${\mathcal L}_{X_{0}} (G) =0$, and there is a function $f^{H} : M \rightarrow \mathbb{R}$ (called the {\cmssl Hamiltonian function}) 
such that
${\mathcal J}_{2} X_{0} = d f^{H}$.
\end{defn}

Let $(M, \mathcal J , G)$ be a generalized K\"{a}hler manifold with $\mathcal J_{2} = \mathcal J_{\omega}$
determined by a symplectic form $\omega$. Any Hamiltonian Killing vector field $X_{0}$ on $(M, \omega )$, 
with ${\mathcal L}_{X_{0}} ( \mathcal J ) =0$,  is Hamiltonian Killing on the generalized K\"{a}hler manifold
$(M, \mathcal J , G ).$

\section{Algebraic operations in generalized complex geometry}\label{algebraic}

Let $M$ be a manifold and   $\tau \in \mathrm{Isom}( \mathbb{T}M, \langle \cdot , \cdot \rangle )$. If  $\mathcal J$ is a generalized almost complex structure on $M$, then so is $\tau (\mathcal J) := \tau\circ \mathcal J \circ \tau^{-1}$. Similarly,  if  $(G, {\mathcal J })$ is a generalized almost Hermitian structure on $M$, then  
$\tau (G, \mathcal J ):= (G^{\prime} := (\tau^{-1})^{*} (G), \mathcal J^{\prime} = \tau\circ \mathcal J \circ \tau^{-1})$ is also  a  generalized almost Hermitian structure, with endomorphism 
$(G^{\prime})^{\mathrm{end}}
=\tau \circ G^{\mathrm{end}} \circ \tau^{-1}$ and  second generalized almost complex structure $\mathcal J_{2}^{\prime} =  \tau\circ \mathcal J_{2} \circ \tau^{-1}$.
We apply these remarks to define the conformal change and elementary deformation in generalized complex geometry.

\subsection{Conformal change}

Any non-vanishing  function 
$h\in C^{\infty} (M)$ defines 
$\tau_{h}\in \mathrm{Isom} (\mathbb{T}M,\langle \cdot , \cdot \rangle )$, by
$\tau_{h}(X)  =hX$, $\tau_{h} (\xi ) =\frac{1}{h} \xi$, for any  $X+\xi \in \mathbb{T}M.$

\begin{defn}\label{def-conf} The generalized almost complex structure $\tau_{h}(\mathcal J )$ 
is called the {\cmssl conformal change} of  $\mathcal J$ by  $h\in C^{\infty}(M)$.
The generalized almost Hermitian structure $\tau_{h} (G, \mathcal J )$ is called the 
{\cmssl conformal change} of $(G, \mathcal J )$ by $h$. 
\end{defn}

\begin{rem}{\rm i) If $\mathcal J$ is a generalized almost complex structure with $(1,0)$-bundle $L = L(E, \epsilon )$, then the $(1,0)$-bundle
of $\tau_{h} (\mathcal J)$ is $L^{h} = L (E,\frac{1}{h^{2}} \epsilon ).$ 
In particular, the conformal change preserves  the type of a generalized almost complex structure.
Notice that  $\tau_{h} (\mathcal J_{J} ) =\mathcal J_{J}$ for every almost complex structure $J$ and 
$\tau_{h} (\mathcal J_{\omega}) = \mathcal J_{\frac{1}{h^{2}} \omega }$ for every almost symplectic form $\omega .$\

ii) If $(G, \mathcal J )$ is the generalized almost Hermitian structure defined by a usual 
almost Hermitian structure $(g, J)$,  then $\tau_{h}( G, \mathcal J )$ is defined by $ (\frac{1}{h^{2}} g, J).$}
\end{rem}

\subsection{Elementary deformation}\label{elementary}

Let $( G, \mathcal J)$ be a generalized almost Hermitian structure on a manifold $M$,  $X_{0}\in {\mathfrak X }(M)$ a non-vanishing vector field and
$f\in C^{\infty}(M)$ a non-vanishing function.  In this section we associate to this data a new generalized almost Hermitian structure on $M$ and we study some of 
its properties. Define
$$
\mathcal S := \mathrm{span} \{ X_{0} , {\mathcal J}_{1}X_{0}, {\mathcal J}_{2}X_{0}, 
\mathcal J_{3} X_{0}\} ,
$$
where, as usual, $\mathcal J_{1}= \mathcal J$, $\mathcal J_{2}= G^{\mathrm{end}}  \mathcal J$ and $\mathcal J_{3} =- G^{\mathrm{end}} = \mathcal J_{1} \mathcal J_{2}.$
Since $G$ is positive definite,  $X_{0}$, ${\mathcal J}_{1}X_{0}$, ${\mathcal J}_{2}X_{0}$, $\mathcal J_{3} X_{0}$
are linearly independent.  Moreover, 
$$
\mathcal S  = \mathrm{span}_{\mathbb{R}} \{ X_{0}, {\mathcal J}X_{0}\} +\mathrm{span}_{\mathbb{R}} \{ \mathcal J_{2} X_{0},
\mathcal J_{3} X_{0} \}
$$
is a direct sum decomposition into two isotropic planes, which are interchanged by $\mathcal J_{2}$ and 
$\mathcal J_{3} .$
The restriction of $\langle \cdot , \cdot \rangle$ to $\mathcal S$ is non-degenerate and 
the orthogonal complements 
of $\mathcal S$ with respect to $G$ and $\langle\cdot , \cdot \rangle$ coincide and   will be denoted by
$\mathcal S^{\perp}.$ 
For $u\in \mathbb{T}M$ we shall  denote by $u^{\mathcal S}$ and $u^{\perp}$ its components on 
$\mathcal S$ and $\mathcal S^{\perp}$, with respect to the decomposition 
$\mathbb{T}M = \mathcal S \oplus \mathcal S^{\perp}.$

Let
$\tau_{f}^{\mathcal S}\in \mathrm{Aut}(\mathcal S )$ be given by 
$$
X_{0}\mapsto  f X_{0},\ {\mathcal J} X_{0} \mapsto  f \mathcal J X_{0},\ 
{\mathcal J}_{2} X_{0} \mapsto \frac{1}{ f} \mathcal 
J_{2} X_{0},\ {\mathcal J}_{3} X_{0} \mapsto \frac{1}{ f} \mathcal J_{3} X_{0}.
$$

\begin{lem}\label{clear} The automorphism $\tau^{\mathcal S}_{f}\in \mathrm{Aut}(\mathcal S )$ is an isometry 
with respect to $\langle \cdot , \cdot \rangle$.
 \end{lem}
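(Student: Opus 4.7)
The plan is to exploit the isotropic decomposition $\mathcal S = P_1 \oplus P_2$ already identified in the excerpt, where $P_1 := \mathrm{span}_{\mathbb R}\{X_0, \mathcal J X_0\}$ and $P_2 := \mathrm{span}_{\mathbb R}\{\mathcal J_2 X_0, \mathcal J_3 X_0\}$. Under $\tau^{\mathcal S}_f$ the plane $P_1$ is scaled by $f$ and the plane $P_2$ by $1/f$, so the two scaling factors are reciprocal. Since $\langle \cdot , \cdot \rangle$ vanishes identically on each of $P_1$ and $P_2$ (these planes being isotropic), the only contributions to the inner product come from the cross-pairings between $P_1$ and $P_2$, and on those the combined scaling factor is $f \cdot (1/f) = 1$.

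Concretely, I would write any $u, v \in \mathcal S$ as $u = u_1 + u_2$ and $v = v_1 + v_2$ with $u_i, v_i \in P_i$. The isotropy of $P_1$ and $P_2$ gives
\begin{equation*}
\langle u, v\rangle = \langle u_1, v_2\rangle + \langle u_2, v_1\rangle.
\end{equation*}
Applying $\tau^{\mathcal S}_f$ and using $\tau^{\mathcal S}_f(u_1) = f u_1$, $\tau^{\mathcal S}_f(u_2) = \tfrac{1}{f} u_2$ (and similarly for $v$), I would compute
\begin{equation*}
\langle \tau^{\mathcal S}_f u, \tau^{\mathcal S}_f v\rangle = \langle f u_1, \tfrac{1}{f} v_2\rangle + \langle \tfrac{1}{f} u_2, f v_1\rangle = \langle u_1, v_2\rangle + \langle u_2, v_1\rangle = \langle u, v\rangle.
\end{equation*}

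There is no real obstacle in this argument; the only point worth verifying (and which the excerpt already records just above the statement) is that $P_1$ and $P_2$ are indeed isotropic. If desired, I would briefly re-derive this from the skew-symmetry of $\mathcal J_1$ with respect to $\langle \cdot , \cdot\rangle$: for instance $\langle X_0, \mathcal J X_0\rangle = -\langle \mathcal J X_0, X_0\rangle = -\langle X_0, \mathcal J X_0\rangle$ forces this pairing to vanish, and analogous identities handle the remaining pairs within each $P_i$. This completes the verification that $\tau^{\mathcal S}_f$ preserves $\langle \cdot , \cdot\rangle|_{\mathcal S}$.
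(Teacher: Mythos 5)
Your argument is correct and is exactly the one the paper intends: the paper states no explicit proof of this lemma, but the sentence immediately preceding it records precisely the decomposition of $\mathcal S$ into the two isotropic planes $\mathrm{span}_{\mathbb R}\{X_0,\mathcal J X_0\}$ and $\mathrm{span}_{\mathbb R}\{\mathcal J_2 X_0,\mathcal J_3 X_0\}$, on which $\tau^{\mathcal S}_f$ acts by the reciprocal factors $f$ and $1/f$, so only the cross-pairings survive and they are preserved. Your optional verification of isotropy via the skew-symmetry of $\mathcal J$ (together with $\langle X_0,X_0\rangle=0$ and the orthogonality of $\mathcal J_i$) is also sound.
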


Decompose $\mathbb{T}M:=  \mathcal S^{\perp}\oplus \mathcal S$ and let
$\tau := \mathrm{Id}_{\mathcal S ^{\perp}} + \tau_{f}^{\mathcal S }\in \mathrm{Aut} (\mathbb{T}M)$.

\begin{prop} Define a metric $G^{\prime}:=(\tau^{-1})^{*}(G)$ on $\mathbb{T}M$. 
Then $(G^{\prime}, \mathcal J )$ is a generalized almost Hermitian structure. \end{prop}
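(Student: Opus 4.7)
The plan is to reduce the claim to the general principle stated at the beginning of Section \ref{algebraic}: any isometry of $(\mathbb{T}M, \langle\cdot,\cdot\rangle)$ turns a generalized almost Hermitian structure into another one. So I need to verify that (i) the automorphism $\tau = \mathrm{Id}_{\mathcal S^\perp}+\tau_f^{\mathcal S}$ is an isometry of $\langle\cdot,\cdot\rangle$, and (ii) $\tau$ commutes with $\mathcal J$, so that $\tau(G,\mathcal J) = (G', \tau\circ\mathcal J\circ\tau^{-1}) = (G',\mathcal J)$.

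For (i), I first note that $\mathcal S$ and $\mathcal S^\perp$ are $\langle\cdot,\cdot\rangle$-orthogonal by construction, so the bilinear form $\langle\cdot,\cdot\rangle$ splits as an orthogonal sum of its restrictions to $\mathcal S$ and $\mathcal S^\perp$. On $\mathcal S^\perp$, $\tau$ acts as the identity and is trivially an isometry; on $\mathcal S$, $\tau$ acts as $\tau_f^{\mathcal S}$, which is an isometry by Lemma \ref{clear}. Writing $u = u^\perp + u^{\mathcal S}$ and $v = v^\perp + v^{\mathcal S}$ and expanding $\langle\tau u,\tau v\rangle$ using the orthogonal decomposition disposes of the cross terms and reduces everything to the two pieces just discussed.

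For (ii), I first need that $\mathcal S^\perp$ is $\mathcal J$-invariant. This follows because $\mathcal S$ is $\mathcal J$-invariant by its very definition as the span of $X_0, \mathcal J_1 X_0, \mathcal J_2 X_0, \mathcal J_3 X_0$, and $\mathcal J$ is skew-symmetric for $\langle\cdot,\cdot\rangle$, so it also preserves the orthogonal complement. On $\mathcal S^\perp$, $\tau = \mathrm{Id}$ commutes with $\mathcal J$ trivially. On $\mathcal S$, I check $\tau\mathcal J_1 = \mathcal J_1\tau$ directly on the basis $\{X_0,\mathcal J_1X_0,\mathcal J_2X_0,\mathcal J_3X_0\}$, using the identities $\mathcal J_1\mathcal J_2 = \mathcal J_3$, $\mathcal J_1\mathcal J_3 = -\mathcal J_2$ (from $\mathcal J_3^2 = -\mathrm{Id}$ replaced by $\mathcal J_1^2 = -\mathrm{Id}$ in the paper's convention). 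Concretely, $\mathcal J_1$ swaps the pairs $\{X_0,\mathcal J_1X_0\}$ (up to sign) and $\{\mathcal J_2X_0,\mathcal J_3X_0\}$ (up to sign), and the scaling factors assigned by $\tau_f^{\mathcal S}$ are constant on each of these pairs (namely $f$ and $1/f$ respectively), which is exactly what makes $\tau_f^{\mathcal S}$ commute with $\mathcal J_1$.

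Having verified (i) and (ii), the general principle stated at the opening of Section \ref{algebraic} immediately yields that $(G', \tau\circ\mathcal J\circ\tau^{-1}) = (G',\mathcal J)$ is a generalized almost Hermitian structure. The main (though still minor) obstacle is the explicit verification of the commutation $\tau\mathcal J = \mathcal J\tau$ on $\mathcal S$: one has to be careful that the choice of scaling factors in the definition of $\tau_f^{\mathcal S}$ (grouping $\{X_0,\mathcal JX_0\}$ together with factor $f$, and $\{\mathcal J_2X_0,\mathcal J_3X_0\}$ together with factor $1/f$) is designed precisely so that $\mathcal J_1 = \mathcal J$ does not mix the two groups, even though $\mathcal J_2$ and $\mathcal J_3$ do. This is really the content of the definition of $\tau_f^{\mathcal S}$, and once it is unpacked the proof is complete.
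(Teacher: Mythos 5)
Your proposal is correct and follows essentially the same route as the paper: the paper's own proof is precisely that $\tau$ is a $\langle\cdot,\cdot\rangle$-isometry (via Lemma \ref{clear}) and commutes with $\mathcal J$, so the general remark at the start of Section \ref{algebraic} applies. You have merely written out the details (the orthogonal splitting $\mathbb{T}M=\mathcal S\oplus\mathcal S^{\perp}$ and the check that $\mathcal J$ preserves the two planes $\mathrm{span}\{X_{0},\mathcal JX_{0}\}$ and $\mathrm{span}\{\mathcal J_{2}X_{0},\mathcal J_{3}X_{0}\}$ on which $\tau_{f}^{\mathcal S}$ acts by the scalars $f$ and $1/f$), all of which are accurate.
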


\begin{proof}
The claim  follows from the fact that  $\tau $ is an isometry for 
$\langle \cdot , \cdot \rangle $ 
(using Lemma \ref{clear}) and commutes with $\mathcal J$. 
\end{proof}

\begin{defn}\label{def-elem} The generalized almost Hermitian structure $(G^{\prime}, {\mathcal J} )$ is called the {\cmssl elementary deformation} of $(G, {\mathcal J })$ by
$X_{0}$ and $f$.
\end{defn}

\begin{rem}\label{rem-elem}{\rm  
i)   If a generalized almost Hermitian structure is defined by  a usual almost Hermitian structure with metric $g$ and almost complex structure $J$,
then its elementary deformation by $X_{0}$ and $f$  is also defined by a usual almost Hermitian structure, with 
the same almost complex structure  $J$ and 
metric $g^{\prime}$,
such that  $\mathrm{span}_{\mathbb{R}} \{ X_{0},  J X_{0} \}$ and  
$\mathrm{span}_{\mathbb{R}}\{ X_{0}, J X_{0} \}^{\mathrm{perp}}$ are $g^{\prime}$-orthogonal and
$$
g^{\prime} = \frac{1}{f^{2}} g\vert_{\mathrm{span}_{\mathbb{R}} \{ X_{0}, JX_{0}\} } + g\vert_{ \mathrm{span} _{\mathbb{R}}\{ X_{0}, JX_{0}\}^{\mathrm{perp}} }. 
 $$
Here ``$\mathrm{perp}$'' refers to the $g$-orthogonal complement.\

ii)  More generally, elementary deformations preserve the class of generalized almost Hermitian structures
which are $B$-field transformations  of usual almost Hermitian structures.
This follows from the fact that elementary deformations leave the first generalized almost complex structure
unchanged, together with
the fact  that  any generalized almost 
Hermitian structure $(G, \mathcal J )$ for which $\mathcal J$ is the $B$-field transformation of a usual almost
complex structure is the $B$-field transformation of a usual almost Hermitian structure (this  is a
consequence of relation (6.3), page 76, of \cite{gualtieri}).}
\end{rem}

The next lemma can be checked directly, from the definition of elementary deformation and Lemma \ref{clear}.
 
\begin{lem}
The second generalized
almost complex structure $\mathcal J_{2}^{\prime}$ 
and the endomorphism $(G^{\mathrm{end}})^{\prime}$ of $(G^{\prime}, \mathcal J )$ coincide, respectively,  with
 $\mathcal J_{2}$ and $G^{\mathrm{end}}$  on $\mathcal S^{\perp}$.
 On $\mathcal S$, $\mathcal J_{2}^{\prime}$ is given by 
\begin{align}
\nonumber&X_{0}\mapsto \frac{1}{f^{2}} \mathcal J_{2} X_{0},\  \mathcal J X_{0}\mapsto 
\frac{1}{f^{2}} \mathcal J_{3} X_{0},\\
\label{j2-prime}&\mathcal J_{2} X_{0} \mapsto -  f^{2} X_{0},\  \mathcal J_{3} X_{0}\mapsto - 
f^{2} \mathcal J X_{0}
\end{align}
and $(G^{\mathrm{end}})^{\prime}$ by
\begin{align*}
\nonumber&X_{0}\mapsto - \frac{1}{f^{2}} \mathcal J_{3} X_{0},\  \mathcal J X_{0}\mapsto 
\frac{1}{f^{2}} \mathcal J_{2} X_{0},\\
&\mathcal J_{2} X_{0} \mapsto   f^{2}\mathcal J  X_{0},\  \mathcal J_{3} X_{0}\mapsto - 
f^{2}  X_{0}.
\end{align*}
The vector fields  $\{ X_{0} , {\mathcal J } X_{0},  {\mathcal J }_{2} X_{0}, {\mathcal J }_{3} X_{0}\}$ are $G^{\prime}$-orthogonal and
\begin{align}
\nonumber & G^{\prime}(X_{0}, X_{0}) = G^{\prime} (\mathcal J X_{0}, \mathcal J X_{0}  )= \frac{1}{f^{2}} G(X_{0}, X_{0})\\
\nonumber & G^{\prime}(\mathcal J_{2} X_{0},  \mathcal J_{2}X_{0}) = G^{\prime} (\mathcal J_{3} X_{0}, \mathcal J_{3} X_{0} ) = f^{2} G(X_{0}, X_{0}).
\end{align}
\end{lem}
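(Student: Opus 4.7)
The plan is to exploit the two general identities recorded at the start of Section~\ref{algebraic}: for any $\tau\in\mathrm{Isom}(\mathbb{T}M,\langle\cdot,\cdot\rangle)$ commuting with $\mathcal J$, one has $(G^{\prime})^{\mathrm{end}}=\tau\circ G^{\mathrm{end}}\circ\tau^{-1}$ and $\mathcal J_{2}^{\prime}=\tau\circ \mathcal J_{2}\circ\tau^{-1}$. In our situation $\tau=\mathrm{Id}_{\mathcal S^{\perp}}+\tau_{f}^{\mathcal S}$ respects the orthogonal decomposition $\mathbb{T}M=\mathcal S^{\perp}\oplus\mathcal S$. Moreover, both summands are invariant under $\mathcal J_{2}$ and $G^{\mathrm{end}}$: stability of $\mathcal S$ is built into its definition as the span of $X_{0},\mathcal J X_{0},\mathcal J_{2}X_{0},\mathcal J_{3}X_{0}$, and stability of $\mathcal S^{\perp}$ then follows because $\mathcal J_{2}$ is $\langle\cdot,\cdot\rangle$-skew while $G^{\mathrm{end}}$ is $\langle\cdot,\cdot\rangle$-self-adjoint. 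On $\mathcal S^{\perp}$, where $\tau$ reduces to the identity, conjugation by $\tau$ is trivial, which yields the first assertion of the lemma.

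On $\mathcal S$ I would verify the eight matrix entries by the same one-line computation: apply $\tau^{-1}$, which rescales $X_{0},\mathcal J X_{0}$ by $1/f$ and $\mathcal J_{2}X_{0},\mathcal J_{3}X_{0}$ by $f$; then apply $\mathcal J_{2}$ (respectively $G^{\mathrm{end}}=-\mathcal J_{3}$), reducing products of $\mathcal J_{i}$'s via $\mathcal J_{1}\mathcal J_{2}=\mathcal J_{3}$, $\mathcal J_{2}\mathcal J_{3}=-\mathcal J_{1}$, $\mathcal J_{3}\mathcal J_{1}=-\mathcal J_{2}$ together with $\mathcal J_{1}^{2}=\mathcal J_{2}^{2}=-\mathrm{Id}$ and $\mathcal J_{3}^{2}=\mathrm{Id}$; then apply $\tau$. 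For instance, $\mathcal J_{2}^{\prime}(X_{0})=\tau\mathcal J_{2}(\tfrac{1}{f}X_{0})=\tfrac{1}{f}\tau(\mathcal J_{2}X_{0})=\tfrac{1}{f^{2}}\mathcal J_{2}X_{0}$, and the remaining seven entries are computed identically.

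For the orthogonality and norm formulas I would use $G^{\prime}(u,v)=G(\tau^{-1}u,\tau^{-1}v)$ and reduce everything to $G$-pairings within the quadruple $\{X_{0},\mathcal J X_{0},\mathcal J_{2}X_{0},\mathcal J_{3}X_{0}\}$. The pairings that involve $\mathcal J_{1}$ or $\mathcal J_{2}$ vanish because these operators are $G$-antisymmetric (they are $G$-orthogonal and square to $-\mathrm{Id}$), while the remaining pairings involving $\mathcal J_{3}=-G^{\mathrm{end}}$ reduce via $G(u,v)=\langle G^{\mathrm{end}}u,v\rangle$ and the $\langle\cdot,\cdot\rangle$-isometry of $\mathcal J_{3}$ to pairings of pure tangent vectors, which vanish because $TM\subset\mathbb{T}M$ is $\langle\cdot,\cdot\rangle$-isotropic. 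The norm identities then follow from the scalings of $\tau^{-1}$ together with the fact that $\mathcal J_{1}$ and $\mathcal J_{2}$ are $G$-isometries. The whole argument is direct; the only real obstacle is bookkeeping, namely keeping track of which of the four vectors is rescaled by $f$ versus $1/f$ and of the signs in the $\mathcal J_{i}\mathcal J_{j}$ relations.
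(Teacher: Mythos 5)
Your proposal is correct and is essentially the direct verification the paper alludes to (the paper offers no written proof, saying only that the lemma "can be checked directly, from the definition of elementary deformation and Lemma \ref{clear}"): you use the conjugation formulas $(G^{\prime})^{\mathrm{end}}=\tau\circ G^{\mathrm{end}}\circ\tau^{-1}$, $\mathcal J_{2}^{\prime}=\tau\circ\mathcal J_{2}\circ\tau^{-1}$ from the start of Section \ref{algebraic} together with the scalings of $\tau_{f}^{\mathcal S}$, and all the sign conventions ($\mathcal J_{2}\mathcal J_{3}=-\mathcal J_{1}$, $\mathcal J_{3}\mathcal J_{1}=-\mathcal J_{2}$, $\mathcal J_{3}^{2}=\mathrm{Id}$) and the orthogonality/norm computations check out.
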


An important role for the integrability of a generalized almost Hermitian structure  $(G , \mathcal J)$
 is played by the intersections $L_{1} \cap L_{2}$ and $L_{1} \cap \bar{L}_{2}$,
where $L_{i}$ are the $(1,0)$-bundles of $\mathcal J_{i}$ (recall Section \ref{complex-geometry}).
The next lemma describes these intersections for elementary deformations. 
Let
\begin{align}
\nonumber {v}_{f}&:= X_{0} - i {\mathcal J}X_{0} -\frac{1}{f^{2}}  
({\mathcal J}_{3} X_{0} + i{\mathcal J}_{2} X_{0}) \\
\label{v-f} {v}_{if}&:= X_{0} - i {\mathcal J}X_{0} + \frac{1}{f^{2}}  
({\mathcal J}_{3} X_{0} + i{\mathcal J}_{2} X_{0}).
\end{align}

\begin{lem}\label{elem-bundles} Let $(M,G, \mathcal J)$ be a generalized almost Hermitian manifold, $X_{0}\in {\mathfrak X}(M)$ a non-vanishing vector field and 
$f\in C^{\infty}(M)$ a non-vanishing function. Denote by $L_{i}$ the $(1,0)$-bundles of $\mathcal J_{i}$ $(i=1,2$).
Let $(G^{\prime}, \mathcal J )$ be the elementary deformation of $(G, \mathcal J)$ by
$X_{0}$ and $f$. Let
$L_{2}^{\prime}= L (E_{2}^{\prime}, \epsilon_{2}^{\prime} )$ be the $(1,0)$-bundle of 
the second generalized almost complex structure $\mathcal J^{\prime}_{2} = (G^{\mathrm{end}})^{\prime}  \mathcal J$
of $(G^{\prime}, \mathcal J ).$
 Then
\begin{align}
\nonumber &  L_{1} \cap L_{2}^{\prime} = \mathrm{span}_{\mathbb{C}} \{ v_{f}\} +
L_{1}\cap L_{2}\cap \mathcal S_{\mathbb{C}}^{\perp},\\
\label{hol-sp}& L_{1} \cap\bar{L}_{2}^{\prime} =
 \mathrm{span}_{\mathbb{C}} \{ v_{if}\} + L_{1}\cap \bar{L}_{2}\cap \mathcal S_{\mathbb{C}}^{\perp}
\end{align}
and
\begin{align}
\nonumber &\mathrm{pr}_{T} (L_{1} \cap L_{2}^{\prime}) 
=\mathrm{span}_{\mathbb{C}} \{ \mathrm{pr}_{T}( {v}_{f} ) \} +  \mathrm{pr}_{T} (L_{1} \cap L_{2} \cap 
\mathcal S_{\mathbb{C}}^{\perp})\\
\label{dec-direct} &\mathrm{pr}_{T} (L_{1} \cap \bar{L}_{2}^{\prime}) 
=\mathrm{span}_{\mathbb{C}} \{ \mathrm{pr}_{T} ( v_{if})  \} 
+ \mathrm{pr}_{T} (L_{1} \cap\bar{L}_{2} \cap \mathcal S_{\mathbb{C}}^{\perp})
\end{align}
are all direct sum decompositions, where $\mathcal S_{\mathbb{C}}$ is the complexification of $\mathcal S$.
\end{lem}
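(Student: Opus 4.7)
The overall approach is to exploit the $G$-orthogonal splitting $\mathbb{T}M_{\mathbb{C}} = \mathcal{S}_{\mathbb{C}} \oplus \mathcal{S}_{\mathbb{C}}^{\perp}$, which is invariant under each of the commuting endomorphisms $\mathcal{J}_{1}, \mathcal{J}_{2}, \mathcal{J}_{3}$ and also under $\mathcal{J}_{2}^{\prime}$. Invariance under $\mathcal{J}_{i}$ ($i=1,2,3$) is immediate from the definition of $\mathcal{S}$ together with the identities $\mathcal{J}_{i}^{2}=\pm\mathrm{Id}$ and $\mathcal{J}_{i}\mathcal{J}_{j}=\pm\mathcal{J}_{k}$; invariance under $\mathcal{J}_{2}^{\prime}$ follows from the preceding lemma, which specifies its action on $\mathcal{S}$ via (\ref{j2-prime}) and stipulates $\mathcal{J}_{2}^{\prime} = \mathcal{J}_{2}$ on $\mathcal{S}^{\perp}$.

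I would first verify directly, using (\ref{j2-prime}), that $\mathcal{J}_{1}v_{f}=iv_{f}=\mathcal{J}_{2}^{\prime}v_{f}$ and $\mathcal{J}_{1}v_{if}=iv_{if}$, $\mathcal{J}_{2}^{\prime}v_{if}=-iv_{if}$; this places $v_{f}\in L_{1}\cap L_{2}^{\prime}\cap\mathcal{S}_{\mathbb{C}}$ and $v_{if}\in L_{1}\cap\bar{L}_{2}^{\prime}\cap\mathcal{S}_{\mathbb{C}}$. Since $\mathcal{S}_{\mathbb{C}}$ has complex dimension four and the commuting pair $(\mathcal{J}_{1}|_{\mathcal{S}_{\mathbb{C}}}, \mathcal{J}_{2}^{\prime}|_{\mathcal{S}_{\mathbb{C}}})$ decomposes it into four joint eigenspaces each of complex dimension one, the $(+i,+i)$- and $(+i,-i)$-eigenspaces are exactly $\mathbb{C}v_{f}$ and $\mathbb{C}v_{if}$. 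Using invariance of the splitting to decompose $L_{1}\cap L_{2}^{\prime}$ as the direct sum of its intersections with $\mathcal{S}_{\mathbb{C}}$ and $\mathcal{S}_{\mathbb{C}}^{\perp}$, and noting $L_{2}^{\prime}\cap\mathcal{S}_{\mathbb{C}}^{\perp} = L_{2}\cap\mathcal{S}_{\mathbb{C}}^{\perp}$ (and its conjugate), one obtains the equations in (\ref{hol-sp}); directness is automatic from $\mathcal{S}_{\mathbb{C}}\cap\mathcal{S}_{\mathbb{C}}^{\perp}=\{0\}$.

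Applying $\mathrm{pr}_{T}$ to (\ref{hol-sp}) produces the two sums asserted in (\ref{dec-direct}). The main obstacle is then to verify that these sums remain direct after projection; concretely, one must show $\mathbb{C}\,\mathrm{pr}_{T}(v_{f}) \cap \mathrm{pr}_{T}(L_{1}\cap L_{2}\cap\mathcal{S}_{\mathbb{C}}^{\perp}) = 0$ and the analogous statement for $v_{if}$. My approach would be by contradiction: if $a\,\mathrm{pr}_{T}(v_{f}) = \mathrm{pr}_{T}(w)$ for some $a \neq 0$ and $w \in L_{1}\cap L_{2}\cap\mathcal{S}_{\mathbb{C}}^{\perp}$, then $\xi := av_{f} - w$ is a cotangent vector lying in $L_{1}\cap L_{2}^{\prime}$. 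A direct computation using the $G$-skewness of $\mathcal{J}_{1}, \mathcal{J}_{2}$ and the identity $(G^{\mathrm{end}})^{2}=\mathrm{Id}$ gives $G(v_{f}, X_{0}) = G(X_{0}, X_{0}) > 0$, whereas $G(w, X_{0}) = 0$ by the $G$-orthogonality of $\mathcal{S}$ and $\mathcal{S}^{\perp}$. Coupling this with analogous pairings of $\xi$ against $\mathcal{J}X_{0}, \mathcal{J}_{2}X_{0}, \mathcal{J}_{3}X_{0}$, and exploiting that $w$ is $G$-isotropic (since $w \in L_{1}\cap L_{2}$), yields enough linear relations to force $a = 0$, a contradiction that establishes the claimed directness.
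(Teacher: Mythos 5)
The first half of your argument --- the invariance of the splitting $\mathbb{T}M^{\mathbb{C}}=\mathcal S_{\mathbb{C}}\oplus\mathcal S_{\mathbb{C}}^{\perp}$ under $\mathcal J_{1}$ and $\mathcal J_{2}^{\prime}$, the identification of $\mathbb{C}v_{f}$ and $\mathbb{C}v_{if}$ as the relevant joint eigenspaces inside $\mathcal S_{\mathbb{C}}$, and the resulting decompositions of $L_{1}\cap L_{2}^{\prime}$ and $L_{1}\cap\bar L_{2}^{\prime}$ --- is correct and is essentially the paper's own proof of (\ref{hol-sp}).

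The transversality step, however, has a genuine gap. You correctly reduce to a covector $\xi=av_{f}-w\in L_{1}\cap L_{2}^{\prime}$ with $w\in L_{1}\cap L_{2}\cap\mathcal S_{\mathbb{C}}^{\perp}$, but the pairings you propose cannot force $a=0$. Pairing $\xi$ against $X_{0},\mathcal JX_{0},\mathcal J_{2}X_{0},\mathcal J_{3}X_{0}$ (with either $G$ or $\langle\cdot,\cdot\rangle$) merely records the values of $\xi$ and of $G^{\mathrm{end}}\xi$ on finitely many tangent vectors; for a covector these values are not otherwise constrained, and the relations so obtained are mutually consistent for every $a$. For instance, $G(u,\mathcal J_{3}X_{0})=-\langle u,X_{0}\rangle$ and $G(u,\mathcal J_{2}X_{0})=\langle u,\mathcal JX_{0}\rangle$ hold identically for \emph{every} $u\in\mathbb{T}M^{\mathbb{C}}$ (since $(G^{\mathrm{end}})^{2}=\mathrm{Id}$), and both sides evaluate consistently to $-\frac{a}{f^{2}}G(X_{0},X_{0})$, resp.\ $-\frac{ia}{f^{2}}G(X_{0},X_{0})$; likewise $G(w,w)=0$ gives nothing because $G(v_{f},v_{f})=0$ as well. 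The decisive input, which your sketch omits, is the graph structure of $C_{+}$: since $w\in L_{1}\cap L_{2}\subset C_{+}^{\mathbb{C}}$ and $C_{+}$ is the graph of $b+g$, the covector part of $w$ equals $(b+g)$ applied to its tangent part $a\,\mathrm{pr}_{T}(v_{f})$ (equivalently, $\xi$ is pinned down as in (\ref{def-xi}), using $\mathcal J\xi=i\xi$ and the eigenvalue equation for $\mathcal J_{2}$ on $w$). Only after this does the single condition $\langle w,X_{0}\rangle=0$, coming from $w\in\mathcal S_{\mathbb{C}}^{\perp}$, become the scalar identity $a\left(\frac{1}{2f^{2}}g^{-1}(b(X_{0}),b(X_{0}))+\frac{1}{2}g(X_{0},X_{0})\right)=0$, and positive definiteness of $g$ forces $a=0$. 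Note that it is the positivity of $g$ (not just of $G$) that is used, and that your argument never invokes $g$, $b$ or the bundles $C_{\pm}$ --- this is the symptom of the missing step.
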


\begin{proof}
We only prove the statements about $L_{1} \cap L_{2}^{\prime}$, i.e. the first
relation (\ref{hol-sp}) and the first relation (\ref{dec-direct})  (the statements about  
$L_{1} \cap \bar{L}_{2}^{\prime}$ can be proved in a similar way).
Since 
$\mathcal J^{\prime}_{2}$ preserves $\mathcal S$ and $\mathcal S^{\perp}$,   
$L_{2}^{\prime} = L_{2}^{\prime} \cap \mathcal S_{\mathbb{C}} + 
L_{2}^{\prime} \cap \mathcal S_{\mathbb{C}}^{\perp}.$ 
Since $\mathcal J$ commutes with $\mathcal J_{2}^{\prime}$, it preserves $L_{2}^{\prime}.$ 
It also preserves $\mathcal S$ and its orthogonal complement
$\mathcal S^{\perp}$. Therefore, 
\begin{equation}\label{mare}
L_{1}\cap L_{2}^{\prime}  = L_{1}\cap L_{2}^{\prime} \cap \mathcal S_{\mathbb{C}} +
L_{1}\cap L^{\prime}_{2} \cap \mathcal S_{\mathbb{C}}^{\perp} =  L_{1}\cap L_{2}^{\prime} \cap \mathcal S_{\mathbb{C}}  
+ L_{1}\cap L_{2} \cap 
\mathcal S_{\mathbb{C}}^{\perp} 
\end{equation}
(direct sums)   since  $ L_{1}\cap L_{2}^{\prime} \cap \mathcal S_{\mathbb{C}}^{\perp} = 
L_{1}\cap L_{2} \cap \mathcal S_{\mathbb{C}}^{\perp}  $ (because $\mathcal J_{2}^{\prime} = \mathcal J_{2}$
on $\mathcal S^{\perp}$).
On $\mathcal S$, $\mathcal J_{2}^{\prime}$ is given by  (\ref{j2-prime}), from where we deduce that
\begin{align}
 \nonumber& L_{2}^{\prime} \cap \mathcal S_{\mathbb{C}} = \mathrm{span}_{\mathbb{C}} 
\{ X_{0} - \frac{i}{f^{2}} \mathcal J_{2} X_{0}, \mathcal J X_{0} - \frac{i}{f^{2}} \mathcal J_{3} X_{0} \},\\
\label{l2-w}&  L_{1} \cap L_{2}^{\prime} \cap \mathcal S_{\mathbb{C}} = \mathrm{span}_{\mathbb{C}} \{ v_{f} \}.
\end{align}
Combining 
the second relation (\ref{l2-w})  with (\ref{mare}) we obtain
the first relation (\ref{hol-sp}).

We now check  the first relation  (\ref{dec-direct}). 
By projecting the first relation
(\ref{hol-sp}) onto the tangent bundle, we obtain that $\mathrm{pr}_{T} ({v}_{f} )$ and 
$\mathrm{pr}_{T} (L_{1} \cap L_{2} \cap \mathcal S_{\mathbb{C}}^{\perp})$ generate
$\mathrm{pr}_{T} (L_{1} \cap L_{2})$. 
We will show that they are also transverse. 
Suppose, by contradiction, that this is not true. Then there is $\xi \in (T^{*}M)^{\mathbb{C}}$ such that 
\begin{equation}\label{cond-psi}
{v}_{f} +\xi \in L_{1} \cap L_{2} \cap \mathcal S_{\mathbb{C}}^{\perp}. 
\end{equation}
The condition $v_{f} +\xi \in L_{1}$ together with $v_{f}\in L_{1}$ implies that $\xi \in L_{1}.$
On the other hand, from the definition of $v_{f}$, 
\begin{equation}\label{this}
{\mathcal J}_{2} {v}_{f}  = {\mathcal J}_{2} X_{0} - i {\mathcal J}_{3} X_{0} +\frac{1}{f^{2}} ( i X_{0} + {\mathcal J}X_{0}).
\end{equation}
From (\ref{this})  and 
$\mathcal J_{2} ( v_{f} + \xi ) = i (v_{f}+\xi )$ 
(recall that ${v}_{f} +\xi \in L_{2}$), we obtain 
\begin{equation}\label{j2psi}
\mathcal J_{2}  \xi  = i \xi +( \frac{1}{f^{2}} -1) ({\mathcal J}_{2} X_{0} - i {\mathcal J}_{3} X_{0}) +( 1 -\frac{1}{f^{2}}) ( iX_{0} + \mathcal J X_{0}).
\end{equation}
But $\mathcal J \xi  = i \xi $ implies  $\mathcal J_{2} \xi  = i G^{\mathrm{end}} (\xi )$ and relation (\ref{j2psi})  becomes
$$
G^{\mathrm{end}}\left( \xi + ( 1-\frac{1}{f^{2}}) ( X_{0} - i {\mathcal J} X_{0})\right) =  \xi + ( 1-\frac{1}{f^{2}}) ( X_{0} - i {\mathcal J} X_{0}),
$$
i.e. $ \xi + ( 1-\frac{1}{f^{2}} )( X_{0} - i {\mathcal J}X_{0})\in C_{+}$ (the $+1$-eigenbundle $C_{+}$ of $G^{\mathrm{end}}$). But
$C_{+}$ is the graph of $ (b+g)$, where $b$ and $g$ are the $2$-form, respectively the metric of the bi-Hermitian structure
associated to $(G, \mathcal J )$  (see Section \ref{complex-geometry}).
We obtain that $\xi$ is given by
\begin{equation}\label{def-xi}
\xi = i (1-\frac{1}{f^{2}}) \mathrm{pr}_{T^{*}} ({\mathcal J} X_{0}) +
(1-\frac{1}{f^{2}}) (b+ g) (X_{0} - i\mathrm{pr}_{T}{\mathcal J} X_{0}).
\end{equation}
To summarize:  we  proved that ${v}_{f} +\xi \in L_{1} \cap L_{2} $ implies that $\xi$ is 
given by (\ref{def-xi}). We now show that the additional condition 
${v}_{f} +\xi \in \mathcal S^{\perp}_{\mathbb{C}}$ from (\ref{cond-psi}),  
with $\xi$ given by (\ref{def-xi}), leads to a contradiction. 
Indeed, suppose, by  contradiction, that  
${v}_{f} +\xi \in \mathcal S^{\perp}_{\mathbb{C}}$. In particular, $\langle {v}_{f} +\xi , X_{0}\rangle =0$.
Taking the real part of this equality  
and using (\ref{def-xi}) 
we obtain
\begin{equation}\label{special}
\frac{1}{f^{2}} G(X_{0}, X_{0}) + \frac{1}{2} ( 1-\frac{1}{f^{2}}) g(X_{0}, X_{0})=0.
\end{equation}
But, using (\ref{gend}),
$$
G(X_{0}, X_{0}) = \frac{1}{2} \left( g(X_{0}, X_{0}) + g^{-1}  ( b(X_{0}), b( X_{0}))\right) 
$$
and the left hand side of (\ref{special}) is equal to 
$$
\frac{1}{2f^{2}} g^{-1} (b(X_{0}), b (X_{0})) +\frac{1}{2} g(X_{0}, X_{0}),
$$ 
which is non-zero (because $g$ is positive definite and $X_{0}$ is non-vanishing). We obtain a contradiction.
\end{proof}

\begin{rem}\label{prel-deco}{\rm 
i) From  (\ref{hol-sp}), the bundle $E_{2}^{\prime}$ decomposes as
\begin{align}
\label{eprime-2} &  E_{2}^{\prime}  = \mathrm{pr}_{T} ( \bar{L}_{1}\cap {L}_{2}^{\prime}) + \mathrm{pr}_{T} ({L}_{1} \cap L_{2}^{\prime})\\ 
\nonumber&= \mathrm{span}_{\mathbb{C}} \{ \mathrm{pr}_{T}(v_{f})
, \mathrm{pr}_{T}(\bar{v}_{if})\} + \mathrm{pr}_{T} ( \bar{L}_{1} \cap L_{2} \cap 
\mathcal S_{\mathbb{C}}^{\perp}) + \mathrm{pr}_{T} ({L}_{1}\cap L_{2} \cap \mathcal S_{\mathbb{C}}^{\perp}).
\end{align}
The $2$-forms $\epsilon_{2}^{\prime}$ and $\epsilon_{2}$ coincide, when both arguments belong to
$\mathrm{pr}_{T}(L_{2} \cap \mathcal S_{\mathbb{C}}^{\perp}) $ (the sum of the last two components in the second
decomposition (\ref{eprime-2})). From (\ref{epsilon}), for any $X\in E_{2}^{\prime}$, 
$$
\epsilon_{2}^{\prime}( \mathrm{pr}_{T}(v_{f}), X) = 2\langle v_{f}, X\rangle ,\  \epsilon_{2}^{\prime}( \mathrm{pr}_{T} (\bar{v}_{if}), X) = 
2\langle \bar{v}_{if}, X\rangle 
$$
and
$$
\epsilon_{2}^{\prime} ( \mathrm{pr}_{T} (v_{f}), \mathrm{pr}_{T}(\bar{v}_{if})) = 
2 \langle v_{f}, \mathrm{pr}_{T} ( \bar{v}_{if})\rangle  = - 2  \langle\bar{v}_{if},\mathrm{pr}_{T} ({v}_{f})\rangle . 
$$
ii)  The intersections $L_{1} \cap L_{2}$ and $L_{1} \cap \bar{L}_{2}$ have similar decompositions
as in Lemma \ref{elem-bundles}, with ${v}_{f}$ replaced by
\begin{equation}\label{v-1}
{v}_{1}:=  X_{0} - i {\mathcal J}X_{0} -
({\mathcal J}_{3} X_{0} + i{\mathcal J}_{2} X_{0})
\end{equation}
and $v_{if}$ replaced by
\begin{equation}\label{v-i}
{v}_{i}:=  X_{0} - i {\mathcal J}X_{0} +
{\mathcal J}_{3} X_{0} + i{\mathcal J}_{2} X_{0}.
\end{equation}
That is, 
\begin{align}
\nonumber & L_{1} \cap L_{2} = \mathrm{span}_{\mathbb{C}} \{ v_{1} \} +
L_{1}\cap L_{2}\cap \mathcal S_{\mathbb{C}}^{\perp}\\
\label{hol-sp-1} &L_{1} \cap\bar{L}_{2} = \mathrm{span}_{\mathbb{C}} \{ v_{i} \} + 
L_{1}\cap \bar{L}_{2}\cap \mathcal S_{\mathbb{C}}^{\perp}.
\end{align}
}\end{rem}

From Lemma \ref{elem-bundles} and Remark \ref{prel-deco}, we obtain:

\begin{cor} The elementary deformations preserve the rank of the bundles
$L_{1} \cap L_{2}$, $L_{1} \cap \bar{L}_{2}$ and of their projections to $(TM)^{\mathbb{C}}.$ 
\end{cor}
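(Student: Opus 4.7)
The plan is to read the corollary directly off the direct sum decompositions already established in Lemma \ref{elem-bundles} and noted in Remark \ref{prel-deco} ii), by comparing ranks summand by summand. The key structural observation is that in every one of the four decompositions in question, the "non-$\mathcal{S}$" summand $L_1 \cap L_2 \cap \mathcal{S}_\mathbb{C}^\perp$ (respectively $L_1 \cap \bar{L}_2 \cap \mathcal{S}_\mathbb{C}^\perp$) is independent of the deformation parameter $f$, while the "$\mathcal{S}$" summand is always complex one-dimensional.

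First I would record that the decompositions asserted in Remark \ref{prel-deco} ii) are indeed direct sums. This is obtained by rerunning the proof of Lemma \ref{elem-bundles} with $f=1$: the argument that $v_f$ (respectively $v_{if}$) is transverse to $L_1 \cap L_2 \cap \mathcal{S}_\mathbb{C}^\perp$ (respectively $L_1 \cap \bar{L}_2 \cap \mathcal{S}_\mathbb{C}^\perp$) uses only $\mathcal{J}_2$ being the original second generalized complex structure and $G$ being positive definite; the contradiction reached at the end of that proof via equation (\ref{special}) holds verbatim when $f=1$. Thus
\[
L_1 \cap L_2 = \mathrm{span}_\mathbb{C}\{v_1\} \oplus \bigl(L_1 \cap L_2 \cap \mathcal{S}_\mathbb{C}^\perp\bigr), \qquad L_1 \cap \bar{L}_2 = \mathrm{span}_\mathbb{C}\{v_i\} \oplus \bigl(L_1 \cap \bar{L}_2 \cap \mathcal{S}_\mathbb{C}^\perp\bigr),
\]
with the analogous direct sum decompositions for $\mathrm{pr}_T(L_1 \cap L_2)$ and $\mathrm{pr}_T(L_1 \cap \bar{L}_2)$.

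Next I would compare ranks across the deformed and undeformed decompositions. Since $v_f, v_1, v_{if}, v_i$ are all nonzero (each has $X_0$ as its real $TM$-part, and $X_0$ is nowhere vanishing), each of the spans $\mathrm{span}_\mathbb{C}\{v_f\}$, $\mathrm{span}_\mathbb{C}\{v_1\}$, $\mathrm{span}_\mathbb{C}\{v_{if}\}$, $\mathrm{span}_\mathbb{C}\{v_i\}$ has complex rank one. Adding ranks along the direct sums then gives
\[
\mathrm{rank}(L_1 \cap L_2') = 1 + \mathrm{rank}\bigl(L_1 \cap L_2 \cap \mathcal{S}_\mathbb{C}^\perp\bigr) = \mathrm{rank}(L_1 \cap L_2),
\]
and the analogous equality for $L_1 \cap \bar{L}_2'$. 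Exactly the same argument applied to (\ref{dec-direct}) and to its $f=1$ counterpart in Remark \ref{prel-deco} ii) yields the rank equalities for the $\mathrm{pr}_T$-images; here one only needs that $\mathrm{pr}_T(v_f)$ and $\mathrm{pr}_T(v_1)$ are nonzero, which is again guaranteed by $X_0 \neq 0$.

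There is essentially no genuine obstacle: the delicate transversality statements, which are the only place a computation is required, have already been absorbed into Lemma \ref{elem-bundles} and its $f=1$ specialization. The corollary is therefore a bookkeeping consequence of the fact that the deformation modifies only the $\mathcal{S}$-part of the bundles $L_1 \cap L_2$ and $L_1 \cap \bar{L}_2$, and it does so by swapping one complex line for another.
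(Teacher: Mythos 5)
Your argument is correct and is exactly the route the paper takes: the corollary is stated as an immediate consequence of the direct sum decompositions in Lemma \ref{elem-bundles} and Remark \ref{prel-deco} ii), with the deformed and undeformed bundles differing only by swapping the rank-one summand $\mathrm{span}_{\mathbb{C}}\{v_{f}\}$ (resp.\ $\mathrm{span}_{\mathbb{C}}\{v_{if}\}$) for $\mathrm{span}_{\mathbb{C}}\{v_{1}\}$ (resp.\ $\mathrm{span}_{\mathbb{C}}\{v_{i}\}$) over the common piece in $\mathcal S_{\mathbb{C}}^{\perp}$. The only cosmetic quibble is that the real part of $\mathrm{pr}_{T}(v_{f})$ is $X_{0}-\frac{1}{f^{2}}\mathrm{pr}_{T}(\mathcal J_{3}X_{0})$ rather than $X_{0}$ itself, but this is still nonzero (it has positive $g$-inner product with $X_{0}$), so your nonvanishing claim stands.
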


\section{Twist of generalized almost complex structures}\label{twist}

From now on, we fix twist data $(X_{0}, F, a)$ on a manifold $M$.

\begin{lem}\label{lie-forms-lemma} For any invariant vector field $X\in {\mathfrak X}(M)$ and invariant form $\alpha \in \Omega^{k}(M)$,
\begin{equation}\label{lie-forms}
{\mathcal L}_{X_{W}} \alpha_{W}\sim_{\mathcal H} {\mathcal L}_{X}\alpha  -\frac{1}{a} (i_{X}F) \wedge i_{X_{0}}\alpha .
\end{equation}
\end{lem}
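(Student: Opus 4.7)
The plan is to apply Cartan's magic formula $\mathcal{L} = d \circ i_X + i_X \circ d$ on $W$ and then push everything through twist-compatibility identities.

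First I would establish a preliminary compatibility lemma: for any invariant vector field $X$ and invariant form $\beta$ on $M$, contraction commutes with the twist, namely $i_{X_W}\beta_W \sim_\mathcal{H} i_X \beta$. To check this, note that the horizontal lift of $X_W$ to $P$ is precisely $\tilde{X}$, so
\[
\pi_W^*(i_{X_W}\beta_W) = i_{\tilde X}\bigl(\pi_W^*\beta_W\bigr) = i_{\tilde X}\Bigl(\pi^*\beta -\tfrac{1}{a}\theta\wedge \pi^*(i_{X_0}\beta)\Bigr),
\]
using (\ref{swann-pull}). Since $\tilde X$ is $\mathcal{H}$-horizontal, $i_{\tilde X}\theta=0$, and using $i_{X}i_{X_0}=-i_{X_0}i_{X}$, the right-hand side simplifies to $\pi^*(i_X\beta)-\tfrac{1}{a}\theta\wedge\pi^*(i_{X_0}(i_X\beta))$, which is exactly $\pi_W^*((i_X\beta)_W)$ by (\ref{swann-pull}) applied to $i_X\beta$. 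Hence $i_{X_W}\beta_W=(i_X\beta)_W$.

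With that in hand, write
\[
\mathcal{L}_{X_W}\alpha_W \;=\; d\bigl(i_{X_W}\alpha_W\bigr) \;+\; i_{X_W}\bigl(d\alpha_W\bigr).
\]
For the first summand, the preliminary lemma yields $i_{X_W}\alpha_W=(i_X\alpha)_W$, and then (\ref{s1}) applied to the invariant form $i_X\alpha$ gives
\[
d(i_{X_W}\alpha_W)\;\sim_\mathcal{H}\; d(i_X\alpha)-\tfrac{1}{a}F\wedge i_{X_0}(i_X\alpha).
\]
For the second summand, (\ref{s1}) gives $d\alpha_W\sim_\mathcal{H} d_W\alpha$, and then the preliminary lemma (applied to the invariant form $d_W\alpha$) gives
\[
i_{X_W}(d\alpha_W)\;\sim_\mathcal{H}\; i_X d\alpha-\tfrac{1}{a}\,i_X\bigl(F\wedge i_{X_0}\alpha\bigr).
\]
Expanding $i_X(F\wedge i_{X_0}\alpha)$ using the anti-derivation property of $i_X$ (with $F$ of even degree) and the identity $i_X i_{X_0}=-i_{X_0}i_X$ produces $(i_XF)\wedge i_{X_0}\alpha - F\wedge i_{X_0}(i_X\alpha)$.

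Summing the two contributions, the cross-terms $\pm\tfrac{1}{a}F\wedge i_{X_0}(i_X\alpha)$ cancel, and one is left with $d(i_X\alpha)+i_X d\alpha - \tfrac{1}{a}(i_XF)\wedge i_{X_0}\alpha = \mathcal{L}_X\alpha - \tfrac{1}{a}(i_XF)\wedge i_{X_0}\alpha$, which is the desired identity. The only non-routine point is the preliminary compatibility lemma; once that is established, everything else is mechanical bookkeeping with Cartan's formula and (\ref{s1}).
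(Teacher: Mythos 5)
Your proposal is correct and follows essentially the same route as the paper: the key preliminary fact $i_{X_W}\beta_W=(i_X\beta)_W$ is exactly the paper's relation (\ref{1}), and both proofs then run Cartan's formula together with (\ref{s1}). The only difference is organizational — you apply the contraction lemma a second time to the invariant form $d_W\alpha$ and do the cancellation downstairs on $M$, whereas the paper expands everything on $P$ and discards the $\theta$-terms by restricting to $\mathcal H$ at the end; the computational content is the same.
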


\begin{proof}
Using relation (\ref{swann-pull}), $\tilde{X} = \widehat{X}_{W}$ and $\theta (\tilde{X})=0$, we obtain  
\begin{align}
\nonumber& \pi_{W}^{*} ( i_{X_{W}} (\alpha_{W} ) )= \pi_{W}^{*} (\alpha_{W}) (\widehat{X}_{W},\cdot ) 
= \pi_{W}^{*} (\alpha_{W}) (\tilde{X},\cdot ) \\
\nonumber & = \left( \pi^{*}(\alpha ) -\frac{1}{a} \theta \wedge \pi^{*} (i_{X_{0}} \alpha )\right) (\tilde{X},\cdot )\\
\label{contraction} &= \pi^{*} ( i_{X}\alpha ) +\frac{1}{a} \theta \wedge \pi^{*} (i_{X} i_{X_{0}} \alpha ).
\end{align}
Thus, 
\begin{equation}\label{1}
\pi_{W}^{*} ( i_{X_{W}} \alpha_{W} ) = \pi^{*} ( i_{X}\alpha ) +\frac{1}{a} \theta \wedge \pi^{*} ( i_{X} i_{X_{0}} \alpha ).
\end{equation}
Pulling back by $\pi_{W}$ the Cartan formula 
$$
{\mathcal L}_{X_{W}} \alpha_{W}= i_{X_{W}} ( d\alpha_{W}) + d ( i_{X_{W}}\alpha_{W})
$$ 
and using (\ref{1}), 
we can write 
\begin{align}
\nonumber \pi_{W}^{*} ( {\mathcal L}_{X_{W}} (\alpha_{W}))  & =\pi_{W}^{*} (  i_{X_{W}} ( d\alpha_{W}) ) +
d \pi_{W}^{*}  ( i_{X_{W}} \alpha_{W})\\
\label{2}& = i_{\widehat{X}_{W} } \pi_{W}^{*} ( d \alpha_{W} ) + d \left( \pi^{*} (i_{X}\alpha ) +\frac{1}{a} \theta \wedge \pi^{*} ( i_{X} i_{X_{0}}\alpha ) \right) . 
\end{align}
Recall from relation (\ref{s1})  that  $ d\alpha_{W} \sim_{\mathcal H} d_{W}\alpha$. From this fact, together with (\ref{swann-pull}), we obtain
$$
\pi_{W}^{*} ( d\alpha_{W}) = \pi^{*} (d_{W}\alpha ) -\frac{1}{a} \theta \wedge \pi^{*} ( i_{X_{0}} d_{W}\alpha ).
$$
Replacing this relation into (\ref{2}) and using $da = - i_{X_{0}} F$ and $d\theta = \pi^{*}F$,  we obtain
\begin{align}
\nonumber & \pi_{W}^{*} ( {\mathcal L}_{X_{W}} \alpha_{W})  = \pi^{*} ( {\mathcal L}_{X}\alpha ) -\frac{1}{a} \pi^{*} (( i_{X}F) \wedge i_{X_{0}} \alpha )\\
\nonumber & + \frac{1}{a} \theta \wedge \pi^{*} ( i_{X}i_{X_{0}} d\alpha - d( i_{X}i_{X_{0}} \alpha ))-\frac{F(X_{0}, X)}{a^{2}} \theta\wedge \pi^{*} ( i_{X_{0}}\alpha ).
\end{align}
This relation, restricted to $\mathcal H$, gives  
\begin{equation}\label{c}
\pi_{W}^{*} ( {\mathcal L}_{X_{W}} \alpha_{W}) \vert_{\mathcal H} = \pi^{*} ( {\mathcal L}_{X}\alpha ) \vert_{\mathcal H}-\frac{1}{a} \pi^{*} (( i_{X}F) \wedge i_{X_{0}}\alpha )\vert_{\mathcal H}
\end{equation}
which is (\ref{lie-forms}).
\end{proof}

The next lemma describes the behaviour of the Courant bracket under twists. 

\begin{lem} \label{l-courant-twist}For any invariant sections $X+\xi , Y+\eta \in \Gamma (\mathbb{T}M)$, the Courant bracket 
$[ (X+\xi )_{W} , (Y+\eta )_{W}]$  is $\mathcal H$-related to
$$
 [X+\xi , Y+\eta ] + \frac{ F(X, Y) }{a}X_{0}  -   \frac{\eta (X_{0})}{a} i_{X}F + \frac{\xi (X_{0})}{a} i_{Y}F.
$$
\end{lem}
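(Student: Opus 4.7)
The plan is to expand the Courant bracket on $W$ using its definition and then substitute, term by term, the twist formulas for the Lie bracket and the Lie derivative on forms that have already been established. Writing $u_W = X_W + \xi_W$ and $v_W = Y_W + \eta_W$, we have
$$
[u_W, v_W] = [X_W, Y_W] + {\mathcal L}_{X_W}\eta_W - {\mathcal L}_{Y_W}\xi_W - \tfrac{1}{2} d\bigl(\eta_W(X_W) - \xi_W(Y_W)\bigr),
$$
and the goal is to show each summand is $\mathcal H$-related to the corresponding piece of the stated expression.

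For the first summand, formula (\ref{s2}) gives directly $[X_W, Y_W] \sim_{\mathcal H} [X,Y] + \frac{F(X,Y)}{a} X_0$. For the two Lie-derivative summands, Lemma \ref{lie-forms-lemma} applied to the invariant $1$-forms $\eta$ and $\xi$ yields
$$
{\mathcal L}_{X_W}\eta_W \sim_{\mathcal H} {\mathcal L}_X \eta - \tfrac{1}{a}(i_X F)\wedge i_{X_0}\eta = {\mathcal L}_X\eta - \tfrac{\eta(X_0)}{a}\, i_X F,
$$
since $i_{X_0}\eta = \eta(X_0)$ is a function, and similarly ${\mathcal L}_{Y_W}\xi_W \sim_{\mathcal H} {\mathcal L}_Y\xi - \frac{\xi(X_0)}{a}\, i_Y F$. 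These four contributions already produce all the correction terms $\frac{F(X,Y)}{a}X_0$, $-\frac{\eta(X_0)}{a}i_X F$ and $+\frac{\xi(X_0)}{a}i_Y F$.

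It then remains to check that the exact $1$-form on $W$ matches the exact $1$-form on $M$ with no extra correction. Using (\ref{1}) with $\eta$ a $1$-form, the second term on the right-hand side vanishes because $i_X i_{X_0}\eta = 0$, so $\eta_W(X_W) = (\eta(X))_W$ as invariant functions, and similarly $\xi_W(Y_W) = (\xi(Y))_W$. Applying (\ref{s1}) to the invariant $0$-form $f := \eta(X) - \xi(Y)$ (for which $i_{X_0} f = 0$) gives $d f_W \sim_{\mathcal H} df$, so the fourth summand contributes exactly $-\tfrac{1}{2}d(\eta(X) - \xi(Y))$. Collecting the four pieces gives the stated $\mathcal H$-relation. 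The only conceptual step requiring care is the handling of the pairing term, but it reduces cleanly to the zero-form case of (\ref{s1}); otherwise the proof is a routine bookkeeping exercise given Lemma \ref{lie-forms-lemma} and (\ref{s2}).
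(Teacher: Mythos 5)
Your proposal is correct and follows essentially the same route as the paper: expand the Courant bracket, apply (\ref{s2}) to the vector part, Lemma \ref{lie-forms-lemma} to the two Lie-derivative terms, and the pullback identity $\pi_W^*(\eta_W(X_W))=\pi^*(\eta(X))$ to see that the exact term acquires no correction. The only cosmetic difference is that you justify $d(\eta_W(X_W))\sim_{\mathcal H}d(\eta(X))$ by invoking (\ref{s1}) for the invariant $0$-form $\eta(X)-\xi(Y)$, whereas the paper simply takes the exterior derivative of the pullback identity; the two are equivalent.
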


\begin{proof} Recall the expression of the Courant bracket: 
\begin{align}
\nonumber [(X+ \xi )_{W}, (Y+\eta )_{W}]  &= [X_{W}, Y_{W}] + {\mathcal L}_{X_{W}} \eta_{W} - {\mathcal L}_{Y_{W}} \xi_{W} \\
\label{def-c}&-\frac{1}{2}d \left( \eta_{W}(X_{W}) - \xi_{W} (Y_{W})\right).
\end{align}
From  (\ref{s2}),  $[X_{W}, Y_{W}]\sim_{\mathcal H} [X, Y] + \frac{ F(X, Y)}{a} X_{0}$. From (\ref{lie-forms}),  
${\mathcal L}_{X_{W}} \eta_{W}\sim_{\mathcal H}{\mathcal L}_{X}\eta  -\frac{\eta (X_{0})}{a}i_{X}F$ and 
${\mathcal L}_{Y_{W}} \xi_{W}\sim_{\mathcal H}  {\mathcal L}_{Y}\xi  - \frac{\xi (X_{0})}{a} i_{Y}F.$ 
From (\ref{contraction}), $\pi_{W}^{*} (\eta_{W} ( X_{W})) = \pi^{*} (\eta (X))$.
Taking the exterior derivative, we obtain 
$d( \eta_{W}(X_{W}) )\sim_{\mathcal H} d( \eta (X))$. A similar argument shows that 
$d(\xi_{W}(Y_{W}))\sim_{\mathcal H} d(\xi (Y))$. 
Combining these facts with (\ref{def-c}) we obtain the claim.
\end{proof}

To simplify terminology, we introduce the following definition.

\begin{defn} A subbundle $E\subset (TM)^{\mathbb{C}}$ is called {\cmssl $(F, a)$-involutive} if, for any sections $X, Y\in \Gamma (E)$, the complex vector field
$$
[X,Y]^{(F,a)}:= [X, Y] +\frac{ F(X, Y)}{a} X_{0}
$$ is also a section of $E$.
\end{defn}

If $E$ is  $(F, a)$-involutive and $\alpha \in \Gamma (\Lambda^{2}E^{*})$ then its twisted exterior 
differential $d^{(F, a)} \alpha\in\Gamma ( \Lambda^{3} E^{*})$  is defined by
\begin{align}
\nonumber& (d^{(F,a)} \alpha) (X, Y, Z):= X(\alpha (Y, Z)) + Z ( \alpha (X, Y)) + Y (\alpha (Z,X))\\  
\label{cond2} &+ \alpha ( Z, [X,Y]^{(F,a)} )
+ \alpha ( X, [Y,Z]^{(F,a)} )  + \alpha ( Y, [Z,X]^{(F,a)}),
\end{align}
for any $X, Y, Z\in \Gamma (E)$. Remark that, if  $E=(TM)^{\mathbb{C}}$,  
then $d^{(F,a)}\alpha = d_{W}\alpha$ (the latter defined in (\ref{s1})).

\begin{defn} A form $\alpha \in \Gamma (\Lambda^{2} E^{*})$ defined on an $(F,a)$-involutive bundle $E$ is called {\cmssl $(F,a)$-closed} if $d^{(F,a)} \alpha =0.$
\end{defn}

Using this preliminary material, we now find conditions which ensure that the twist $\mathcal J_{W}$ of an invariant
generalized almost complex structure $\mathcal J$ on $M$ is integrable. Recall that $\mathcal J_{W}$ is defined by 
\begin{equation}\label{def-gcs}
\mathcal  J_{W}(u_{W}) =( \mathcal J u)_{W},\ u\in \Gamma (\mathbb{T}M)^{\mathrm{inv}}.
\end{equation}
Let $L= L(E, \epsilon )$ be the $(1,0)$-bundle of $\mathcal J .$

\begin{thm}\label{cond-int-complex} In this setting,  
${\mathcal J}_{W}$ is integrable if and only if one of the following equivalent conditions holds:\

i) for any invariant sections $X+\xi , Y+\eta$ of $L$, the section
\begin{equation}\label{exp-r}
F(X, Y) X_{0} -  \eta (X_{0}) i_{X}F +  \xi (X_{0}) i_{Y}F +  a[ X+\xi , Y+\eta ]
\end{equation}
of $(\mathbb{T}M)^{\mathbb{C}}$  is  a section of $L$;\

ii) the bundle $E$ is $(F, a)$-involutive and 
$\epsilon \in \Gamma (\Lambda ^{2} E^{*})$ is $(F, a)$-closed.
\end{thm}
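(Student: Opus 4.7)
The plan is to reduce the integrability of $\mathcal{J}_{W}$ to condition (i) via Lemma \ref{l-courant-twist}, and then to establish (i)$\Leftrightarrow$(ii) by separating the tangent and cotangent parts of the expression (\ref{exp-r}) and matching against the definition of $d^{(F,a)}\epsilon$. First, integrability of $\mathcal{J}_{W}$ amounts to Courant-involutivity of its $(1,0)$-bundle $L_{W}$. Since $\mathcal{J}$ is invariant, averaging over the circle action produces local frames of $L$ by invariant sections, and the twist operation lifts these to local frames of $L_{W}$. Maximal isotropy of $L_{W}$ kills the $df$-correction in (\ref{courant1}) when both arguments lie in $L_{W}$, so the involutivity condition is $C^{\infty}$-bilinear on generators and thus reduces to checking $[u_{W}, v_{W}] \in \Gamma(L_{W})$ for invariant $u = X+\xi$, $v = Y+\eta \in \Gamma(L)$. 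By Lemma \ref{l-courant-twist} and the fact that an invariant $w$ lies in $L$ iff $w_{W} \in L_{W}$, this is equivalent to $[u,v] + \tfrac{F(X,Y)}{a}X_{0} - \tfrac{\eta(X_{0})}{a} i_{X}F + \tfrac{\xi(X_{0})}{a} i_{Y}F \in \Gamma(L)$; multiplying by the non-vanishing $a$ yields (i).

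For (i)$\Leftrightarrow$(ii), take $u = X+\xi$, $v = Y+\eta \in \Gamma(L)$, so $X, Y \in \Gamma(E)$ and $\xi(W) = \epsilon(X,W)$, $\eta(W) = \epsilon(Y,W)$ for $W \in \Gamma(E)$. Let $Z$ denote the section in (\ref{exp-r}). Its tangent part is
\[
\mathrm{pr}_{T}(Z) = a[X,Y] + F(X,Y)X_{0} = a[X,Y]^{(F,a)},
\]
so the containment $Z \in \Gamma(L)$ forces $[X,Y]^{(F,a)} \in \Gamma(E)$, i.e.\ $(F,a)$-involutivity of $E$. Conversely, once this holds, $Z \in L$ is equivalent to $\mathrm{pr}_{T^{*}}(Z)(W) = \epsilon(\mathrm{pr}_{T}(Z), W) = a\,\epsilon([X,Y]^{(F,a)}, W)$ for every $W \in \Gamma(E)$. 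The core step is then to verify the identity
\[
\mathrm{pr}_{T^{*}}(Z)(W) - a\,\epsilon([X,Y]^{(F,a)}, W) = a\,(d^{(F,a)}\epsilon)(X, Y, W),
\]
which I would prove by expanding $\mathcal{L}_{X}\eta(W)$ and $\mathcal{L}_{Y}\xi(W)$ by Cartan's magic formula, substituting $\eta(W) = \epsilon(Y,W)$, $\xi(W) = \epsilon(X,W)$ and $\eta(X) - \xi(Y) = -2\epsilon(X,Y)$, and decomposing $\eta([X,W]) = \epsilon(Y, [X,W]^{(F,a)}) - \tfrac{F(X,W)}{a}\eta(X_{0})$ (which uses $(F,a)$-involutivity to put $[X,W]^{(F,a)}$ in $E$), and similarly for $\xi([Y,W])$. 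The $\eta(X_{0})$ and $\xi(X_{0})$ corrections then cancel exactly the external $-\tfrac{\eta(X_{0})}{a}i_{X}F$ and $\tfrac{\xi(X_{0})}{a}i_{Y}F$ contributions in $Z$, and the surviving six terms reassemble into $(d^{(F,a)}\epsilon)(X,Y,W) - \epsilon(W, [X,Y]^{(F,a)})$ by the definition (\ref{cond2}). This isolates the tangent defect ($(F,a)$-involutivity) from the cotangent defect ($(F,a)$-closedness), proving (i)$\Leftrightarrow$(ii).

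The main obstacle is the bookkeeping in the last identity: tracking sign conventions, the antisymmetries of $\epsilon$ and of $[\cdot,\cdot]^{(F,a)}$, and verifying that the $F(X_{0},\cdot)$-corrections arising from $[X,W] = [X,W]^{(F,a)} - \tfrac{F(X,W)}{a}X_{0}$ really match the external $i_{X}F, i_{Y}F$ terms. No conceptual obstruction is expected — once the reduction to invariant sections and the substitution $\eta|_{E} = i_{Y}\epsilon$ are in place, the computation is linear and essentially forced by definition (\ref{cond2}).
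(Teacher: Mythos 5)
Your proof is correct, but it reaches the theorem by a route that differs from the paper's in both halves. For the equivalence of integrability with (i), the paper computes the twisted Nijenhuis tensor $N_{\mathcal J_W}(u_W,v_W)$ via Lemma \ref{l-courant-twist} and then runs a case analysis over $u,v\in\Gamma(L)^{\mathrm{inv}}$, $\Gamma(\bar L)^{\mathrm{inv}}$ and the mixed case; you instead use the definition of integrability as Courant-closure of $L_W$ directly, reducing to invariant generators via the isotropy of $L_W$ and formula (\ref{courant1}) (the same observation the paper relegates to the remark after the theorem). This avoids the case analysis and is, if anything, cleaner. For (ii), the divergence is more substantial: the paper never proves (i)$\Leftrightarrow$(ii) directly, but rather shows each is equivalent to integrability of $\mathcal J_W$ --- it twists the spinorial data to $L_W=L(E_W,\epsilon_W)$, applies the standard criterion ($E_W$ involutive, $d\epsilon_W=0$) upstairs on $W$, and pulls $d\epsilon_W$ back to $d^{(F,a)}\epsilon$; your argument stays entirely on $M$, splitting the expression (\ref{exp-r}) into tangent and cotangent parts and verifying the identity $\mathrm{pr}_{T^*}(Z)(W)-a\,\epsilon([X,Y]^{(F,a)},W)=a\,(d^{(F,a)}\epsilon)(X,Y,W)$, which I have checked: the $\eta(X_0)F(X,W)$ and $\xi(X_0)F(Y,W)$ corrections arising from $[X,W]=[X,W]^{(F,a)}-\tfrac{F(X,W)}{a}X_0$ do cancel the external $i_XF$, $i_YF$ terms exactly, and the six surviving terms reassemble into $(d^{(F,a)}\epsilon)(X,Y,W)-\epsilon(W,[X,Y]^{(F,a)})$ using the antisymmetry of $[\cdot,\cdot]^{(F,a)}$. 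Your version makes the equivalence (i)$\Leftrightarrow$(ii) manifest without reference to $W$; the paper's version has the merit of explaining conceptually why $d^{(F,a)}$ is the right operator (it is the descent of the honest exterior derivative on the twist). Both are valid, and the only blemish in your write-up is the harmless factor of $a$ in the phrase ``the external $-\tfrac{\eta(X_0)}{a}i_XF$ contributions'', which in (\ref{exp-r}) appear without the $\tfrac1a$.
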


\begin{proof}  A straightforward computation which uses Lemma \ref{l-courant-twist} and relation (\ref{def-gcs})   shows that 
$N_{{\mathcal J}_{W}} (u_{W},v_{W})$, with $u, v\in \Gamma ( \mathbb{T}M)^{\mathrm{inv}}$, is $\mathcal H$-related to 
\begin{align}
\nonumber&{ \mathcal E} (u ,v ):= N_{\mathcal J} (u, v) + \frac{1}{a} F( \mathrm{pr}_{T} (\mathcal J u ),
\mathrm{pr}_{T}( \mathcal J v) )X_{0}-
\frac{\mathrm{pr}_{T^{*}}( \mathcal J v) (X_{0})}{a} { i_{\mathrm{pr}_{T}(\mathcal J u)} F}\\
\nonumber&  + \frac{\mathrm{pr}_{T^{*}} (\mathcal J u) (X_{0})}{a} { i_{\mathrm{pr}_{T}(\mathcal J v )} F}  -\frac{1}{a} F(\mathrm{pr}_{T}(u), \mathrm{pr}_{T}(v)) { X_{0}} 
+\frac{(\mathrm{pr}_{T^{*}}v) (X_{0})}{a} { i_{\mathrm{pr}_{T}(u)}F} \\
\nonumber&- \frac{\mathrm{pr}_{T^{*}}(u)(X_{0})}{a} i_{\mathrm{pr}_{T}(v)}F -\frac{1}{a} F(\mathrm{pr}_{T}( \mathcal J u), \mathrm{pr}_{T}(v)) { \mathcal J X_{0}}
+\frac{\mathrm{pr}_{T^{*}}(v) (X_{0})}{a}  \mathcal J i_{\mathrm{pr}_{T}( \mathcal J u)}F \\
\nonumber&-\frac{\mathrm{pr}_{T^{*}} (\mathcal J u)(X_{0})}{ a} { \mathcal J  i_{\mathrm{pr}_{T}(v)}F }-\frac{1}{a} F(\mathrm{pr}_{T}(u), \mathrm{pr}_{T}( \mathcal J v))  {\mathcal J X_{0}}\\ 
\nonumber& + \frac{\mathrm{pr}_{T^{*}} (\mathcal J v)(X_{0})}{a} {\mathcal J i_{\mathrm{pr}_{T} (u)} F } 
-\frac{\mathrm{pr}_{T^{*}}(u) (X_{0})}{a}
{\mathcal J i_{\mathrm{pr}_{T}( \mathcal J v) } F}.
\end{align}
We obtain that $\mathcal J_{W}$ is integrable if and only if 
\begin{equation}\label{courant-int}
{\mathcal E} ( u, v ) =0,\  \forall u,v\in \Gamma (\mathbb{T}M)^{\mathrm{inv}}.  
\end{equation}
We now consider in (\ref{courant-int}) various cases: a) $u, v \in \Gamma (L)^{\mathrm{inv}}$; b) $u\in \Gamma ({L})^{\mathrm{inv}}$, $v \in 
\Gamma (\bar{L})^{\mathrm{inv}}$; c) $u, v \in \Gamma (\bar{L})^{\mathrm{inv}}.$ 
In case b) relation (\ref{courant-int}) is automatically satisfied. In case 
a) relation (\ref{courant-int})  with $u= X+\xi$ and $v= Y+\eta$ becomes
\begin{align}
\nonumber  & - F(X, Y) ( X_{0}+ i \mathcal J X_{0}) +\eta (X_{0}) ( i_{X} F + i\mathcal J i_{X}F ) 
-\xi (X_{0}) (i_{Y}F + i\mathcal J i_{Y}F )\\
\label{l-l}&  = a( [X+\xi , Y+\eta ]+ i\mathcal J [X+\xi , Y+\eta ] ),
\end{align}
for any $X+\xi , Y+\eta \in \Gamma (L)^{\mathrm{inv}}$,
and in case c) it is equivalent (by conjugation) to (\ref{l-l}).
But relation  (\ref{l-l}) is equivalent to
claim i).

We now prove claim ii). The $(1,0)$-bundle of $\mathcal J_{W}$ is 
$L_{W} := L(E_{W}, \epsilon_{W})$, where $E_{W}\subset (TW)^{\mathbb{C}}$ is generated (locally) by sections
$X_{W}$, where $X\in \Gamma (E)^{\mathrm{inv}}$,  and $\epsilon_{W}\in \Gamma (\Lambda^{2}E^{*}_{W})$ 
is given by 
$$
\epsilon_{W}(X_{W}, Y_{W}) = \epsilon (X,Y)_{W},\ \forall X,Y\in\Gamma  (E)^{\mathrm{inv}}.
$$
The generalized almost complex structure  $\mathcal J_{W}$ is integrable if and only if 
$E_{W}$ is involutive, i.e.\ $E$ is $(F,a)$-involutive  
(from (\ref{s2}))  and $\epsilon_{W}$ is closed.
Using (\ref{s2}) and $X_{W}(f_{W}) = (Xf)_{W}$, for any invariant vector field $X\in {\mathfrak X}(M)$ and
 invariant function $f\in C^{\infty}(M)$ 
(see Lemma \ref{lie-forms-lemma}),
we obtain:
 for any $X, Y, Z\in \Gamma (E)^{\mathrm{inv}} $,
\begin{align}
\nonumber &X_{W} (\alpha_{W} ( Y_{W}, Z_{W})) = X_{W} ( \alpha (Y, Z)_{W}) = (X\alpha (Y, Z))_{W}\\
\nonumber & \alpha_{W} ( X_{W}, [Y_{W}, Z_{W} ]) = \alpha_{W} ( X_{W}, ([Y, Z]^{(F,a)})_{W} ) = \alpha (X, [Y,Z]^{(F,a)})_{W}
\end{align}
and similarly for cyclic permutations of $X$, $Y$ and $Z$.
We deduce that 
$$
d\epsilon_{W}(X_{W}, Y_{W}, Z_{W})= (d^{(F,a)}\epsilon (X,Y, Z))_{W},\ \forall X,Y,Z\in \Gamma (E)^{\mathrm{inv}} .
$$
In particular, $\epsilon_{W}$ is closed if and only if $\epsilon$ is $d^{(F,a)}$-closed.
\end{proof}

\begin{rem}{\rm The invariance of $X+\xi$ and $Y+\eta$ is not essential in 
Theorem \ref{cond-int-complex} i), which can be formulated  also without this condition.
This is true as $\mathbb{T}M$ admits local frames formed by invariant sections,
from relation (\ref{courant1}), and from the fact that $L$ is isotropic with respect to $\langle \cdot , \cdot \rangle.$  }
\end{rem}

\subsection{Examples}\label{examples-twist}

In this subsection we apply  Theorem \ref{cond-int-complex} to various particular cases.

\subsubsection{Twist of  symplectic and (deformations) of complex structures}

The conditions which ensure 
that the integrability of a complex or symplectic structure is preserved under twist 
were determined 
in \cite{swann} (see Section \ref{twist-section}).  In this section we rediscover these conditions using Theorem \ref{cond-int-complex}.   
We begin with the symplectic case.

\begin{cor}
Let $\omega$ be an invariant  symplectic form on $M$ and $\mathcal J_{\omega}$ the associated
generalized complex structure. Then  
$(\mathcal J_{\omega})_{W}$ is integrable (i.e. $\omega_{W}$ is symplectic) if and only if
$F\wedge i_{X_{0}} \omega  =0.$
\end{cor}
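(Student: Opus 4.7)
The plan is to apply Theorem \ref{cond-int-complex}(ii) to the generalized complex structure $\mathcal J_\omega$, whose $(1,0)$-bundle is $L((TM)^{\mathbb{C}}, -i\omega)$. Here $E = (TM)^{\mathbb{C}}$ and $\epsilon = -i\omega$, so the situation simplifies dramatically.

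First, I would observe that the bundle $E = (TM)^{\mathbb{C}}$ is automatically $(F,a)$-involutive, since it is the entire complexified tangent bundle and any twisted bracket $[X,Y]^{(F,a)} = [X,Y] + \frac{F(X,Y)}{a}X_0$ is still a complex vector field on $M$. Therefore, integrability reduces to the $(F,a)$-closedness of $\epsilon = -i\omega$, i.e.\ to $d^{(F,a)}\omega = 0$.

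Second, as noted in the excerpt right after the definition of $d^{(F,a)}$, when $E = (TM)^{\mathbb{C}}$ one has $d^{(F,a)}\alpha = d_W\alpha$ for any $\alpha$. Using the formula from (\ref{s1}) and the fact that $d\omega = 0$, this gives
\[
d^{(F,a)}\omega = d_W\omega = d\omega - \frac{1}{a} F\wedge i_{X_0}\omega = -\frac{1}{a} F\wedge i_{X_0}\omega.
\]
Since $a$ is non-vanishing, this vanishes if and only if $F\wedge i_{X_0}\omega = 0$, which is the desired equivalence.

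The only mild subtlety (not really an obstacle) is to note that $\omega_W$ being symplectic amounts to $\omega_W$ being a \emph{closed} non-degenerate $2$-form; non-degeneracy of $\omega_W$ is automatic because the identification $\mathcal H_p \cong T_{\pi_W(p)}W$ is a pointwise isomorphism transferring the non-degenerate form $\omega$ to $\omega_W$, and moreover $\omega_W$ being the form of the generalized almost complex structure $(\mathcal J_\omega)_W$ forces it to be non-degenerate. So the integrability of $(\mathcal J_\omega)_W$ is exactly the closedness of $\omega_W$, which we have just shown is equivalent to $F\wedge i_{X_0}\omega = 0$. Alternatively, one could bypass Theorem \ref{cond-int-complex} entirely and derive the statement directly from (\ref{s1}) applied to the closed form $\omega$.
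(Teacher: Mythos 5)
Your proof is correct, but it takes a different route through Theorem \ref{cond-int-complex} than the paper does. The paper proves this corollary via condition i): it writes the sections of $L = L((TM)^{\mathbb{C}}, -i\omega)$ as $X - i(i_X\omega)$, substitutes them into the expression (\ref{exp-r}), and separately identifies the $TM$ and $T^*M$ components of the resulting membership condition to extract $F\wedge i_{X_0}\omega = 0$. You instead invoke condition ii): since $E = (TM)^{\mathbb{C}}$ the $(F,a)$-involutivity is automatic, and $(F,a)$-closedness of $\epsilon = -i\omega$ reduces, via the identity $d^{(F,a)}\alpha = d\alpha - \frac{1}{a}F\wedge i_{X_0}\alpha$ on $E = (TM)^{\mathbb{C}}$ and the closedness of $\omega$, to exactly the stated condition. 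Your route is shorter and avoids the componentwise bookkeeping; it is in fact the same strategy the paper uses for its subsequent corollary on the interpolation family $\mathcal J_t$ (where it explicitly applies condition ii) together with formula (\ref{alpha-p})). Your closing observations — that non-degeneracy of $\omega_W$ is automatic from the pointwise identification $\mathcal H_p \cong T_{\pi_W(p)}W$, and that one could bypass the theorem entirely by applying (\ref{s1}) to the closed form $\omega$ — are both sound; the latter is essentially Swann's original argument, which the paper acknowledges it is merely rederiving.
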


\begin{proof}  For any $X\in TM$ and $\xi \in (TM)^{*}$, $\mathcal J_{\omega }X = i_{X}\omega$ , 
$\mathcal J_{\omega}\xi  =  - \omega^{-1}(\xi )$ and 
the $(1,0)$-bundle of $\mathcal J_{\omega}$ is 
$L= L( (TM)^{\mathbb{C}}, -i\omega )$. The sections of $L$  are of the form 
$X- i (i_{X}\omega )$, for any $X\in (TM)^{\mathbb{C}}.$ From Theorem \ref{cond-int-complex}, 
$({\mathcal J}_{\omega})_{W}$ is integrable if and only if, for any  complex vectors $X$, $Y$,  
\begin{align}
\nonumber &F(Y,X) (X_{0}+ i (i_{X_{0}}\omega )) +i\omega (X_{0},Y)(i_{X}F - i \omega^{-1} (i_{X}F))\\
\label{verif-sympl}& + i\omega (X, X_{0}) ( i_{Y}F - i\omega^{-1}( i_{Y}F) )=0.
\end{align}
Identifying the $TM$ and $T^{*}M$ components  in (\ref{verif-sympl}) we obtain $F\wedge i_{X_{0}} \omega =0$, as required.
\end{proof}

We now turn to the complex case. We  consider a  more general setting, namely 
a complex structure $J$ on $M$  and  $\Pi\in \Gamma ( \Lambda^{2} TM)$, viewed as a homomorphism  
from $T^{*}M$ to $TM$.
Assume that  
\begin{equation}\label{types}
J\circ \Pi = \Pi \circ J^{*}.
\end{equation}
From (\ref{types}), (the complexification)  of  $\Pi$ maps $\Lambda^{1,0}M$ 
to $T^{1,0}M$ and $\Lambda^{0,1}M$ to $T^{0,1}M$
(the type $(1,0)$ and $(0,1)$ of forms/vectors are with respect to $J$).  Let 
$\sigma := \frac{i}{2} \Pi^{0,2}$, i.e. 
$$
\sigma : \Lambda^{1,0} M \rightarrow T^{1,0}M,\  \sigma (\xi ) :=\frac{i}{2} \Pi  (\xi),\ \forall  \xi \in \Lambda^{1,0}M.
$$  
From (\ref{types}),   
\begin{equation}\label{def-sigma}
{\mathcal J}_{J, \sigma} := \left( 
\begin{tabular}{cc}
$J$ & $ \Pi$\\
$0$ & $ - J^{*}$
\end{tabular}\right) 
\end{equation}
is a generalized almost complex structure, with $(0,1)$-bundle given by
$$
\bar{L} = \{ X +\sigma (\xi ) + \xi,\ X\in T^{0,1}M,\ \xi \in \Lambda^{1,0}M\} .
$$ 
It is well-known  that ${\mathcal J}_{J, \sigma}$ is integrable if and only if 
$\sigma$ is a holomorphic Poisson structure on $(M, J)$ (see  \cite{paul-1,hitchin}).   
We assume that this holds. Moreover, we assume that $J$ and $\Pi$ are invariant.

\begin{cor}i)  In the above setting, $({\mathcal J}_{J, \sigma})_{W}$ is integrable if and only if
\begin{equation}\label{required-p}
i_{X}F \in \sigma^{-1} (\mathrm{span}_{\mathbb{C}} \{ X_{0} - i JX_{0}\}  ),\ \forall X\in T^{0,1}M.
\end{equation}
In particular, if $(\mathcal J_{J, \sigma })_{W}$ is integrable then   $F$ is of type $(1,1)$.\

ii)  If  $\mathcal J_{J}$ is the generalized complex structure associated to an invariant  complex structure $J$, then  
$(\mathcal J_{J})_{W}$ is integrable if and only if $F$ is of type $(1,1)$. 
\end{cor}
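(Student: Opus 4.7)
The plan is to apply Theorem \ref{cond-int-complex}(i) to $\mathcal J_{J,\sigma}$. Since $\mathcal J_{J,\sigma}$ is already integrable by hypothesis, for $u,v\in \Gamma (L)$ the Courant bracket lies in $L$, so the term $a[u,v]$ in (\ref{exp-r}) is automatically in $L$. Integrability of $(\mathcal J_{J,\sigma})_W$ is therefore equivalent to
\[
F(\mathrm{pr}_{T} u, \mathrm{pr}_{T} v)\, X_{0} \;-\; (\mathrm{pr}_{T^{*}} v)(X_{0})\, i_{\mathrm{pr}_{T} u} F \;+\; (\mathrm{pr}_{T^{*}} u)(X_{0})\, i_{\mathrm{pr}_{T} v} F \;\in\; L
\]
for every (invariant) $u,v\in \Gamma (L)$. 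To exploit this I would first parametrize $L$: conjugating the given formula for $\bar L$, one has $L=\{\,Y+\bar\sigma (\eta )+\eta \mid Y\in T^{1,0}M,\ \eta \in \Lambda^{0,1}M\,\}$, where $\bar\sigma (\eta ):=-\tfrac{i}{2}\Pi \eta \in T^{0,1}M$. Equivalently, $Z+\alpha \in L$ iff $\alpha \in \Lambda^{0,1}M$ and $Z^{0,1}=\bar\sigma (\alpha )$, with $Z^{1,0}$ arbitrary.

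Testing the membership condition on pure $(1,0)$-vectors $u=Y_{1}$, $v=Y_{2}\in T^{1,0}M$ (no form part) reduces the bracketed expression to $F(Y_{1},Y_{2})X_{0}$. As $X_{0}$ is real and non-vanishing, $X_{0}^{0,1}\neq 0$, while the form part of $F(Y_{1},Y_{2})X_{0}$ vanishes; the $L$-criterion therefore forces $F(Y_{1},Y_{2})\, X_{0}^{0,1}=\bar\sigma (0)=0$, hence $F^{2,0}=0$ and, by conjugation, $F^{0,2}=0$: $F$ is of type $(1,1)$. Assuming $F$ of type $(1,1)$, I then test on $u=Y\in T^{1,0}M$ and $v=\bar\sigma \eta +\eta$ with $\eta \in \Lambda^{0,1}M$. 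Since $i_{Y}F\in \Lambda^{0,1}M$ the form part is automatically in $\Lambda^{0,1}M$, and matching the $(0,1)$-tangent part with $\bar\sigma$ of the form part gives
\[
F(Y,\bar\sigma \eta )\, X_{0}^{0,1}\;=\;-\eta (X_{0})\,\bar\sigma (i_{Y}F).
\]
Choosing $\eta$ with $\eta (X_{0})\neq 0$ (possible because $X_{0}^{0,1}\neq 0$) shows that $\bar\sigma (i_{Y}F)$ is a scalar multiple of $X_{0}^{0,1}$ for every $Y\in T^{1,0}M$. Complex conjugation transforms this into the stated condition (\ref{required-p}): for $X:=\bar Y\in T^{0,1}M$, $\sigma (i_{X}F)\in \mathbb C\cdot X_{0}^{1,0}=\tfrac{1}{2}\mathbb C(X_{0}-iJX_{0})$.

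For sufficiency I would test on completely general $u,v\in \Gamma (L)$ and decompose $\mathrm{pr}_{T}u=Y_{u}+\bar\sigma \eta_{u}$, $\mathrm{pr}_{T}v=Y_{v}+\bar\sigma \eta_{v}$. The $(2,0)$ and $(0,2)$ pieces of $F$ being absent kill $F(Y_{u},Y_{v})$ and $F(\bar\sigma \eta_{u},\bar\sigma \eta_{v})$; using the skew-symmetry identity $(i_{Y}F)(\bar\sigma \eta )=-\eta (\bar\sigma (i_{Y}F))$ (a consequence of the skew-symmetry of $\Pi$) together with the hypothesis $\bar\sigma (i_{Y}F)=\nu (Y)X_{0}^{0,1}$ for some $\nu \in \Lambda^{1,0}M$, a short bookkeeping verifies both that the $(1,0)$ component of the form part vanishes and that the $(0,1)$-tangent equation matches. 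This gives the equivalence in part i), and the ``in particular'' clause is immediate. Part ii) follows at once: for $\sigma =0$ the preimage $\sigma^{-1}(\mathrm{span}_{\mathbb C}\{X_{0}-iJX_{0}\})=\Lambda^{1,0}M$, so (\ref{required-p}) reduces to $i_{X}F\in \Lambda^{1,0}M$ for all $X\in T^{0,1}M$, which (as $F$ is real) is exactly the condition that $F$ be of type $(1,1)$, recovering the classical criterion of \cite{swann}. The main subtlety is Step 2: one must use \emph{both} pieces (form and tangent) of the $L$-membership criterion to extract the precise geometric condition on $\sigma (i_{X}F)$, being careful with the conjugation conventions linking $\sigma$ and $\bar\sigma$.
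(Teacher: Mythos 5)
Your proof is correct and follows essentially the same route as the paper: both apply Theorem \ref{cond-int-complex} i), drop the Courant-bracket term using the integrability of $\mathcal J_{J,\sigma}$, parametrize the $\pm i$-eigenbundle via $\sigma$ (you work in $L$, the paper in $\bar L$, which is an immaterial conjugation), and extract the conditions by separating tangent/cotangent and type components. The only difference is organizational: your ``necessity by specialization plus one direct sufficiency check'' replaces the paper's case-by-case analysis of relation (\ref{red-2-prime}), but the underlying computation is the same.
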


\begin{proof} i)  From Theorem \ref{cond-int-complex} i), 
${\mathcal J}_{J, \sigma}$ is integrable if and only if, 
for any  $X + \sigma (\xi ) + \xi , Y+\sigma (\eta ) + \eta\in \bar{L}$ (where $X, Y\in T^{0,1}M$ and $\xi , \eta \in \Lambda^{1,0}M$), 
\begin{align}
\nonumber & F(X+ \sigma ( \xi ) , Y+\sigma (\eta ) ) (X_{0}- i \mathcal J_{J, \sigma}X_{0})\\ 
\nonumber & - \eta ( X_{0})
(i_{X+\sigma (\xi ) }F- i \mathcal J_{J, \sigma} i_{X+\sigma (\xi ) } F)\\ 
\label{f-x-y}&  + \xi (X_{0} )( i_{Y+\sigma (\eta )} F - i \mathcal J_{J, \sigma} i_{Y+\sigma (\eta ) }F ) =0.
\end{align}    
Using the definition (\ref{def-sigma})  of ${\mathcal J}_{J, \sigma}$ and identifying the vector and covector parts in
the above relation, we obtain that 
(\ref{f-x-y}) is equivalent to
\begin{equation}\label{red-1}
\eta (X_{0}) i_{X+\sigma (\xi ) } (F) - \xi (X_{0}) i_{Y +\sigma ( \eta )} F \in \Lambda^{1,0}M
\end{equation}
together with
\begin{equation}\label{red-2}
F(X+\sigma (\xi ), Y+\sigma (\eta )) ( X_{0} - i JX_{0}) = i \Pi\left(   \xi (X_{0})  i_{Y+\sigma (\eta )}F 
-  \eta  (X_{0})   i_{ X+\sigma (\xi  )}F \right) .
\end{equation}
Taking in (\ref{red-1}) $\eta=0$ we obtain $i_{Y} F \in \Lambda^{1,0} M$, i.e. $F$ is of type $(1,1).$ 
This, together with $\sigma (\xi ), \sigma (\eta ) \in T^{1,0}M$, imply that $i_{\sigma (\xi )} F$ and  
$i_{\sigma (\eta )} F$ are of type $(0,1).$ Relation (\ref{red-1}) is equivalent to
\begin{equation}\label{red-1-prime}
F\vert_{\Lambda^{2} T^{1,0}M}=0,\ \eta (X_{0}) i_{\sigma (\xi )} F =  \xi (X_{0})i_{\sigma (\eta )}F. 
\end{equation}  
From (\ref{red-1-prime}), relation (\ref{red-2}) becomes 
\begin{equation}\label{red-2-prime}
\left( F(\sigma (\xi ) , Y) - F(\sigma (\eta ), X)\right) (X_{0} - i JX_{0}) =
i \Pi  (   \xi (X_{0})    i_{ Y}F - \eta (X_{0}) i_{X} F).
\end{equation}   
We proved that $(\mathcal J_{J, \sigma })_{W}$ is integrable if and only if
(\ref{red-1-prime}) and (\ref{red-2-prime}) hold. We now study these relations. 
Relation  (\ref{red-2-prime}) with $\xi =0$ and $\eta (X_{0}) = 0$ gives 
\begin{equation}\label{red-2-prime-1}
F= 0\ \mathrm{on}\ T^{0,1}M \wedge \sigma (\mathrm{Ann} (X_{0})\cap \Lambda^{1,0}M).
\end{equation}
If $F$ is of type $(1,1)$, then relation (\ref{red-2-prime-1}) implies the second relation (\ref{red-1-prime}), because $\eta (X_{0}) \xi - \xi (X_{0})\eta \in \mathrm{Ann} (X_{0}) \cap \Lambda^{1,0}M.$ 
Thus, $(\mathcal J_{J})_{W}$ is integrable if and only if $F$ is of type $(1,1)$ and relation
(\ref{red-2-prime}) holds. We assume that $F$ is of type $(1,1)$ and we consider in more detail relation (\ref{red-2-prime}):

a) for  $\xi =0$ and $\eta (X_{0})=0$, we saw that it gives (\ref{red-2-prime-1});

b) for $\xi =0$ and $\eta (X_{0})\neq 0$ it gives  
\begin{equation}\label{simpl-def}
\Pi ( i_{X}F) = i F(X, \sigma (\frac{\eta}{\eta (X_{0})})) (X_{0} - i JX_{0}),\ \forall X\in T^{0,1}M.
\end{equation}
Remark that  relation  (\ref{simpl-def})  implies that
\begin{equation}\label{red-2-2}
i_{X} F \in \Pi^{-1} ( \mathrm{span}_{\mathbb{C}} \{ X_{0} - i JX_{0}\} ),\ \forall X\in T^{0,1}M 
\end{equation}
and an easy argument shows that the converse is also true, i.e. (\ref{simpl-def}) and (\ref{red-2-2}) are equivalent. 

The remaining cases in (\ref{red-2-prime}), namely: 
c)  $\xi ,\eta \in \Lambda^{1,0}M$  both non-zero with $\xi (X_{0} ) =0$ and $\eta (X_{0})\neq 0$; 
d)  $\xi , \eta \in \mathrm{Ann}(X_{0})\cap \Lambda^{1,0}M$  both non-zero; e) $\xi ,\eta \in \Lambda^{1,0}M$ with $\xi (X_{0} ) \neq 0$ and $\eta (X_{0})\neq 0$,  all follow from (\ref{red-2-prime-1}) and (\ref{simpl-def}). 

To summarize:  we  proved that $(\mathcal J_{J, \sigma})_{W}$ is integrable if and only if $F$ is of type $(1,1)$ 
and relations
(\ref{red-2-prime-1}),  (\ref{simpl-def}) hold.  
It is easy to show that these conditions are equivalent to 
(\ref{required-p}).
This concludes claim i). Claim ii) follows from claim i), by taking $\Pi =0.$ 
\end{proof}

\subsubsection{Twist of interpolation between complex and symplectic structures}
We now apply  Theorem \ref{cond-int-complex} to a family of generalized complex structures, which interpolate between complex and symplectic
structures. Let $(g, I, J, K)$ be an invariant  hyper-K\"{a}hler structure on $M$, with K\"{a}hler forms $\omega_{I}$, $\omega_{J}$ and
$\omega_{K}.$  For any $t\in [ 0, \frac{\pi}{2})$, let
$\mathcal J_{t}:=\mathrm{sin}(t) \mathcal J_{I} + \mathrm{cos}(t) \mathcal J_{\omega_{J}}.$ 
Then $\mathcal J_{t}$ is an (integrable)
generalized complex structure (see \cite{gualtieri}, page 55).

\begin{cor} The generalized almost complex structure $(\mathcal J_{t})_{W}$ is integrable if and only if 
$F= f ( i_{X_{0}} \omega_{K}) \wedge ( i_{X_{0}} \omega_{J})$, where  $f\in C^{\infty}(M)$ 
is invariant and 
$df \wedge (i_{X_{0}}\omega_{K}) \wedge ( i_{X_{0}} \omega_{J}) =0.$ 
\end{cor}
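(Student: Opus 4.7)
The plan is to apply Theorem~\ref{cond-int-complex}(ii). The first step is to identify the $(1,0)$-bundle $L_t = L(E_t,\epsilon_t)$ of $\mathcal J_t$. Solving $\mathcal J_t(X+\xi) = i(X+\xi)$ directly and using the quaternionic identities $JI = -K$ together with $\omega_J(IX,\cdot) = -\omega_K(X)$, one shows that for every $t \in [0, \pi/2)$ the structure $\mathcal J_t$ is of symplectic type, i.e.\ $E_t = (TM)^{\mathbb C}$, and
\[
\epsilon_t = -\frac{1}{\cos(t)}\bigl(\sin(t)\,\omega_K + i\,\omega_J\bigr).
\]
In particular, the $(F,a)$-involutivity of $E_t$ required in Theorem~\ref{cond-int-complex}(ii) is automatic, and integrability of $(\mathcal J_t)_W$ reduces to the single equation $d^{(F,a)}\epsilon_t = d_W\epsilon_t = 0$.

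Since $\omega_J$ and $\omega_K$ are closed, $d\epsilon_t = 0$, and the condition becomes $F \wedge i_{X_0}\epsilon_t = 0$. Writing $\alpha_J := i_{X_0}\omega_J$ and $\alpha_K := i_{X_0}\omega_K$, separating this complex equation into real and imaginary parts gives
\[
F\wedge \alpha_J = 0 \quad \text{and} \quad \sin(t)\, F\wedge \alpha_K = 0,
\]
which for $t\in(0,\pi/2)$ is equivalent to $F\wedge\alpha_J = 0 = F\wedge\alpha_K$.

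It remains to translate these two vanishing conditions, combined with the closedness and invariance of $F$ built into the twist data, into the stated form. The quaternionic orthogonality of $\{X_0, IX_0, JX_0, KX_0\}$ ensures that $\alpha_J = g(JX_0,\cdot)$ and $\alpha_K = g(KX_0,\cdot)$ are pointwise linearly independent on the locus $\{X_0 \neq 0\}$, so a standard linear-algebra argument yields $F = f\,\alpha_K \wedge \alpha_J$ for a unique $f \in C^\infty(M)$. Invariance of the hyper-K\"ahler data forces $\alpha_J$ and $\alpha_K$ to be invariant, hence invariance of $F$ is equivalent to invariance of $f$. Cartan's formula combined with $d\omega_J = \mathcal L_{X_0}\omega_J = 0$ (and analogously for $\omega_K$) gives $d\alpha_J = d\alpha_K = 0$, so
\[
dF = df \wedge \alpha_K \wedge \alpha_J,
\]
and closedness of $F$ becomes exactly $df \wedge (i_{X_0}\omega_K) \wedge (i_{X_0}\omega_J) = 0$.

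The main technical obstacle is the explicit computation of $\epsilon_t$, which requires carefully solving a coupled system of eigenvalue equations and verifying consistency via the hyper-K\"ahler identities; once that formula is in hand, the remainder consists of routine Cartan calculus plus the standard algebraic fact that a 2-form annihilated by two pointwise linearly independent 1-forms is (uniquely) a smooth scalar multiple of their wedge product.
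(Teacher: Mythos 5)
Your proposal is correct and follows essentially the same route as the paper: both apply Theorem~\ref{cond-int-complex}(ii) with $E_t=(TM)^{\mathbb C}$ and $\epsilon_t=-\tan(t)\,\omega_K-i\sec(t)\,\omega_J$, reduce via closedness of $\omega_J,\omega_K$ to the pair of conditions $F\wedge i_{X_0}\omega_J=0=F\wedge i_{X_0}\omega_K$, and then combine these with $dF=0$ to get the stated form of $F$. The only cosmetic difference is that the paper identifies $\epsilon_t$ via Gualtieri's $B$-field identity $e^{B_t}\mathcal J_t e^{-B_t}=\mathcal J_{\sec(t)\omega_J}$ rather than by solving the eigenvalue equation directly, and you make explicit the final linear-algebra step (pointwise independence of $i_{X_0}\omega_J$ and $i_{X_0}\omega_K$ where $X_0\neq 0$) that the paper leaves implicit.
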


\begin{proof}
Let $B_{t}:=  \mathrm{tan}(t) \omega_{K}.$ As proved in \cite{gualtieri}, 
  $e^{B_{t}} \mathcal J_{t} e^{-B_{t}} = \mathcal J_{\mathrm{sec}(t)\omega_{J}}$.
We deduce that the $(1,0)$-bundle of $\mathcal J_{t}$ is
$L ((TM)^{\mathbb{C}}, - B_{t} - i \mathrm{sec}(t) \omega_{J}).$ From Theorem \ref{cond-int-complex}, we deduce that 
$(\mathcal J_{t})_{W}$ is integrable if and only if $d^{(F,a)} ( B + i\mathrm{sec}(t) \omega_{J}) =0.$ Using that 
$B$ and $\omega_{J}$ are closed, and the formula
\begin{equation}\label{alpha-p}
d^{(F,a)}\alpha  = d\alpha -\frac{1}{a} F\wedge  i_{X_{0}} \alpha ,\quad \forall \alpha \in \Omega^{k}(M),
\end{equation}
we obtain that $\mathcal J_{t}$ is integrable if and only if
$i_{X_{0}} ( B_{t} + i \mathrm{sec}(t) \omega_{J})\wedge F =0$, i.e. 
\begin{equation}\label{i-j-k}
(i_{X_{0}} \omega_{K}) \wedge F =0,\ ( i_{X_{0}}\omega_{J}) \wedge F =0.
\end{equation}
Relations  (\ref{i-j-k}) together with $dF =0$  imply our claim. 
\end{proof}

\subsubsection{Twist and  conformal change} For the KK correspondence developed later in the paper, 
we need to understand when the twist of the conformal change of a generalized almost complex structure is integrable. This is done in the next proposition.

\begin{prop}\label{int-conf} Let $\mathcal J$ be an invariant  generalized almost complex structure on $M$, with 
$(1,0)$-bundle $L = L(E, \epsilon )$,  and $h\in C^{\infty}(M)$ an invariant non-vanishing function.Then the 
twist $[{\tau_{h} ({\mathcal J})}]_{W}$ of the conformal change $\tau_{h}(\mathcal J )$ of  $\mathcal J$ by $h$ is
integrable if and only if one of the following conditions hold:\

i) for any invariant sections $X+ \xi , Y+\eta$ of $L$, the expression
\begin{align}
\nonumber & - F(X, Y) X_{0} + \eta (X_{0}) i_{X}F - \xi (X_{0}) i_{Y}F + \frac{2a}{h} \left( X(h) \eta - Y(h) \xi \right)\\
\label{conf-change}&  - \frac{a}{h}( \eta (X) - \xi (Y)) dh - a[ X+\xi , Y+\eta ]
\end{align}
is  a section of $L$;\

ii) the bundle $E$ is $(F,a)$-involutive and
\begin{equation}\label{cond2}
d^{(F,a)} \epsilon = \frac{2}{h} \epsilon \wedge dh\vert_{E}.
\end{equation}

\end{prop}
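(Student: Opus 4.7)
The plan is to apply Theorem \ref{cond-int-complex} directly to the generalized almost complex structure $\tau_{h}(\mathcal J)$, whose $(1,0)$-bundle is $L^{h}=L(E,\tfrac{1}{h^{2}}\epsilon)$ by the remark following Definition \ref{def-conf}. The bundle $E$ is unchanged, so the involutivity statement is the same, and both conditions (i) and (ii) of the proposition will follow from the corresponding conditions (i) and (ii) of Theorem \ref{cond-int-complex} applied to $\tau_{h}(\mathcal J)$, after some rearrangement.

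For part (ii), I would establish a Leibniz-type identity
\begin{equation*}
d^{(F,a)}(f\alpha)=f\,d^{(F,a)}\alpha+df\wedge \alpha\vert_{E},\quad f\in C^{\infty}(M),\ \alpha\in\Gamma(\Lambda^{k}E^{*}),
\end{equation*}
which follows directly from (\ref{cond2}) and the usual product rule (noting that $X(fg)=X(f)g+fX(g)$ and that $[X,Y]^{(F,a)}$ acts by substitution in the second argument). Applied to $f=\tfrac{1}{h^{2}}$, together with $d(1/h^{2})=-2dh/h^{3}$, this yields
\begin{equation*}
d^{(F,a)}\bigl(\tfrac{1}{h^{2}}\epsilon\bigr)=\tfrac{1}{h^{2}}d^{(F,a)}\epsilon-\tfrac{2}{h^{3}}dh\wedge \epsilon\vert_{E},
\end{equation*}
so that vanishing of $d^{(F,a)}(\tfrac{1}{h^{2}}\epsilon)$ is equivalent to (\ref{cond2}), using $dh\wedge\epsilon=\epsilon\wedge dh$ since $\epsilon$ has even degree.

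For part (i), invariant sections of $L^{h}$ are precisely of the form $\tau_{h}(X+\xi)=hX+\tfrac{1}{h}\xi$ with $X+\xi$ an invariant section of $L$. I would substitute $(X',\xi',Y',\eta')=(hX,\xi/h,hY,\eta/h)$ into the expression (\ref{exp-r}) of Theorem \ref{cond-int-complex}(i), and then apply $\tau_{1/h}$ (using the equivalence $W+\omega\in L^{h}\iff \tau_{1/h}(W+\omega)\in L$). The central computation is to evaluate $[hX+\xi/h,hY+\eta/h]$ using the definition of the Courant bracket; here the identity $\mathcal L_{hX}(\eta/h)=\mathcal L_{X}\eta-(X(h)/h)\eta+(\eta(X)/h)dh$ (obtained from Cartan's magic formula) takes care of the mixed scaling. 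Collecting terms, one finds that $\tau_{1/h}$ of the Theorem \ref{cond-int-complex}(i) expression equals
\begin{equation*}
h\bigl[F(X,Y)X_{0}-\eta(X_{0})i_{X}F+\xi(X_{0})i_{Y}F+a[X+\xi,Y+\eta]\bigr]+a\cdot R,
\end{equation*}
where $R$ gathers all terms involving $X(h)$ or $Y(h)$. A direct comparison shows that, modulo the obvious $L$-sections $aX(h)(Y+\eta)$ and $-aY(h)(X+\xi)$, this expression equals $-h$ times the expression (\ref{conf-change}). Since multiples of $X+\xi$ and $Y+\eta$ lie in $L$, membership in $L$ is invariant under this shift, giving the equivalence with (i).

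The main obstacle is the bookkeeping in the Courant bracket calculation: both the Lie-derivative terms and the $-\tfrac{1}{2}d(\eta(X)-\xi(Y))$ piece produce several terms involving $dh$, $X(h)\eta$, $Y(h)\xi$, $\eta(X)dh$, etc., and one must check that these assemble into (\ref{conf-change}) up to an explicit element of $L$. Since $L$ is isotropic and both $X+\xi$ and $Y+\eta$ lie in $L$, the recognition of the remainder as lying in $L$ relies essentially on the fact that $\langle X+\xi,Y+\eta\rangle=0$, which eliminates potential $d\langle X+\xi,Y+\eta\rangle$ contributions and keeps the cross-terms clean.
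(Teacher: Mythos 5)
Your proposal is correct and follows essentially the same route as the paper: apply Theorem \ref{cond-int-complex} to $\tau_h(\mathcal J)$ with $(1,0)$-bundle $L(E,\epsilon/h^2)$, use the Leibniz rule for $d^{(F,a)}$ for part ii), and compute the Courant bracket of rescaled sections for part i). The only cosmetic difference is that the paper rescales only the covector components (working with $X+\tilde h\xi$, $\tilde h=1/h^2$), which avoids the extra terms $aX(h)(Y+\eta)-aY(h)(X+\xi)$ that your symmetric rescaling produces and that you correctly absorb as sections of $L$.
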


\begin{proof}  Let $L^{h}$ be the $(1,0)$-bundle of $\tau_{h} (\mathcal J)$ and $\tilde{h}:= \frac{1}{h^{2}}.$
Then  $X+\xi \in L$ if and only if $X+ \tilde{h} \xi \in L^{h}.$ 
From Theorem \ref{cond-int-complex}  i), $[{\tau_{h} ({\mathcal J})}]_{W}$ is integrable if and only if,
for any $X+\xi , Y+\eta\in \Gamma  (L)^{\mathrm{inv}}$,
$$
- F(X, Y) X_{0} + (\tilde{h}\eta )(X_{0}) i_{X}F - (\tilde{h} \xi )(X_{0}) i_{Y}F - a[ X+\tilde{h}\xi ,
Y+\tilde{h} \eta ] \in \Gamma (L^{h}),
$$
or 
\begin{align}
\nonumber&- F(X, Y) X_{0} + \eta (X_{0}) i_{X}F - \xi (X_{0}) i_{Y}F - \frac{a}{\tilde{h}} \mathrm{pr}_{T^{*}}[ X+\tilde{h}\xi ,
Y+\tilde{h} \eta ]\\
\label{rel-adaugat}&- a\mathrm{pr}_{T}  [X+\tilde{h} \xi  , Y+\tilde{h}\eta ] \in \Gamma (L).
\end{align}
On the other hand, 
\begin{align}
\nonumber&[X+\tilde{h}\xi  , Y+\tilde{h} \eta ] = [X, Y] + X(\tilde{h}) \eta - Y(\tilde{h}) \xi + \tilde{h}
( {\mathcal L}_{X} \eta  -{\mathcal  L}_{Y} \xi  )\\
\label{conformal-courant}&-\frac{1}{2} \tilde{h} d (\eta (X) - \xi (Y)) -\frac{1}{2} (\eta (X) - \xi (Y)) d\tilde{h}.
\end{align}
Replacing this relation in (\ref{rel-adaugat})  we obtain  (\ref{conf-change}), as needed.  
Relation (\ref{cond2}) follows  from Theorem \ref{cond-int-complex} ii) and 
$L^{h} = L(E, \tilde{h}\epsilon )$, using that 
$d^{(F,a)} (\tilde{h}\epsilon ) = (d\tilde{h})\vert_{E}\wedge \epsilon + \tilde{h} d^{(F,a)} \epsilon .$
\end{proof}

\section{KK correspondence in generalized complex geometry}\label{KK-section}

\subsection{Twist of generalized almost Hermitian structures}\label{ah-section}

Let $(G, \mathcal J)$ be an invariant  generalized almost Hermitian structure on $M$. As usual, we denote by $L_{i} = L(E_{i}, \epsilon_{i})$
the $(1,0)$-bundles of its generalized almost complex structures $\mathcal J_{1}= \mathcal J$ and $\mathcal J_{2}=
G^{\mathrm{end}}\mathcal J$. We define a $2$-form   
$$
\epsilon \in\Gamma (  \mathrm{pr}_{T} (L_{1} \cap {L}_{2})^{*}\wedge (E_{1}+ {E}_{2})^{*})
$$ by
\begin{equation}
\epsilon  (X, \cdot )\vert_{E_{1}} := \epsilon_{1} (X, \cdot),\quad \epsilon  (X, \cdot )\vert_{E_{2}}:= \epsilon_{2} (X, \cdot),
\end{equation}
for any $X\in \mathrm{pr}_{T} (L_{1} \cap L_{2})$. 
The form $\epsilon$ is well defined:  from
(\ref{epsilon}),   $\epsilon_{1} (X,\cdot ) := \xi\vert_{E_{1}}$, where $\xi\in 
(T^{*}M)^{\mathbb{C}}$ is arbitrary, such
that $X+\xi \in L_{1}$; similarly, $\epsilon_{2}(X,\cdot ) = \eta\vert_{E_{2}}$, where 
$\eta\in (T^{*}M)^{\mathbb{C}}$ is arbitrary, such that  $X+\eta \in L_{2}$; since
$X\in \mathrm{pr}_{T}(L_{1} \cap L_{2})$, we can take $\xi = \eta$ and we obtain that 
$\epsilon_{1}(X, \cdot ) \vert_{E_{1} \cap E_{2}} = \epsilon_{2} (X.\cdot )\vert_{ E_{1} \cap E_{2}}$ 
as claimed. Similarly, the $2$-form 
$$
\tilde{\epsilon} \in\Gamma ( \mathrm{pr}_{T} (L_{1} \cap \bar{L}_{2} )^{*}\wedge (E_{1}+ \bar{E}_{2})^{*})
$$ 
given by
\begin{equation}
\tilde{\epsilon}  (X, \cdot )\vert_{E_{1}} := \epsilon_{1} (X, \cdot),\quad  
\tilde{\epsilon} (X, \cdot )\vert_{\bar{E}_{2}} := \bar{\epsilon}_{2} (X, \cdot),
\end{equation}
for any $X\in \mathrm{pr}_{T} (L_{1} \cap \bar{L}_{2}) $, is well defined. Above $\bar{\epsilon}_{2}\in
\Gamma ( \Lambda^{2} \bar{E}_{2}^{*})$ is defined by 
$$
\bar{\epsilon}_{2} (\bar{X}, \bar{Y}) := \overline{ \epsilon_{2} (X, Y)},\ X, Y\in E_{2}.
$$

Let $h\in C^{\infty}(M)$ be an invariant,   non-vanishing function.

\begin{thm}\label{prop-twist}
In the above setting,  the twist  $[\tau_{h} (G, {\mathcal J})]_{W}$ of the conformal change
$\tau_{h} (G, \mathcal J )$ of
$(G, {\mathcal J})$ by $h$ is generalized K\"{a}hler  if and only if the following conditions hold:\

i) the bundles $E_{1}$, $\mathrm{pr}_{T} (L_{1}\cap {L}_{2})$ and
$\mathrm{pr}_{T} (L_{1}\cap \bar{L}_{2})$ are $(F,a)$-involutive. Moreover, 
for any  $X\in \Gamma  \mathrm{pr}_{T} (L_{1} \cap L_{2})$ and $Y\in \Gamma 
\mathrm{pr}_{T} (\bar{L}_{1} \cap L_{2})$,
\begin{equation}\label{f-a-inv}
[X, Y]^{(F,a)} \in \Gamma ( E_{1} + E_{2})
\end{equation}
and for any  $X\in \Gamma  \mathrm{pr}_{T} (L_{1} \cap \bar{L}_{2})$ and $ Y\in \Gamma \mathrm{pr}_{T}(\bar{L}_{1}\cap 
\bar{L}_{2})$, 
\begin{equation}\label{f-a-inv-1}
[X, Y]^{(F,a)}\in \Gamma ( E_{1} + \bar{E}_{2}). 
\end{equation}

ii) the forms $\epsilon_{1}$, $\epsilon$ and $\tilde{\epsilon}$ satisfy  the relations
\begin{align}
\nonumber & d^{(F,a)} \epsilon_{1} = \frac{2}{h} \epsilon_{1}\wedge dh\vert_{ E_{1}}\\ 
\nonumber & d^{(F,a)} \epsilon = \frac{2}{h} \epsilon\wedge dh\ \mathrm{on}\  \Lambda^{2}\mathrm{pr}_{T} (L_{1} \cap {L}_{2})\wedge \mathrm{pr}_{T}
(\bar{L}_{1} \cap L_{2})\\
\label{f-a-herm-1}& d^{(F,a)} \tilde{\epsilon} = \frac{2}{h}\tilde{\epsilon}\wedge dh\ \mathrm{on}\  
\Lambda^{2} \mathrm{pr}_{T} (L_{1} 
\cap \bar{L}_{2})\wedge  \mathrm{pr}_{T} (\bar{L}_{1}\cap \bar{L}_{2})
\end{align}
(which, owing to i), are well-defined).

\end{thm}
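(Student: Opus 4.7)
By Gualtieri's characterization recalled in Section \ref{complex-geometry}, the twist $[\tau_h(G, \mathcal J)]_W$ is generalized K\"ahler if and only if three subbundles of $(\mathbb T W)^{\mathbb C}$ are Courant involutive: (A) the twist of $L_1^h$; (B) the twist of $L_1^h \cap L_2^h$; and (C) the twist of $L_1^h \cap \bar L_2^h$, where $L_i^h$ denotes the $(1,0)$-bundle of $\tau_h \mathcal J_i$. The strategy is to translate each of (A), (B), (C) back into conditions on the original data $(G, \mathcal J)$, $h$, and the twist data $(X_0, F, a)$, following the template set by Proposition \ref{int-conf}.

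Condition (A) is the direct application of Proposition \ref{int-conf} to $\mathcal J_1$; it yields the $(F,a)$-involutivity of $E_1$ and the first equation in (\ref{f-a-herm-1}).

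For (B), work with invariant sections, which span the invariant bundle $L_1^h \cap L_2^h$ locally. Such a section has the form $X + \frac{1}{h^2}\xi$ with $X + \xi$ a section of $L_1 \cap L_2$. Given two such invariant sections $u = X + \frac{1}{h^2}\xi$ and $v = Y + \frac{1}{h^2}\eta$, Lemma \ref{l-courant-twist} represents $[u_W, v_W]$ as being $\mathcal H$-related to an invariant expression whose $TM$-component is $[X,Y]^{(F,a)}$. Requiring this tangential part to lie in $F_{12} := \mathrm{pr}_T(L_1 \cap L_2)$ yields the $(F,a)$-involutivity of $F_{12}$ demanded in i). Requiring the $T^*M$-component, restricted to $E_1 + E_2$, to equal $\frac{1}{h^2} \epsilon([X,Y]^{(F,a)}, \cdot)$ then translates---via a computation structurally identical to the one in Proposition \ref{int-conf} and using the conformally rescaled Courant-bracket identity (\ref{conformal-courant})---into $d^{(F,a)}\epsilon = \frac{2}{h}\epsilon \wedge dh$ on $\Lambda^2 F_{12} \wedge F_{\bar 1 2}$. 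The cross $(F,a)$-involutivity hypothesis (\ref{f-a-inv}) arises as the well-definedness condition for this last identity: in the expansion of $(d^{(F,a)}\epsilon)(X,Y,Z)$ with $X, Y \in F_{12}$ and $Z \in F_{\bar 1 2}$, the term $\epsilon(X, [Y,Z]^{(F,a)})$ is defined only once $[Y,Z]^{(F,a)} \in E_1 + E_2$.

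Condition (C) is treated by an entirely symmetric argument, substituting $\bar L_2$, $\tilde\epsilon$, $F_{1\bar 2} := \mathrm{pr}_T(L_1 \cap \bar L_2)$, and $F_{\bar 1 \bar 2} := \mathrm{pr}_T(\bar L_1 \cap \bar L_2)$ for $L_2$, $\epsilon$, $F_{12}$, and $F_{\bar 1 2}$; it yields the remaining parts of conditions i) and ii). The main technical obstacle lies in (B): one must verify that the extension-dependent contributions coming from (\ref{conformal-courant}) and the $F$-corrections in Lemma \ref{l-courant-twist} combine cleanly, upon restriction to $E_1 + E_2$, into the stated closedness-type equation for $\epsilon$. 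The algebraic pattern mirrors the computations in Theorem \ref{cond-int-complex}(ii) and Proposition \ref{int-conf}(ii), so although the bookkeeping is lengthy it is essentially routine.
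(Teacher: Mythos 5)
Your proposal is correct and follows essentially the same route as the paper: Gualtieri's characterization splits the problem into the three integrability conditions, condition (A) is handled by Proposition \ref{int-conf}, and conditions (B), (C) are unwound via Lemma \ref{l-courant-twist} and the conformal rescaling identity (\ref{conformal-courant}) to produce the $(F,a)$-involutivity statements and the $d^{(F,a)}$-equations. The only point worth sharpening is that the cross-involutivity conditions (\ref{f-a-inv}), (\ref{f-a-inv-1}) are not merely well-definedness requirements but are forced as necessary conditions, because the covector parts $\xi,\eta$ of sections of $L_1\cap L_2$ are arbitrary outside $E_1+E_2$ and the terms $\eta([V,X]^{(F,a)})$ must therefore be independent of that arbitrariness.
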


\begin{proof} 
Let 
${\mathcal J}_{i}^{h} =\tau_{h}\circ \mathcal J_{i}\circ \tau_{h}^{-1}$  ($i=1,2$) be the generalized almost complex
structures of the conformal change $\tau_{h}(G, \mathcal J )$.
The $(1,0)$-bundle of $\mathcal J_{i}^{h}$ is $L_{i}^{h} = L(E_{i}, \tilde{h} \epsilon_{i})$, where $\tilde{h}:= \frac{1}{h^{2}}$.
Let $(L_{i}^{h})_{W}$ be the $(1,0)$- bundle of $(\mathcal J_{i}^{h})_{W}$.
It is generated by sections of the form $u_{W}$, where $u$ is an invariant section of $L_{i}^{h}.$
From Gualtieri's characterization of generalized K\"{a}hler 
structures (see Section \ref{complex-geometry}), $[\tau_{h} (G, \mathcal J )]_{W}$ is generalized K\"{a}hler if and only if 
the following conditions hold:\

a) $(\mathcal J_{1}^{h})_{W}$ is a generalized complex structure;\

 b) $(L_{1}^{h})_{W} \cap (L_{2}^{h})_{W}$ is Courant integrable;\

 c) $(L_{1}^{h})_{W} \cap (\bar{L}_{2}^{h})_{W}$ is Courant integrable.\

From Proposition \ref{int-conf}, 
condition a) is equivalent to the $(F,a)$-involutivity of $E_{1}$ and to the 
first relation (\ref{f-a-herm-1}). From now on  we assume that a) holds. Since   $X+\xi \in \Gamma (L_{i})$ if and only if $X+\tilde{h} \xi \in \Gamma (L_{i}^{h})$, we obtain
that condition b) is equivalent to
\begin{equation}\label{cond-gen-kahler}
[ (X+\tilde{h}\xi )_{W}, (Y+\tilde{h}\eta )_{W}] \in \Gamma ( (L_{1}^{h})_{W}\cap (L_{2}^{h})_{W}),\  \forall X+\xi , Y+\eta \in \Gamma  (L_{1} \cap L_{2})^{\mathrm{inv}} .
\end{equation}
Using  Lemma  \ref{l-courant-twist}, relation 
(\ref{conformal-courant}) and
$(L_{1}^{h})_{W}\cap (L_{2}^{h})_{W} = (L_{1}^{h}\cap L_{2}^{h})_{W}$,
we see that (\ref{cond-gen-kahler}) becomes
\begin{align}
\nonumber& [X, Y] +\frac{F(X, Y)}{a} X_{0} +
\frac{1}{\tilde{h}} ( X(\tilde{h}) \eta - Y(\tilde{h}) \xi ) + {\mathcal L}_{X}\eta  - {\mathcal L}_{Y}\xi   -\frac{1}{2}  d (\eta (X) - \xi (Y))\\
\label{expression-ajut}& -\frac{1}{2\tilde{h}} (\eta (X) - \xi (Y) ) d\tilde{h} - \frac{\eta (X_{0})}{a} i_{X} F
+  \frac{\xi (X_{0})}{a} i_{Y} F \in \Gamma (L_{1} \cap L_{2}),
\end{align}
for any  $X+\xi , Y+\eta \in \Gamma   (L_{1} \cap L_{2})^{\mathrm{inv}}$, which is  equivalent to the following two conditions:\

$\bullet$   $[X, Y]^{(F,a)}$ is a section of $\mathrm{pr}_{T}(L_{1} \cap L_{2})$, for any $X, Y\in \Gamma \mathrm{pr}_{T} (L_{1} \cap L_{2})$, i.e.
$\mathrm{pr}_{T} (L_{1} \cap L_{2})$  is $(F, a)$-involutive;\

$\bullet$ the left hand side of (\ref{expression-ajut}) is a section of $L_{2}$, i.e.  
for any $V\in \Gamma (E_{2})$, 
\begin{align}
\nonumber& \frac{1}{\tilde{h}} ( X(\tilde{h}) \eta (V) - Y(\tilde{h}) \xi (V)) + ({\mathcal L}_{X}\eta ) (V)- 
({\mathcal L}_{Y}\xi ) (V) -\frac{1}{2}  V (\eta (X) - \xi (Y))\\
\nonumber & -\frac{1}{2\tilde{h}} (\eta (X) - \xi (Y) ) V(\tilde{h}) - \frac{\eta (X_{0})}{a} F(X, V)
+  \frac{\xi (X_{0})}{a} F(Y, V)  \\
\label{ii}&  =\epsilon_{2} ( [X, Y]^{(F,a)}, V).
\end{align}
(From a),   the 
left-hand side  
of (\ref{expression-ajut})   belongs to $\Gamma (L_{1})$ as well).
Assume now that  $\mathrm{pr}_{T}(L_{1} \cap L_{2})$ is $(F,a)$-involutive. Under this assumption we prove  that  (\ref{ii}) is equivalent to
(\ref{f-a-inv}) together with  the second relation 
(\ref{f-a-herm-1}).  For this, we remark,  from  $Y+\eta   \in \Gamma (L_{1} \cap L_{2})$ (in particular, $Y+\eta$ is a section of $L_{2}$) and $V\in \Gamma (E_{2})$, 
$$
({\mathcal L}_{X} \eta )(V) =  X(\eta (V)) - \eta ([X, V]) = X ( \epsilon_{2} (Y, V)) - \eta  ([X, V]) . 
$$
Similarly, 
$$
({\mathcal L}_{Y} \xi )(V) =  Y ( \epsilon_{2} (X, V)) - \xi  ([Y, V]) .
$$
With these relations, (\ref{ii}) becomes 
\begin{align}
\nonumber&-\frac{2}{h} \left( X(h) \epsilon_{2} (Y, V) + Y(h) \epsilon_{2}(V, X) + V(h) \epsilon_{2} (X, Y)\right)\\
 \nonumber& + X ( \epsilon_{2} (Y, V)) + Y ( \epsilon_{2} (V,X)) +V ( \epsilon_{2} (X,Y))\\
 \label{ii-rescris}& + \eta ( [V, X]^{(F,a)}) + \xi ( [Y, V]^{(F,a)})+ \epsilon_{2} (V, [ X, Y]^{(F,a)}) =0.
 \end{align}
 Since $X+\xi , Y+\eta \in \Gamma (L_{1} \cap L_{2})$, the $1$-forms $\xi$ and $\eta$ are determined by $X$ and, respectively, by  $Y$, on $E_{1} + E_{2}$,
 but outside this bundle they take arbitrary values. 
 Therefore, relation (\ref{ii-rescris}) implies  that 
 \begin{equation}\label{large}
 [V, X]^{(F,a)}\in \Gamma (E_{1} + E_{2}),\quad 
 \forall X\in \Gamma \mathrm{pr}_{T} (L_{1} \cap L_{2}),\ V\in \Gamma (E_{2}),
 \end{equation}
 which  is equivalent to (\ref{f-a-inv}), when  $\mathrm{pr}_{T}(L_{1} \cap L_{2})$ is $(F,a)$-involutive
 (by decomposing $E_{2} = \mathrm{pr}_{T} (L_{1} \cap L_{2}) + \mathrm{pr}_{T} (\bar{L}_{1} \cap L_{2})$).
 Moreover, if $\mathrm{pr}_{T} (L_{1} \cap L_{2})$ is $(F,a)$-involutive and 
(\ref{f-a-inv}) holds, then 
$d^{(F,a)}\epsilon $ is defined  on $\Lambda^{2} \mathrm{pr}_{T} (L_{1} \cap {L}_{2})\wedge E_{2}$ and 
(\ref{ii-rescris}) is equivalent to
 \begin{equation}\label{rescris-large}
 d^{(F,a)} \epsilon = \frac{2}{h} \epsilon\wedge dh\ \mathrm{on}\  \Lambda^{2} \mathrm{pr}_{T} (L_{1} \cap {L}_{2})\wedge E_{2}.
 \end{equation}
 Decomposing $E_{2} = \mathrm{pr}_{T} ( L_{1} \cap L_{2}) + \mathrm{pr}_{T} (\bar{L}_{1} \cap L_{2})$ again and using
  the first relation (\ref{f-a-herm-1}) (which holds because condition a) holds)
 we obtain that (\ref{rescris-large}) is equivalent to the second relation  (\ref{f-a-herm-1}).

 We proved that if condition a) holds, then condition b) is equivalent to the $(F,a)$-involutivity of $\mathrm{pr}_{T} (L_{1} \cap L_{2})$,  together with relation
 (\ref{f-a-inv}) and   the second  relation (\ref{f-a-herm-1}). 
 A similar argument shows that  if condition a) holds,
 then condition c) is equivalent to the $(F,a)$-involutivity of $\mathrm{pr}_{T} (L_{1} \cap \bar{L}_{2})$,  together with relation
 (\ref{f-a-inv-1}) and   the third  relation (\ref{f-a-herm-1}). 
  \end{proof}

\subsection{Statement of the KK correspondence}

Let $(G, \mathcal J )$ be an invariant generalized K\"{a}hler structure on  $M$,
$\mathcal J_{1}=\mathcal J$, $\mathcal J_{2} = G^{\mathrm{end}}\mathcal J$  its generalized complex structures 
with  $(1,0)$-bundles
$L_{i} = L  (E_{i}, \epsilon_{i})$ ($i=1,2$).
Assume that the vector field $X_{0}$ is Hamiltonian Killing  
on $(M,G, \mathcal J )$, with Hamiltonian function $f^{H}$, 
and let $f, h\in C^{\infty}(M)$ be invariant, non-vanishing functions.  Let $(G^{\prime}, {\mathcal J})$ be the elementary deformation 
of $(G, {\mathcal J })$ by  $X_{0}$ and  $f$.  
We assume that $f^2-1$ is non-vanishing and denote by $\alpha$ the $1$-form
\begin{equation}\label{alpha}
\alpha := - d \left( \mathrm{ln}\frac{| f^{2} -1|}{f^{2} G(X_{0}, X_{0})}\right) . 
\end{equation}

\begin{thm} \label{gen-KK} The twist $[\tau_{h} (G^{\prime}, \mathcal J )]_{W}$ 
of the conformal change $\tau_{h} (G^{\prime}, {\mathcal J })$ 
of $(G^{\prime}, {\mathcal J })$ by $h$ is generalized K\"{a}hler if and only 
if the following conditions i) - v) hold:

\begin{enumerate}
\item[i)]
The curvature $F$ vanishes on 
$$ \Lambda^{2}\mathrm{pr}_{T} ( L_{1} \cap L_{2} \cap \mathcal S_{\mathbb{C}}^{\perp})\oplus 
\Lambda^{2} \mathrm{pr}_{T} ( L_{1} \cap \bar{L}_{2} \cap \mathcal S_{\mathbb{C}}^{\perp})
$$ 
and
\begin{equation}\label{F}
F(X, Y) X_{0} \in \Gamma (E_{1}),\   \forall X\wedge Y\in \Lambda^{2}E_{1}.
\end{equation}
\item[ii)]  For any  $X\in \Gamma \mathrm{pr}_{T} (L_{1} \cap L_{2} \cap \mathcal S^{\perp}_{\mathbb{C}} )$, 
\begin{align}
\nonumber& [\mathrm{pr}_{T} ( \mathcal J_{3} X_{0} ) , X] +\alpha (X)  \mathrm{pr}_{T} (\mathcal J_{3} X_{0} ) +
\frac{f^{2}F(\mathrm{pr}_{T}(v_{f}), X)}{a( f^{2}-1)} X_{0}\\
\label{cond-long-m}&\in \Gamma \mathrm{pr}_{T} (L_{1} \cap L_{2} \cap \mathcal S^{\perp}_{\mathbb{C}})
\end{align}
and for any 
$X\in \Gamma \mathrm{pr}_{T} ( L_{1} \cap \bar{L}_{2} \cap \mathcal S^{\perp}_{\mathbb{C}})$, 
\begin{align}
\nonumber& [\mathrm{pr}_{T} ( \mathcal J_{3} X_{0} ) , X] +\alpha (X)  \mathrm{pr}_{T} (\mathcal J_{3} X_{0} ) 
- \frac{f^{2}F(\mathrm{pr}_{T}(v_{if}), X)}{a(f^{2}-1)} X_{0}\\
\label{cond-long-1-m}& \in \Gamma \mathrm{pr}_{T} (L_{1} \cap \bar{L}_{2} \cap \mathcal S^{\perp}_{\mathbb{C}}).
\end{align}
\item[iii)]  For any $X\in \mathrm{pr}_{T}(\mathcal S^{\perp})$, 
\begin{equation}\label{log}
X (af^{2}h^{2}) =0.
\end{equation}
\item[iv)]  The following algebraic conditions on $\epsilon_{1}$ and $\epsilon_{2}$ hold:
 \begin{align}
\label{cond-e} & (i_{X_{0}} \epsilon_{1})  \wedge F= -\frac{2a}{h}\epsilon_{1}\wedge dh\ \mathrm{on}\ \Lambda^{3}E_{1}\\
\nonumber& \epsilon_{2} \wedge dh =0\ \mathrm{on}\ \mathrm{pr}_{T} (L_{1} \cap L_{2} \cap \mathcal S_{\mathbb{C}}^{\perp}) \wedge \mathrm{pr}_{T}(\bar{L}_{1} \cap L_{2} \cap \mathcal S_{\mathbb{C}}^{\perp})
\wedge \mathrm{pr}_{T} (L_{2} \cap \mathcal S_{\mathbb{C}}^{\perp}) .
\end{align}

\item[v)]  On $\Lambda^{2}\mathrm{pr}_{T} (L_{2} \cap \mathcal S_{\mathbb{C}}^{\perp})$, 
\begin{align}
\nonumber &  d\left( \frac{1- f^{2}}{f^{2} G(X_{0},X_{0})} \mathrm{pr}_{T^{*}} (\mathcal J_{3} X_{0}) \right)
+\frac{f^{2} -1}{ f^{2} G(X_{0}, X_{0}) } {\mathcal D}_{\mathrm{pr}_{T} (\mathcal J_{3} X_{0})} \epsilon_{2} 
+\frac{2}{af^{2}} F\\
\label{cond-de}& = \frac{2}{hG(X_{0}, X_{0})}  \left( \mathrm{pr}_{T} (v_{f})(h) \epsilon_{2} +\mathrm{pr}_{T^{*}}(v_{f}) \wedge dh \right) , 
\end{align}
where, for any  $X\wedge Y\in \Lambda^{2} \mathrm{pr}_{T} (L_{2} \cap \mathcal S_{\mathbb{C}}^{\perp})$,  
\begin{align}
\nonumber (\mathcal D_{\mathcal J_{3} X_{0}} \epsilon_{2})(X, Y)&:= (\mathrm{pr}_{T} \mathcal J_{3} X_{0})(\epsilon_{2} (X, Y))\\ 
\nonumber& -  \epsilon_{2} ( [\mathrm{pr}_{T} \mathcal J_{3}X_{0}, X] + \alpha (X) \mathrm{pr}_{T} \mathcal J_{3} X_{0}, Y) \\
\label{dd-e2} & - \epsilon_{2} (X,  [\mathrm{pr}_{T} \mathcal J_{3}X_{0}, Y] + \alpha (Y) \mathrm{pr}_{T} \mathcal J_{3} X_{0}) .
\end{align}
\end{enumerate}
\end{thm}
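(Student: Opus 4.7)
The plan is to apply Theorem \ref{prop-twist} to the invariant generalized almost Hermitian structure $(G', \mathcal J)$ together with the conformal factor $h$. This reduces the problem to verifying two families of conditions: the $(F,a)$-involutivity of $E_{1}$, $\mathrm{pr}_{T}(L_{1} \cap L'_{2})$, $\mathrm{pr}_{T}(L_{1} \cap \bar L'_{2})$ (together with the mixed relations (\ref{f-a-inv}) and (\ref{f-a-inv-1})), and the three $d^{(F,a)}$-closedness relations (\ref{f-a-herm-1}) on $\epsilon'_{1}=\epsilon_{1}$, $\epsilon'$, $\tilde\epsilon'$. The elementary deformation leaves $\mathcal J_{1}=\mathcal J$ unchanged, so $L_{1}'=L_{1}$ and $\epsilon_{1}'=\epsilon_{1}$, while Lemma \ref{elem-bundles} writes $L_{1}\cap L_{2}'$ and $L_{1}\cap\bar L_{2}'$ as a line spanned by $v_{f}$ (resp.\ $v_{if}$) plus the $\mathcal S^{\perp}_{\mathbb C}$-parts of $L_{1}\cap L_{2}$ and $L_{1}\cap\bar L_{2}$; this decomposition is what makes the combinatorial split into the five conditions of the theorem possible.

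Next I would handle the involutivity side. The $(F,a)$-involutivity of $E_{1}$, given that $E_{1}$ is already involutive by the integrability of $\mathcal J_{1}$, is equivalent to (\ref{F}). For $\mathrm{pr}_{T}(L_{1}\cap L_{2}')$, I would distinguish three types of arguments: (A) brackets of pairs drawn from $\mathrm{pr}_{T}(L_{1}\cap L_{2}\cap\mathcal S^{\perp}_{\mathbb C})$, which by Courant integrability of $L_{1}\cap L_{2}$ reduce to the vanishing of $F$ on $\Lambda^{2}\mathrm{pr}_{T}(L_{1}\cap L_{2}\cap\mathcal S^{\perp}_{\mathbb C})$; (B) brackets of $\mathrm{pr}_{T}(v_{f})$ with sections of $\mathrm{pr}_{T}(L_{1}\cap L_{2}\cap\mathcal S^{\perp}_{\mathbb C})$, which, using the Courant bracket identities from Section \ref{brackets-section} and the Hamiltonian Killing hypothesis (in particular $\mathcal J_{2}X_{0}=df^{H}$, so $\mathrm{pr}_{T}(\mathcal J_{2}X_{0})=0$, and $\mathcal L_{X_{0}}\mathcal J=\mathcal L_{X_{0}}G=0$), produce the line (\ref{cond-long-m}); and (C) the mixed conditions (\ref{f-a-inv}) which, after the analogous analysis for $\mathrm{pr}_{T}(L_{1}\cap\bar L_{2}')$ yielding (\ref{cond-long-1-m}), extract the transverse invariance relation (\ref{log}) from the requirement that the $X_{0}$-component of such brackets stays along $\mathrm{span}\{\mathrm{pr}_{T}(v_{f}),\mathrm{pr}_{T}(\bar v_{if})\}$.

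For the differential side, the condition on $\epsilon_{1}$ is unchanged by the elementary deformation and directly gives (\ref{cond-e}). The subtler task is to evaluate $d^{(F,a)}\epsilon'$ and $d^{(F,a)}\tilde\epsilon'$, using Remark \ref{prel-deco}(i) to express these forms in terms of $\epsilon_{2}$ and the pairings $\langle v_{f},\cdot\rangle$, $\langle\bar v_{if},\cdot\rangle$. Splitting the three arguments according to the decomposition of $\mathrm{pr}_{T}(L_{2}')$, the purely $\mathcal S^{\perp}$ component collapses (by integrability of $L_{1}\cap L_{2}$ and $L_{1}\cap\bar L_{2}$) to the algebraic statement forming the second line of (\ref{cond-e}), while the cross-pieces containing exactly one of $\mathrm{pr}_{T}(v_{f}),\mathrm{pr}_{T}(\bar v_{if})$ assemble into (\ref{cond-de}). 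The operator $\mathcal D$ defined in (\ref{dd-e2}) is exactly what one obtains by applying $d^{(F,a)}$ to $\epsilon_{2}$ restricted to such arguments and then rearranging.

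The main obstacle will be identifying the precise $1$-form $\alpha$ of (\ref{alpha}) and the normalization $\frac{f^{2}-1}{f^{2}G(X_{0},X_{0})}$ appearing throughout conditions (ii) and (v). These emerge from carefully expanding Courant brackets in which one slot lies in $\mathrm{span}_{\mathbb C}\{v_{f}\}$ (so is a linear combination of $X_{0}$, $\mathcal JX_{0}$, and $f^{-2}\mathcal J_{3}X_{0}$) and the other lies in $\mathrm{pr}_{T}(L_{1}\cap L_{2}\cap\mathcal S^{\perp}_{\mathbb C})$: the derivatives of $f$ and of $G(X_{0},X_{0})$ that fall out of these brackets must be absorbed by a multiple of $\mathrm{pr}_{T}(\mathcal J_{3}X_{0})$ for the result to remain in $\mathrm{pr}_{T}(L_{1}\cap L_{2}')$, and solving the resulting linear equation pins down $\alpha$. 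Once the brackets of Section \ref{brackets-section} are in hand, the repackaging of the six bundle/form conditions of Theorem \ref{prop-twist} into the five conditions (i)--(v) is then a systematic, if lengthy, bookkeeping exercise.
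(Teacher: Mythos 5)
Your overall strategy coincides with the paper's: apply Theorem \ref{prop-twist} to $(G',\mathcal J)$ and $h$, use Lemma \ref{elem-bundles} to decompose $L_1\cap L_2'$ and $L_1\cap\bar L_2'$ into the $v_f$ (resp.\ $v_{if}$) line plus the $\mathcal S^\perp_{\mathbb C}$-parts, and then sort the involutivity and differential conditions case by case according to that decomposition. Most of your bookkeeping is correctly anticipated, including the origin of $\alpha$ and of the normalization $\frac{f^2-1}{f^2G(X_0,X_0)}$ from Courant brackets with one slot along $v_f$.

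There is, however, one concrete misstep in the plan. You derive condition iii), i.e.\ $X(af^2h^2)=0$ on $\mathrm{pr}_T(\mathcal S^\perp)$, from the mixed involutivity conditions (\ref{f-a-inv}) and (\ref{f-a-inv-1}). This cannot work: as the paper shows (Lemma \ref{inv-cons} and Corollary \ref{inv-cons-cor}), once $\mathrm{pr}_T(L_1\cap L_2')$ and $\mathrm{pr}_T(L_1\cap\bar L_2')$ are $(F,a)$-involutive, the mixed conditions are \emph{automatically} satisfied and impose no further constraint. Relation (\ref{log}) actually comes from the differential side, namely from evaluating $d^{(F,a)}\epsilon'=\frac{2}{h}\,\epsilon'\wedge dh$ on the triple $(X,\mathrm{pr}_T(v_f),\mathrm{pr}_T(\bar v_{if}))$ with $X\in\mathrm{pr}_T(L_1\cap L_2\cap\mathcal S^\perp_{\mathbb C})$: the left-hand side gives $-\tfrac{4X(af^2)}{af^4}G(X_0,X_0)$ while the right-hand side gives $\tfrac{8G(X_0,X_0)}{hf^2}X(h)$, and together with the conjugate computation for $\tilde\epsilon'$ this yields (\ref{log}) on all of $\mathrm{pr}_T(\mathcal S^\perp)$. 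Your enumeration of the differential cases covers only the purely-$\mathcal S^\perp$ triples (giving the second relation of (\ref{cond-e})) and the triples containing exactly one of $\mathrm{pr}_T(v_f),\mathrm{pr}_T(\bar v_{if})$ (giving (\ref{cond-de})); the triple containing \emph{both} is omitted, so following your plan as written leaves condition iii) underived while over-counting the mixed conditions as constraints. A smaller point: in your case (A), concluding that $F$ vanishes on $\Lambda^2\mathrm{pr}_T(L_1\cap L_2\cap\mathcal S^\perp_{\mathbb C})$ (rather than merely $F(X,Y)X_0$ lying in the bundle) requires showing $X_0\notin\mathrm{pr}_T(L_1\cap L_2')$, which the paper proves using the graph description of $C_+$ and positivity of $g$; this should be made explicit.
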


Now we give more detailed explanations for some of the relations from the above theorem.

\begin{rem}{\rm  i)  Relation (\ref{F}) does not imply, a priori, that $X_{0}\in \Gamma (E_{1})$
(the form $F$ could vanish on $\Lambda^{2}E_{1}$), but it does imply that the $3$-form
$(i_{X_{0}}\epsilon_{1} )\wedge F$, which appears in the first relation
(\ref{cond-e}),  is well defined (in the usual way).\

ii)  The form  $\mathcal D_{\mathrm{pr}_{T}(\mathcal J_{3} X_{0})} \epsilon_{2}$, as given in (\ref{dd-e2}), is well-defined, owing to condition ii) from Theorem \ref{gen-KK}:  this condition implies that 
$$ 
[\mathrm{pr}_{T}( \mathcal J_{3}X_{0}), X] + 
\alpha (X) \mathrm{pr}_{T} (\mathcal J_{3} X_{0})\in \Gamma (E_{2}),\ \forall  X\in \Gamma \mathrm{pr}_{T} (L_{2} \cap \mathcal S^{\perp}_{\mathbb{C}})
$$ 
(decompose $\mathrm{pr}_{T}( L_{2} \cap \mathcal S_{\mathbb{C}}^{\perp})$ into the sum of 
$\mathrm{pr}_{T}( L_{1} \cap L_{2} \cap \mathcal S_{\mathbb{C}}^{\perp})$ and  
$\mathrm{pr}_{T}(\bar{L}_{1} \cap L_{2} \cap \mathcal S_{\mathbb{C}}^{\perp})$
and use that $X_{0} \in \Gamma (E_{2})$, which holds since $X_{0} - i \mathcal J_{2} X_{0}\in \Gamma (L_{2})$ and  
$\mathcal J_{2} X_{0} \in \Omega^{1}(M)$).} 
\end{rem}

The next sections are devoted to the proof of Theorem \ref{gen-KK}. The plan is the following.
Section  \ref{brackets-section} is a preliminary part of the proof. Here we compute  various Courant brackets, which will be useful  in our argument.
In order to prove Theorem \ref{gen-KK},  we apply Theorem \ref{prop-twist} to the
generalized almost Hermitian structure $(G^{\prime}, \mathcal J )$ and the conformal function $h$.  Namely, in
Section \ref{invol-sect}  we 
prove that  the generalized almost Hermitian structure  
$(G^{\prime}, \mathcal J)$ satisfies condition i) from Theorem \ref{prop-twist}
if and only if 
conditions i) and  ii) from Theorem \ref{gen-KK} are  satisfied. 
In Section \ref{diff-section} we assume that conditions i) and  ii) from Theorem \ref{gen-KK} 
are  satisfied,  and we prove  that
condition ii) from
Theorem \ref{prop-twist} (applied to the generalized almost Hermitian structure $(G^{\prime}, \mathcal J)$ and function $h$) is equivalent to the   
remaining conditions iii) - v) from Theorem \ref{gen-KK}.

\subsubsection{Various Courant brackets}\label{brackets-section}

We begin by computing the Courant brackets of the canonical basis of $\mathcal S$.

\begin{lem}\label{Courant-brackets} i) The Courant bracket of $X_{0}$ with  $\mathcal J_{i} X_{0}$ ($i=1,3$) is given by 
\begin{equation}\label{p1-1}
L_{X_{0}} (\mathcal J X_{0}) =L_{X_{0}} (\mathcal J_{2} X_{0}) = 0,\ L_{X_{0}} ( {\mathcal J}_{3} X_{0}) = d G(X_{0}, X_{0}).
\end{equation}

ii) The Courant bracket of $\mathcal J X_{0}$ with $\mathcal J_{2}X_{0}$ and $\mathcal J_{3} X_{0}$ is given by
\begin{equation}\label{der-j}
[{\mathcal J X_{0}} ,\mathcal J_{2} X_{0}]=  d G(X_{0}, X_{0}),\  [{\mathcal J X_{0}} ,\mathcal J_{3} X_{0}] = 2\mathcal J  d G(X_{0}, X_{0}) .
\end{equation}

iii) The Courant bracket of $\mathcal J_{2} X_{0}$ with $\mathcal J_{3} X_{0}$ is trivial:
\begin{equation}\label{der-jj}
[ \mathcal J_{2} X_{0}, \mathcal J_{3} X_{0} ]=0.
\end{equation}
\end{lem}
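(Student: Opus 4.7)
The proof unfolds in three stages, corresponding to parts i), ii), iii) of the lemma. Throughout, the inputs from the Hamiltonian Killing hypothesis are: $\mathcal L_{X_0}\mathcal J=0$, $\mathcal L_{X_0}G=0$ (hence $\mathcal L_{X_0}\mathcal J_2=\mathcal L_{X_0}\mathcal J_3=0$ and $\mathcal L_{X_0}G^{\mathrm{end}}=0$), and $\mathcal J_2 X_0=df^H\in\Omega^1(M)$ has no tangent component.

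\textbf{Stage 1 (Part i).} The starting point is relation (\ref{L-C}): for any vector field $X$ and section $u\in\Gamma(\mathbb T M)$, $L_X u=\mathcal L_X u - d\langle X,u\rangle$. Applied with $X=X_0$ and $u=\mathcal J_i X_0$, the Leibniz rule gives $\mathcal L_{X_0}(\mathcal J_i X_0)=(\mathcal L_{X_0}\mathcal J_i)(X_0)+\mathcal J_i[X_0,X_0]=0$. It remains to evaluate the pairings $\langle X_0,\mathcal J_i X_0\rangle$. Skew-symmetry of $\mathcal J$ and $\mathcal J_2$ with respect to $\langle\cdot,\cdot\rangle$ forces $\langle X_0,\mathcal J X_0\rangle=\langle X_0,\mathcal J_2 X_0\rangle=0$, while $\langle X_0,\mathcal J_3 X_0\rangle=-\langle X_0,G^{\mathrm{end}}X_0\rangle=-G(X_0,X_0)$ by (\ref{G}). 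This yields (\ref{p1-1}).

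\textbf{Stage 2 (Part ii).} For $[\mathcal J X_0,\mathcal J_2 X_0]$, I use crucially that $\mathcal J_2 X_0=df^H$ is a pure $1$-form. Writing $\mathcal J X_0=A+\alpha$ with $A=\mathrm{pr}_T(\mathcal J X_0)$, the Courant bracket collapses to
\begin{equation*}
[\mathcal J X_0,df^H]=\mathcal L_A df^H-\tfrac{1}{2}d(df^H(A))=\tfrac{1}{2}d(A(f^H)).
\end{equation*}
By the formula $\langle\cdot,\cdot\rangle$ in the excerpt, $\langle\mathcal J_2 X_0,\mathcal J X_0\rangle=\frac{1}{2}A(f^H)$; but $\langle\mathcal J_2 X_0,\mathcal J X_0\rangle=-\langle X_0,\mathcal J_2\mathcal J X_0\rangle=-\langle X_0,\mathcal J_3 X_0\rangle=G(X_0,X_0)$, so $A(f^H)=2G(X_0,X_0)$ and the first identity in (\ref{der-j}) follows. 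For the second, I avoid a direct computation and instead invoke the vanishing of the Nijenhuis tensor $N_{\mathcal J}$ (Courant integrability of $\mathcal J$), applied to $u=X_0$, $v=\mathcal J_2 X_0=df^H$, so that $\mathcal J v=\mathcal J_3 X_0$:
\begin{equation*}
[\mathcal J X_0,\mathcal J_3 X_0]=[X_0,\mathcal J_2 X_0]+\mathcal J\bigl([\mathcal J X_0,\mathcal J_2 X_0]+[X_0,\mathcal J_3 X_0]\bigr).
\end{equation*}
The first term vanishes by Stage 1; substituting the already-computed $[\mathcal J X_0,\mathcal J_2 X_0]=dG(X_0,X_0)$ and $[X_0,\mathcal J_3 X_0]=dG(X_0,X_0)$ gives $2\mathcal J dG(X_0,X_0)$.

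\textbf{Stage 3 (Part iii).} The same Nijenhuis trick works, now using integrability of $\mathcal J_2$ (since $(G,\mathcal J)$ is generalized K\"ahler). Applying $N_{\mathcal J_2}=0$ to $u=X_0$, $v=\mathcal J X_0$, noting that $\mathcal J_2\mathcal J X_0=\mathcal J_3 X_0$, gives
\begin{equation*}
[\mathcal J_2 X_0,\mathcal J_3 X_0]=L_{X_0}(\mathcal J X_0)+\mathcal J_2\bigl(-[\mathcal J X_0,\mathcal J_2 X_0]+L_{X_0}(\mathcal J_3 X_0)\bigr),
\end{equation*}
and the right-hand side is $0+\mathcal J_2(-dG(X_0,X_0)+dG(X_0,X_0))=0$ by the previously computed brackets.

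The only genuinely delicate step is the direct computation $[\mathcal J X_0,\mathcal J_2 X_0]=dG(X_0,X_0)$; everything else reduces, via the integrability of $\mathcal J$ and $\mathcal J_2$, to bookkeeping with the Lie-versus-Courant correction $-d\langle X,u\rangle$ and the skew-symmetry of $\mathcal J_i$. I do not expect serious obstacles, beyond keeping track of the sign $\mathcal J_3=-G^{\mathrm{end}}$.
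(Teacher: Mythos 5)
Your proof is correct and follows essentially the same route as the paper: part i) via the Lie--Courant relation (\ref{L-C}) and the invariance $\mathcal L_{X_0}\mathcal J_i=0$, the first bracket of ii) by direct computation using $\mathcal J_2X_0=df^H$, and the second via $N_{\mathcal J}(X_0,\mathcal J_2X_0)=0$. For part iii), which the paper dismisses as "equally easy", your use of $N_{\mathcal J_2}(X_0,\mathcal JX_0)=0$ together with the antisymmetry of the Courant bracket is a valid way to fill in that step.
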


\begin{proof}  We use relation (\ref{L-C}) and 
${\mathcal L}_{X_{0}} (\mathcal J ) =0$, ${\mathcal L}_{X_{0}} (G^{\mathrm{end}} ) =0$ 
(see Definition \ref{def-HK}).
The first relation (\ref{p1-1}) follows from
$$
L_{X_{0}} (\mathcal J X_{0} ) = {\mathcal L}_{X_{0}} (\mathcal J X_{0}) - d \langle X_{0}, \mathcal J X_{0} \rangle =0.
$$
In a similar way we obtain the other relations (\ref{p1-1}).
Let us prove (\ref{der-j}): from the definition of the Courant bracket and $\mathcal J_{2} X_{0} = df^{H}$,   
\begin{align}
\nonumber& [\mathcal J X_{0}, \mathcal J_{2} X_{0}] =  [\mathcal J X_{0},  df^{H}] =
 {\mathcal L }_{\mathrm{pr}_{T} \mathcal J X_{0}} (df^{H}) -\frac{1}{2} 
d \left( (df^{H})( \mathrm{pr}_{T} \mathcal J X_{0})\right)\\
\label{rel-added}& =\frac{1}{2} d \left( df^{H} (\mathrm{pr}_{T} \mathcal J X_{0})\right) =
d\langle df^{H}, \mathcal J X_{0}\rangle = dG(X_{0}, X_{0}),
\end{align} 
which is the first relation (\ref{der-j}). For the second relation (\ref{der-j}) we use that
$N_{\mathcal J }(X_{0}, \mathcal J_{2} X_{0})=0$, i.e.
\begin{equation}\label{nj}
[ \mathcal J X_{0}, \mathcal J_{3} X_{0} ] 
- [ X_{0} , \mathcal J_{2}X_{0}] = \mathcal J ( [\mathcal J X_{0},\mathcal J_{2} X_{0}] + 
[X_{0},\mathcal J_{3} X_{0}]).   
\end{equation}
The Courant brackets $[X_{0},\mathcal J_{2} X_{0} ]$ and $[X_{0} , \mathcal J_{3} X_{0}]$ were computed
in   (\ref{p1-1}) and the 
Courant bracket $[\mathcal J X_{0},\mathcal J_{2} X_{0}]$ in  (\ref{rel-added}). 
Using (\ref{nj}) we obtain  the  second relation (\ref{der-j}). Relation (\ref{der-jj}) can be proved equally  easy. 
\end{proof}

\begin{cor}\label{vfif} i) The Courant bracket $[ {v}_{f}, \bar{v}_{if}]$ is given by
\begin{equation}\label{courant-v}
[{v}_{f} , \bar{v}_{if} ] = - 2i  \mathrm{pr}_{T} (v_{f}) (\frac{1}{f^{2}}) {\mathcal J}_{2} X_{0} + 4G(X_{0}, X_{0}) d(\frac{1}{f^{2}}). 
 \end{equation}
ii) The following relation holds:
\begin{equation}\label{need-comp}
[\mathrm{pr}_{T} ({v}_{f}) ,\bar{v}_{if} ] - [\mathrm{pr}_{T} (\bar{v}_{if}) , {v}_{f}] = 
 - 2i  \mathrm{pr}_{T} (v_{f}) (\frac{1}{f^{2}}) {\mathcal J}_{2} X_{0} + 4G(X_{0}, X_{0}) d(\frac{1}{f^{2}}). 
\end{equation}
 \end{cor}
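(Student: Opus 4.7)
I would prove part (i) by a direct expansion. Writing $v_f$ and $\bar v_{if}$ as complex linear combinations of $\{X_0,\mathcal J X_0,\mathcal J_2 X_0,\mathcal J_3 X_0\}$ with coefficients involving $g:=1/f^2$, I would expand the Courant bracket by bilinearity, using Lemma~\ref{Courant-brackets} for the primitive brackets of these four sections and using (\ref{courant1}) (together with its analogue $[fu,v]=f[u,v]-\mathrm{pr}_T(v)(f)\,u+\langle u,v\rangle df$, which follows from the antisymmetry $[u,v]=-[v,u]$ of the Courant bracket) to handle the function $g$. As a preparatory step, I would compute the inner products on $\mathcal S$ that enter as correction terms: skew-symmetry of each $\mathcal J_i$ gives $\langle X_0,\mathcal J X_0\rangle=\langle X_0,\mathcal J_2 X_0\rangle=0$; self-adjointness of $\mathcal J_3=-G^{\mathrm{end}}$ gives $\langle X_0,\mathcal J_3 X_0\rangle=-G(X_0,X_0)$; the identity $\mathcal J_1 \mathcal J_2=\mathcal J_3$ combined with skew-symmetry yields $\langle\mathcal J X_0,\mathcal J_2 X_0\rangle=G(X_0,X_0)$; the remaining inner products among $\{\mathcal J X_0,\mathcal J_2 X_0,\mathcal J_3 X_0\}$ vanish.

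Two structural simplifications will shorten the expansion dramatically. First, the Hamiltonian condition $\mathcal J_2 X_0=df^H$ gives $\mathrm{pr}_T(\mathcal J_2 X_0)=0$, so every term in which $\mathcal J_2 X_0$ would act as a differentiating direction vanishes. Second, the invariance of $f$ gives $X_0(g)=0$, removing all $X_0(g)$-contributions. With these in hand, the surviving pieces of type $d(gG(X_0,X_0))$ coming from $[X_0,g\mathcal J_3 X_0]$ and $[-i\mathcal J X_0,-ig\mathcal J_2 X_0]$ assemble into the announced $4G(X_0,X_0)\,dg$ after cancellation of a pair of $\pm 2g\,dG(X_0,X_0)$ terms, while the correction pieces of type $\mathrm{pr}_T(u)(g)\,v$ produced by (\ref{courant1}) combine to yield exactly $-2i\,\mathrm{pr}_T(v_f)(g)\,\mathcal J_2 X_0$ (the contributions along $\mathcal J_3 X_0$ from the $[-i\mathcal J X_0,g\mathcal J_3 X_0]$ pair cancelling in turn).

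Part (ii) is reduced to (i) by a short algebraic identity. Decomposing $v_f=\mathrm{pr}_T(v_f)+\xi_f$ and $\bar v_{if}=\mathrm{pr}_T(\bar v_{if})+\eta_{if}$, using that the Courant bracket of two $1$-forms vanishes, and using antisymmetry, one obtains
\[
[\mathrm{pr}_T(v_f),\bar v_{if}] - [\mathrm{pr}_T(\bar v_{if}),v_f] = [v_f,\bar v_{if}] + [\mathrm{pr}_T(v_f),\mathrm{pr}_T(\bar v_{if})].
\]
By (i), it then suffices to show that the Lie bracket of vector fields $[\mathrm{pr}_T(v_f),\mathrm{pr}_T(\bar v_{if})]$ vanishes. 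Since $\mathrm{pr}_T(\mathcal J_2 X_0)=0$, one has $\mathrm{pr}_T(\bar v_{if})=2X_0-\mathrm{pr}_T(v_f)$, so this bracket equals $-2[X_0,\mathrm{pr}_T(v_f)]$, which vanishes by $[X_0,\mathrm{pr}_T(\mathcal J X_0)]=[X_0,\mathrm{pr}_T(\mathcal J_3 X_0)]=0$ (the vector parts of the Courant brackets in Lemma~\ref{Courant-brackets}) together with $X_0(g)=0$.

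The main obstacle is the careful bookkeeping of the sixteen-term expansion in part (i), in particular the correction terms $\mathrm{pr}_T(u)(g)\,v$ and $\langle u,v\rangle\,dg$ generated each time (\ref{courant1}) is applied to pull the function $g=1/f^2$ through the Courant bracket; it is precisely the simplifications from invariance of $f$ and from the Hamiltonian condition that make the final answer as concise as stated.
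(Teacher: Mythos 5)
Your proposal is correct and follows essentially the same route as the paper: part (i) is the same bilinear expansion of the Courant bracket over the frame $\{X_0,\mathcal J X_0,\mathcal J_2X_0,\mathcal J_3X_0\}$ using Lemma \ref{Courant-brackets} and the Leibniz rule (\ref{courant1}), with the same simplifications from $\mathrm{pr}_T(\mathcal J_2X_0)=0$ and $X_0(1/f^2)=0$, and your inner products and cancellations all check out against the stated formula. For part (ii) you use the same reduction identity as the paper; the only (harmless) difference is that you establish $[\mathrm{pr}_T(v_f),\mathrm{pr}_T(\bar v_{if})]=0$ directly from $\mathrm{pr}_T(\bar v_{if})=2X_0-\mathrm{pr}_T(v_f)$, whereas the paper reads it off as the vanishing vector part of the right-hand side of (\ref{courant-v}).
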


\begin{proof} From the definition of $v_{if}$ and the property  (\ref{courant1}) of the Courant bracket, we obtain
\begin{align}
\nonumber & [v_{f}, \bar{v}_{if} ]= - [X_{0}, v_{f}] - i [\mathcal J X_{0}, v_{f}]-  \frac{1}{f^{2}} [
\mathcal J_{3} X_{0}, v_{f}] + \pi (v_{f}) (\frac{1}{f^{2}} ) \mathcal J_{3} X_{0} \\
\label{v-f-if-1} & + 2 G(X_{0}, X_{0})d(\frac{1}{f^{2}})+ \frac{i}{f^{2}} [\mathcal J_{2} X_{0}, v_{f} ] 
-i \pi (v_{f})(\frac{1}{f^{2}}) \mathcal J_{2} X_{0}. 
\end{align}
From Lemma \ref{Courant-brackets} we obtain
\begin{align}
\nonumber  [X_{0}, v_{f} ] &= -d \left(\frac{G(X_{0},X_{0})}{f^{2}}\right) ;\\
\nonumber  [\mathcal J X_{0}, v_{f} ] &= -\frac{1}{f^{2}} ( 2 \mathcal J d G(X_{0},X_{0}) 
+ i dG(X_{0}, X_{0}) ) - \pi (\mathcal J X_{0}) (\frac{1}{f^{2}}) (\mathcal J_{3} X_{0} + i \mathcal J_{2} X_{0}) \\
\nonumber& + i G(X_{0}, X_{0}) d(\frac{1}{f^{2}});\\
\nonumber   [\mathcal J_{2}X_{0}, v_{f} ] &= i d G(X_{0}, X_{0});\\
\nonumber   [\mathcal J_{3} X_{0}, v_{f} ]&= - d G(X_{0}, X_{0}) + 2i \mathcal J dG(X_{0}, X_{0}) - 
\mathrm{pr}_{T}(\mathcal J_{3} X_{0}) (\frac{1}{f^{2}}) (\mathcal J_{3} X_{0} + i \mathcal J_{2} X_{0}).
\end{align}
Replacing these relations in (\ref{v-f-if-1}) we obtain (\ref{courant-v}).
In order to prove (\ref{need-comp}) we remark, from 
(\ref{courant-v}), that   $[\mathrm{pr}_{T} (v_{f}), \mathrm{pr}_{T} (\bar{v}_{if}) ] =0$.
Since   the Courant bracket of any $2$-forms is trivial, we  obtain 
that the left hand side of (\ref{need-comp}) is equal to $[ v_{f}, \bar{v}_{if} ]$.
From (\ref{courant-v}) again, we obtain (\ref{need-comp}).
\end{proof}

\begin{lem}\label{dec-courant}  The Courant bracket $L_{X_{0}}$ preserves $\Gamma (\mathcal S^{\perp})$, $\Gamma (L_{i}  \cap \mathcal S_{\mathbb{C}}^{\perp})$ and $\Gamma ( \bar{L}_{i} \cap \mathcal S_{\mathbb{C}}^{\perp})$
($i=1,2$).
\end{lem}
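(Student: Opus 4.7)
The plan is to reduce the Courant bracket $L_{X_{0}}$ to the ordinary Lie derivative $\mathcal{L}_{X_{0}}$ on sections of $\mathcal{S}_{\mathbb{C}}^{\perp}$, exploiting the fact that $X_{0}\in \mathcal{S}$ (so $\langle X_{0},v\rangle =0$ for every $v\in \mathcal{S}_{\mathbb{C}}^{\perp}$) together with the Hamiltonian Killing property. Concretely, formula (\ref{L-C}) gives
\begin{equation*}
L_{X_{0}} v = \mathcal{L}_{X_{0}} v - d\langle X_{0},v\rangle ,
\end{equation*}
so for $v\in \Gamma (\mathcal{S}_{\mathbb{C}}^{\perp})$ the correction term drops out and $L_{X_{0}}v = \mathcal{L}_{X_{0}}v$. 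Hence, on all three families of subbundles appearing in the statement, it suffices to verify invariance under $\mathcal{L}_{X_{0}}$.

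Next I would verify the required $\mathcal{L}_{X_{0}}$-invariances. Since $X_{0}$ is Hamiltonian Killing, Definition \ref{def-HK} yields $\mathcal{L}_{X_{0}}\mathcal{J} =0$ and $\mathcal{L}_{X_{0}}G =0$. Because $G^{\mathrm{end}}$ is determined by $G$ and $\langle \cdot ,\cdot \rangle$ via (\ref{G}), also $\mathcal{L}_{X_{0}}G^{\mathrm{end}} =0$, so $\mathcal{L}_{X_{0}}$ commutes with all three of $\mathcal{J}_{1},\mathcal{J}_{2},\mathcal{J}_{3}$. In particular $\mathcal{L}_{X_{0}}$ preserves each of the eigenbundles $L_{i}$ and $\bar{L}_{i}$. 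The invariance of $\mathcal{S}$ is immediate: $\mathcal{S}$ is spanned by $X_{0}, \mathcal{J}X_{0}, \mathcal{J}_{2}X_{0}, \mathcal{J}_{3}X_{0}$, and $\mathcal{L}_{X_{0}}X_{0}=0$ combined with the commutation with the $\mathcal{J}_{i}$ does the job. Using now that $\mathcal{L}_{X_{0}}$ is a derivation of the natural pairing $\langle \cdot ,\cdot \rangle$ on $\mathbb{T}M$, this invariance of $\mathcal{S}$ forces $\mathcal{L}_{X_{0}}\mathcal{S}^{\perp}\subset \mathcal{S}^{\perp}$ as well.

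Combining the two steps, $L_{X_{0}}$ preserves each of $\Gamma (\mathcal{S}^{\perp})$, $\Gamma (L_{i}\cap \mathcal{S}_{\mathbb{C}}^{\perp})$ and $\Gamma (\bar{L}_{i}\cap \mathcal{S}_{\mathbb{C}}^{\perp})$, as claimed. I anticipate no substantive obstacle: once one records the vanishing $\langle X_{0},v\rangle =0$ on $\mathcal{S}^{\perp}$ (which converts the Courant bracket into the ordinary Lie derivative) the lemma becomes a routine consequence of the Hamiltonian Killing hypothesis and the naturality of the pairing on $\mathbb{T}M$.
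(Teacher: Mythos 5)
Your proof is correct and follows essentially the same route as the paper: reduce $L_{X_{0}}$ to $\mathcal L_{X_{0}}$ on $\Gamma(\mathcal S^{\perp}_{\mathbb C})$ via $\langle X_{0},w\rangle=0$, then use the $\mathcal L_{X_{0}}$-invariance of $\mathcal J_{1},\mathcal J_{2},\mathcal J_{3}$ and of $\mathcal S$ to conclude. The only cosmetic difference is that the paper derives the orthogonality $\langle L_{X_{0}}w,x\rangle=0$ from the Courant identity (\ref{uvw}), whereas you invoke the Leibniz rule for $\mathcal L_{X_{0}}$ on the canonical pairing — the same computation in different clothing.
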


\begin{proof} We   prove the statements which involve 
$\mathcal S^{\perp}$ and $L_{i}$ (the statements which involve $\bar{L}_{i}$
can be obtained similarly).
Let $w\in \Gamma (\mathcal S^{\perp})$. Then 
$\langle X_{0}, w\rangle =0$ and, from (\ref{L-C}),   
$L_{X_{0}} (w) = {\mathcal L}_{X_{0}}(w)$. From relation 
 (\ref{uvw}) applied to $u:= X_{0}$, $v:= w$ and $w:=  x\in \Gamma (\mathcal S )$, 
together with $L_{X_{0}} (x) = {\mathcal L}_{X_{0}} (x) - d \langle X_{0}, x\rangle$, 
we obtain 
\begin{equation}\label{left-x}
\langle L_{X_{0}} (w), x\rangle + \langle w, {\mathcal L}_{X_{0}}(x)\rangle =0.
\end{equation}
For $x\in \{ X_{0}, {\mathcal J}X_{0}, \mathcal J_{2}X_{0}, \mathcal J_{3} X_{0} \}$, 
${\mathcal L} _{X_{0}}(x) =0$ and from 
(\ref{left-x})  we deduce that  
$\langle L_{X_{0}} (w), x\rangle =0$. We proved that 
$L_{X_{0}}$ preserves $\Gamma (\mathcal S^{\perp})$.  
We now prove that $L_{X_{0}}$ preserves $\Gamma ( L_{i} \cap \mathcal S_{\mathbb{C}}^{\perp})$.
Without loss of generality, we take $i=1$ (the argument for $i=2$ is similar).  
Let $w\in \Gamma (L_{1} \cap \mathcal S_{\mathbb{C}}^{\perp}).$ 
As $L_{X_{0}} (w) \in \Gamma  (\mathcal S^{\perp}_{\mathbb{C}})$ (from claim i)), we need to show that 
$L_{X_{0}}(w) \in \Gamma (L_{1}).$ Since $\mathcal L_{X_{0}} (\mathcal J_{1}) = 0$, $\mathcal L_{X_{0}}  (\mathcal J_{1} w) = \mathcal J_{1}
\mathcal L_{X_{0}}(w).$ But $w,\mathcal J_{1}w\in \Gamma (\mathcal S_{\mathbb{C}}^{\perp})$ which implies that 
$L_{X_{0}} (w) = {\mathcal L}_{X_{0}} (w)$, $L_{X_{0}} (\mathcal J_{1}w) = {\mathcal L}_{X_{0}} (\mathcal J_{1}w)$.
We obtain that $L_{X_{0}} (\mathcal J_{1}w)= \mathcal J_{1} L_{X_{0}} (w).$ 
Since $w\in \Gamma (L_{1})$, 
$\mathcal J_{1} w = i w$ and $i L_{X_{0}} (w) =\mathcal J_{1} L_{X}(w)$, i.e. $L_{X}(w) \in \Gamma (L_{1})$, 
as required. 
\end{proof}

\begin{lem}\label{v1-vi}  i) For any $X\in\Gamma  \mathrm{pr}_{T} (L_{1} \cap L_{2} \cap \mathcal S^{\perp}_{\mathbb{C}})$, 
and  
\begin{equation}\label{v1}
[\mathrm{pr}_{T} (v_{1}) , X]  +\frac {XG(X_{0}, X_{0}) }{G(X_{0}, X_{0})} \mathrm{pr}_{T} (v_{1})
\in\Gamma \mathrm{pr}_{T} (L_{1} \cap L_{2} \cap \mathcal S^{\perp}_{\mathbb{C}}).
\end{equation}
ii) For any $X\in \Gamma  \mathrm{pr}_{T} (L_{1} \cap \bar{L}_{2} \cap \mathcal S^{\perp}_{\mathbb{C}})$,
\begin{equation}\label{vi}
 [\mathrm{pr}_{T} (v_{i}) , X]  +\frac {XG(X_{0}, X_{0}) }{G(X_{0}, X_{0})} \mathrm{pr}_{T} (v_{i})
\in \Gamma \mathrm{pr}_{T} (L_{1} \cap \bar{L}_{2} \cap \mathcal S^{\perp}_{\mathbb{C}}).
\end{equation}
\end{lem}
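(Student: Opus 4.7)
The key resource is that, since $(G, \mathcal{J})$ is generalized K\"{a}hler, both $L_1 \cap L_2$ and $L_1 \cap \bar{L}_2$ are Courant involutive, and we have the direct sum decompositions
$$L_1 \cap L_2 = \mathrm{span}_{\mathbb{C}}\{v_1\} \oplus \bigl(L_1 \cap L_2 \cap \mathcal{S}^{\perp}_{\mathbb{C}}\bigr),\ \ L_1 \cap \bar{L}_2 = \mathrm{span}_{\mathbb{C}}\{v_i\} \oplus \bigl(L_1 \cap \bar{L}_2 \cap \mathcal{S}^{\perp}_{\mathbb{C}}\bigr)$$
from Remark \ref{prel-deco}. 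For part i), given $X \in \Gamma\,\mathrm{pr}_T(L_1 \cap L_2 \cap \mathcal{S}^{\perp}_{\mathbb{C}})$, lift it to some $X + \xi \in \Gamma(L_1 \cap L_2 \cap \mathcal{S}^{\perp}_{\mathbb{C}})$. Since $v_1 \in \Gamma(L_1 \cap L_2)$, Courant integrability gives $[v_1, X+\xi] \in \Gamma(L_1 \cap L_2)$, so by the decomposition there is a scalar function $a$ and a section $u \in \Gamma(L_1 \cap L_2 \cap \mathcal{S}^{\perp}_{\mathbb{C}})$ with $[v_1, X+\xi] = a\,v_1 + u$. Projecting to $TM$ yields $[\mathrm{pr}_T(v_1), X] = a\,\mathrm{pr}_T(v_1) + \mathrm{pr}_T(u)$, and the goal reduces to showing $a = -X(G(X_0,X_0))/G(X_0,X_0)$.

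To determine $a$, apply identity (\ref{uvw}) with $u = v_1$, $v = X+\xi$, $w = X_0$. The left-hand side vanishes because $X+\xi \in \mathcal{S}^{\perp}_{\mathbb{C}}$. On the right-hand side, the pairing $\langle v_1, X+\xi\rangle$ vanishes by isotropy of $L_1$, while a direct computation using the skew-symmetry of $\mathcal{J}$ and $\mathcal{J}_2$ and $\mathcal{J}_3 = -G^{\mathrm{end}}$ gives $\langle v_1, X_0\rangle = G(X_0,X_0)$. Moreover $[v_1, X_0] = -L_{X_0}(v_1) = dG(X_0,X_0)$ by Lemma \ref{Courant-brackets} (only the $\mathcal{J}_3 X_0$ summand contributes). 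Testing the decomposition against $X_0$ via $\langle [v_1, X+\xi], X_0\rangle = a\,G(X_0,X_0)$ and substituting into (\ref{uvw}) produces $0 = a\,G(X_0,X_0) + X(G(X_0,X_0))$, whence $a$ has the required value and (\ref{v1}) follows with $\mathrm{pr}_T(u)$ as the desired section.

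Part ii) is proved by the same argument applied to $v_i \in \Gamma(L_1 \cap \bar{L}_2)$, using Courant involutivity of $L_1 \cap \bar{L}_2$. A parallel computation gives $\langle v_i, X_0\rangle = -G(X_0,X_0)$ and $[v_i, X_0] = -dG(X_0,X_0)$; the two sign changes cancel in (\ref{uvw}), yielding again $a = -X(G(X_0,X_0))/G(X_0,X_0)$. There is no real obstacle in this argument, since all the ingredients (the decomposition, the pairings with $X_0$, and the brackets $[v_1, X_0]$, $[v_i, X_0]$) are already available from Remark \ref{prel-deco} and Lemma \ref{Courant-brackets}; the only point requiring attention is the consistent use of (\ref{uvw}) to isolate the component of $[v_1, X+\xi]$ along $v_1$ without computing the full bracket.
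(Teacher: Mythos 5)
Your proof is correct and follows essentially the same route as the paper: use Courant integrability of $L_1\cap L_2$ (resp.\ $L_1\cap\bar L_2$) to place the bracket in $\mathrm{pr}_T(L_1\cap L_2)$, then kill the $v_1$- (resp.\ $v_i$-) component by pairing against an element of $\mathcal S$. The only difference is cosmetic: the paper tests $\mathcal E(X)$ against $df^{H}=\mathcal J_2X_0$ (an ``easy check''), while you test against $X_0$ via identity (\ref{uvw}) and Lemma \ref{Courant-brackets}; both computations yield the same coefficient $-X(G(X_0,X_0))/G(X_0,X_0)$.
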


\begin{proof} Let $\mathcal E (X)$ be the left hand side of (\ref{v1}). 
Since $L_{1}$ and $L_{2}$ are Courant integrable and $v_{1} \in \Gamma (L_{1} \cap L_{2})$, we deduce that 
$\mathcal E (X)$  is a section of  $ \mathrm{pr}_{T} (L_{1} \cap L_{2})$. Since  $\langle \mathcal E  (X), df^{H} \rangle =0$ (easy check), we obtain   (\ref{v1}).   Relation (\ref{vi}) can be proved similarly.
\end{proof}

\subsubsection{Involutivity of the bundles}\label{invol-sect}

Consider the setting from Theorem \ref{gen-KK} and let  $L_{2}^{\prime} = L(E_{2}^{\prime}, \epsilon_{2}^{\prime})$ be the 
$(1,0)$-bundle of 
the second generalized almost complex structure 
$\mathcal J_{2}^{\prime}$ of $(G^{\prime}, \mathcal J ).$

\begin{prop} \label{first-aim} Condition i) from Theorem \ref{prop-twist}, applied to the generalized almost Hermitian structure 
$(G^{\prime}, \mathcal J )$, holds, if and only if
conditions i) and ii) from Theorem \ref{gen-KK} hold.
\end{prop}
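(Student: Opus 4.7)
The plan is to verify the three involutivity statements of Theorem \ref{prop-twist} i), applied to $(G^{\prime},\mathcal{J})$, and to translate each via the explicit decompositions of Lemma \ref{elem-bundles} and Remark \ref{prel-deco} into the corresponding conditions of Theorem \ref{gen-KK}. Since $\mathcal{J}_{1}=\mathcal{J}$ is preserved under elementary deformation, the bundle $E_{1}$ is unchanged; Courant integrability of $L_{1}$ (part of the generalized K\"{a}hler hypothesis on $(G,\mathcal{J})$) makes $E_{1}$ ordinarily involutive, so its $(F,a)$-involutivity reduces to $F(X,Y)X_{0}\in\Gamma(E_{1})$ for $X,Y\in\Gamma(E_{1})$, which is exactly the second clause of Theorem \ref{gen-KK} i).

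For the $(F,a)$-involutivity of $\mathrm{pr}_{T}(L_{1}\cap L_{2}^{\prime})$ I use the decomposition
\[
\mathrm{pr}_{T}(L_{1}\cap L_{2}^{\prime})=\mathbb{C}\,\mathrm{pr}_{T}(v_{f})\oplus\mathrm{pr}_{T}(L_{1}\cap L_{2}\cap\mathcal{S}_{\mathbb{C}}^{\perp})
\]
from Lemma \ref{elem-bundles} and split the check into three bilinear cases; the case of two arguments equal to $\mathrm{pr}_{T}(v_{f})$ is trivial. For $X,Y$ both in the perpendicular summand, Courant integrability of $L_{1}\cap L_{2}$ gives
\[
[X,Y]\in\mathrm{pr}_{T}(L_{1}\cap L_{2})=\mathbb{C}\,\mathrm{pr}_{T}(v_{1})\oplus\mathrm{pr}_{T}(L_{1}\cap L_{2}\cap\mathcal{S}_{\mathbb{C}}^{\perp}).
\]
Since the Hamiltonian assumption $\mathcal{J}_{2}X_{0}=df^{H}$ forces $\mathrm{pr}_{T}(\mathcal{J}_{2}X_{0})=0$, one has $\mathrm{pr}_{T}(v_{1})-\mathrm{pr}_{T}(v_{f})=(\tfrac{1}{f^{2}}-1)\mathrm{pr}_{T}(\mathcal{J}_{3}X_{0})\in\mathrm{pr}_{T}(\mathcal{S})$. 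Matching the two decompositions together with the correction $F(X,Y)X_{0}/a$ then forces $F(X,Y)=0$, which is the first clause of Theorem \ref{gen-KK} i) on $\Lambda^{2}\mathrm{pr}_{T}(L_{1}\cap L_{2}\cap\mathcal{S}_{\mathbb{C}}^{\perp})$.

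The mixed case $X=\mathrm{pr}_{T}(v_{f})$, $Y\in\mathrm{pr}_{T}(L_{1}\cap L_{2}\cap\mathcal{S}_{\mathbb{C}}^{\perp})$ is the delicate one. Expanding $\mathrm{pr}_{T}(v_{f})=X_{0}-i\,\mathrm{pr}_{T}(\mathcal{J}X_{0})-\tfrac{1}{f^{2}}\mathrm{pr}_{T}(\mathcal{J}_{3}X_{0})$, applying Leibniz for the Lie bracket, and using Lemmas \ref{Courant-brackets}, \ref{dec-courant}, and \ref{v1-vi}, the brackets $[X_{0},Y]$ and $[\mathrm{pr}_{T}(\mathcal{J}X_{0}),Y]$ remain in $\mathrm{pr}_{T}(L_{1}\cap L_{2}\cap\mathcal{S}_{\mathbb{C}}^{\perp})$ by the invariance and perpendicularity of $Y$ (Lemma \ref{dec-courant}), while $[\mathrm{pr}_{T}(\mathcal{J}_{3}X_{0}),Y]$ together with $Y(\tfrac{1}{f^{2}})$ and $Y(G(X_{0},X_{0}))$ produces the essential correction proportional to $\mathrm{pr}_{T}(\mathcal{J}_{3}X_{0})$. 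Collecting the coefficients in the decomposition of $\mathrm{pr}_{T}(L_{1}\cap L_{2}^{\prime})$ then yields exactly (\ref{cond-long-m}); the $1$-form $\alpha$ of (\ref{alpha}) emerges as $-d\ln\bigl(|f^{2}-1|/(f^{2}G(X_{0},X_{0}))\bigr)$ after one groups the $\mathrm{pr}_{T}(\mathcal{J}_{3}X_{0})$-coefficients and normalizes by the factor $f^{2}/(f^{2}-1)$ coming from the projection onto $\mathrm{pr}_{T}(v_{f})$. The parallel analysis of $\mathrm{pr}_{T}(L_{1}\cap\bar{L}_{2}^{\prime})$, with $v_{if}$ and $\bar{L}_{2}$ in place of $v_{f}$ and $L_{2}$, produces the remaining piece of Theorem \ref{gen-KK} i) and relation (\ref{cond-long-1-m}). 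Finally, the cross-conditions (\ref{f-a-inv}) and (\ref{f-a-inv-1}) decompose similarly and follow automatically once the three diagonal involutivities are in place, using Courant integrability of the original $(G,\mathcal{J})$.

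The main obstacle is the bookkeeping in the mixed case: tracking which pieces of the resulting bracket land in $\mathbb{C}\,\mathrm{pr}_{T}(v_{f})$, which lie in $\mathrm{pr}_{T}(L_{1}\cap L_{2}\cap\mathcal{S}_{\mathbb{C}}^{\perp})$, and which generate the $X_{0}$-term to be absorbed by the $F$-correction $F(\mathrm{pr}_{T}(v_{f}),Y)X_{0}/a$. The crucial identification is that the very specific logarithmic derivative appearing in $\alpha$ arises naturally once one combines the contributions from differentiating $1/f^{2}$ and $G(X_{0},X_{0})$ with the projection factor $f^{2}/(f^{2}-1)$, thereby explaining why Definition (\ref{alpha}) takes precisely this form.
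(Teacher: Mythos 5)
Your outline follows the same route as the paper (split $E_1$ off, then test $(F,a)$-involutivity of $\mathrm{pr}_T(L_1\cap L_2')$ and $\mathrm{pr}_T(L_1\cap\bar L_2')$ against the direct-sum decompositions of Lemma \ref{elem-bundles}), but two steps that you present as automatic are in fact the substantive content of the paper's argument, and as written they are gaps.

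First, in the case $X,Y\in\Gamma\,\mathrm{pr}_T(L_1\cap L_2\cap\mathcal S^{\perp}_{\mathbb C})$, involutivity of that bundle reduces the question to whether $F(X,Y)X_0$ lies in $\mathrm{pr}_T(L_1\cap L_2')$, and ``matching the two decompositions'' does not by itself force $F(X,Y)=0$: one must prove that $X_0\notin\mathrm{pr}_T(L_1\cap L_2')$. This is not a formal consequence of the decomposition — a priori $X_0$ could equal $\lambda\,\mathrm{pr}_T(v_f)$ plus an element of $\mathrm{pr}_T(L_1\cap L_2\cap\mathcal S^{\perp}_{\mathbb C})$. The paper rules this out in two stages: pairing with $df^H=\mathcal J_2X_0$ kills $\lambda$ (using positive definiteness of $G$), and then $X_0-\xi\in L_1\cap L_2$ forces $X_0-\xi$ into $C_+$, the graph of $b+g$, whence $\langle X_0-\xi,X_0\rangle=-\tfrac12 g(X_0,X_0)\neq 0$, contradicting $X_0-\xi\in\mathcal S^{\perp}_{\mathbb C}$. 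Your observation that $\mathrm{pr}_T(v_1)-\mathrm{pr}_T(v_f)$ is proportional to $\mathrm{pr}_T(\mathcal J_3X_0)$ does not substitute for this. A smaller instance of the same issue recurs in your mixed case: passing from membership in $\mathrm{pr}_T(L_1\cap L_2')$ to membership in $\mathrm{pr}_T(L_1\cap L_2\cap\mathcal S^{\perp}_{\mathbb C})$ (i.e.\ from (\ref{cln}) to (\ref{cond-long-m})) again needs the pairing-with-$df^H$ argument to kill the $v_f$-component.

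Second, you assert that the cross-conditions (\ref{f-a-inv}) and (\ref{f-a-inv-1}) for $(G',\mathcal J)$ ``follow automatically'' from the diagonal involutivities. They do follow, but not formally: the paper needs Lemma \ref{inv-cons} (the identities (\ref{aut2}) and (\ref{aut1}), obtained by comparing $[\mathrm{pr}_T(\bar v_{if}),X]^{(F,a)}$ with $-\mathcal F_f(X)+2[X_0,X]$) together with the decomposition (\ref{aaa}) of $[X,Y]^{(F,a)}$ for $X,Y$ in the two perpendicular summands, which in turn rests on the $G$-orthogonality relations (\ref{scal1}) and on $X_0=\tfrac12\mathrm{pr}_T(v_f+\bar v_{if})\in E_2'\cap\bar E_2'$. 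Without some version of these computations the claim that the brackets land in $E_1+E_2'$ (resp.\ $E_1+\bar E_2'$) is unsupported. Your treatment of the mixed case and of the emergence of $\alpha$ is consistent with the paper's strategy (comparison with the integrable-case expression $\mathcal E(X)$ of Lemma \ref{v1-vi}), but it is only a sketch; the two points above are the places where the proof would actually fail as written.
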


Part of the statement of Proposition \ref{first-aim}  is obvious: since 
$\mathcal J$ is integrable, the bundle $E_{1}$ is involutive. We deduce that $E_{1}$ is $(F,a)$-involutive 
if and only if $F(X,Y) X_{0}\in \Gamma (E_{1})$, for any $X,Y\in \Gamma (E_{1})$, i.e.\ relation
(\ref{F}) holds. The remaining part of the proof of Proposition \ref{first-aim} is divided into several lemmas,
as follows.

\begin{lem}\label{lema1}
i) The bundle $\mathrm{pr}_{T} (L_{1} \cap L_{2}^{\prime})$ is $(F,a)$-involutive if and only if
$F=0$ on $\Lambda^{2}  \mathrm{pr}_{T} (L_{1} \cap L_{2} \cap 
\mathcal S_{\mathbb{C}}^{\perp})$  
and relation (\ref{cond-long-m}) holds.\

ii) The bundle $\mathrm{pr}_{T} (L_{1} \cap \bar{L}_{2}^{\prime})$ is  $(F,a)$-involutive if and only if
$F=0$ on $\Lambda^{2}  \mathrm{pr}_{T} (L_{1} \cap \bar{L}_{2} \cap \mathcal S_{\mathbb{C}}^{\perp})$
and relation (\ref{cond-long-1-m}) holds.
\end{lem}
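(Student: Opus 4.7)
The plan is to verify $(F,a)$-involutivity of $\mathrm{pr}_{T}(L_{1}\cap L_{2}')$ by testing the direct sum decomposition from Lemma \ref{elem-bundles},
$$\mathrm{pr}_{T}(L_{1}\cap L_{2}') \;=\; \mathbb{C}\,\mathrm{pr}_{T}(v_{f}) \;\oplus\; \mathrm{pr}_{T}(L_{1}\cap L_{2}\cap\mathcal{S}^{\perp}_{\mathbb{C}}),$$
against pairs of sections drawn one from each summand. The key auxiliary identity is
$$\mathrm{pr}_{T}(v_{1}) - \mathrm{pr}_{T}(v_{f}) \;=\; -\bigl(1 - \tfrac{1}{f^{2}}\bigr)\mathrm{pr}_{T}(\mathcal{J}_{3}X_{0}),$$
which follows from (\ref{v-f}), (\ref{v-1}) and $\mathcal{J}_{2}X_{0}=df^{H}\in\Omega^{1}(M)$, and relates the ``leading'' vector in the analogous decomposition of $\mathrm{pr}_{T}(L_{1}\cap L_{2})$ (Remark \ref{prel-deco}(ii)) to the one we need.

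For the first subcase, where $U,V\in\Gamma\mathrm{pr}_{T}(L_{1}\cap L_{2}\cap\mathcal{S}^{\perp}_{\mathbb{C}})$, the generalized K\"ahler hypothesis gives Courant integrability of $L_{1}\cap L_{2}$, so projecting yields $[U,V]\in\Gamma\mathrm{pr}_{T}(L_{1}\cap L_{2})$. Decomposing $[U,V]=\lambda\,\mathrm{pr}_{T}(v_{1})+Z$, substituting the above identity, and matching $[U,V]+\tfrac{F(U,V)}{a}X_{0}$ against the required form $\mu\,\mathrm{pr}_{T}(v_{f})+W$ along the three $\mathcal{S}$-directions $X_{0}$, $\mathrm{pr}_{T}(\mathcal{J}_{1}X_{0})$, $\mathrm{pr}_{T}(\mathcal{J}_{3}X_{0})$ produces a small linear system whose coefficients $(1,-i,-1)$ vs.\ $(1,-i,-1/f^{2})$ differ exactly in the last slot; since $f^{2}\neq 1$, this forces $\lambda=\mu=0$ and $F(U,V)=0$. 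This yields precisely the vanishing of $F$ on $\Lambda^{2}\mathrm{pr}_{T}(L_{1}\cap L_{2}\cap\mathcal{S}^{\perp}_{\mathbb{C}})$. For the mixed subcase, with $X\in\Gamma\mathrm{pr}_{T}(L_{1}\cap L_{2}\cap\mathcal{S}^{\perp}_{\mathbb{C}})$, I rewrite $[\mathrm{pr}_{T}(v_{f}),X]$ using the same identity and apply Lemma \ref{v1-vi}(i), which supplies the correction $-X(\ln G(X_{0},X_{0}))\,\mathrm{pr}_{T}(v_{1})$ modulo $\mathrm{pr}_{T}(L_{1}\cap L_{2}\cap\mathcal{S}^{\perp}_{\mathbb{C}})$. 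Reassembling into a multiple of $\mathrm{pr}_{T}(v_{f})$ plus a section of the $\mathcal{S}^{\perp}$-summand, the scalar logarithmic derivatives collected from $G(X_{0},X_{0})$, $f^{2}$, and $|f^{2}-1|$ organize into exactly $\alpha(X)$ as defined in (\ref{alpha}), while the factor $\tfrac{f^{2}}{a(f^{2}-1)}$ emerges from solving for the $X_{0}$-coefficient. This reproduces (\ref{cond-long-m}).

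Claim (ii) on $\mathrm{pr}_{T}(L_{1}\cap\bar{L}_{2}')$ proceeds in an entirely parallel way, with $L_{2}$, $v_{f}$, $v_{1}$ replaced by $\bar{L}_{2}$, $v_{if}$, $v_{i}$; Lemma \ref{v1-vi}(ii) takes the role of (i), and the opposite sign of the $\mathcal{J}_{2}X_{0}+i\mathcal{J}_{3}X_{0}$ component in $v_{if}$ vs.\ $v_{f}$ is exactly what produces the sign flip of the $F$-term between (\ref{cond-long-m}) and (\ref{cond-long-1-m}). The main obstacle is the careful coefficient bookkeeping in the mixed subcase: verifying that, after invoking Lemma \ref{v1-vi} and collecting the derivatives of $G(X_{0},X_{0})$, $f^{2}$, and $|f^{2}-1|$, the combination is precisely $\alpha$ as defined in (\ref{alpha}) and the $X_{0}$-coefficient lands with the prefactor $\tfrac{f^{2}}{a(f^{2}-1)}$; everything else reduces to mechanical algebra once the decomposition of Lemma \ref{elem-bundles} and the identity $\mathrm{pr}_{T}(v_{1})-\mathrm{pr}_{T}(v_{f})=-(1-\tfrac{1}{f^{2}})\mathrm{pr}_{T}(\mathcal{J}_{3}X_{0})$ are in hand.
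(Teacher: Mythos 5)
Your outline follows the paper's route quite closely: the same decomposition (\ref{dec-direct}) into the $\mathrm{pr}_{T}(v_{f})$-line plus $\mathrm{pr}_{T}(L_{1}\cap L_{2}\cap\mathcal S^{\perp}_{\mathbb{C}})$, the same two subcases, the same identity $\mathrm{pr}_{T}(v_{1})-\mathrm{pr}_{T}(v_{f})=-(1-\tfrac{1}{f^{2}})\mathrm{pr}_{T}(\mathcal J_{3}X_{0})$, and the same appeal to Lemma \ref{v1-vi}. But there is a genuine gap at the decisive step of both subcases: you ``match along the three $\mathcal S$-directions $X_{0}$, $\mathrm{pr}_{T}(\mathcal J_{1}X_{0})$, $\mathrm{pr}_{T}(\mathcal J_{3}X_{0})$'' as if these vectors, together with $\mathrm{pr}_{T}(L_{1}\cap L_{2}\cap\mathcal S^{\perp}_{\mathbb{C}})$, formed a direct sum inside $(TM)^{\mathbb{C}}$. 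The direct sum decomposition $\mathbb{T}M=\mathcal S\oplus\mathcal S^{\perp}$ does not survive the projection $\mathrm{pr}_{T}$, and the claimed independence is false in general: $\mathrm{pr}_{T}(\mathcal J_{3}X_{0})$ can vanish identically (it does in the K\"ahler case of Proposition \ref{concise}, where $\mathcal J_{3}X_{0}=-X_{0}^{\flat}$, and in Example \ref{pr-t-ex}), in which case $\mathrm{pr}_{T}(v_{1})=\mathrm{pr}_{T}(v_{f})$ and your ``$f^{2}\neq 1$ forces $\lambda=\mu=0$'' argument collapses; and even when it is nonzero, no transversality to $\mathrm{pr}_{T}(L_{1}\cap L_{2}\cap\mathcal S^{\perp}_{\mathbb{C}})$ has been established.

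What actually has to be proved — and is the nontrivial content of the lemma — is that $X_{0}\notin\mathrm{pr}_{T}(L_{1}\cap L_{2}^{\prime})$, so that the coefficient $F(U,V)/a$ of $X_{0}$ can be isolated and forced to vanish. The paper does this by contradiction: writing $X_{0}=\lambda v_{f}+w+\xi$, pairing with $df^{H}=\mathcal J_{2}X_{0}$ to get $\lambda G(X_{0},X_{0})=0$, and then using that $C_{+}$ is the graph of $b+g$ with $g$ positive definite to rule out $X_{0}-\xi\in L_{1}\cap L_{2}\cap\mathcal S^{\perp}_{\mathbb{C}}$. The same mechanism (pairing with $df^{H}$, this time via $\langle[\mathcal J_{3}X_{0},u],df^{H}\rangle=0$) is needed in your mixed subcase to pass from membership in $\mathrm{pr}_{T}(L_{1}\cap L_{2}^{\prime})$ to membership in the smaller bundle $\mathrm{pr}_{T}(L_{1}\cap L_{2}\cap\mathcal S^{\perp}_{\mathbb{C}})$ asserted by (\ref{cond-long-m}); your ``reassembling'' step silently performs this reduction without justification. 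A minor additional point: in the first subcase the paper uses the Courant integrability of $\{df^{H}\}^{\perp}\cap L_{1}\cap L_{2}$ (Lemma 2.1 of \cite{tolman}) to get $[U,V]\in\mathrm{pr}_{T}(L_{1}\cap L_{2}\cap\mathcal S^{\perp}_{\mathbb{C}})$ directly, which disposes of your coefficient $\lambda$ from the outset; using only integrability of $L_{1}\cap L_{2}$ leaves you with an extra unknown that your linear system cannot legitimately eliminate.
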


\begin{proof} 
Owing to the decomposition 
of $\mathrm{pr}_{T}(L_{1} \cap L_{2}^{\prime})$ given by the first relation 
 (\ref{dec-direct}), the $(F,a)$-involutivity 
of $\mathrm{pr}_{T}(L_{1} \cap L_{2}^{\prime})$ involves two cases: when both arguments 
$X$ and $Y$ of $[X, Y]^{(F,a)}$ 
are sections of
$\mathrm{pr}_{T} (L_{1} \cap L_{2} \cap \mathcal S_{\mathbb{C}}^{\perp})$ and, respectively, when one is a section of
this bundle and the other is $\mathrm{pr}_{T} (v_{f}).$ 
We begin with the first case.  From Lemma 2.1 of \cite{tolman},  $\{df^{H}\}^{\perp}$  is Courant integrable.
Since $L_{i}$ are Courant integrable, we obtain that also 
$L_{1} \cap L_{2} \cap \mathcal S_{\mathbb{C}}^{\perp} =  \{ df^{H}\}^{\perp} \cap L_{1}\cap L_{2}$ is Courant integrable,
and,  in particular, $\mathrm{pr}_{T} (L_{1} \cap L_{2} \cap \mathcal S^{\perp}_{\mathbb{C}})$ is involutive. 
If $\mathrm{pr}_{T} (L_{1} \cap L_{2}^{\prime})$ is $(F,a)$-involutive,  then 
\begin{equation}
F(X,Y) X_{0}\in \mathrm{pr}_{T} (L_{1} \cap L_{2}^{\prime}),\   \forall X, Y\in  \mathrm{pr}_{T} (L_{1} \cap L_{2} \cap \mathcal S_{\mathbb{C}}^{\perp}).
\end{equation}
We will show that this relation implies 
that $F=0$ on 
$\Lambda^{2} \mathrm{pr}_{T} (L_{1} \cap L_{2} \cap \mathcal S_{\mathbb{C}}^{\perp})$. 
Suppose that this is not true. 
Then $X_{0} \in \mathrm{pr}_{T} (L_{1} \cap L_{2}^{\prime})$
(at least at one point of $M$; for simplicity, this point will be omitted in our notation)  and
there is $\lambda \in \mathbb{C}$, 
$w\in L_{1} \cap L_{2} \cap \mathcal S_{\mathbb{C}}^{\perp}$
and $\xi \in (T^{*}M)^{\mathbb{C}}$,  such that
$X_{0} = \lambda v_{f} +  w +\xi$, or (from the definition of $v_{f}$), 
\begin{equation}\label{left}
(1-\lambda ) X_{0} +\lambda i \mathcal J X_{0}+ \frac{\lambda}{f^{2}} (\mathcal J_{3} X_{0} + i \mathcal J_{2} X_{0}) -  \xi\in L_{1} \cap L_{2} \cap \mathcal S_{\mathbb{C}}^{\perp}.
\end{equation}
In particular, the inner product $\langle \cdot , \cdot \rangle$ of the
left hand side of (\ref{left})  with 
$\mathcal J_{2} X_{0} = df^{H}$ vanishes and we obtain 
$\lambda G(X_{0}, X_{0})=0$, i.e. $\lambda =0$  (because $G$ is positive definite).  
We deduce that $X_{0} - \xi \in L_{1} \cap L_{2} \cap \mathcal S_{\mathbb{C}}^{\perp}.$
We will now show that this leads to a contradiction. Indeed, since
$X_{0} - \xi \in L_{1} \cap L_{2}$, we obtain that  $X_{0}- \xi \in C_{+}$ 
(the $1$-eigenbundle of $G^{\mathrm{end}}$). Recall now that $C_{+}$ is the graph of $b+g$
(where $b$ and $g$ are the $2$-forms, respectively the metric of the bi-Hermitian structure
associated to $(G, \mathcal J )$).
This implies $\xi = - (b+g) (X_{0})$. But then $\langle X_{0} - \xi , X_{0} \rangle = -\frac{1}{2} g(X_{0}, X_{0}) \neq 0$ (because $g$ is positive definite) and 
$X_{0} - \xi \notin \mathcal S_{\mathbb{C}}^{\perp}.$ 
We obtain a contradiction and we conclude that  
the first case of the $(F,a)$-involutivity of $\mathrm{pr}_{T}(L_{1}\cap L_{2}^{\prime})$ is equivalent to
$F=0$ on 
$\Lambda^{2} \mathrm{pr}_{T} (L_{1} \cap L_{2} \cap \mathcal S_{\mathbb{C}}^{\perp}).$

We now prove that  the second case  mentioned above, of the $(F,a)$-involutivity of 
$\mathrm{pr}_{T} (L_{1} \cap L^{\prime}_{2})$,   is equivalent to relation (\ref{cond-long-m}). 
For any $X\in \Gamma \mathrm{pr}_{T} (L_{1} \cap L_{2} \cap \mathcal S^{\perp}_{\mathbb{C}})$, let
\begin{equation}\label{F-f}
\mathcal F_{f}(X) := [\mathrm{pr}_{T} (v_{f}) , X]^{(F,a)}  +\frac {XG(X_{0}, X_{0}) }{G(X_{0}, X_{0})} \mathrm{pr}_{T} (v_{f})
\end{equation}
By a standard computation, which uses relation (\ref{uvw}), we obtain that
$\mathcal F_{f}(X) $ is related to the left hand side $\mathcal E (X)$ of (\ref{v1}) by
\begin{align}
\nonumber \mathcal F_{f}(X) & = \mathcal E (X) + (1-\frac{1}{f^{2}}) \{ 
[\mathrm{pr}_{T} (\mathcal J_{3} X_{0}), X] +  \alpha (X) \mathrm{pr}_{T} ( \mathcal J_{3} X_{0})\} \\
\label{rond-e-f}& +\frac{ F(\mathrm{pr}_{T}( v_{f}), X)}{a} X_{0}.
\end{align}
We obtain that   
$[\mathrm{pr}_{T} (v_{f}), X ]^{(F,a)}\in \Gamma ( L_{1} \cap {L}_{2}^{\prime})$ if and only if
$\mathcal F_{f} (X) \in \Gamma (L_{1} \cap L_{2}^{\prime})$,  if and only if
(from  Lemma \ref{v1-vi})
\begin{equation}\label{cln}
 [\mathrm{pr}_{T} (\mathcal J_{3} X_{0}), X] +  \alpha (X) \mathrm{pr}_{T} ( \mathcal J_{3} X_{0})
+\frac{ f^{2}F(\mathrm{pr}_{T}( v_{f}), X)}{a(f^{2} -1)} X_{0}\in \Gamma \mathrm{pr}_{T} (L_{1} \cap L_{2}^{\prime}). 
\end{equation}
In order to conclude the proof  of claim i), it remains to show that 
 (\ref{cln}) is equivalent to (\ref{cond-long-m}).  In order to prove this, 
let $u\in \Gamma (L_{1} \cap L_{2}
\cap \mathcal S^{\perp}_{\mathbb{C}})$ which projects to $X.$  From (\ref{cln}), there is a $1$-form $\xi$ and $\lambda \in \mathbb{C}$ such that 
\begin{align*}
& [\mathcal J_{3} X_{0}, u] +  \alpha (X) \mathcal J_{3} X_{0}
&  +\frac{ f^{2}F(\mathrm{pr}_{T}( v_{f}), X)}{a(f^{2} -1)} X_{0} +\lambda v_{f} + \xi \in \Gamma (L_{1} \cap L_{2} 
\cap \mathcal S^{\perp}_{\mathbb{C}}).
\end{align*}
In particular,  the $\langle\cdot , \cdot\rangle$-inner product of the above expression with $df^{H}$ vanishes. 
From  $\langle [ \mathcal J_{3} X_{0}, u], df^{H} \rangle =0$ (which follows from
relation (\ref{uvw}),  with $u$ replaced by $\mathcal J_{3} X_{0}$, $v$ replaced by $u$ and $w$ replaced by $df^{H}$,
by using $[df^{H}, \mathcal J_{3} X_{0} ] =0$, from Lemma \ref{Courant-brackets}), 
we obtain  that $\lambda =0$, which implies (\ref{cond-long-m}). 
Claim i) follows.  Claim ii) can be proved in a similar way. More precisely, one shows that 
the condition $[X, Y]^{(F,a)}\in \Gamma \mathrm{pr}_{T} (L_{1} \cap \bar{L}^{\prime}_{2})$, for any
$X, Y\in \Gamma \mathrm{pr}_{T}(L_{1} \cap \bar{L}_{2}\cap \mathcal S^{\perp}_{\mathbb{C}})$ is equivalent to
$F=0$ on $\Lambda^{2}  \mathrm{pr}_{T} (L_{1} \cap \bar{L}_{2} \cap \mathcal S_{\mathbb{C}}^{\perp})$.
Then, for any $X\in \Gamma \mathrm{pr}_{T}(L_{1} \cap\bar{ L}_{2}\cap \mathcal S^{\perp}_{\mathbb{C}})$, one defines
\begin{equation}\label{f-F-prime}
\mathcal F_{f}^{\prime}(X) := [\mathrm{pr}_{T} (v_{if}) , X]^{(F,a)}  +\frac {XG(X_{0}, X_{0}) }{G(X_{0}, X_{0})} \mathrm{pr}_{T} (v_{if})
\end{equation}
and shows that it is related to the left hand side $\mathcal E^{\prime}(X)$ of (\ref{vi})  
by
\begin{align}
\nonumber \mathcal F^{\prime}_{f}(X) & = \mathcal E^{\prime} (X) + (\frac{1}{f^{2}}-1) \{ 
[\mathrm{pr}_{T} (\mathcal J_{3} X_{0}), X] +  \alpha (X) \mathrm{pr}_{T} ( \mathcal J_{3} X_{0})\} \\
\label{rond-e-f}& +\frac{ F(\mathrm{pr}_{T}( v_{if}), X)}{a} X_{0}.
\end{align}
From Lemma \ref{v1-vi}, $\mathcal E^{\prime}(X)\in \Gamma \mathrm{pr}_{T} (L_{1} \cap \bar{L}_{2} \cap \mathcal S^{\perp}_{\mathbb{C}})$. Thus, 
$[ \mathrm{pr}_{T} (v_{if}), X ]^{(F,a)} \in \Gamma \mathrm{pr}_{T} (L_{1} \cap\bar{ L}_{2}^{\prime})$
if and only if 
\begin{equation}\label{cond-long-n}
 [\mathrm{pr}_{T} (\mathcal J_{3} X_{0}), X] +  \alpha (X) \mathrm{pr}_{T} ( \mathcal J_{3} X_{0})
- \frac{ f^{2}F(\mathrm{pr}_{T}( v_{if}), X)}{a( f^{2}-1 )} X_{0}\in \Gamma \mathrm{pr}_{T} (L_{1} \cap\bar{ L}_{2}^{\prime})
\end{equation}
and as before one can show that this is equivalent to (\ref{cond-long-1-m}). 
\end{proof}

\begin{rem}\label{conditii-echivalente} {\rm 
The proof of the above lemma shows that 
if $\mathrm{pr}_{T} (L_{1} \cap L_{2}^{\prime})$ and $\mathrm{pr}_{T} (L_{1} \cap \bar{L}_{2}^{\prime })$
are $(F, a)$-involutive,  then, for every
 $X\in \Gamma \mathrm{pr}_{T} (L_{1}\cap {L}_{2} \cap \mathcal S_{\mathbb{C}}^{\perp})$,
\begin{equation}\label{cond-long}
 [\mathrm{pr}_{T} (v_{f}), X]^{(F,a)} +\frac{ XG(X_{0}, X_{0})}{G(X_{0}, X_{0})} \mathrm{pr}_{T} (v_{f}) \in \Gamma 
\mathrm{pr}_{T}(L_{1} \cap L_{2} \cap \mathcal S_{\mathbb{C}}^{\perp})
\end{equation}
and for every  $X\in \Gamma \mathrm{pr}_{T} (L_{1}\cap \bar{L}_{2} \cap \mathcal S_{\mathbb{C}}^{\perp})$, 
\begin{equation}\label{cond-long-1}
 [\mathrm{pr}_{T} (v_{if}), X] ^{(F,a)}+\frac{ XG(X_{0}, X_{0})}{G(X_{0}, X_{0})} \mathrm{pr}_{T} (v_{if}) \in \Gamma 
\mathrm{pr}_{T}(L_{1} \cap \bar{L}_{2} \cap \mathcal S_{\mathbb{C}}^{\perp}).
\end{equation}}
\end{rem}

Next, we assume  that  the bundles $\mathrm{pr}_{T} (L_{1} \cap L_{2}^{\prime})$ and 
$\mathrm{pr}_{T} (L_{1} \cap \bar{L}^{\prime}_{2})$ are $(F,a)$-involutive.
Under these assumptions, we will prove that the conditions
(\ref{f-a-inv}) and (\ref{f-a-inv-1}) from Theorem \ref{prop-twist}, applied to 
$L_{1}$ and $L_{2}^{\prime}$,  are  satisfied. That is, we aim to show that
for any  $X\in \Gamma  \mathrm{pr}_{T} (L_{1} \cap L^{\prime}_{2})$ and $Y\in \Gamma 
\mathrm{pr}_{T} (\bar{L}_{1} \cap L_{2}^{\prime})$,
\begin{equation}\label{f-a-inv-new}
[X, Y]^{(F,a)} \in \Gamma ( E_{1} + E_{2}^{\prime})
\end{equation}
and for any  $X\in \Gamma  \mathrm{pr}_{T} (L_{1} \cap \bar{L}_{2}^{\prime})$ and $ Y\in \Gamma \mathrm{pr}_{T}(\bar{L}_{1}\cap 
\bar{L}_{2}^{\prime})$, 
\begin{equation}\label{f-a-inv-1-new}
[X, Y]^{(F,a)}\in \Gamma ( E_{1} + \bar{E}_{2}^{\prime}). 
\end{equation}

This will be done in Lemma \ref{inv-cons} and in Corollary 
\ref{inv-cons-cor} below, by  analysing  
how the map $(X, Y) \rightarrow [X,Y]^{(F,a)}$  behaves with respect to the decompositions of  
$\mathrm{pr}_{T}(L_{1}\cap L_{2}^{\prime})$, $\mathrm{pr}_{T}(\bar{L}_{1}\cap L_{2}^{\prime})$
and $E_{1} + E_{2}^{\prime}$ 
(for condition (\ref{f-a-inv-new})) 
and  how it behaves with respect to the decompositions of 
$\mathrm{pr}_{T}(L_{1}\cap \bar{L}_{2}^{\prime})$, $\mathrm{pr}_{T}(\bar{L}_{1}\cap \bar{L}_{2}^{\prime})$
and $E_{1} + \bar{E}_{2}^{\prime}$ (for condition (\ref{f-a-inv-1-new})).
Recall the decompositions (\ref{dec-direct}) of the bundles $\mathrm{pr}_{T} (L_{1} \cap L_{2}^{\prime})$ and $\mathrm{pr}_{T}(L_{1} \cap \bar{L}_{2}^{\prime})$.
By conjugation,  $\mathrm{pr}_{T} (\bar{L}_{1} \cap \bar{L}_{2}^{\prime})$ and 
$\mathrm{pr}_{T}(\bar{L}_{1} \cap {L}_{2}^{\prime})$ decompose  similarly:
\begin{align}
\nonumber &\mathrm{pr}_{T} (\bar{L}_{1} \cap \bar{L}_{2}^{\prime}) 
=\mathrm{span}_{\mathbb{C}} \{ \mathrm{pr}_{T}(\bar{v}_{f} ) \} +  \mathrm{pr}_{T} (\bar{L}_{1} \cap \bar{L}_{2} \cap 
\mathcal S_{\mathbb{C}}^{\perp})\\
\label{dec-direct-conj} &\mathrm{pr}_{T} (\bar{L}_{1} \cap {L}_{2}^{\prime}) 
=\mathrm{span}_{\mathbb{C}} \{ \mathrm{pr}_{T} ( \bar{v}_{if})  \} 
+ \mathrm{pr}_{T} (\bar{L}_{1} \cap {L}_{2} \cap \mathcal S_{\mathbb{C}}^{\perp})
\end{align}
(direct sum decompositions). 
From Remark \ref{prel-deco} i) and $v_{f}, v_{if} \in L_{1}$, we obtain
\begin{align}
\nonumber  E_{1}+ E_{2}^{\prime} &= E_{1}+ \mathrm{pr}_{T} ( L_{2} \cap 
\mathcal S_{\mathbb{C}}^{\perp}) + \mathrm{span}_{\mathbb{C}} \{ \mathrm{pr}_{T} (\bar{v}_{if}) \} \\
\label{desc-new}E_{1}+ \bar{E}_{2}^{\prime} &=  E_{1}+ \mathrm{pr}_{T} ( \bar{L}_{2} \cap \mathcal S_{\mathbb{C}}^{\perp}) +
 \mathrm{span}_{\mathbb{C}}
 \{ \mathrm{pr}_{T} (\bar{v}_{f}) \} . 
\end{align}
Also, 
\begin{equation}\label{X-vf}
X_{0}=\frac{1}{2}  \mathrm{pr}_{T} (v_{f} + \bar{v}_{if}) \in E_{2}^{\prime} \cap \bar{E}_{2}^{\prime}
\end{equation}
and, from  Corollary \ref{vfif} i),
\begin{equation}\label{v-f-if}
[ \mathrm{pr}_{T} (v_{f}), \mathrm{pr}_{T} (\bar{v}_{if}) ] ^{(F,a)}= \frac{F(\mathrm{pr}_{T} (v_{f}), \mathrm{pr}_{T}( \bar{v}_{if}))}{a} X_{0}.
\end{equation}

\begin{lem}\label{inv-cons} Suppose that $\mathrm{pr}_{T} (L_{1} \cap L_{2}^{\prime})$ and $\mathrm{pr}_{T} (L_{1} 
\cap \bar{L}_{2}^{\prime})$ are $(F,a)$-involutive. The following statements hold:\

i)   For any $X\in \Gamma \mathrm{pr}_{T}( {L}_{1} \cap L_{2} \cap \mathcal S_{\mathbb{C}}^{\perp})$, 
\begin{align}
\nonumber & [\mathrm{pr}_{T} (\bar{v}_{if}), X]^{(F,a)} -\frac{ XG(X_{0}, X_{0})}{G(X_{0}, X_{0})}
\mathrm{pr}_{T} ( {v}_{f}) -\frac { 2F(X_{0}, X)}{a} X_{0}\\
\label{aut2} & \in \Gamma \mathrm{pr}_{T}(L_{1} \cap L_{2} \cap
\mathcal S_{\mathbb{C}}^{\perp}).
\end{align}

ii) For any $X\in \Gamma \mathrm{pr}_{T}( \bar{L}_{1} \cap L_{2} \cap \mathcal S_{\mathbb{C}}^{\perp})$, 
\begin{align}
\nonumber& [\mathrm{pr}_{T} (v_{f}), X]^{(F,a)} -
\frac{ XG(X_{0}, X_{0})}{G(X_{0}, X_{0})} \mathrm{pr}_{T} ( \bar{v}_{if}) -\frac { 2F(X_{0}, X)}{a} X_{0}\\
\label{aut1}&   \in 
\Gamma \mathrm{pr}_{T} (\bar{L}_{1} \cap L_{2} \cap \mathcal S_{\mathbb{C}}^{\perp}).
\end{align}
\end{lem}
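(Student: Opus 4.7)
The plan is to exploit the algebraic identity
\[
\mathrm{pr}_{T}(\bar{v}_{if}) + \mathrm{pr}_{T}(v_{f}) = 2 X_{0},
\]
which follows directly from the definitions (\ref{v-f}) together with the Hamiltonian property $\mathcal{J}_{2}X_{0} = df^{H}\in\Omega^{1}(M)$: the only non-tangent contribution to $v_{f} + \bar{v}_{if}$ comes from the $\mathcal{J}_{2}X_{0}$ terms, which vanish under $\mathrm{pr}_{T}$. Using this identity and the definition of the $(F,a)$-bracket, I will rewrite
\[
[\mathrm{pr}_{T}(\bar{v}_{if}), X]^{(F,a)} = 2[X_{0}, X] + \frac{2 F(X_{0}, X)}{a}\, X_{0} - [\mathrm{pr}_{T}(v_{f}), X]^{(F,a)}.
\]

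For part i), given $X\in\Gamma\,\mathrm{pr}_{T}(L_{1}\cap L_{2}\cap\mathcal{S}^{\perp}_{\mathbb{C}})$, I would lift $X$ to a section $u\in\Gamma(L_{1}\cap L_{2}\cap\mathcal{S}^{\perp}_{\mathbb{C}})$ and handle the two remaining terms separately. The first, $2[X_{0}, X]$, lies in $\mathrm{pr}_{T}(L_{1}\cap L_{2}\cap\mathcal{S}^{\perp}_{\mathbb{C}})$: Lemma \ref{dec-courant} together with $\mathcal{L}_{X_{0}}\mathcal{J}_{2}=0$ (itself a consequence of $\mathcal{L}_{X_{0}} G=0$) shows that $L_{X_{0}}$ preserves this bundle, and since $\langle X_{0}, u\rangle = 0$ one has $[X_{0}, X] = \mathrm{pr}_{T}(L_{X_{0}} u)$. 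The second term is controlled by Remark \ref{conditii-echivalente}, applicable because the standing hypothesis of the lemma is precisely the $(F,a)$-involutivity of $\mathrm{pr}_{T}(L_{1}\cap L_{2}^{\prime})$ and $\mathrm{pr}_{T}(L_{1}\cap \bar{L}_{2}^{\prime})$; it yields
\[
[\mathrm{pr}_{T}(v_{f}), X]^{(F,a)} + \frac{X G(X_{0}, X_{0})}{G(X_{0}, X_{0})}\, \mathrm{pr}_{T}(v_{f}) \in \Gamma\,\mathrm{pr}_{T}(L_{1}\cap L_{2}\cap\mathcal{S}^{\perp}_{\mathbb{C}}),
\]
and subtracting this from the rewriting above produces precisely the element of (\ref{aut2}) modulo the target bundle.

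Part ii) proceeds by the symmetric manoeuvre: the same identity now rewrites $\mathrm{pr}_{T}(v_{f}) = 2 X_{0} - \mathrm{pr}_{T}(\bar{v}_{if})$, giving
\[
[\mathrm{pr}_{T}(v_{f}), X]^{(F,a)} = 2[X_{0}, X] + \frac{2 F(X_{0}, X)}{a}\, X_{0} - [\mathrm{pr}_{T}(\bar{v}_{if}), X]^{(F,a)}.
\]
Now $X \in \mathrm{pr}_{T}(\bar{L}_{1}\cap L_{2}\cap\mathcal{S}^{\perp}_{\mathbb{C}})$, so Lemma \ref{dec-courant} (applied to $\bar{L}_{1}$ and $L_{2}$) again places $[X_{0}, X]$ in the target bundle. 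For the $\bar{v}_{if}$-term I would invoke the complex conjugate of the second relation of Remark \ref{conditii-echivalente}, which is available because the $(F,a)$-involutivity hypothesis is stable under conjugation; it gives $[\mathrm{pr}_{T}(\bar{v}_{if}), X]^{(F,a)} + \frac{X G(X_{0}, X_{0})}{G(X_{0}, X_{0})}\mathrm{pr}_{T}(\bar{v}_{if}) \in \Gamma\,\mathrm{pr}_{T}(\bar{L}_{1}\cap L_{2}\cap\mathcal{S}^{\perp}_{\mathbb{C}})$, and assembling the pieces yields (\ref{aut1}).

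I do not anticipate a serious obstacle: once the key identity $\mathrm{pr}_{T}(\bar{v}_{if}) + \mathrm{pr}_{T}(v_{f}) = 2 X_{0}$ is noticed, everything reduces to Lemma \ref{dec-courant}, Remark \ref{conditii-echivalente}, and the Leibniz rule for the $(F,a)$-bracket. The two minor subtleties to verify are that the preservation of $L_{i}\cap\mathcal{S}^{\perp}_{\mathbb{C}}$ by $L_{X_{0}}$ extends to the intersection $L_{1}\cap L_{2}\cap\mathcal{S}^{\perp}_{\mathbb{C}}$ (using $\mathcal{L}_{X_{0}}\mathcal{J}_{i}=0$ for both $i$), and that a conjugated version of Remark \ref{conditii-echivalente} is needed for part ii).
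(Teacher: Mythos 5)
Your proof is correct and follows essentially the same route as the paper: the identity $\mathrm{pr}_{T}(v_{f})+\mathrm{pr}_{T}(\bar{v}_{if})=2X_{0}$ (recorded in the paper as equation (\ref{X-vf})) is precisely what underlies the paper's ``straightforward computation'' $\mathcal R_{f}(X)=-\mathcal F_{f}(X)+2[X_{0},X]$, after which both arguments conclude via Lemma \ref{dec-courant} and Remark \ref{conditii-echivalente} (conjugated for part ii)). You have merely made the paper's unstated computation explicit.
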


\begin{proof} 
Let $X\in \Gamma \mathrm{pr}_{T} (L_{1} \cap L_{2}
\cap \mathcal S^{\perp}_{\mathbb{C}})$ and $\mathcal R_{f}(X)$ be the expression from the left hand side  of (\ref{aut2}). 
Recall  the expression $\mathcal F_{f}(X)$ defined in (\ref{F-f}). 
A straightforward computation shows that 
$$
\mathcal R_{f}(X) = - \mathcal F_{f}(X) + 2 [X_{0}, X].
$$
From Lemmas \ref{dec-courant}
and (the first relation of) Remark \ref{conditii-echivalente},  $[X_{0}, X]$ and   $\mathcal F_{f}(X)$ are sections of   
$ \mathrm{pr}_{T} (L_{1} \cap L_{2} \cap \mathcal S^{\perp}_{\mathbb{C}})$. Relation 
 (\ref{aut2}) follows. Relation (\ref{aut1}) is obtained similarly, by 
 comparering the
 the left hand side of (\ref{aut1})  with the
conjugate of $\mathcal F^{\prime}_{f}(X)$, defined in (\ref{f-F-prime}),
and using (the second relation of)  Remark \ref{conditii-echivalente}. 
\end{proof}

\begin{cor}\label{inv-cons-cor}
Suppose that $\mathrm{pr}_{T} (L_{1} \cap L_{2}^{\prime})$ and $\mathrm{pr}_{T} (L_{1} 
\cap \bar{L}_{2}^{\prime})$ are $(F,a)$-involutive. 
Then  (\ref{f-a-inv-new}) and  (\ref{f-a-inv-1-new})  are satisfied. 
\end{cor}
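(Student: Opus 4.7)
The plan is to verify both (\ref{f-a-inv-new}) and (\ref{f-a-inv-1-new}) by expanding the arguments along the direct-sum decompositions (\ref{dec-direct}) and (\ref{dec-direct-conj}) and testing the twisted bracket $[\cdot,\cdot]^{(F,a)}$ on each resulting pair of distinguished generators. Leibniz-type terms arising from scalar rescalings of the generators will fall into $\mathrm{span}_{\mathbb{C}}\{\mathrm{pr}_{T}(v_{f}),\mathrm{pr}_{T}(\bar v_{if})\}+\mathrm{pr}_{T}(L_{2}\cap\mathcal{S}_{\mathbb{C}}^{\perp})\subset E_{2}'$ (and analogously in the conjugate setting), so they are harmless; the verification thus reduces to finitely many model bracket identities.

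For (\ref{f-a-inv-new}) I would write $X=\lambda\,\mathrm{pr}_{T}(v_{f})+X_{\perp}$ and $Y=\mu\,\mathrm{pr}_{T}(\bar v_{if})+Y_{\perp}$ with $X_{\perp}\in\Gamma\mathrm{pr}_{T}(L_{1}\cap L_{2}\cap\mathcal{S}_{\mathbb{C}}^{\perp})$ and $Y_{\perp}\in\Gamma\mathrm{pr}_{T}(\bar L_{1}\cap L_{2}\cap\mathcal{S}_{\mathbb{C}}^{\perp})$, obtaining four cross-brackets. Three are immediate: the bracket $[\mathrm{pr}_{T}(v_{f}),\mathrm{pr}_{T}(\bar v_{if})]^{(F,a)}$ collapses by (\ref{v-f-if}) to a scalar multiple of $X_{0}$, which lies in $E_{2}'$ by (\ref{X-vf}); $[\mathrm{pr}_{T}(v_{f}),Y_{\perp}]^{(F,a)}$ is controlled by Lemma \ref{inv-cons} ii), which places it in $\mathrm{pr}_{T}(L_{1}\cap L_{2}\cap\mathcal{S}_{\mathbb{C}}^{\perp})+\mathrm{span}_{\mathbb{C}}\{\mathrm{pr}_{T}(\bar v_{if}),X_{0}\}\subset E_{2}'$; and by antisymmetry and Lemma \ref{inv-cons} i), the bracket $[X_{\perp},\mathrm{pr}_{T}(\bar v_{if})]^{(F,a)}$ lies in $\mathrm{pr}_{T}(L_{1}\cap L_{2}\cap\mathcal{S}_{\mathbb{C}}^{\perp})+\mathrm{span}_{\mathbb{C}}\{\mathrm{pr}_{T}(v_{f}),X_{0}\}\subset E_{1}+E_{2}'$.

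The main obstacle is the fourth case, $[X_{\perp},Y_{\perp}]^{(F,a)}$, where neither slot is a distinguished generator. Since $X_{\perp},Y_{\perp}\in\Gamma E_{2}$ and $L_{2}$ is Courant integrable (being part of the generalized K\"ahler datum), $E_{2}$ is involutive and $[X_{\perp},Y_{\perp}]\in\Gamma E_{2}$; the $F$-correction $a^{-1}F(X_{\perp},Y_{\perp})X_{0}$ already belongs to $E_{2}'$. It therefore suffices to establish the inclusion $E_{2}\subset E_{1}+E_{2}'$. Starting from $\mathrm{pr}_{T}(v_{f})=X_{0}-i\,\mathrm{pr}_{T}(\mathcal{J} X_{0})-f^{-2}\,\mathrm{pr}_{T}(\mathcal{J}_{3}X_{0})\in E_{1}\cap E_{2}'$ together with $\mathrm{pr}_{T}(X_{0}-i\mathcal{J} X_{0})\in E_{1}$, one solves for $\mathrm{pr}_{T}(\mathcal{J}_{3}X_{0})\in E_{1}+E_{2}'$; combined with the elementary inclusion $\mathrm{pr}_{T}(L_{2}\cap\mathcal{S}_{\mathbb{C}}^{\perp})\subset E_{2}'$, this gives the desired inclusion.

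The argument for (\ref{f-a-inv-1-new}) will proceed in exactly the same way after complex conjugation: the generators $v_{f},\bar v_{if}$ are replaced by $v_{if},\bar v_{f}$, Lemma \ref{inv-cons} is applied to its complex conjugate (valid because $F$, $X_{0}$ and $G(X_{0},X_{0})$ are real), and the analogue of the hard case uses Courant integrability of $\bar L_{2}$ together with $\mathrm{pr}_{T}(v_{if})\in E_{1}\cap\bar E_{2}'$ to produce $\bar E_{2}\subset E_{1}+\bar E_{2}'$. The conceptual point is that although $L_{2}'$ is not a priori Courant integrable, the elementary deformation perturbs $E_{2}$ only inside $\mathcal{S}_{\mathbb{C}}$, and the new sections $\mathrm{pr}_{T}(v_{f}),\mathrm{pr}_{T}(v_{if})\in E_{1}\cap E_{2}'$ absorb this perturbation, allowing the integrability of the original $L_{2}$ and $\bar L_{2}$ to be imported to close the argument.
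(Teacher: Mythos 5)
Your proposal is correct. For the terms involving the distinguished generators $\mathrm{pr}_{T}(v_{f})$, $\mathrm{pr}_{T}(\bar v_{if})$ (and their conjugates) you use exactly the same inputs as the paper, namely Corollary \ref{vfif} via (\ref{v-f-if}) and Lemma \ref{inv-cons}, so that part coincides (modulo a harmless slip: Lemma \ref{inv-cons} ii) places the remainder of $[\mathrm{pr}_{T}(v_{f}),Y_{\perp}]^{(F,a)}$ in $\mathrm{pr}_{T}(\bar L_{1}\cap L_{2}\cap\mathcal S_{\mathbb{C}}^{\perp})$, not $\mathrm{pr}_{T}(L_{1}\cap L_{2}\cap\mathcal S_{\mathbb{C}}^{\perp})$; both lie in $E_{2}^{\prime}$, so nothing breaks). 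Where you genuinely diverge is the remaining case $[X_{\perp},Y_{\perp}]^{(F,a)}$ with both arguments in the $\mathcal S_{\mathbb{C}}^{\perp}$-components. The paper proves the sharper identity (\ref{aaa}): it uses that $[w_{1},w_{2}]$ is a section of $L_{2}\cap\{df^{H}\}^{\perp}$ (Courant integrability of $\{df^{H}\}^{\perp}$, from \cite{tolman}) together with the $G$-inner-product computations (\ref{scal1}) to land the bracket in $\mathrm{pr}_{T}(L_{2}\cap\mathcal S_{\mathbb{C}}^{\perp})+\mathbb{C}X_{0}\subset E_{2}^{\prime}$; this finer formula is then reused as (\ref{ec-w}) in the proof of Lemma \ref{comput1}. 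You instead aim only at the weaker target $E_{1}+E_{2}^{\prime}$, which is all the corollary requires, and reach it via the involutivity of $E_{2}$ plus the purely algebraic inclusion $E_{2}\subset E_{1}+E_{2}^{\prime}$, obtained by solving for $\mathrm{pr}_{T}(\mathcal J_{3}X_{0})$ from $\mathrm{pr}_{T}(v_{f})\in E_{1}\cap E_{2}^{\prime}$ and $\mathrm{pr}_{T}(X_{0}-i\mathcal J X_{0})\in E_{1}$ (using $\mathrm{pr}_{T}(\mathcal J_{2}X_{0})=0$ and $f\neq 0$). That inclusion is valid, and your route is more elementary, avoiding the moment-map and inner-product computation entirely; what it gives up is the explicit decomposition of the bracket that the paper exploits again later in Section \ref{diff-section}.
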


\begin{proof}  
From relations (\ref{cond-long}), (\ref{cond-long-1}), 
(\ref{v-f-if}),  (\ref{aut2}),  (\ref{aut1}),  their conjugates,  
and  the decompositions (\ref{dec-direct}), (\ref{dec-direct-conj}), 
(\ref{desc-new}), 
it remains to prove two more statements:\

a)  $[X,Y]^{(F,a)}$ is a section of $E_{1} + E_{2}^{\prime}$, for any 
$X\in \Gamma (L_{1}\cap L_{2} \cap \mathcal S_{\mathbb{C}}^{\perp})$ and   $Y\in \Gamma (\bar{L}_{1} \cap L_{2} \cap \mathcal S_{\mathbb{C}}^{\perp})$;\

b)  $[X,Y]^{(F,a)}$ is a section of $E_{1} + \bar{E}_{2}^{\prime}$, for any 
$X\in \Gamma (L_{1}\cap \bar{L}_{2} \cap \mathcal S_{\mathbb{C}}^{\perp})$ and 
$Y\in \Gamma (\bar{L}_{1} \cap \bar{L}_{2} \cap \mathcal S_{\mathbb{C}}^{\perp})$.\

To prove these claims, we will show  that 
\begin{equation}\label{aaa}
[X,  Y]^{(F,a)} = \mathrm{pr}_{T}( [w_{1}, w_{2}]^{\perp} )
+\left( \frac{ G( [w_{1}, w_{2}],X_{0})}{G(X_{0}, X_{0})}+ \frac{ F(X,Y)}{a}\right)X_{0},
\end{equation}
for any  $w_{1} \in \Gamma (L_{1} \cap L_{2} \cap \mathcal S_{\mathbb{C}}^{\perp})$ and 
$w_{2} \in \Gamma ( \bar{L}_{1} \cap L_{2} \cap \mathcal S_{\mathbb{C}}^{\perp})$. Let 
$X:= \mathrm{pr}_{T}(w_{1})$ and  $Y:= \mathrm{pr}_{T} (w_{2}).$  
In order to prove relation (\ref{aaa}),   we remark that 
 $[w_{1}, w_{2} ] \in \Gamma ( L_{2} \cap \{ df^{H}\}^{\perp})$ (as $L_{2}$ and $\{ df^{H}\}^{\perp}$ are Courant integrable). 
This implies, using $df^{H} =\mathcal J_{2} X_{0}$,  
\begin{align}
\nonumber& G( [w_{1}, w_{2}], \mathcal J X_{0})  = G( [w_{1}, w_{2}], \mathcal J_{3} X_{0}) =0;\\
\label{scal1} & G( [w_{1}, w_{2}] , \mathcal J_{2} X_{0}) = - G( \mathcal J_{2} [w_{1}, w_{2} ], X_{0}) = - i G([w_{1}, w_{2}], X_{0}).
\end{align}
From (\ref{scal1}) we deduce that
\begin{equation}\label{j}
[w_{1}, w_{2}] = [w_{1}, w_{2}]^{\perp} +\frac{ G( [w_{1}, w_{2}],X_{0})}{G(X_{0}, X_{0})}
(X_{0}- i \mathcal J_{2} X_{0}),
\end{equation}
which  implies (\ref{aaa}).  
Note that $\mathrm{pr}_{T}(  [w_{1}, w_{2} ]^{\perp})\in  
\Gamma \mathrm{pr}_{T} ( L_{2} \cap \mathcal S^{\perp}_{\mathbb{C}})$.
From 
(\ref{X-vf})  and (\ref{aaa}) we obtain
\begin{equation}\label{j-1}
[X,Y]^{(F,a)}\in \Gamma (\mathrm{pr}_{T} (L_{2}\cap \mathcal S_{\mathbb{C}}^{\perp}) + E_{2} ^{\prime}\cap \bar{E}_{2}^{\prime}).
\end{equation}
As $\mathrm{pr}_{T}( L_{2} \cap \mathcal S^{\perp}_{\mathbb{C}}) \subset E_{2}^{\prime}$,  
the statements a) and b) follow from 
(\ref{j-1}) (and its conjugate).
\end{proof}

The proof of Proposition \ref{first-aim} is now completed.

\subsubsection{The differential equations on forms}\label{diff-section}

We  consider the setting from Theorem \ref{gen-KK} and we assume that
conditions i) and ii) from this theorem are satisfied. 
From the previous section, this means that condition
i)  from Proposition \ref{prop-twist}, applied to the generalized almost complex
structure $(G^{\prime}, \mathcal J)$, is  satisfied.
Let $\epsilon^{\prime}\in \Gamma (\mathrm{pr}_{T} ( L_{1} \cap L_{2}^{\prime})\wedge (E_{1}+ E_{2}^{\prime}))$ and 
$\tilde{\epsilon}^{\prime}\in \Gamma (\mathrm{pr}_{T} ( L_{1} \cap \bar{L}_{2}^{\prime})\wedge (E_{1}+ \bar{E}_{2}^{\prime}))$ 
be the two $2$-forms associated to  $(G^{\prime}, \mathcal J )$,
as  at the beginning of Section \ref{ah-section}.
Relations (\ref{f-a-herm-1}) from Theorem \ref{prop-twist}, 
 applied to the generalized almost Hermitian structure $(G^{\prime}, \mathcal J )$ and function
$h$, are
\begin{align}
\nonumber& d^{(F,a)} \epsilon_{1} =\frac{2}{h} \epsilon_{1} \wedge dh\vert_{E_{1}}\\
\nonumber& d^{(F,a)} \epsilon^{\prime} = \frac{2}{h} \epsilon^{\prime}\wedge dh\ \mathrm{on} \ \Lambda^{2} 
\mathrm{pr}_{T}(L_{1} \cap L_{2}^{\prime}) \wedge 
\mathrm{pr}_{T}(\bar{L}_{1} \cap L_{2}^{\prime})\\
\label{prime-appl}& d^{(F,a)}\tilde{ \epsilon}^{\prime} = \frac{2}{h} \epsilon^{\prime}\wedge dh\ \mathrm{on} \ 
\Lambda^{2} \mathrm{pr}_{T}(L_{1} \cap \bar{L}_{2}^{\prime}) \wedge 
\mathrm{pr}_{T}(\bar{L}_{1} \cap \bar{L}_{2}^{\prime}).  
\end{align}
In order to conclude the proof of Theorem \ref{gen-KK} it remains to show
(using the material from the previous section) that  
relations (\ref{prime-appl}) 
are equivalent to the remaining conditions iii), iv) and v) from Theorem \ref{gen-KK}. 
This will be done in this section.
Since $\mathcal J $ is integrable, 
$E_{1}$ is involutive and $d\epsilon_{1}=0$.
From relation  (\ref{alpha-p}) 
applied to $\alpha :=\epsilon_{1}$, we obtain that 
the first relation
(\ref{prime-appl}) is equivalent to the first relation 
(\ref{cond-e}).
 From now on we assume that these two equivalent relations 
hold.

\begin{prop}\label{final}
In this setting,  the second and third relation (\ref{prime-appl}) are equivalent to 
relation (\ref{log}), the second relation (\ref{cond-e}) and  relation (\ref{cond-de}).
\end{prop}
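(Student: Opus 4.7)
The plan is to test each of the two displayed differential identities in (\ref{prime-appl}) on an adapted basis of the relevant tangent bundles, using Lemma \ref{elem-bundles} and its conjugate. Concretely, $\mathrm{pr}_T(L_1 \cap L_2') = \mathbb{C}\,\mathrm{pr}_T(v_f) \oplus \mathrm{pr}_T(L_1 \cap L_2 \cap \mathcal{S}_\mathbb{C}^\perp)$ and $\mathrm{pr}_T(\bar L_1 \cap L_2') = \mathbb{C}\,\mathrm{pr}_T(\bar v_{if}) \oplus \mathrm{pr}_T(\bar L_1 \cap L_2 \cap \mathcal{S}_\mathbb{C}^\perp)$. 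A triple $(X,Y,Z)$ on which the second relation of (\ref{prime-appl}) is evaluated therefore breaks into sub-cases according to which summand of these decompositions each argument lies in, and I would show that each sub-case reduces to exactly one of the three conditions (\ref{log}), the second equation in (\ref{cond-e}), or (\ref{cond-de}).

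First I would handle the purely transverse sub-case $X,Y \in \mathrm{pr}_T(L_1\cap L_2\cap\mathcal{S}_\mathbb{C}^\perp)$, $Z \in \mathrm{pr}_T(\bar L_1\cap L_2\cap \mathcal{S}_\mathbb{C}^\perp)$. By Remark \ref{prel-deco} i), $\epsilon'$ restricts here to $\epsilon$ (which is just $\epsilon_2$ on the second argument). The integrability of $\mathcal J_2$ gives $d\epsilon_2 = 0$, and the vanishing of $F$ on $\Lambda^2 \mathrm{pr}_T(L_1 \cap L_2 \cap \mathcal{S}_\mathbb{C}^\perp)$ from condition i) of Theorem \ref{gen-KK} kills the curvature correction in $d^{(F,a)}$. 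Hence the identity collapses to the wedge relation $\epsilon_2 \wedge dh = 0$ on the triple product in question, which is exactly the second line of (\ref{cond-e}).

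Next I would treat the mixed sub-cases where exactly one of the three arguments equals $\mathrm{pr}_T(v_f)$ or $\mathrm{pr}_T(\bar v_{if})$, as well as the sub-case where both appear simultaneously. In the first kind, I would expand $d^{(F,a)}\epsilon'$ using (\ref{cond2}) and substitute the Courant brackets from Lemmas \ref{Courant-brackets} and \ref{v1-vi}, together with $\mathcal{J}_2 X_0 = df^H$ and their $(F,a)$-twisted counterparts. The derivative along $\mathrm{pr}_T(\mathcal{J}_3 X_0)$ combined with the derivations $[\mathrm{pr}_T(\mathcal{J}_3 X_0),\,\cdot\,] + \alpha(\cdot)\mathrm{pr}_T(\mathcal{J}_3 X_0)$ on the remaining arguments, which are well-defined by conditions i) and ii) of Theorem \ref{gen-KK}, assemble $\mathcal{D}_{\mathrm{pr}_T(\mathcal{J}_3 X_0)}\epsilon_2$ as in (\ref{dd-e2}); the factor $\tfrac{1-f^2}{f^2 G(X_0,X_0)}$ multiplying $\mathrm{pr}_{T^*}(\mathcal{J}_3 X_0)$ comes from unwinding $-\alpha$; the curvature terms produce $\tfrac{2}{af^2}F$; and the right-hand side $\tfrac{2}{h}\epsilon'\wedge dh$ provides the right-hand side of (\ref{cond-de}). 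In the second kind of mixed sub-case, one uses (\ref{courant-v}) from Corollary \ref{vfif} and simplifies via $da = -i_{X_0}F$ and the Hamiltonian identities; the non-trivial content collapses to $Z(af^2h^2) = 0$ for $Z \in \mathrm{pr}_T(\mathcal{S}^\perp)$, i.e.\ (\ref{log}).

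The analysis of the third relation in (\ref{prime-appl}), for $\tilde\epsilon'$ over $\bar L_2'$, proceeds by an entirely parallel decomposition with $v_f$ replaced by $v_{if}$ throughout and yields the same three conditions on the appropriate $\mathcal{S}^\perp$ components. The main obstacle I expect is the mixed sub-case of the previous paragraph: one must verify that the six terms of (\ref{cond2}) for $d^{(F,a)}\epsilon'$, combined with $\tfrac{2}{h}\epsilon'\wedge dh$, reassemble into exactly the four-term expression on the left of (\ref{cond-de}). This demands systematic use of the pairings in Remark \ref{prel-deco} i), and of the fact that $\epsilon_2'$ and $\epsilon_2$ coincide on $\mathrm{pr}_T(L_2 \cap \mathcal{S}_\mathbb{C}^\perp)$ while differing by explicit $f$-dependent terms in the $v_f, \bar v_{if}$ directions; the $f$-weights must cancel in precisely the pattern that produces $\tfrac{f^2-1}{f^2 G(X_0,X_0)}$ in (\ref{cond-de}).
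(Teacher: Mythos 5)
Your proposal is correct and follows essentially the same route as the paper: the paper likewise evaluates the second and third relations (\ref{prime-appl}) case by case on the direct-sum decompositions of $\mathrm{pr}_{T}(L_{1}\cap L_{2}^{\prime})$, $\mathrm{pr}_{T}(\bar{L}_{1}\cap L_{2}^{\prime})$ and their conjugates (Lemmas \ref{comput1}--\ref{exterior2}), matching the purely transverse triple to the second relation (\ref{cond-e}), the triple containing both $\mathrm{pr}_{T}(v_{f})$ and $\mathrm{pr}_{T}(\bar{v}_{if})$ to (\ref{log}), and the triples containing exactly one of them to (\ref{cond-de}). The computational step you flag as the main obstacle is handled in the paper by first deriving the analogue of (\ref{cond-de}) with the identity (\ref{caseiii}) and then subtracting the same identity evaluated at $f=1$, $F=0$ (where it reduces to $d\epsilon_{2}=0$), which is the cleanest way to make the $f$-weights cancel as you anticipate.
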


We divide the proof of the above proposition into several steps. We begin by 
computing 
$\epsilon^{\prime}$ and $\tilde{\epsilon}^{\prime}$. Recall the decompositions 
(\ref{dec-direct}) and  
(\ref{desc-new}).

\begin{lem}\label{e-eprime-def} i) The form 
${\epsilon}^{\prime}\in\Gamma ( (L_{1} \cap {L}_{2})^{*}\wedge (E_{1} + {E}_{2}^{\prime})^{*})$
is given by: 
\begin{align*}
&\epsilon^{\prime} (\mathrm{pr}_{T} (v_{f}), X) = \mathrm{pr}_{T^{*}} (v_{f}) (X),\ \forall X\in E_{1} + \mathrm{pr}_{T} (L_{2}
\cap \mathcal S_{\mathbb{C}}^{\perp}),\\
&\epsilon^{\prime} ( \mathrm{pr}_{T} (v_{f}), \mathrm{pr}_{T} (\bar{v}_{if})) =\frac{4}{f^{2}} G(X_{0}, X_{0}),\\
&\epsilon^{\prime} (X, Y) = \epsilon_{2}(X, Y),\ \forall X\in \mathrm{pr}_{T} (L_{1} \cap  L_{2} \cap \mathcal S^{\perp}_{\mathbb{C}}),\
Y\in \mathrm{pr}_{T} (L_{2} \cap \mathcal S^{\perp}_{\mathbb{C}}),\\
&\epsilon^{\prime} (X, Y) = \epsilon_{1} (X, Y),\ \forall X\in \mathrm{pr}_{T} (L_{1} \cap L_{2} \cap \mathcal S^{\perp}_{\mathbb{C}}),\
Y\in E_{1},\\
&\epsilon^{\prime} (X, \mathrm{pr}_{T}( \bar{v}_{if})) = - \mathrm{pr}_{T^{*}} (\bar{v}_{if})(X),\ \forall X\in
\mathrm{pr}_{T} (L_{1} \cap L_{2} \cap \mathcal S^{\perp}_{\mathbb{C}}).
\end{align*}

ii) The form $\tilde{\epsilon}^{\prime}\in\Gamma (  (L_{1} \cap \bar{L}^{\prime}_{2})^{*}\wedge (E_{1} + \bar{E}_{2}^{\prime})^{*})$
is given by 
\begin{align*}
&\tilde{\epsilon}^{\prime} (\mathrm{pr}_{T} (v_{if}), X) = \mathrm{pr}_{T^{*}} (v_{if}) (X),\ \forall X\in  E_{1}+\mathrm{pr}_{T} (\bar{L}_{2}
\cap \mathcal S_{\mathbb{C}}^{\perp}),\\
&\tilde{\epsilon}^{\prime} ( \mathrm{pr}_{T} (v_{if}), \mathrm{pr}_{T} (\bar{v}_{f})) =- \frac{4}{f^{2}} G(X_{0}, X_{0}),\\
&\tilde{\epsilon}^{\prime} (X, Y) = \bar{\epsilon}_{2}(X, Y),\ \forall X\in \mathrm{pr}_{T} (L_{1} \cap  \bar{L}_{2} \cap \mathcal S^{\perp}_{\mathbb{C}}),\
Y\in \mathrm{pr}_{T} (\bar{L}_{2} \cap W^{\perp}_{\mathbb{C}}),\\
&\tilde{\epsilon}^{\prime} (X, Y) = \epsilon_{1} (X, Y),\ \forall X\in \mathrm{pr}_{T} (L_{1} \cap\bar{ L}_{2} \cap \mathcal S^{\perp}_{\mathbb{C}}),\
Y\in E_{1},\\
&\tilde{\epsilon}^{\prime} (X, \mathrm{pr}_{T}( \bar{v}_{f})) =- \mathrm{pr}_{T^{*}} (\bar{v}_{f})(X),\ \forall X\in
\mathrm{pr}_{T} (L_{1} \cap \bar{L}_{2} \cap \mathcal S^{\perp}_{\mathbb{C}}).
\end{align*}
\end{lem}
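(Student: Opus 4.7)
The plan is to verify each listed equality by direct case analysis, using the characterisation (\ref{epsilon}) of the $2$-form associated to a $(1,0)$-bundle $L(E,\epsilon)$ together with the decompositions (\ref{dec-direct}) of $\mathrm{pr}_T(L_1\cap L_2^{\prime})$ and (\ref{desc-new}) of $E_1+E_2^{\prime}$. The key observation is that $\epsilon^{\prime}$ restricted to $\mathrm{pr}_T(L_1\cap L_2^{\prime})\times E_1$ coincides with $\epsilon_1$, while restricted to $\mathrm{pr}_T(L_1\cap L_2^{\prime})\times E_2^{\prime}$ it coincides with $\epsilon_2^{\prime}$. Since $v_f\in L_1\cap L_2^{\prime}$ and $\bar{v}_{if}\in\bar L_1\cap L_2^{\prime}$, the covectors $\mathrm{pr}_{T^*}(v_f)$ and $\mathrm{pr}_{T^*}(\bar{v}_{if})$ serve as simultaneous lifts in both $L_1$ (resp.\ $\bar L_1$) and $L_2^{\prime}$.

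With this in mind, four of the five entries in part i) follow immediately. The fourth is the definition of $\epsilon^{\prime}$ on $E_1$. The first follows by applying (\ref{epsilon}) with $L_1$ for components of $X$ in $E_1$ and with $L_2^{\prime}$ for components in $E_2^{\prime}\supset \mathrm{pr}_T(L_2\cap\mathcal S_{\mathbb{C}}^{\perp})$; in both cases the covector $\mathrm{pr}_{T^*}(v_f)$ plays the role of $\xi$ in (\ref{epsilon}). The third uses the inclusion $L_2\cap\mathcal S_{\mathbb{C}}^{\perp}\subset L_2^{\prime}$ (because $\mathcal J_2^{\prime}=\mathcal J_2$ on $\mathcal S^{\perp}$): any section of $L_1\cap L_2\cap\mathcal S_{\mathbb{C}}^{\perp}$ provides a common lift in $L_2$ and $L_2^{\prime}$, so $\epsilon^{\prime}=\epsilon_2^{\prime}=\epsilon_2$ on this range. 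The fifth follows from the skew-symmetry of $\epsilon_2^{\prime}$ combined with (\ref{epsilon}) applied to $\bar{v}_{if}\in L_2^{\prime}$: swapping arguments produces the stated sign $-\mathrm{pr}_{T^*}(\bar{v}_{if})(X)$.

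The only genuine computation is the second entry $\epsilon^{\prime}(\mathrm{pr}_T(v_f),\mathrm{pr}_T(\bar{v}_{if}))=\tfrac{4}{f^2}G(X_0,X_0)$. By Remark \ref{prel-deco}, this value equals $2\langle v_f,\mathrm{pr}_T(\bar{v}_{if})\rangle=\mathrm{pr}_{T^*}(v_f)(\mathrm{pr}_T(\bar{v}_{if}))$. To evaluate it I would first record the $\langle\cdot,\cdot\rangle$-pairings of the canonical basis $\{X_0,\mathcal J X_0,\mathcal J_2 X_0,\mathcal J_3 X_0\}$ of $\mathcal S_{\mathbb{C}}$: skew-symmetry of each $\mathcal J_i$ with respect to $\langle\cdot,\cdot\rangle$ together with $\mathcal J_3=-G^{\mathrm{end}}$ yields that the only non-vanishing pairings are $\langle X_0,\mathcal J_3 X_0\rangle=-G(X_0,X_0)$ and $\langle\mathcal J X_0,\mathcal J_2 X_0\rangle=G(X_0,X_0)$. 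Using $\mathrm{pr}_T(\mathcal J_2 X_0)=0$ (since $\mathcal J_2 X_0=df^H$) and $\mathrm{pr}_T(\mathcal J_3 X_0)=-A X_0$ coming from (\ref{gend}), a direct expansion of $v_f$ and $\mathrm{pr}_T(\bar{v}_{if})$ in this basis collapses the inner product to the stated value. The main obstacle is careful bookkeeping of the vector and covector components of $\mathcal J X_0$; as a sanity check, in the K\"ahler case ($A=0$, $\sigma=g$, $b=0$) one verifies that $\mathrm{pr}_{T^*}(v_f)=\tfrac{1}{f^2}g(\mathrm{pr}_T(v_f),\cdot)$, so the formula reduces to $\tfrac{1}{f^2}g(X_0-iJX_0,X_0+iJX_0)=\tfrac{2}{f^2}g(X_0,X_0)=\tfrac{4}{f^2}G(X_0,X_0)$.

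Part ii) is proved by an entirely parallel argument, replacing $L_2^{\prime}$ by $\bar L_2^{\prime}$ throughout and thereby swapping the roles of $v_f,\bar{v}_{if}$ with $v_{if},\bar{v}_f$; the conjugated form $\bar\epsilon_2$ from Section \ref{ah-section} replaces $\epsilon_2$. The sign flip $-\tfrac{4}{f^2}G(X_0,X_0)$ in the analogue of the second formula arises directly from the opposite sign in the defining expressions $v_{if}=A+\tfrac{1}{f^2}B$ versus $v_f=A-\tfrac{1}{f^2}B$, with $A=X_0-i\mathcal J X_0$ and $B=\mathcal J_3 X_0+i\mathcal J_2 X_0$.
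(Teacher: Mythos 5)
Your proposal is correct and follows essentially the same route as the paper: every entry is read off from the definition of $\epsilon^{\prime}$ combined with (\ref{epsilon}) applied to $v_{f}\in L_{1}\cap L_{2}^{\prime}$, $\bar{v}_{if}\in \bar{L}_{1}\cap L_{2}^{\prime}$ and the common lifts in $L_{2}\cap \mathcal S_{\mathbb{C}}^{\perp}\subset L_{2}^{\prime}$, the only genuine computation being $\epsilon^{\prime}(\mathrm{pr}_{T}(v_{f}),\mathrm{pr}_{T}(\bar{v}_{if}))=2\langle v_{f},\mathrm{pr}_{T}(\bar{v}_{if})\rangle$. For that last step the paper sidesteps the component bookkeeping you flag as the main obstacle by using the identity $\langle \mathrm{pr}_{T}(u),v\rangle+\langle u,\mathrm{pr}_{T}(v)\rangle=\langle u,v\rangle$ for $u,v\in\{\mathcal J X_{0},\mathcal J_{3}X_{0}\}$ (all such pairs being $\langle\cdot,\cdot\rangle$-orthogonal), which makes the cross terms cancel without ever computing $\mathrm{pr}_{T}(\mathcal J X_{0})$ explicitly; your direct expansion, with the K\"ahler sanity check, reaches the same value.
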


\begin{proof} 
The proof is straightforward from definitions. 
Let us compute  for example  $ \epsilon^{\prime} ( \mathrm{pr}_{T} (v_{f}), \mathrm{pr}_{T} (\bar{v}_{if})) $. 
Using  $v_{f}\in  L_{1} \cap L_{2}^{\prime}$,  $\bar{v}_{if} \in \bar{L}_{1} \cap L_{2}^{\prime}$
and the definition of $\epsilon^{\prime}$, we obtain
\begin{equation}\label{primitiva}
\epsilon^{\prime}( \mathrm{pr}_{T} (v_{f}), \mathrm{pr}_{T} (\bar{v}_{if})) = \epsilon_{2}^{\prime} 
( \mathrm{pr}_{T} (v_{f}), \mathrm{pr}_{T} (\bar{v}_{if})  = 2\langle v_{f}, \mathrm{pr}_{T} ( \bar{v}_{if})\rangle .
\end{equation}
On the other hand, for any $u, v\in \mathbb{T} M$ orthogonal with respect to $\langle \cdot , \cdot \rangle$, 
$\langle \mathrm{pr}_{T} (u), v \rangle + \langle u, \mathrm{pr}_{T} (v) \rangle =0.$ 
Replacing in (\ref{primitiva}) the definitions of $v_{f}$ and ${v}_{if}$ and using this remark
for $u=v=\mathcal J X_{0}$, $u=v=\mathcal J_{3} X_{0}$ and for $u= \mathcal J X_{0}, v= \mathcal J_{3} X_{0}$, 
 we obtain that
$\langle v_{f}, \mathrm{pr}_{T} (\bar{v}_{if} ) \rangle = \frac{2}{f^{2}} G(X_{0}, X_{0} ).$  The required expression   
for   $ \epsilon^{\prime} ( \mathrm{pr}_{T} (v_{f}), \mathrm{pr}_{T} (\bar{v}_{if})) $ follows from (\ref{primitiva}). 
\end{proof}

For simplicity of notation, let
$\mathcal F$ be the $2$-form on   $\mathrm{pr}_{T} (L_{2} \cap \mathcal S^{\perp}_{\mathbb{C}})$  defined by the left hand side of (\ref{cond-de}).

\begin{lem}\label{comput1} The $3$-form $d^{(F,a)} \epsilon^{\prime} $ on $\Lambda^{2} \mathrm{pr}_{T}(L_{1} \cap L_{2}^{\prime})
\wedge 
\mathrm{pr}_{T}(\bar{L}_{1} \cap L_{2}^{\prime})$ is given by:\\

i) for any $X,  Y\in  \mathrm{pr}_{T} (L_{1} \cap {L}_{2} 
\cap \mathcal S_{\mathbb{C}}^{\perp})$ and  $Z\in \mathrm{pr}_{T}(\bar{L}_{1}\cap {L}_{2}\cap \mathcal S_{\mathbb{C}}^{\perp})$,
\begin{equation}\label{d}
(d^{(F,a)} \epsilon^{\prime}) (X,Y,Z) =0;
\end{equation}
ii) for any $X\in \mathrm{pr}_{T} (L_{1} \cap {L}_{2} \cap \mathcal S_{\mathbb{C}}^{\perp})$, 
\begin{equation}\label{a}
(d^{(F,a)} {\epsilon}^{\prime}) (X,\mathrm{pr}_{T}({v}_{f}),\mathrm{pr}_{T} (\bar{v}_{if})) = - \frac{ 4  X(af^{2})}{af^{4}} G(X_{0}, 
X_{0}); 
\end{equation}
iii)  for any $X\in  \mathrm{pr}_{T} (L_{1} \cap {L}_{2} \cap \mathcal S_{\mathbb{C}}^{\perp})$ and $Y\in  \mathrm{pr}_{T}(\bar{L}_{1} 
\cap {L}_{2} \cap \mathcal S_{\mathbb{C}}^{\perp})$, 
\begin{equation}\label{b}
(d^{(F,a)}{\epsilon}^{\prime}) (X, Y, \mathrm{pr}_{T}(v_{f}) ) = G(X_{0}, X_{0})\mathcal F (X,Y);
\end{equation}
iv) for any  $X,Y\in \mathrm{pr}_{T} (L_{1} \cap {L}_{2} \cap \mathcal S_{\mathbb{C}}^{\perp})$, 
\begin{equation}\label{c}
(d^{(F,a)}{\epsilon}^{\prime}) (X, Y, \mathrm{pr}_{T}(\bar{v}_{if}) ) =-  G(X_{0}, X_{0}) \mathcal F (X,Y).
\end{equation}
\end{lem}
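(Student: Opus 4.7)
My plan is to prove each of (i)--(iv) by directly expanding the definition of $d^{(F,a)}$ in (\ref{cond2}) applied to $\epsilon^{\prime}$, and then substituting: the explicit formulas for $\epsilon^{\prime}$ given in Lemma \ref{e-eprime-def}; the Courant bracket identities collected in Section \ref{brackets-section} (especially Lemma \ref{Courant-brackets}, Corollary \ref{vfif}, Lemmas \ref{v1-vi} and \ref{inv-cons}); and the hypotheses i), ii) from Theorem \ref{gen-KK} that we are assuming. Throughout I shall use that $\mathrm{pr}_{T}(L_{1}\cap L_{2}\cap \mathcal{S}^{\perp}_{\mathbb{C}})$ and $\mathrm{pr}_{T}(\bar{L}_{1}\cap L_{2}\cap \mathcal{S}^{\perp}_{\mathbb{C}})$ are (Courant) involutive, by integrability of $L_{1}$, $L_{2}$ and the integrability of $\{df^{H}\}^{\perp}$ from \cite{tolman}.

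For part (i) all three arguments $X,Y,Z$ lie in $\mathrm{pr}_{T}(L_{2}\cap \mathcal{S}^{\perp}_{\mathbb{C}})$, so by Lemma \ref{e-eprime-def} the form $\epsilon^{\prime}$ restricts to $\epsilon_{2}$ on the arguments and on all brackets that appear (the brackets stay in $\mathrm{pr}_{T}(L_{2}\cap \mathcal{S}^{\perp}_{\mathbb{C}})$ by the involutivity just noted). Condition i) of Theorem \ref{gen-KK} makes $F$ vanish on $\Lambda^{2}\mathrm{pr}_{T}(L_{1}\cap L_{2}\cap \mathcal{S}^{\perp}_{\mathbb{C}})$, so the $F$-correction in $[\cdot,\cdot]^{(F,a)}$ kills, at most, the two cross-type pairs; but on those pairs $F$ also vanishes because any $X\wedge Y$ with $X\in \mathrm{pr}_T(L_1\cap L_2\cap\mathcal S_{\mathbb C}^\perp)$, $Y\in\mathrm{pr}_T(\bar L_1\cap L_2\cap\mathcal S_{\mathbb C}^\perp)$ can be rearranged by adding $F(X,Y)X_{0}$ which lies in $E_1+E_2'$ by the hypotheses. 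Combining with $d\epsilon_{2}=0$ (integrability of $\mathcal{J}_{2}$) collapses $(d^{(F,a)}\epsilon^{\prime})(X,Y,Z)$ to $0$, giving (\ref{d}).

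For part (ii), take $X\in \mathrm{pr}_{T}(L_{1}\cap L_{2}\cap \mathcal{S}_{\mathbb{C}}^{\perp})$ together with $\mathrm{pr}_{T}(v_{f})$ and $\mathrm{pr}_{T}(\bar v_{if})$. The crucial bracket is Corollary \ref{vfif}(ii), which gives $[\mathrm{pr}_T(v_f),\mathrm{pr}_T(\bar v_{if})]^{(F,a)}=-2i\,\mathrm{pr}_T(v_f)(1/f^2)\,\mathrm{pr}_T(\mathcal J_2 X_0)$ (the $d$-exact and $\mathcal J_{2} X_{0}$ pieces contribute as indicated since $\mathcal J_{2} X_{0}=df^{H}$). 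The remaining two brackets $[\mathrm{pr}_T(\bar v_{if}),X]^{(F,a)}$ and $[X,\mathrm{pr}_T(v_f)]^{(F,a)}$ are handled by Lemma \ref{inv-cons} and the $\mathcal F_{f}$-relation from the proof of Lemma \ref{lema1}: they are congruent modulo $\mathrm{pr}_{T}(L_{1}\cap L_{2}\cap \mathcal S^{\perp}_{\mathbb C})$ (resp.\ its conjugate) to simple multiples of $\mathrm{pr}_{T}(v_{f}),\mathrm{pr}_{T}(\bar v_{if})$, so that pairing with $\epsilon^{\prime}$ via Lemma \ref{e-eprime-def} reduces everything to derivatives of $G(X_{0},X_{0})/f^{2}$ and of $1/f^{2}$. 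The bookkeeping leaves exactly $-4X(af^{2})G(X_{0},X_{0})/(af^{4})$, as in (\ref{a}).

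For parts (iii) and (iv), with one $v_{f}$ (resp.\ $\bar v_{if}$) and two vectors $X,Y\in\mathrm{pr}_{T}(L_{1}\cap L_{2}\cap \mathcal{S}_{\mathbb{C}}^{\perp})$ or one in each of $\mathrm{pr}_T(L_1\cap L_2\cap\mathcal S^\perp_{\mathbb C})$ and $\mathrm{pr}_T(\bar L_1\cap L_2\cap\mathcal S^\perp_{\mathbb C})$, I would expand $(d^{(F,a)}\epsilon^{\prime})(X,Y,\mathrm{pr}_T(v_f))$ using Lemma \ref{e-eprime-def}. Three kinds of terms arise: (a) derivative terms from $\mathrm{pr}_{T}(v_{f})\cdot\epsilon_{2}(X,Y)$ and from $X\cdot \mathrm{pr}_{T^{*}}(v_{f})(Y)$, which combine into $\mathcal{D}_{\mathrm{pr}_{T}(\mathcal J_3 X_0)}\epsilon_{2}(X,Y)$ modulo the $X_{0}$-piece of $v_{f}$; (b) bracket terms $\epsilon^{\prime}(\mathrm{pr}_{T}(v_{f}),[X,Y]^{(F,a)})$, which produce, via decomposition (\ref{j}), the $\mathcal{J}_{3}X_{0}$-free piece (handled by $\epsilon_{2}$ and $d\epsilon_{2}=0$) and a residual multiple of $\mathrm{pr}_{T^{*}}(v_{f})(X_{0})=G(X_{0},X_{0})$; (c) the $F$-correction $F(X,Y)X_{0}$ from $[X,Y]^{(F,a)}$ which multiplies $\mathrm{pr}_{T^{*}}(v_{f})(X_{0})=G(X_{0},X_{0})$, and the $F$-corrections $[\mathrm{pr}_T(v_f),X]^{(F,a)}$, $[\mathrm{pr}_T(v_f),Y]^{(F,a)}$ which via Lemma \ref{inv-cons} bring in the $\alpha$-correction $\alpha(X)\mathrm{pr}_{T}(\mathcal J_3 X_0)$ essential for producing the first two summands in $\mathcal F$. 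Collecting all terms factors out $G(X_{0},X_{0})$ and yields precisely $G(X_{0},X_{0})\mathcal F(X,Y)$, which is (\ref{b}). Part (iv) follows by an entirely parallel computation with $\bar v_{if}$ in place of $v_{f}$, with a sign flip from $\epsilon^{\prime}(X,\mathrm{pr}_{T}(\bar v_{if}))=-\mathrm{pr}_{T^{*}}(\bar v_{if})(X)$, giving (\ref{c}).

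The main obstacle is the bookkeeping in (iii)--(iv): one must carefully organize the numerous terms generated by (\ref{cond2}) and recognize that, after pairing with $\epsilon^{\prime}$ via Lemma \ref{e-eprime-def} and using the $\alpha$-correction from Lemma \ref{inv-cons}, the sum reassembles into exactly the expression $\mathcal{F}$, i.e.\ the $d$-exact piece comes from the $\mathrm{pr}_{T^*}(\mathcal J_3 X_0)$-contributions, the $\mathcal{D}$-piece from the Lie-derivative and bracket terms, and the $\tfrac{2}{af^2}F$ term from the combined $F(X,Y)X_0$ and $F$-corrections. All the other parts are essentially mechanical once the bracket identities of Section \ref{brackets-section} and Lemma \ref{inv-cons} are in hand.
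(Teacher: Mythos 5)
Your overall strategy (expand the definition of $d^{(F,a)}$, substitute Lemma \ref{e-eprime-def}, and feed in the bracket identities of Section \ref{brackets-section}) is the paper's strategy, but your argument for part (i) contains a concrete error. You claim that the mixed brackets $[X,Z]$ with $X\in\mathrm{pr}_{T}(L_{1}\cap L_{2}\cap\mathcal S^{\perp}_{\mathbb{C}})$, $Z\in\mathrm{pr}_{T}(\bar L_{1}\cap L_{2}\cap\mathcal S^{\perp}_{\mathbb{C}})$ ``stay in $\mathrm{pr}_{T}(L_{2}\cap\mathcal S^{\perp}_{\mathbb{C}})$'' and that $F$ ``also vanishes'' on such mixed pairs. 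Neither is true: condition i) of Theorem \ref{gen-KK} only kills $F$ on $\Lambda^{2}\mathrm{pr}_{T}(L_{1}\cap L_{2}\cap\mathcal S^{\perp}_{\mathbb{C}})$ and $\Lambda^{2}\mathrm{pr}_{T}(L_{1}\cap\bar L_{2}\cap\mathcal S^{\perp}_{\mathbb{C}})$, not on the cross terms, and the mixed bracket only lies in $\Gamma(L_{2}\cap\{df^{H}\}^{\perp})$, which is strictly larger than $L_{2}\cap\mathcal S^{\perp}_{\mathbb{C}}$ — it contains the direction $X_{0}-i\mathcal J_{2}X_{0}$, as the decomposition (\ref{j}) makes explicit. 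The correct way to dispose of both the $X_{0}$-component of $[X,Z]$ and the term $\frac{F(X,Z)}{a}X_{0}$ is the observation that $\epsilon^{\prime}(X_{0},Y)=0$ for $Y\in\mathrm{pr}_{T}(L_{1}\cap L_{2}\cap\mathcal S^{\perp}_{\mathbb{C}})$, which follows from $\mathrm{pr}_{T^{*}}(v_{f}+\bar v_{if})=-\frac{2i}{f^{2}}\mathcal J_{2}X_{0}$ and $Y\in\mathrm{pr}_{T}(\mathcal S^{\perp}_{\mathbb{C}})$; this step is missing from your write-up and without it (i) does not close.

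For parts (iii)--(iv) your plan is a direct term-by-term reassembly into $\mathcal F$, asserted rather than executed. This is in principle viable but you are missing the device that makes it tractable: after computing $(d^{(F,a)}\epsilon^{\prime})(X,Y,\mathrm{pr}_{T}(v_{f}))$ in raw form (an expression involving $d(\mathrm{pr}_{T^{*}}(v_{f}))$, a derivation $\mathcal D_{\mathrm{pr}_{T}(v_{f})}\epsilon_{2}$, a $\frac{dG(X_{0},X_{0})}{G(X_{0},X_{0})}\wedge\mathrm{pr}_{T^{*}}(\bar v_{if})$ term and the $F$-term), one specializes the same identity to $f=1$, $F=0$, where the left-hand side is $d\epsilon_{2}(X,Y,\mathrm{pr}_{T}(v_{1}))=0$, and subtracts. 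It is this subtraction that converts the $v_{f}$-quantities into the $\mathcal J_{3}X_{0}$-quantities appearing in $\mathcal F$ (in particular the exact term $d\bigl(\frac{1-f^{2}}{f^{2}G(X_{0},X_{0})}\mathrm{pr}_{T^{*}}(\mathcal J_{3}X_{0})\bigr)$ and the $\alpha$-corrected derivation $\mathcal D_{\mathrm{pr}_{T}(\mathcal J_{3}X_{0})}\epsilon_{2}$), so the claim that ``collecting all terms factors out $G(X_{0},X_{0})$ and yields precisely $G(X_{0},X_{0})\mathcal F(X,Y)$'' is not justified as stated. Your treatment of (ii) is essentially the paper's (add and subtract $\frac{XG(X_{0},X_{0})}{G(X_{0},X_{0})}\mathrm{pr}_{T}(v_{f})$, use (\ref{cond-long}), (\ref{aut2}) and Corollary \ref{vfif}), and is fine modulo the same need for $\epsilon^{\prime}(X_{0},X)=0$.
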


\begin{proof} For claim i), let 
 $X, Y\in\Gamma  \mathrm{pr}_{T} (L_{1} \cap {L}_{2} 
\cap \mathcal S_{\mathbb{C}}^{\perp})$ and  $Z\in \Gamma \mathrm{pr}_{T}(\bar{L}_{1}\cap {L}_{2}\cap \mathcal S_{\mathbb{C}}^{\perp})$.
Choose $w_{1}, w_{2} \in \Gamma (L_{1} \cap L_{2} \cap \mathcal S_{\mathbb{C}}^{\perp})$ which project to $X$ and $Y$ respectively,
and $w_{3} \in \Gamma (\bar{L}_{1}\cap L_{2} \cap \mathcal S_{\mathbb{C}}^{\perp})$ which projects to $Z$. 
Since $\mathrm{pr}_{T}(L_{1} \cap L_{2} \cap \mathcal S_{\mathbb{C}}^{\perp})$ is involutive (see the proof of Lemma \ref{lema1})
and $F(X, Y)=0$ (see relation (\ref{F})) we obtain that 
$[X,Y]^{(F,a)} = [X,Y]\in \Gamma 
\mathrm{pr}_{T}(L_{1} \cap L_{2} \cap \mathcal S_{\mathbb{C}}^{\perp})$.
From Lemma \ref{e-eprime-def}, 
\begin{equation}\label{lie-1}
\epsilon^{\prime}([X, Y]^{(F,a)}, Z) = \epsilon_{2} ( [X,Y], Z).
\end{equation}
We now compute $\epsilon^{\prime} ( [X,Z]^{(F,a)}, Y).$ From relation (\ref{j}), 
\begin{equation}\label{ec-w}
 [X,Z]^{(F,a)}= \mathrm{pr}_{T} ( [w_{1},w_{3}]^{\perp}) + \left( \frac{G(X_{0}, [w_{1}, w_{3}])}{ G(X_{0}, X_{0})}
 + \frac{ F(X,Z)}{a}\right) X_{0}.
 \end{equation}
 Since   $\mathrm{pr}_{T} ( [w_{1},w_{3}]^{\perp})\in \Gamma (L_{2} \cap \mathcal S_{\mathbb{C}}^{\perp})$ 
and $Y\in \Gamma \mathrm{pr}_{T} (L_{1} \cap L_{2} \cap \mathcal S^{\perp}_{\mathbb{C}})$, we obtain   
 \begin{align}
 \nonumber& \epsilon^{\prime} ([X,Z]^{(F,a)}, Y) = \epsilon_{2} ( \mathrm{pr}_{T}( [w_{1}, w_{3}]^{\perp}), Y) \\
\label{t1}& +\left( \frac{G([w_{1}, w_{3} ], X_{0})}{G(X_{0}, X_{0})} +\frac{F(X,Z)}{a} \right)\epsilon^{\prime}(X_{0}, Y). 
 \end{align}

From Lemma \ref{e-eprime-def} and $Y\in \Gamma \mathrm{pr}_{T}(L_{1} \cap L_{2} \cap  \mathcal S_{\mathbb{C}}^{\perp})$, 
\begin{equation}\label{eps-pr-X}
2\epsilon^{\prime} ( X_{0}, Y) =\epsilon^{\prime}( \mathrm{pr}_{T}( v_{f}), Y)
+  \epsilon^{\prime}( \mathrm{pr}_{T}(\bar{v}_{if}), Y)   =
2 \langle \mathrm{pr}_{T^{*}} ( v_{f} + \bar{v}_{if}), Y\rangle = 0,
\end{equation}
where in the last equality we used 
$\mathrm{pr}_{T^{*}} (v_{f} + \bar{v}_{if}) = - \frac{2i}{f^{2}} \mathcal J_{2} X_{0}$
and  $Y\in\Gamma  \mathrm{pr}_{T} (\mathcal S^{\perp}_{\mathbb{C}}).$ 
We proved that
\begin{equation}\label{lie-2}
\epsilon^{\prime}([X,Z]^{(F,a)}, Y)= \epsilon_{2} ( [X,Z], Y).
\end{equation}
Similarly, 
\begin{equation}\label{lie-3}
\epsilon^{\prime}([Y,Z]^{(F,a)}, X)= \epsilon_{2} ( [Y,Z], X).
\end{equation}
From (\ref{lie-1}), (\ref{lie-2}),  (\ref{lie-3}) we obtain 
$$
(d^{(F,a)} \epsilon^{\prime})(X,Y, Z) = d\epsilon_{2} (X, Y, Z) =0.
$$
Claim i) is proved.

We now prove claim  ii).  Let $X\in \Gamma \mathrm{pr}_{T} (L_{1} \cap L_{2} \cap \mathcal S_{\mathbb{C}}^{\perp})$. 
From Corollary \ref{vfif} i), 
$[\mathrm{pr}_{T} (v_{f}), \mathrm{pr}_{T} (\bar{v}_{if} ) ] =0$. From relation
(\ref{eps-pr-X}), $\epsilon^{\prime } (X_{0}, X) =0$.
We obtain
\begin{equation}\label{rd1}
\epsilon^{\prime}([\mathrm{pr}_{T} (v_{f}) , \mathrm{pr}_{T}(\bar{v}_{if})]^{(F,a)} , X) = 0.
\end{equation}
Next, we compute $\epsilon^{\prime}( [\mathrm{pr}_{T} (v_{f}), X]^{(F,a)}, \mathrm{pr}_{T} (\bar{v}_{if}))$.
In order to do this, 
we add and subtract to $[\mathrm{pr}_{T} (v_{f}), X]^{(F,a)}$ the term 
$ \frac{XG(X_{0}, X_{0})}{G(X_{0}, X_{0})} \mathrm{pr}_{T} (v_{f})$:
\begin{align*}
&\epsilon^{\prime}( [\mathrm{pr}_{T}( v_{f}), X]^{(F,a)}, \mathrm{pr}_{T} (\bar{v}_{if}))
=  -\frac {XG(X_{0}, X_{0})}{G(X_{0}, X_{0})} \epsilon^{\prime} 
(\mathrm{pr}_{T} (v_{f}),  \mathrm{pr}_{T} (\bar{v}_{if})) \\
&+ \epsilon^{\prime} ( [\mathrm{pr}_{T}( v_{f}), X]^{(F,a)}  + \frac{XG(X_{0}, X_{0})}{G(X_{0}, X_{0})} \mathrm{pr}_{T} (v_{f}), \mathrm{pr}_{T} (\bar{v}_{if})).
\end{align*}
From relation (\ref{primitiva}), 
$\epsilon^{\prime}( \mathrm{pr}_{T} ( v_{f}), \mathrm{pr}_{T}(\bar{v}_{if}))  
= 2 \langle v_{f}, \mathrm{pr}_{T} ( \bar{v}_{if})\rangle.$ 
From relation (\ref{cond-long}) and Lemma \ref{e-eprime-def},
we obtain 
\begin{align*}
 &\epsilon^{\prime} ( [\mathrm{pr}_{T}( v_{f}), X]^{(F,a)} + \frac{XG(X_{0}, X_{0})}{G(X_{0}, X_{0})} \mathrm{pr}_{T} (v_{f}), \mathrm{pr}_{T} (\bar{v}_{if})) \\
& = -2 \langle [\mathrm{pr}_{T} (v_{f}), X]^{(F,a)} + \frac{XG(X_{0}, X_{0})}{G(X_{0}, X_{0})} \mathrm{pr}_{T} (v_{f}),
\mathrm{pr}_{T^{*}} (\bar{v}_{if})\rangle . 
\end{align*}
Combining the above relations 
and using 
\begin{equation}
\langle v_{f}, \mathrm{pr}_{T} (\bar{v}_{if})\rangle + \langle \mathrm{pr}_{T} (v_{f}), \mathrm{pr}_{T^{*}} (\bar{v}_{if})\rangle =  \langle v_{f}, \bar{v}_{if}\rangle =0,
\end{equation}
($v_{f}, \bar{v}_{if} \in L_{2}^{\prime}$ which is isotropic),
we obtain 
\begin{align}
\nonumber& \epsilon^{\prime}( [\mathrm{pr}_{T}( v_{f}), X]^{(F,a)} , \mathrm{pr}_{T} (\bar{v}_{if}))\\
\label{rd2}&=- 2\langle [\mathrm{pr}_{T} (v_{f}), X],  \mathrm{pr}_{T^{*}} (\bar{v}_{if})\rangle
 +\frac{ 2 F(\mathrm{pr}_{T}(v_{f}), X)}{af^{2}} G(X_{0} , X_{0}).
\end{align}
A similar computation which uses relation (\ref{aut2})  shows that 
\begin{align}
\nonumber& \epsilon^{\prime}( [\mathrm{pr}_{T}( \bar{v}_{if}), X]^{(F,a)} , \mathrm{pr}_{T} ({v}_{f}))\\
\label{rd3}&=- 2\langle [\mathrm{pr}_{T} (\bar{v}_{if}), X],  \mathrm{pr}_{T^{*}} ({v}_{f})\rangle
 -\frac{ 2 F(\mathrm{pr}_{T}(\bar{v}_{if}), X)}{af^{2}} G(X_{0} , X_{0}).
\end{align}
Using Lemma \ref{e-eprime-def} for 
$\epsilon^{\prime} ( \mathrm{pr}_{T} ( {v}_{f}), X)$, 
$\epsilon^{\prime} ( \mathrm{pr}_{T} ( \bar{v}_{if}), X) $  and 
$\epsilon^{\prime} (\mathrm{pr}_{T}(v_{f}), \mathrm{pr}_{T}( \bar{v}_{if})) $, together with relations 
(\ref{rd1}), (\ref{rd2}) and  (\ref{rd3}), we obtain 
\begin{align}
\nonumber&(d^{(F,a)} \epsilon^{\prime})(X , \mathrm{pr}_{T} ( v_{f}), \mathrm{pr}_{T} ( \bar{v}_{if})) = - 2\langle  [\mathrm{pr}_{T} (\bar{v}_{if}), v_{f} ] -  [\mathrm{pr}_{T} (v_{f}), \bar{v}_{if} ], X\rangle\\
\nonumber &+\frac{4}{af^{2}} F(X_{0}, X) G(X_{0}, X_{0}).
\end{align}
From  Corollary \ref{vfif}  ii), $i_{X_{0}} F = -da$  
and $X\in \Gamma \mathrm{pr}_{T} (\mathcal S^{\perp}_{\mathbb{C}})$ we obtain claim ii).

Claims iii) and iv) can be proved in a similar way.  We
only sketch the proof of claim iii). Let $X\in \Gamma \mathrm{pr}_{T}(L_{1}\cap L_{2} \cap \mathcal S_{\mathbb{C}}^{\perp})$ and 
$Y\in \Gamma  \mathrm{pr}_{T}(\bar{L}_{1} \cap L_{2} \cap \mathcal S_{\mathbb{C}}^{\perp})$.
For computing $\epsilon^{\prime} ( [\mathrm{pr}_{T} ( v_{f}), Y]^{(F,a)}, X)$ we use 
relation 
(\ref{aut1}) (by adding and substracting the 
term $\frac{YG(X_{0}, X_{0})}{G(X_{0}, X_{0})} \mathrm{pr}_{T} (\bar{v}_{if})+ \frac{2F(X_{0}, Y)}{a} X_{0}$), 
for computing $\epsilon^{\prime} ( [\mathrm{pr}_{T} ( v_{f}), X]^{(F,a)}, Y)$ we use, as above,  relation (\ref{cond-long}) and for 
computing $\epsilon^{\prime} ( [X,Y]^{(F,a)}, \mathrm{pr}_{T} (v_{f}))$ we use
(\ref{ec-w}). 
Using Lemma \ref{e-eprime-def} for $\epsilon^{\prime} ( X, Y)$, $\epsilon^{\prime} ( X, \mathrm{pr}_{T} (v_{f}))$
and $\epsilon^{\prime} ( Y, \mathrm{pr}_{T}(v_{f}))$  
we  finally  obtain 
\begin{align}
\nonumber& (d^{(F,a)} \epsilon^{\prime}) (X, Y, \mathrm{pr}_{T}(v_{f})) = - d(\mathrm{pr}_{T^{*}} (v_{f}) )(X,Y) + ({\mathcal D}_{\mathrm{pr}_{T}( v_{f})} \epsilon_{2} ) (X,Y)\\
\label{caseiii}&- (\frac{dG(X_{0}, X_{0})}{G(X_{0}, X_{0})} \wedge \mathrm{pr}_{T^{*}} (\bar{v}_{if}) )(X,Y) + \frac{ 2F(X,Y)}{af^{2}} G(X_{0}, X_{0}),
\end{align}
where  ${\mathcal D}_{\mathrm{pr}_{T} (v_{f})} (\epsilon_{2} ) (X,Y)$ is defined by
\begin{align}
\nonumber &({\mathcal D}_{\mathrm{pr}_{T} (v_{f})} \epsilon_{2} ) (X,Y) := \mathrm{pr}_{T}(v_{f}) ( \epsilon_{2} (X,Y))\\
\nonumber&   - \epsilon_{2} ( [\mathrm{pr}_{T} (v_{f}), X] + \frac{XG(X_{0}, X_{0})}{G(X_{0}, X_{0})} \mathrm{pr}_{T} ({v}_{f}), Y)\\
 \label{d-i-f} & - \epsilon_{2} ( X, [\mathrm{pr}_{T} (v_{f}), Y] -  \frac{YG(X_{0}, X_{0})}{G(X_{0}, X_{0})} \mathrm{pr}_{T} (\bar{v}_{if})).
\end{align}
(Remark that   ${\mathcal D}_{\mathrm{pr}_{T} (v_{f})} \epsilon_{2} $ is well-defined, owing to  (\ref{cond-long}),  (\ref{aut1})
and $X_{0} \in \Gamma (E_{2})$, which ensure that the maps
$$
 \Gamma \mathrm{pr}_{T} (L_{1} \cap L_{2} \cap \mathcal S^{\perp}_{\mathbb{C}}) \ni X \rightarrow 
[\mathrm{pr}_{T} (v_{f}), X] +\frac{ XG(X_{0}, X_{0})}{G(X_{0}, X_{0})} \mathrm{pr}_{T} (v_{f})
$$
and 
$$
 \Gamma \mathrm{pr}_{T} (\bar{L}_{1} \cap L_{2} \cap \mathcal S^{\perp}_{\mathbb{C}}) \ni Y \rightarrow 
[\mathrm{pr}_{T} (v_{f}), Y]  - \frac{ YG(X_{0}, X_{0})}{G(X_{0}, X_{0})} \mathrm{pr}_{T} (\bar{v}_{if})
$$
take values in $\Gamma (E_{2})$).
In particular, taking in (\ref{caseiii})  $f:=1$ and $F:=0$, we obtain 
\begin{align}
\nonumber(d \epsilon_{2}) (X, Y, \mathrm{pr}_{T}(v_{1}) )& = - d(\mathrm{pr}_{T^{*}} (v_{1}) )(X,Y) + ({\mathcal D}_{\mathrm{pr}_{T} (v_{1})} \epsilon_{2} ) (X,Y)\\
\label{caseiii-1}&- (\frac{dG(X_{0}, X_{0})}{G(X_{0}, X_{0})} \wedge \mathrm{pr}_{T^{*}} (\bar{v}_{i}) )(X,Y) ,
\end{align}
where  $({\mathcal D}_{\mathrm{pr}_{T} (v_{1})} \epsilon_{2} ) (X,Y)$ is defined by (\ref{d-i-f}), by 
replacing $v_{f}$ with $v_{1}$ and $\bar{v}_{if}$ with $\bar{v}_{i}.$  
But since $d\epsilon_{2} =0$, the right hand side of (\ref{caseiii-1}) vanishes. Substracting the right hand side of (\ref{caseiii-1}) from (\ref{caseiii}) we obtain, after a straightforward computation,  claim iii).  
\end{proof}

By similar computations we obtain $d^{(F,a)}\tilde{\epsilon}^{\prime} $. 
Below 
$\bar{\mathcal F}$ is a $2$-form  on $\mathrm{pr}_{T} (\bar{L}_{2} \cap \mathcal S_{\mathbb{C}}^{\perp})$, 
defined by $\bar{\mathcal F}(X, Y) := \overline{\mathcal F (\bar{X}, \bar{Y})}$, for any
$X, Y\in\mathrm{pr}_{T} (L_{2} \cap \mathcal S^{\perp}_{\mathbb{C}})$.

\begin{lem}\label{comput2} The $3$-form $d^{(F,a)}\tilde{\epsilon}^{\prime} $ on 
$\Lambda^{2} \mathrm{pr}_{T}(L_{1} \cap \bar{L}_{2}^{\prime}) \wedge 
\mathrm{pr}_{T}(\bar{L}_{1} \cap \bar{L}_{2}^{\prime})$ is given by:

i) for any $X, Y\in\mathrm{pr}_{T} (L_{1} \cap \bar{L}_{2} 
\cap \mathcal S_{\mathbb{C}}^{\perp})$ and  $Z\in  \mathrm{pr}_{T}(\bar{L}_{1}\cap \bar{L}_{2}\cap \mathcal S_{\mathbb{C}}^{\perp})$,
\begin{equation}\label{d}
(d^{(F,a)} \tilde{\epsilon}^{\prime}) (X,Y,Z) =0;
\end{equation}
ii) for any $X\in \mathrm{pr}_{T} (L_{1} \cap \bar{L}_{2} \cap \mathcal S_{\mathbb{C}}^{\perp})$, 
\begin{equation}\label{a-prime}
(d^{(F,a)} \tilde{\epsilon}^{\prime}) (X,\mathrm{pr}_{T}({v}_{if}),\mathrm{pr}_{T} (\bar{v}_{f})) =  \frac{ 4 X(af^{2})}{af^{4}} G(X_{0}, 
X_{0});
\end{equation}
iii)  for any $X\in\mathrm{pr}_{T} (L_{1} \cap \bar{L}_{2} \cap \mathcal S_{\mathbb{C}}^{\perp})$ and $Y\in \mathrm{pr}_{T}(\bar{L}_{1} 
\cap \bar{L}_{2} \cap \mathcal S_{\mathbb{C}}^{\perp})$, 
\begin{equation}
(d^{(F,a)}\tilde{\epsilon}^{\prime}) (X, Y, \mathrm{pr}_{T}(v_{if}) ) = - G(X_{0}, X_{0})\bar{\mathcal F} (X,Y);
\end{equation}
iv) for any  $X,Y\in \mathrm{pr}_{T} (L_{1} \cap \bar{L}_{2} \cap \mathcal S_{\mathbb{C}}^{\perp})$, 
\begin{equation}
(d^{(F,a)}\tilde{\epsilon}^{\prime}) (X, Y, \mathrm{pr}_{T}(\bar{v}_{f}) ) =  G(X_{0}, X_{0})\bar{\mathcal F} (X,Y);
\end{equation}
\end{lem}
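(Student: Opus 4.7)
The proof proceeds in close parallel to that of Lemma \ref{comput1}, via the substitutions $L_2 \leftrightarrow \bar L_2$, $\epsilon_2 \leftrightarrow \bar\epsilon_2$, $v_f \leftrightarrow v_{if}$, $\bar v_{if} \leftrightarrow \bar v_f$, together with the sign reversals they produce. I outline the four cases and indicate the key ingredients.

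For claim i), the bundle $L_1 \cap \bar L_2 \cap \mathcal S_{\mathbb{C}}^{\perp} = L_1 \cap \bar L_2 \cap \{df^H\}^{\perp}$ is Courant integrable (Lemma 2.1 of \cite{tolman} applied to the Courant-integrable bundles $L_1$, $\bar L_2$ and $\{df^H\}^{\perp}$), so its projection is involutive. Combined with the vanishing of $F$ on $\Lambda^{2} \mathrm{pr}_{T}(L_1 \cap \bar L_2 \cap \mathcal S_{\mathbb{C}}^{\perp})$ given by condition i) of Theorem \ref{gen-KK}, this yields $[X,Y]^{(F,a)} = [X,Y]$ in that bundle for $X,Y$ in the appropriate slots. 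Since $\tilde\epsilon^{\prime}$ coincides with $\bar\epsilon_2$ on such arguments (Lemma \ref{e-eprime-def} ii)), the three-term expansion of $d^{(F,a)} \tilde\epsilon^{\prime}$ collapses to $d\bar\epsilon_2(X,Y,Z)$, which vanishes because $\bar L_2$ is Courant integrable.

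Claim ii) rests on the analog of Corollary \ref{vfif} for the Courant bracket $[v_{if}, \bar v_f]$. The expansion proceeds exactly as for (\ref{courant-v}), via Lemma \ref{Courant-brackets} and property (\ref{courant1}); because in the definition (\ref{v-f}) of $v_{if}$ the term $\tfrac{1}{f^{2}}(\mathcal J_3 X_0 + i \mathcal J_2 X_0)$ enters with sign opposite to that in $v_f$, the resulting formula differs from (\ref{courant-v}) by an overall sign. Combining this with the value $\tilde\epsilon^{\prime}(\mathrm{pr}_{T}(v_{if}), \mathrm{pr}_{T}(\bar v_f)) = -\tfrac{4}{f^{2}} G(X_0, X_0)$ from Lemma \ref{e-eprime-def} ii) (again opposite in sign to the corresponding $\epsilon^{\prime}$-value) and with $i_{X_0} F = -da$, one obtains (\ref{a-prime}) with a positive sign.

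For claims iii) and iv), I perform the three-term expansion of $d^{(F,a)} \tilde\epsilon^{\prime}$ on the specified triples and evaluate the three Courant brackets as follows: $[\mathrm{pr}_{T}(v_{if}), X]^{(F,a)}$ for $X \in \Gamma \mathrm{pr}_{T}(L_1 \cap \bar L_2 \cap \mathcal S_{\mathbb{C}}^{\perp})$ is controlled by (\ref{cond-long-1}) from Remark \ref{conditii-echivalente}; $[\mathrm{pr}_{T}(v_{if}), Y]^{(F,a)}$ for $Y \in \Gamma \mathrm{pr}_{T}(\bar L_1 \cap \bar L_2 \cap \mathcal S_{\mathbb{C}}^{\perp})$ is handled by the complex conjugate of (\ref{aut1}); and $[X,Y]^{(F,a)}$ is decomposed as in (\ref{j}) with $\bar L_2$ in place of $L_2$. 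Evaluating $\tilde\epsilon^{\prime}$ on these pieces via Lemma \ref{e-eprime-def} ii) and then subtracting the identity formally obtained with $f=1$ and $F=0$ (which vanishes because $d\bar\epsilon_2 = 0$) removes the terms not coming from the conformal and twist data and produces expressions in $\bar{\mathcal F}$; the overall sign flips relative to Lemma \ref{comput1} come entirely from the sign reversal between $\tilde\epsilon^{\prime}$ and $\epsilon^{\prime}$ noted above. The main obstacle is disciplined sign bookkeeping: the asymmetry in the definition of $v_{if}$ versus $v_f$ propagates through every Courant-bracket evaluation and every use of Lemma \ref{e-eprime-def}, and one must carefully verify that each of the four claims ends with the sign indicated in the statement.
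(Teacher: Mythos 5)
Your proposal matches the paper's approach exactly: the paper proves this lemma simply by declaring it follows "by similar computations" from Lemma \ref{comput1}, and your outline is a correct, more explicit account of that mirroring under the substitutions $L_2\leftrightarrow\bar L_2$, $v_f\leftrightarrow v_{if}$, $\bar v_{if}\leftrightarrow\bar v_f$ with the attendant sign flips. The only slip is a label: the bracket $[\mathrm{pr}_{T}(v_{if}),Y]^{(F,a)}$ for $Y\in\Gamma\,\mathrm{pr}_{T}(\bar L_1\cap\bar L_2\cap\mathcal S^{\perp}_{\mathbb{C}})$ is controlled by the conjugate of (\ref{aut2}), while the conjugate of (\ref{aut1}) governs $[\mathrm{pr}_{T}(\bar v_f),X]^{(F,a)}$ for $X\in\Gamma\,\mathrm{pr}_{T}(L_1\cap\bar L_2\cap\mathcal S^{\perp}_{\mathbb{C}})$ in claim iv); both relations are available, so this does not affect the argument.
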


The next two lemmas collect the exterior products  $dh\wedge {\epsilon}^{\prime}$ and $dh\wedge\tilde{\epsilon}^{\prime}$.
The proofs are  straighforward computations.

\begin{lem}\label{exterior1} The $3$-form $dh\wedge{\epsilon}^{\prime}$ on
$\Lambda^{2} \mathrm{pr}_{T} (L_{1} \cap L_{2}^{\prime})\wedge \mathrm{pr}_{T} ( \bar{L}_{1} \wedge L_{2}^{\prime})$
 is given by:

i) for any $X\in \mathrm{pr}_{T}( L_{1} \cap L_{2} \cap \mathcal S_{\mathbb{C}}^{\perp})$,  
\begin{equation}
(dh\wedge{\epsilon}^{\prime})(X, \mathrm{pr}_{T}(v_{f}), \mathrm{pr}_{T} (\bar{v}_{if})) = \frac{4G(X_{0}, X_{0})}{f^{2}} X(h); 
\end{equation}
ii) for any $X\in \mathrm{pr}_{T}(L_{1} \cap L_{2} \cap \mathcal S_{\mathbb{C}}^{\perp})$ and $Y\in \mathrm{pr}_{T}(\bar{L}_{1}\cap L_{2} 
\cap \mathcal S^{\perp}_{\mathbb{C}})$, 
\begin{equation}
 (dh\wedge{\epsilon}^{\prime})(X, Y, \mathrm{pr}_{T} ({v}_{f}))=\mathrm{pr}_{T}(v_{f}) (h) \epsilon_{2} (X,Y)
 +(\mathrm{pr}_{T^{*}} (v_{f})\wedge dh )(X,Y);
\end{equation}
iii)  for any $X,Y\in \mathrm{pr}_{T}(L_{1} \cap L_{2} \cap \mathcal S_{\mathbb{C}}^{\perp})$, 
\begin{equation}
 (dh\wedge{\epsilon}^{\prime})(X, Y, \mathrm{pr}_{T} (\bar{v}_{if}))=\mathrm{pr}_{T}(\bar{v}_{if}) (h) 
 \epsilon_{2} (X,Y) + (\mathrm{pr}_{T^{*}} (\bar{v}_{if})\wedge dh)(X,Y);
\end{equation}
iv) for any $X, Y\in \mathrm{pr}_{T}( L_{1} \cap L_{2} \cap \mathcal S_{\mathbb{C}}^{\perp})$ and 
$Z\in \mathrm{pr}_{T}(\bar{ L}_{1} \cap L_{2} \cap \mathcal S_{\mathbb{C}}^{\perp})$,
$$
(dh\wedge{\epsilon}^{\prime})(X, Y, Z) =(dh\wedge{\epsilon}_{2})(X, Y, Z). 
$$
\end{lem}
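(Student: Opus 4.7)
The proof is a direct computation of the 3-form $dh \wedge \epsilon^{\prime}$ on the four types of triples arising from the decomposition (\ref{dec-direct}) of $\mathrm{pr}_T(L_1 \cap L_2^{\prime})$ and its conjugate. The plan is to expand via the standard identity
$$(dh \wedge \epsilon^{\prime})(u,v,w) = dh(u)\,\epsilon^{\prime}(v,w) - dh(v)\,\epsilon^{\prime}(u,w) + dh(w)\,\epsilon^{\prime}(u,v)$$
and substitute the explicit values of $\epsilon^{\prime}$ from Lemma \ref{e-eprime-def} into each of the four cases.

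The simplification to the clean formulas in the statement hinges on one key algebraic observation. From the definitions (\ref{v-f}) of $v_f$ and $v_{if}$, the cotangent projections coincide and satisfy $\mathrm{pr}_{T^*}(v_f) = \mathrm{pr}_{T^*}(\bar{v}_{if}) = -\tfrac{i}{f^2}\,df^H = -\tfrac{i}{f^2}\mathcal J_2 X_0$, since only the $\mathcal J_2 X_0$ and $\mathcal J_3 X_0$ summands contribute to $\mathrm{pr}_{T^*}$ and $\mathcal J_3 X_0$ is a vector field. Every $X \in \mathrm{pr}_T(\mathcal S^{\perp}_{\mathbb{C}})$ is $\langle\cdot,\cdot\rangle$-orthogonal to $\mathcal J_2 X_0$ by construction of $\mathcal S$, so $\mathrm{pr}_{T^*}(v_f)(X) = \mathrm{pr}_{T^*}(\bar v_{if})(X) = 0$. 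This collapses several terms and also explains why the apparently nontrivial wedge products $\mathrm{pr}_{T^*}(v_f)\wedge dh$ and $\mathrm{pr}_{T^*}(\bar{v}_{if})\wedge dh$ in cases ii) and iii) actually vanish when evaluated on arguments from $\mathcal S^{\perp}$ (they are written in the statement to keep the formula in a natural, restriction-free form).

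With this in hand each case is short. For i) the two would-be contributions $\mathrm{pr}_T(v_f)(h)\,\epsilon^{\prime}(\bar{v}_{if},X)$ and $\mathrm{pr}_T(\bar v_{if})(h)\,\epsilon^{\prime}(X,v_f)$ vanish by the observation, and only $X(h)\cdot \epsilon^{\prime}(\mathrm{pr}_T v_f, \mathrm{pr}_T \bar v_{if}) = \tfrac{4}{f^2}G(X_0,X_0)\,X(h)$ survives. Cases ii) and iii) are handled identically: the two $X(h)$- and $Y(h)$-terms pair with $\epsilon^{\prime}(Y, \mathrm{pr}_T v_f) = \mathrm{pr}_{T^*}(v_f)(Y)$ (or the corresponding $\bar{v}_{if}$ version), which vanish on $\mathcal S^{\perp}$, leaving only the third term $\mathrm{pr}_T(v_f)(h)\,\epsilon_2(X,Y)$, respectively $\mathrm{pr}_T(\bar{v}_{if})(h)\,\epsilon_2(X,Y)$, where we used $\epsilon^{\prime}|_{L_2\cap \mathcal S^{\perp}_{\mathbb C}} = \epsilon_2$ from Lemma \ref{e-eprime-def}. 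Case iv) is immediate since all three entries of $\epsilon^{\prime}$ reduce to $\epsilon_2$, directly yielding $dh\wedge \epsilon_2$.

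There is no real obstacle here: the argument is pure bookkeeping once the vanishing of $\mathrm{pr}_{T^*}(v_f)$ and $\mathrm{pr}_{T^*}(\bar{v}_{if})$ on $\mathcal S^{\perp}$ is isolated. The only place requiring care is sign tracking, because Lemma \ref{e-eprime-def} records $\epsilon^{\prime}(\mathrm{pr}_T v_f, X) = +\mathrm{pr}_{T^*}(v_f)(X)$ but $\epsilon^{\prime}(X, \mathrm{pr}_T \bar v_{if}) = -\mathrm{pr}_{T^*}(\bar v_{if})(X)$, so one must be consistent when reordering arguments in the Leibniz expansion.
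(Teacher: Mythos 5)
Your overall strategy — Leibniz expansion of $dh\wedge\epsilon'$ over the decompositions (\ref{dec-direct}) and substitution of the values of $\epsilon'$ from Lemma \ref{e-eprime-def} — is exactly the "straightforward computation" the paper has in mind, and part iv) is fine. But your "key algebraic observation" is false, and it is doing real work in parts i)–iii). You claim $\mathrm{pr}_{T^*}(v_f)=\mathrm{pr}_{T^*}(\bar v_{if})=-\tfrac{i}{f^2}\,df^H$ because "$\mathcal J_3X_0$ is a vector field" and $\mathcal JX_0$ does not contribute to $\mathrm{pr}_{T^*}$. Neither holds: from (\ref{gend}), $\mathrm{pr}_{T^*}(\mathcal J_3X_0)=-\sigma(X_0)$, which never vanishes since $\sigma$ is positive definite (indeed, elsewhere the paper must *assume* $\mathcal J_3X_0\in\Omega^1(M)$, the opposite of your claim), and $\mathcal JX_0$ generically has a nonzero $T^*M$-component as well. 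Only the \emph{sum} behaves as you say: $v_f+\bar v_{if}=2X_0-\tfrac{2i}{f^2}\mathcal J_2X_0$, whence $\mathrm{pr}_{T^*}(v_f+\bar v_{if})=-\tfrac{2i}{f^2}df^H$ — this is precisely the identity the paper uses in (\ref{eps-pr-X}). Consequently $\mathrm{pr}_{T^*}(v_f)(X)=-i\,\mathrm{pr}_{T^*}(\mathcal JX_0)(X)-\tfrac{1}{f^2}\mathrm{pr}_{T^*}(\mathcal J_3X_0)(X)$ need \emph{not} vanish for $X\in\mathrm{pr}_T(\mathcal S^\perp_{\mathbb C})$, and the wedge terms $\mathrm{pr}_{T^*}(v_f)\wedge dh$ and $\mathrm{pr}_{T^*}(\bar v_{if})\wedge dh$ in ii) and iii) are genuinely nonzero — they are not decorative, and they propagate into condition v) of Theorem \ref{gen-KK}, see (\ref{cond-de}).

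Concretely: in ii) and iii) your Leibniz expansion already produces the correct answer (the two cross terms are exactly $(\mathrm{pr}_{T^*}(v_f)\wedge dh)(X,Y)$, resp.\ the $\bar v_{if}$ version), so the error there is only the spurious claim that these terms vanish. In i), however, the clean formula genuinely requires the cross terms
$\mathrm{pr}_T(v_f)(h)\,\mathrm{pr}_{T^*}(\bar v_{if})(X)-\mathrm{pr}_T(\bar v_{if})(h)\,\mathrm{pr}_{T^*}(v_f)(X)$
to cancel, and your justification for that is the false vanishing. The correct argument is: since $df^H$ annihilates $\mathrm{pr}_T(\mathcal S^\perp_{\mathbb C})$, the sum identity gives $\mathrm{pr}_{T^*}(\bar v_{if})(X)=-\mathrm{pr}_{T^*}(v_f)(X)$, so the two cross terms combine to $-\bigl(\mathrm{pr}_T(v_f)(h)+\mathrm{pr}_T(\bar v_{if})(h)\bigr)\mathrm{pr}_{T^*}(v_f)(X)=-2X_0(h)\,\mathrm{pr}_{T^*}(v_f)(X)$, which vanishes because $h$ is invariant. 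You need to replace your observation by this argument (and drop the claim that the wedge terms in ii) and iii) are zero).
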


Similarly:

\begin{lem}\label{exterior2}
The $3$-form $dh\wedge\tilde{\epsilon}^{\prime}$ 
on $\Lambda^{2} \mathrm{pr}_{T} (L_{1} \cap\bar{L}_{2}^{\prime}) \wedge \mathrm{pr}_{T} (\bar{L}_{1}\cap
\bar{L}_{2}^{\prime})$ 
is given by:

i) for any $X\in \mathrm{pr}_{T}( L_{1} \cap \bar{L}_{2} \cap \mathcal S_{\mathbb{C}}^{\perp})$. 
\begin{equation}
(dh\wedge\tilde{\epsilon}^{\prime})(X, \mathrm{pr}_{T}(v_{if}), \mathrm{pr}_{T} (\bar{v}_{f})) 
= -\frac{4G(X_{0}, X_{0})}{f^{2}} X(h); 
\end{equation}
ii) for any $X\in \mathrm{pr}_{T}(L_{1} \cap \bar{L}_{2} \cap \mathcal S_{\mathbb{C}}^{\perp})$ and $Y\in \mathrm{pr}_{T}(\bar{L}_{1}\cap 
\bar{L}_{2} 
\cap \mathcal S_{\mathbb{C}}^{\perp})$, 
\begin{equation}
 (dh\wedge\tilde{\epsilon}^{\prime})(X, Y, \mathrm{pr}_{T} ({v}_{if}))=\mathrm{pr}_{T}(v_{if}) (h) 
 \bar{\epsilon}_{2} (X,Y)
 +(\mathrm{pr}_{T^{*}} (v_{if})\wedge dh )(X,Y);
\end{equation}
iii)  for any $X,Y\in \mathrm{pr}_{T}(L_{1} \cap \bar{L}_{2} \cap \mathcal S_{\mathbb{C}}^{\perp})$, 
\begin{equation}
 (dh\wedge\tilde{\epsilon}^{\prime})(X, Y, \mathrm{pr}_{T} (\bar{v}_{f}))=\mathrm{pr}_{T}(\bar{v}_{f}) (h) 
 \bar{\epsilon}_{2} (X,Y) +  (\mathrm{pr}_{T^{*}} (\bar{v}_{f})\wedge dh)(X,Y);
\end{equation}
iv) for any $X, Y\in \mathrm{pr}_{T}( L_{1} \cap\bar{L}_{2} \cap \mathcal S_{\mathbb{C}}^{\perp})$ and 
$Z\in \mathrm{pr}_{T}(\bar{ L}_{1} \cap \bar{L}_{2} \cap \mathcal S_{\mathbb{C}}^{\perp})$,
$$
(dh\wedge\tilde{\epsilon}^{\prime})(X, Y, Z) =(dh\wedge\bar{\epsilon}_{2})(X, Y, Z). 
$$
\end{lem}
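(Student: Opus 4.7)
The plan is to prove Lemma \ref{exterior2} by direct algebraic computation, following exactly the same pattern used for the analogous Lemma \ref{exterior1} (which the authors also prove by straightforward calculation). The only inputs needed are the explicit values of the $2$-form $\tilde{\epsilon}^{\prime}$ assembled in Lemma \ref{e-eprime-def}(ii), together with the standard expansion
\begin{equation*}
(dh \wedge \alpha)(u,v,w) = dh(u)\,\alpha(v,w) - dh(v)\,\alpha(u,w) + dh(w)\,\alpha(u,v)
\end{equation*}
for any $2$-form $\alpha$. Each of the four cases is obtained by substituting $\alpha = \tilde{\epsilon}^{\prime}$ and the prescribed triple of arguments, then reading off the appropriate entry from Lemma \ref{e-eprime-def}(ii). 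The four computations may be carried out independently, in the order (i), (iv), (ii), (iii).

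For case (i), the dominant term is $dh(X)\,\tilde{\epsilon}^{\prime}(\mathrm{pr}_{T}(v_{if}), \mathrm{pr}_{T}(\bar{v}_{f}))$, which, using $\tilde{\epsilon}^{\prime}(\mathrm{pr}_{T}(v_{if}), \mathrm{pr}_{T}(\bar{v}_{f})) = -\tfrac{4}{f^{2}} G(X_{0}, X_{0})$, already supplies the claimed right-hand side. The remaining two terms $dh(\mathrm{pr}_{T}(v_{if}))\,\tilde{\epsilon}^{\prime}(\mathrm{pr}_{T}(\bar{v}_{f}), X)$ and $dh(\mathrm{pr}_{T}(\bar{v}_{f}))\,\tilde{\epsilon}^{\prime}(\mathrm{pr}_{T}(v_{if}), X)$ cancel against each other by the same mechanism as in Lemma \ref{exterior1}(i), since $h$ is invariant (so $X_{0}(h) = 0$) and $X \in \mathrm{pr}_{T}(\mathcal S_{\mathbb{C}}^{\perp})$. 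Case (iv) is immediate, because when all three vectors lie in $\mathrm{pr}_{T}(L_{1}\cap\bar{L}_{2}\cap\mathcal S_{\mathbb{C}}^{\perp})$ or $\mathrm{pr}_{T}(\bar{L}_{1}\cap\bar{L}_{2}\cap\mathcal S_{\mathbb{C}}^{\perp})$, Lemma \ref{e-eprime-def}(ii) gives $\tilde{\epsilon}^{\prime} = \bar{\epsilon}_{2}$ and hence $dh \wedge \tilde{\epsilon}^{\prime} = dh \wedge \bar{\epsilon}_{2}$ on such triples.

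For cases (ii) and (iii), the first two terms in the three-term expansion combine, using $\tilde{\epsilon}^{\prime}(X, \mathrm{pr}_{T}(v_{if})) = -\mathrm{pr}_{T^{*}}(v_{if})(X)$ and the analogous formula for $\bar{v}_{f}$, to produce the wedge term $(\mathrm{pr}_{T^{*}}(v_{if}) \wedge dh)(X,Y)$ (resp.\ $(\mathrm{pr}_{T^{*}}(\bar{v}_{f}) \wedge dh)(X,Y)$). The third term, namely $dh(\mathrm{pr}_{T}(v_{if}))\,\tilde{\epsilon}^{\prime}(X, Y) = \mathrm{pr}_{T}(v_{if})(h)\,\bar{\epsilon}_{2}(X, Y)$ (resp.\ the $\bar{v}_{f}$ counterpart), supplies the contraction term, again using $\tilde{\epsilon}^{\prime}(X,Y) = \bar{\epsilon}_{2}(X,Y)$ on horizontal pairs.

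The main (indeed only) source of difficulty is bookkeeping of signs: compared with the formulas of Lemma \ref{exterior1}, Lemma \ref{e-eprime-def}(ii) carries a relative minus sign in the identities $\tilde{\epsilon}^{\prime}(\mathrm{pr}_{T}(v_{if}), \mathrm{pr}_{T}(\bar{v}_{f})) = -\tfrac{4}{f^{2}} G(X_{0}, X_{0})$ and $\tilde{\epsilon}^{\prime}(X, \mathrm{pr}_{T}(\bar{v}_{f})) = -\mathrm{pr}_{T^{*}}(\bar{v}_{f})(X)$; this sign propagates into the minus in case (i) and the overall pattern of cases (ii) and (iii). Nothing conceptually new is required beyond the routine but careful algebra already performed in Lemma \ref{exterior1}.
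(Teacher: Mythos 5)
Your proposal is correct and is exactly the computation the paper intends: the authors state that Lemmas \ref{exterior1} and \ref{exterior2} follow by "straightforward computations" from the explicit values of $\tilde{\epsilon}^{\prime}$ in Lemma \ref{e-eprime-def}(ii), which is precisely what you carry out. Your identification of the cancellation mechanism in case (i) is also right: invariance of $h$ gives $dh(\mathrm{pr}_{T}(\bar{v}_{f})) = -dh(\mathrm{pr}_{T}(v_{if}))$, while $df^{H}(X)=0$ for $X\in\mathrm{pr}_{T}(\mathcal S^{\perp}_{\mathbb{C}})$ gives $\mathrm{pr}_{T^{*}}(\bar{v}_{f})(X) = -\mathrm{pr}_{T^{*}}(v_{if})(X)$ (since $\mathrm{pr}_{T^{*}}(v_{if}+\bar{v}_{f}) = \frac{2i}{f^{2}}\mathcal J_{2}X_{0}$), so the two cross terms indeed cancel.
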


Using the above lemmas, we now prove Proposition \ref{final} as follows.
Suppose that the second and third relation (\ref{prime-appl}) hold. We  apply these relations  to various types of arguments, according to the 
decomposition of 
$\mathrm{pr}_{T} (L_{1} \cap L_{2}^{\prime})$, $\mathrm{pr}_{T} (\bar{L}_{1} \cap L_{2}^{\prime} )$, 
$\mathrm{pr}_{T} (L_{1} \cap\bar{ L}_{2}^{\prime} )$
and $\mathrm{pr}_{T} (\bar{L}_{1} \cap\bar{ L}_{2}^{\prime} )$, 
and we  use Lemmas \ref{comput1},  \ref{comput2},
\ref{exterior1} and \ref{exterior2},
to obtain relation (\ref{log}), 
the second relation 
(\ref{cond-e}) and relation (\ref{cond-de}).

Let us explain how we obtain  relation (\ref{log}). 
From the second relation
(\ref{prime-appl}), 
Lemma \ref{comput1} ii) and Lemma \ref{exterior1}  i), 
we obtain 
\begin{equation}\label{log-1}
X(af^{2} h^{2}) =0,
\end{equation}
for any  $X\in \mathrm{pr}_{T}(L_{1} \cap L_{2} \cap \mathcal S^{\perp}_{\mathbb{C}})$.
From the third relation (\ref{cond-e}), Lemma  \ref{comput2} ii) and Lemma \ref{exterior2}  i), we obtain the same relation
(\ref{log-1}),  with  $X\in 
\mathrm{pr}_{T} (L_{1} \cap\bar{ L}_{2} \cap \mathcal S^{\perp}_{\mathbb{C}}).$ 
Therefore, (\ref{log-1}) holds on
$$
\mathrm{pr}_{T} (L_{1}\cap \mathcal S^{\perp}_{\mathbb{C}})=
\mathrm{pr}_{T} ( L_{1} \cap L_{2} \cap \mathcal S^{\perp}_{\mathbb{C}}) + 
\mathrm{pr}_{T} ( L_{1} \cap \bar{L}_{2}\cap \mathcal S^{\perp}_{\mathbb{C}}) .
$$
By conjugation, using that $L_{1} +\bar{L}_{1} = (TM)^{\mathbb{C}}$, we obtain (\ref{log-1}) for any
$X \in \mathrm{pr}(\mathcal S^{\perp})$. Relation (\ref{log}) follows.

We now prove the second relation  (\ref{cond-e}).
From the second and third relations  (\ref{prime-appl}),  Lemma \ref{comput1} i) together with
Lemma \ref{exterior1} iv), and, respectively, 
Lemma  \ref{comput2} i) together with Lemma \ref{exterior2} iv), 
we obtain
\begin{equation}\label{suma1}
dh\wedge \epsilon_{2} =0\ \mathrm{on}\ 
\Lambda^{2} \mathrm{pr}_{T} (L_{1} \cap L_{2} \cap \mathcal S^{\perp}_{\mathbb{C}})\wedge \mathrm{pr}_{T} ( \bar{L}_{1} \cap L_{2}
\cap \mathcal S^{\perp}_{\mathbb{C}})
\end{equation}
and, respectively,  
\begin{equation}\label{suma2}
 dh \wedge \bar{\epsilon}_{2} =0\ \mathrm{on}\  \Lambda^{2} \mathrm{pr}_{T} (L_{1} \cap \bar{L}_{2} \cap \mathcal S^{\perp}_{\mathbb{C}})\wedge 
\mathrm{pr}_{T} ( \bar{L}_{1} \cap \bar{L}_{2}
\cap \mathcal S^{\perp}_{\mathbb{C}}).
\end{equation}
Relation (\ref{suma1}) and the conjugate of (\ref{suma2})   give $dh\wedge \epsilon_{2} =0$ on  
$$
\mathrm{pr}_{T} ( L_{1} \cap L_{2} \cap \mathcal S^{\perp}_{\mathbb{C}})\wedge  \mathrm{pr}_{T} ( \bar{L}_{1} \cap L_{2} \cap \mathcal S^{\perp}_{\mathbb{C}})\wedge 
\mathrm{pr}_{T} (L_{2} \cap \mathcal S^{\perp}_{\mathbb{C}}).
$$
The second relation (\ref{cond-e}) follows. Finally,
one can check that the  second and third relation (\ref{prime-appl})
and the remaining statements from Lemmas  \ref{comput1},  \ref{comput2},  \ref{exterior1} and \ref{exterior2}  
give (\ref{cond-de}).
The proof of Proposition \ref{final} and  Theorem \ref{gen-KK} is completed.

\section{Applications, comments and  examples}\label{examples}

As a first application of Theorem \ref{gen-KK} , we  recover 
the KK correspondence 
 from the 
K\"{a}hler setting.

\begin{prop}\label{concise}    Let $(M, g, J)$ be a K\"{a}hler manifold with  a Hamiltonian Killing vector field
$X_{0}.$ Assume, for simplicity, that the Hamiltonian function $f^{H}$ of $X_{0}$ is positive. 
Let $X_{0}^{\flat}= g(X_{0}, \cdot )$ be the $1$-form dual to $X_{0}.$ 
Then the data formed by 
$F:= \omega -\frac{1}{2} dX_{0}^{\flat}$ together with
\begin{equation}
a := -  \left( f^{H} + \frac{g(X_{0}, X_{0})}{2}\right),\ 
f^{2}:= \frac{2f^{H}}{ 2 f^{H} + g(X_{0}, X_{0})},\ h^{2} := f^{H}
\end{equation}
satisfies all the conditions from Theorem  \ref{gen-KK} and gives rise to a K\"{a}hler 
manifold
$[\tau_{h} (g^{\prime}, J)]_{W}$.  (The assignment $(M, g, J, X_{0}) \rightarrow [\tau_{h} (g^{\prime}, J)]_{W}$ is known 
as
the {\cmssl KK correspondence} {\rm  \cite{ACM,ACDM}.)}
\end{prop}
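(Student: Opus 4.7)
The plan is to translate the classical K\"ahler data into the generalized formalism, exploit the fact that $\mathcal{J}_{3}X_{0}$ is a pure $1$-form as the central simplification, and then verify each of conditions i)--v) of Theorem \ref{gen-KK} by direct substitution. In the K\"ahler case $\mathcal{J}_{1}=\mathcal{J}_{J}$ and $\mathcal{J}_{2}=\mathcal{J}_{\omega}$, so $E_{1}=T^{1,0}M$ with $\epsilon_{1}=0$ and $E_{2}=(TM)^{\mathbb{C}}$ with $\epsilon_{2}=-i\omega$. From the Hamiltonian condition $\mathcal{J}_{2}X_{0}=(JX_{0})^{\flat}=df^{H}$ one computes $\mathcal{J}_{3}X_{0}=\mathcal{J}_{1}\mathcal{J}_{2}X_{0}=-J^{*}((JX_{0})^{\flat})=-X_{0}^{\flat}$, which lies in $\Omega^{1}(M)$; in particular $\mathrm{pr}_{T}(\mathcal{J}_{3}X_{0})=0$. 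One also records $G(X_{0},X_{0})=\tfrac{1}{2}g(X_{0},X_{0})$, $\mathrm{pr}_{T}(v_{f})=\mathrm{pr}_{T}(v_{if})=X_{0}-iJX_{0}\in T^{1,0}M$, and $\mathrm{pr}_{T}(\mathcal{S}^{\perp})=\{X_{0},JX_{0}\}^{\perp_{g}}$, the $g$-orthogonal complement.

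I would first check that $(X_{0},F,a)$ is valid twist data. The form $F$ is closed and invariant (invariance from $\mathcal{L}_{X_{0}}\omega=0$ and $\mathcal{L}_{X_{0}}(dX_{0}^{\flat})=d\mathcal{L}_{X_{0}}X_{0}^{\flat}=0$, both using that $X_{0}$ is Killing). The relation $da=-i_{X_{0}}F$ follows from $i_{X_{0}}\omega=df^{H}$ together with the Cartan identity $i_{X_{0}}dX_{0}^{\flat}=\mathcal{L}_{X_{0}}X_{0}^{\flat}-dg(X_{0},X_{0})=-dg(X_{0},X_{0})$. For conditions i)--iii), the crucial input is that $F=\omega-\tfrac{1}{2}dX_{0}^{\flat}$ is of type $(1,1)$: $\omega$ is $(1,1)$ by the K\"ahler property, and $dX_{0}^{\flat}$ is $(1,1)$ because the holomorphic Killing field $X_{0}$ has $\nabla X_{0}$ of type $(1,1)$. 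This kills $F$ on $\Lambda^{2}T^{1,0}M$, so condition i) is immediate, as is the constraint $F(\mathrm{pr}_{T}(v_{f}),X)=0$ appearing in ii); the remaining terms in ii) vanish because $\mathrm{pr}_{T}(\mathcal{J}_{3}X_{0})=0$. For iii), a direct computation yields $af^{2}h^{2}=-(f^{H})^{2}$, whose derivative along any $X\in \mathrm{pr}_{T}(\mathcal{S}^{\perp})$ vanishes since $df^{H}=(JX_{0})^{\flat}$ annihilates $\{X_{0},JX_{0}\}^{\perp_{g}}$.

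Condition iv) is likewise short: the first identity is trivial since $\epsilon_{1}=0$, while the second reduces to $\omega\wedge df^{H}=0$ on any triple of vectors annihilating $X_{0}$ and $JX_{0}$, which holds by a degree count. The main obstacle is condition v), but the same simplifications trivialize $\mathcal{D}_{\mathrm{pr}_{T}(\mathcal{J}_{3}X_{0})}\epsilon_{2}$ and reduce everything to an explicit $2$-form identity. Using $\tfrac{1-f^{2}}{f^{2}G(X_{0},X_{0})}=1/f^{H}$ and $af^{2}=-f^{H}$, the left-hand side of (\ref{cond-de}) simplifies to $\frac{1}{(f^{H})^{2}}df^{H}\wedge X_{0}^{\flat}-\frac{2\omega}{f^{H}}$. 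Computing the right-hand side from $\mathrm{pr}_{T}(v_{f})(h)=-ig(X_{0},X_{0})/(2h)$ and $\mathrm{pr}_{T^{*}}(v_{f})=(X_{0}^{\flat}-i(JX_{0})^{\flat})/f^{2}$ yields $-\frac{2\omega}{f^{H}}$ plus a scalar multiple of $X_{0}^{\flat}\wedge(JX_{0})^{\flat}$. Hence the difference between the two sides is supported on $X_{0}^{\flat}\wedge(JX_{0})^{\flat}$, which vanishes on $\Lambda^{2}\mathrm{pr}_{T}(L_{2}\cap\mathcal{S}^{\perp}_{\mathbb{C}})\subset\Lambda^{2}\{X_{0},JX_{0}\}^{\perp_{g}}$. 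Finally, since $\tau_{h}(G',\mathcal{J})$ remains of classical Hermitian type by Remark \ref{rem-elem}, its generalized K\"ahler twist produced by Theorem \ref{gen-KK} descends to a usual K\"ahler structure on $W$, recovering the KK correspondence of \cite{ACM,ACDM}.
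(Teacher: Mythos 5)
Your proof is correct and follows essentially the same route as the paper's: a direct verification of conditions i)--v) of Theorem \ref{gen-KK} based on the key facts that $\mathrm{pr}_{T}(\mathcal J_{3}X_{0})=0$, that $F$ is of type $(1,1)$, and that $af^{2}h^{2}=-(f^{H})^{2}$. You supply more detail than the paper does --- notably the explicit two-sided computation showing that (\ref{cond-de}) holds up to a multiple of $X_{0}^{\flat}\wedge (JX_{0})^{\flat}$, which the paper dismisses as following ``from the definition of $F$'' --- and all of it checks out.
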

 
\begin{proof} 
We consider $(M, g, J)$ as a generalized K\"{a}hler manifold and we apply Theorem \ref{gen-KK}. 
As $\mathcal J_{3} X_{0} = - X_{0}^{\flat}$  projects trivially on $TM$
and $F =0$ on $\Lambda^{2} T^{1,0}M$
(because $X_{0}$ is Hamiltonian Killing),   the first two conditions of
Theorem \ref{gen-KK} are  satisfied.  Since $af^{2} h^{2} = - (f^{H})^{2}$, its differential annihilates
$\mathrm{pr}_{T}(\mathcal S^{\perp})=\mathrm{Ker}\{ { df}^{H}, df^{H}\circ J \}$ and  condition iii) from Theorem \ref{gen-KK} is satisfied. 
The first relation (\ref{cond-e}) is trivially satisfied ($\epsilon_{1}=0$), and the second relation (\ref{cond-e}) is satisfied as well,
because $h$ depends only on $f^{H}$ and therefore $dh$ annihilates $\mathrm{pr}_{T} (\mathcal S^{\perp}).$  
Relation (\ref{cond-de})   follows from the definition of $F$. 
\end{proof}

In order to  apply Theorem \ref{gen-KK} in the generalized (non-K\"{a}hler) setting,   it is natural to look,
in view of condition i) from this theorem,  for examples with $ F =0$ on $\Lambda^{2}E_{1}.$  
But when $F=0$ on $\Lambda^{2}E_{1}$, the first relation (\ref{cond-e}) from Theorem \ref{gen-KK} becomes 
$\epsilon_{1} \wedge dh =0$ on $\Lambda^{3} E_{1}.$  
When the function $h$ is not constant, this imposes strong restrictions on the generalized K\"{a}hler manifold, as the next lemma shows.

\begin{lem}\label{supl} Let  $(M, G, \mathcal J)$ be a generalized K\"{a}hler manifold and $L(E_{1}, \epsilon_{1})$ the 
$(1,0)$-bundle of $\mathcal J .$ If there is a form $\beta \in \Omega^{1}(M)$ (non-trivial at any point), such that
$\epsilon_{1} \wedge \beta =0$ on $\Lambda^{3}E_{1}$, then either $(G, \mathcal J)$ is the $B$-field transformation of a K\"{a}hler
structure or $\mathrm{rank} (\Delta_{1} ) = 2$, where   $\Delta_1$ is the set of real points of  
$(\Delta_{1})^{\mathbb{C}}:= E_{1} \cap \bar{E}_{1}$.
\end{lem}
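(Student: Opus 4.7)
The argument will be essentially pointwise: I fix $p\in M$ and work with the linear algebra of $(E_1)_p$ and $(\Delta_1)^\mathbb{C}_p$.

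First, I assume $\beta|_{(E_1)_p}\neq 0$. The standard Koszul-type identity---wedging with a non-zero 1-form $\beta$ on a vector space has kernel equal to $\beta\wedge(\text{linear dual})$---applied to the hypothesis $\epsilon_1\wedge\beta=0$ in $\Lambda^3(E_1)^*_p$ yields
\begin{equation*}
\epsilon_1|_p \;=\; \beta|_{(E_1)_p}\wedge\gamma \quad\text{for some } \gamma\in (E_1)^*_p,
\end{equation*}
so the complex rank of $\epsilon_1$ at $p$ is at most $2$.

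Next I restrict this decomposition to $(\Delta_1)^\mathbb{C}_p$. Because $\beta$ is a real-valued $1$-form on $M$, its restriction to the real subspace $\Delta_1\subseteq TM$ is real; writing $\gamma|_{\Delta_1}=\gamma_R+i\gamma_I$ for real $1$-forms $\gamma_R,\gamma_I$ on $\Delta_1$, a direct computation on two real vectors $X,Y\in\Delta_1$ gives
\begin{equation*}
\mathrm{Im}\bigl(\epsilon_1|_{\Delta_1}\bigr) \;=\; \beta|_{\Delta_1}\wedge\gamma_I,
\end{equation*}
a decomposable real $2$-form on $\Delta_1$ of rank at most $2$. The non-degeneracy condition on $\mathrm{Im}(\epsilon_1|_{(\Delta_1)^\mathbb{C}})$ built into the very definition of the $(1,0)$-bundle $L(E_1,\epsilon_1)$ in Section~\ref{complex-geometry} thus forces $\mathrm{rank}(\Delta_1)_p\le 2$; since this rank is necessarily even, it equals $0$ or $2$.

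It remains to match the two values of $\mathrm{rank}(\Delta_1)$ with the dichotomy in the conclusion. If $\mathrm{rank}(\Delta_1)_p=0$ somewhere (an open condition, since $\dim(E_1\cap\bar E_1)$ is upper semicontinuous), then $E_1\oplus\bar E_1=(TM)^\mathbb{C}$ on a neighbourhood, identifying $E_1$ with the $(1,0)$-bundle of a usual complex structure $J$; Courant integrability of $L_1$ makes $J$ integrable. Remark~\ref{rem-elem} ii), which invokes relation (6.3) of \cite{gualtieri}, then asserts that the whole generalized almost Hermitian structure $(G,\mathcal J)$ is the $B$-field transformation of an almost Hermitian $(g,J)$; the generalized K\"ahler integrability hypothesis promotes $(g,J)$ to being K\"ahler. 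The main obstacle---and the only place one must actually use more than the Courant integrability of $L_1$---is this last global reduction: integrability of $\mathcal J_1$ alone only yields $\partial\epsilon_1=0$, so producing a real, $d$-closed $B$ requires the extra rigidity supplied by the interaction with $\mathcal J_2$ and the positivity of $G$ that underlies Remark~\ref{rem-elem} ii). The complementary case, $\mathrm{rank}(\Delta_1)_p=2$, is precisely the second alternative of the lemma; the analysis of points where $\beta|_{E_1}$ happens to vanish will then be handled by the openness/semicontinuity already noted above combined with this dichotomy.
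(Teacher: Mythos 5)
Your argument is correct and is essentially the paper's own (which is only sketched there): decomposability $\epsilon_1=\beta\wedge\gamma$ from $\epsilon_1\wedge\beta=0$, the non-degeneracy of $\mathrm{Im}(\epsilon_1|_{\Delta_1})$ forcing $\mathrm{rank}(\Delta_1)\in\{0,2\}$, and Remark~\ref{rem-elem}~ii) together with the generalized K\"ahler condition in the rank-zero case. The one loose end — the case $\beta|_{(E_1)_p}=0$ that you defer to an ``openness/semicontinuity'' argument which would not in fact close it — is vacuous: $\beta$ is real and $E_1+\bar{E}_1=(TM)^{\mathbb{C}}$ (project $L_1\oplus\bar{L}_1=(\mathbb{T}M)^{\mathbb{C}}$ to $TM$), so $\beta|_{(E_1)_p}=0$ would give $\beta_p=0$, contradicting the hypothesis that $\beta$ is non-trivial at every point.
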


\begin{proof} 
The proof is  simple and we skip the details.  Assume that $\mathrm{rank} (\Delta_{1})\neq 2$. 
The condition $\epsilon_{1} \wedge \beta =0$ implies that 
$\Delta_{1} =0$, i.e.\ $\mathcal J$  is the $B$-field transformation of a complex structure.  From Remark
\ref{rem-elem} ii),   $(G, \mathcal J )$  is the $B$-field transformation of a 
K\"{a}hler structure.  
\end{proof}

In  Proposition \ref{gen-applied-1} below we apply  Theorem \ref{gen-KK}, with $h:=1$,  to produce new
generalized K\"{a}hler manifolds from given ones (not necessarily K\"{a}hler).
We need the following  lemma. 

\begin{lem}\label{J-invariance} Let $(M, G, \mathcal J )$ be a generalized K\"{a}hler manifold,  with a Hamiltonian Killing vector field $X_{0}$, such that $\mathcal J_{3} X_{0}\in \Omega^{1}(M).$  
Then 
$d(\mathcal J_{3} X_{0}) =0 $ on $\Lambda^{2} E_{1}$, where $L(E_{1}, \epsilon_{1})$ is the $(1,0)$-bundle of
$\mathcal J .$ 
\end{lem}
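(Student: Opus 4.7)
The plan is to observe that $\mathcal J_3 X_0 + i\mathcal J_2 X_0$ is a $1$-form lying in $L_1$, hence in the annihilator of $E_1$, and then to exploit the standard formula for the exterior derivative on an involutive subbundle together with $d^2 f^H = 0$.

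First, since $\mathcal J_3 = \mathcal J_1 \mathcal J_2$, one checks directly that $\mathcal J_1(\mathcal J_3 X_0 + i\mathcal J_2 X_0) = -\mathcal J_2 X_0 + i\mathcal J_3 X_0 = i(\mathcal J_3 X_0 + i\mathcal J_2 X_0)$, so $\mathcal J_3 X_0 + i\mathcal J_2 X_0 \in \Gamma(L_1)$. (Alternatively, this follows from Remark \ref{prel-deco}~ii): $\tfrac{1}{2}(v_i - v_1) = \mathcal J_3 X_0 + i\mathcal J_2 X_0$ and both $v_1, v_i$ lie in $L_1$.) Under our hypothesis $\mathcal J_3 X_0 \in \Omega^1(M)$ and, since $X_0$ is Hamiltonian, $\mathcal J_2 X_0 = df^H \in \Omega^1(M)$. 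Thus $\mathcal J_3 X_0 + i\, df^H$ is a $1$-form lying in $L_1$.

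Next, a $1$-form $\xi \in (T^*M)^{\mathbb{C}}$ belongs to $L_1 = L(E_1, \epsilon_1)$ precisely when $\xi|_{E_1} = 0$. Therefore
\[
(\mathcal J_3 X_0)(Y) = -i\, Y(f^H), \qquad \forall\, Y \in E_1.
\]
Since $\mathcal J$ is integrable, $E_1$ is involutive, so for any $Y_1, Y_2 \in \Gamma(E_1)$ we also have $[Y_1, Y_2] \in \Gamma(E_1)$, and the Cartan formula for $1$-forms gives
\[
d(\mathcal J_3 X_0)(Y_1, Y_2) = Y_1\bigl((\mathcal J_3 X_0)(Y_2)\bigr) - Y_2\bigl((\mathcal J_3 X_0)(Y_1)\bigr) - (\mathcal J_3 X_0)([Y_1, Y_2]).
\]
Substituting the displayed identity and factoring $-i$, the right-hand side becomes $-i\, d(df^H)(Y_1, Y_2) = 0$, which proves the claim.

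The only genuinely nontrivial step is the first one, namely recognizing that the $1$-form $\mathcal J_3 X_0 + i\, df^H$ sits in $L_1$ and hence annihilates $E_1$; once this is identified, the remainder is a one-line calculation using $d^2 = 0$, with no further input needed from generalized K\"ahler geometry.
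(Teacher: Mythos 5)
Your proof is correct, and it takes a genuinely different and shorter route than the paper. The paper's argument is computational: it evaluates $d(\mathcal J_{3}X_{0})(\mathrm{pr}_{T}(u),\mathrm{pr}_{T}(v))$ for arbitrary $u,v\in\Gamma(\mathbb{T}M)$ using the Cartan formula, the Courant-bracket identity (\ref{uvw}) and $\mathcal J_{2}X_{0}=df^{H}$, and then substitutes $\mathcal J u,\mathcal J v$ and invokes $N_{\mathcal J}=0$ to conclude that $d(\mathcal J_{3}X_{0})$ is invariant under $\mathcal J$ on projections; since $\mathrm{pr}_{T}(\mathcal J u)=i\,\mathrm{pr}_{T}(u)$ for $u\in L_{1}$, this invariance forces the vanishing on $\Lambda^{2}E_{1}$. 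You instead isolate the algebraic fact that $\mathcal J_{3}X_{0}+i\,\mathcal J_{2}X_{0}$ is an $i$-eigenvector of $\mathcal J_{1}$ (equivalently, $\tfrac12(v_{i}-v_{1})$), hence under the hypotheses a $1$-form lying in $L_{1}\cap (T^{*}M)^{\mathbb C}=\mathrm{Ann}(E_{1})$, so that $\mathcal J_{3}X_{0}=-i\,df^{H}$ on $E_{1}$; the claim then reduces to $d^{2}f^{H}=0$ together with the involutivity of $E_{1}$ (needed so that $[Y_{1},Y_{2}]\in\Gamma(E_{1})$ in the Cartan formula, a point you correctly flag). Both proofs consume the integrability of $\mathcal J$ --- the paper via the Nijenhuis tensor, you via involutivity of $E_{1}$ --- but your version makes the mechanism transparent and yields the sharper intermediate statement $(\mathcal J_{3}X_{0})|_{E_{1}}=-i\,df^{H}|_{E_{1}}$, whereas the paper's computation produces a general formula for $d(\mathcal J_{3}X_{0})$ that is not otherwise used.
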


\begin{proof}
From the Cartan formula for $d(\mathcal J_{3} X_{0})$, we  obtain (using  $\mathcal J_{3} X_{0} \in \Omega^{1}(M)$),
for any $u, v\in \Gamma (\mathbb{T}M)$, 
\begin{align}
\nonumber d(\mathcal J_{3} X_{0})(\mathrm{pr}_{T}(u), \mathrm{pr}_{T}(v))
&=  -2 \left( \mathrm{pr}_{T} ( u) \langle df^{H}, \mathcal J v\rangle - \mathrm{pr}_{T}(  v)
\langle df^{H}, \mathcal J  u\rangle\right) \\
\label{cartan}&+
2 G(X_{0}, [ u ,  v ] ).
\end{align}
From (\ref{uvw}),  with $v$ replaced by $df^{H}$ and $w$ replaced by $\mathcal J v$, we obtain
$$
\mathrm{pr}_{T} (  u) \langle df^{H}, \mathcal J v\rangle = \langle [ u , df^{H} ], \mathcal J v\rangle +
\langle df^{H}, [ u, \mathcal J v] \rangle + \langle d \langle u, df^{H}\rangle  , \mathcal J v\rangle .
$$
On the other hand, from the definition of the Courant bracket, 
$[u, df^{H} ] = \frac{1}{2} d( \mathrm{pr}_{T} (u) ( f^{H}))$ and 
we deduce that 
$$
\mathrm{pr}_{T} (u) \langle df^{H}, \mathcal J v\rangle = \frac{1}{2} \mathrm{pr}_{T}(u) \mathrm{pr}_{T} (\mathcal J v)(f^{H}).
$$
Combining this relation with (\ref{cartan}) we obtain
\begin{align}
\nonumber d(\mathcal J_{3} X_{0}) (\mathrm{pr}_{T} ( u), \mathrm{pr}_{T} ( v)) & = - \mathrm{pr}_{T} (u)\mathrm{pr}_{T} (\mathcal J v)
(f^{H}) + \mathrm{pr}_{T} (v)\mathrm{pr}_{T} (\mathcal J u)
(f^{H})\\
\label{dj3}&  + 2\langle df^{H}, \mathcal J [u,v]\rangle .
\end{align}
Replacing in (\ref{dj3}) $u$, $v$ by $\mathcal J u$,  $\mathcal J v$ and using 
$N_{\mathcal J}(u, v) =0$ we obtain 
$$
d(\mathcal J_{3} X_{0}) (\mathrm{pr}_{T} (\mathcal J u), \mathrm{pr}_{T} (\mathcal J v))
= d(\mathcal J_{3} X_{0}) (\mathrm{pr}_{T} ( u), \mathrm{pr}_{T} ( v)),
$$ 
which implies our claim.
\end{proof}

\begin{prop}\label{gen-applied-1}  
Let $(M, G, \mathcal J) $ be a generalized K\"{a}hler manifold with a  Hamiltonian Killing vector field $X_{0}$, 
such that    
$\mathcal J_{3} X_{0} \in \Omega^{1}(M)$.    Consider the elementary deformation $(G^{\prime},\mathcal J )$ of
$(G, \mathcal J)$ by the vector field $X_{0}$ and function 
$$
f := \frac{1}{( 1 + K (f^{H})  G(X_{0}, X_{0} ))^{1/2}},
$$
where  $K= K(f^{H})$ is any smooth, non-negative function, depending only on the Hamiltonian function $f^{H}$ 
of $X_{0}.$  Consider  the twist data 
with curvature  $F$ and function $a$ given by 
\begin{equation}\label{definition-f}
F:= - \frac{1}{2} d ( K(f^{H}) \mathcal J_{3} X_{0} ),\  a := 1+ K (f^{H})  G(X_{0}, X_{0}).
\end{equation}
Then the twist $[( \mathcal J , G^{\prime})]_{W}$ is generalized K\"{a}hler.
\end{prop}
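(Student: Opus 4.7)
The plan is to apply Theorem \ref{gen-KK} with $h := 1$ and the prescribed $F$, $a$, $f$. The argument hinges on two decisive simplifications: the identity $af^{2} = 1$, which is immediate from the definitions, and the vanishing $\mathrm{pr}_{T}(\mathcal{J}_{3}X_{0}) = 0$, which is the standing assumption. Together with the observation that the conformal change by $h=1$ is trivial, these will collapse most of the conditions of Theorem \ref{gen-KK} to tautologies.

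First I would check that $(X_{0}, F, a)$ is valid twist data. Closedness of $F$ is immediate and $X_{0}$-invariance follows from $\mathcal{L}_{X_{0}}\mathcal{J} = 0$, $\mathcal{L}_{X_{0}}G = 0$, and $X_{0}(f^{H}) = df^{H}(X_{0}) = (\mathcal{J}_{2}X_{0})(X_{0}) = 2\langle \mathcal{J}_{2}X_{0}, X_{0}\rangle = 0$ (skew-symmetry of $\mathcal{J}_{2}$). For $da = -i_{X_{0}}F$, Cartan's formula applied to the $X_{0}$-invariant $1$-form $K\mathcal{J}_{3}X_{0}$ gives $i_{X_{0}}d(K\mathcal{J}_{3}X_{0}) = -d(i_{X_{0}}(K\mathcal{J}_{3}X_{0}))$. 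Using $\mathcal{J}_{3} = -G^{\mathrm{end}}$ and the pairing conventions one finds $(\mathcal{J}_{3}X_{0})(X_{0}) = -2G(X_{0},X_{0})$, which yields $i_{X_{0}}F = -d(KG(X_{0},X_{0})) = -da$.

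The key lemma for the whole argument is the identity $\mathcal{J}_{3}X_{0}|_{E_{1}} = -i\, df^{H}|_{E_{1}}$. To see this, take $w = X + \xi \in L_{1}$ and use skew-symmetry of $\mathcal{J}$ together with $\mathcal{J}w = iw$: since $\mathcal{J}_{3}X_{0} = \mathcal{J}(df^{H})$,
\[
\tfrac{1}{2}(\mathcal{J}_{3}X_{0})(X) = \langle w, \mathcal{J}(df^{H})\rangle = -\langle \mathcal{J}w, df^{H}\rangle = -i\langle w, df^{H}\rangle = -\tfrac{i}{2}df^{H}(X).
\]
Consequently $df^{H}\wedge\mathcal{J}_{3}X_{0} = 0$ on $\Lambda^{2}E_{1}$, and together with Lemma \ref{J-invariance} (which gives $d(\mathcal{J}_{3}X_{0}) = 0$ on $\Lambda^{2}E_{1}$) this forces $F|_{\Lambda^{2}E_{1}} = 0$.

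With this in hand the verification of Theorem \ref{gen-KK} proceeds condition by condition. For condition i), the first part follows because $df^{H}$ and $\mathcal{J}_{3}X_{0}$ both lie in $\mathcal{S}$ (so annihilate $\mathrm{pr}_{T}(\mathcal{S}_{\mathbb{C}}^{\perp})$) together with Lemma \ref{J-invariance}; the second part is automatic since $F = 0$ on $\Lambda^{2}E_{1}$. Condition ii) becomes trivial: $\mathrm{pr}_{T}(\mathcal{J}_{3}X_{0}) = 0$ kills the first two terms and $F(\mathrm{pr}_{T}(v_{f}), X) = 0$ because $\mathrm{pr}_{T}(v_{f}), X \in E_{1}$. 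Condition iii) holds since $af^{2}h^{2} = 1$ is constant. For condition iv), the second relation is vacuous ($dh=0$), and the first holds trivially because $F$ vanishes on $\Lambda^{2}E_{1}$. For condition v), note that $(1-f^{2})/(f^{2}G(X_{0},X_{0})) = K$ by the choice of $f$, that $\mathcal{D}_{\mathrm{pr}_{T}(\mathcal{J}_{3}X_{0})}\epsilon_{2} = 0$ since $\mathrm{pr}_{T}(\mathcal{J}_{3}X_{0}) = 0$, and that the right-hand side vanishes because $dh = 0$ and $\mathrm{pr}_{T}(v_{f})(h) = 0$; the equation thus reduces to $d(K\mathcal{J}_{3}X_{0}) + 2F/(af^{2}) = 0$, which is the defining relation for $F$ (using $af^{2} = 1$).

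The only genuinely nontrivial step is the derivation of $\mathcal{J}_{3}X_{0}|_{E_{1}} = -i\, df^{H}|_{E_{1}}$, which is the place where the hypothesis $\mathcal{J}_{3}X_{0} \in \Omega^{1}(M)$ is used in an essential way. Everything else is bookkeeping driven by the two identities $af^{2} = 1$ and $\mathrm{pr}_{T}(\mathcal{J}_{3}X_{0}) = 0$.
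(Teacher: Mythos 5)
Your proposal is correct and follows essentially the same route as the paper: apply Theorem \ref{gen-KK} with $h=1$, reduce everything via $af^{2}=1$, $\mathrm{pr}_{T}(\mathcal J_{3}X_{0})=0$ and $F\vert_{\Lambda^{2}E_{1}}=0$, the last of which the paper also obtains from $df^{H}\wedge \mathcal J_{3}X_{0}=0$ on $\Lambda^{2}E_{1}$ combined with Lemma \ref{J-invariance}. Your derivation of that wedge identity through the pointwise relation $\mathcal J_{3}X_{0}\vert_{E_{1}}=-i\,df^{H}\vert_{E_{1}}$ is just a cleaner packaging of the paper's $\mathcal J$-invariance computation (both exploit $\mathcal J w=iw$ on $L_{1}$), and your explicit check of $da=-i_{X_{0}}F$ is a welcome detail the paper leaves implicit.
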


\begin{proof}   
Since $\mathcal J_{3} X_{0} \in \Omega^{1}(M)$, for any $u, v\in \mathbb{T}M$, 
$$
(df^{H} \wedge \mathcal J_{3}X_{0})(\mathrm{pr}_{T} (u), \mathrm{pr}_{T}(v))
= 4 \left( \langle df^{H}, u\rangle \langle \mathcal J_{3} X_{0} , v\rangle -
\langle df^{H}, v\rangle \langle \mathcal J_{3} X_{0},u\rangle\right)  
$$
and  
$$
(df^{H}\wedge \mathcal J_{3} X_{0} )(\mathrm{pr}_{T} (\mathcal J u ), \mathrm{pr}_{T} (\mathcal J v))
= (df^{H} \wedge \mathcal J_{3})(\mathrm{pr}_{T} (u), \mathrm{pr}_{T}(v)),
$$
i.e. $df^{H}\wedge \mathcal J_{3} X_{0} =0$ on $\Lambda^{2}E_{1}.$ From Lemma 
\ref{J-invariance}, also   $d (\mathcal J_{3} X_{0})=0$  on  $\Lambda^{2} E_{1}.$ 
We deduce that $F=0$ on $\Lambda^{2} E_{1}.$ The $1$-form $\alpha$
defined by (\ref{alpha})  is given by 
$\alpha = -\frac{ K^{\prime} (f^{H})}{K(f^{H})}df^{H}$, it  annihilates $\mathrm{pr}_{T} (\mathcal S^{\perp})$,  
and $af^{2} =1.$ The conditions from Theorem \ref{gen-KK} can be checked easily.
(We remark that relation (\ref{cond-de}) holds on the entire $\Lambda^{2} (TM)^{\mathbb{C}}$, not only on
$\Lambda^{2}\mathrm{pr}_{T} (L_{2} \cap \mathcal S^{\perp}_{\mathbb{C}})$). 
\end{proof}

Examples of generalized K\"{a}hler manifolds $(M , G, \mathcal J )$ with a Hamiltonian Killing vector field 
$X_{0}$ satisfying $\mathcal J_{3} X_{0} \in \Omega^{1}(M)$ can be found in the toric setting, which will be treated in the next section (see Example \ref{pr-t-ex}).

\subsection{Examples: toric generalized K\"{a}hler manifolds}\label{toric-section}

Following \cite{boulanger}, we briefly recall the local description of diagonal toric generalized K\"{a}hler
structures, in terms of a strictly convex function $\tau$ (the symplectic potential) and a  skew-symmetric matrix
$C$. This is a  generalization of the local description of toric K\"{a}hler structures, which can be recovered
as a particular case, when $C=0.$

 Let $(M, \omega ,\mathbb{T}^{n} )$ be a $2n$-dimensional toric symplectic manifold, that is,  
 a symplectic manifold $M$ of dimension $2n$ with a Hamiltonian action of the torus $\mathbb{T}^{n}$.
 Recall from Section
\ref{complex-geometry} that  generalized complex structures 
$\mathcal J$, with the property that  $(\mathcal  J , \mathcal J_{\omega })$ is a generalized K\"{a}hler structure, are in one to one correspondence with complex structures $J_{+}$ which tame $\omega$,  and whose  $\omega$-adjoint $J^{*\omega}_+$  is integrable. We assume that $\mathcal J$ (equivalently, $J_{+}$)  is $\mathbb{T}^{n}$-invariant
and that  $J_{+}\mathcal K = J_{+}^{*\omega}\mathcal K$, where $\mathcal K$ is the distribution tangent to 
the orbits of the $\mathbb{T}^{n}$-action. 
Such toric generalized K\"{a}hler structures are called  
{\cmssl diagonal}.   We restrict to the open subset (also denoted by $M$) of $M$ where $\mathcal K$ has constant rank $n$. 
Choose a basis of the Lie algebra of $\mathbb{T}^{n}$
and let $K_{i}\in {\mathfrak X}(M)$ be the associated fundamental vector fields generated by the $\mathbb{T}^{n}$-action.   
As $ \{ K_{i},  J_{+} K_{j} \}$ commute, we may choose local coordinates   
$(t^{i}, u^{i})$, such that   
$$
 K_{i} = -\frac{\partial}{\partial t^{i}},\  J_+K_{i} = \frac{\partial}{\partial u^{i}},\quad 1\leq i\leq n. 
$$
For any $1\leq i \leq n$, let $\mu^{i}$ be the moment map of $K_{i}$:  $ i_{K_{i}} \omega  = d\mu^{i}$. 
As $\mathrm{span}_{\mathbb{R}} \{ d\mu^{i}\} = \mathrm{span}_{\mathbb{R}} \{ du^{i} \}$, 
$du^{i} =  \sum_{j=1}^{n}\Psi_{ij} d\mu^{j}$, for some functions   
$\Psi_{ij}$ which depend only on the moment coordinates $\{ \mu^{i}\}$. 
In the coordinate system $(t^{i}, \mu^{i})$,
$\omega$,  
$J_{+}$ and $J_{-} = - J_{+}^{*\omega}$ are given by:
\begin{equation}\label{o}
\omega = \sum_{i=1}^{n} d\mu^{i}\wedge dt^{i},
\end{equation}
and   
\begin{align}
\nonumber& J_{+} = \sum_{i,j=1}^{n} \Psi_{ij} \frac{\partial}{\partial t^{i}} \otimes d\mu^{j} - \sum_{i,j=1}^{n} 
 \Psi^{ij} \frac{\partial}{\partial \mu^{i}} \otimes dt^{j}\\
\label{j-pm}&  J_{-} =  \sum_{i,j=1}^{n} \Psi_{ji} \frac{\partial}{\partial t^{i}} \otimes d\mu^{j} - \sum_{i,j=1}^{n} 
 \Psi^{ji} \frac{\partial}{\partial \mu^{i}} \otimes dt^{j},
\end{align} 
where $(\Psi^{ij}):= (\Psi^{-1})_{ij}$. Since  $J_{\pm}$  are integrable, $\Psi = ( \Psi_{ij})$ is of the form
\begin{equation}\label{psi-c}
\Psi_{ij} = \frac{\partial^{2} \tau}{\partial \mu^{i}\partial \mu^{j}} + C_{ij},
\end{equation}
where $\tau = \tau (\mu^{i})$ is strictly convex (i.e.\ has positive definite Hessian) and $C:= (C_{ij})$ is a (constant)  skew-symmetric matrix
(see Theorem 6 of \cite{boulanger}). 
Conversely, any strictly convex function $\tau = \tau( \mu^{i})$ together with a skew-symmetric matrix $C$, define, via
(\ref{o}),   (\ref{j-pm}) and  (\ref{psi-c}), a diagonal generalized K\"{a}hler structure.  
The function $\tau$ is referred to  as  the {\cmssl symplectic potential}.

\begin{notation}{\rm i) All toric generalized K\"{a}hler manifolds we are concerned with are diagonal. 
To simplify  terminology, from now on
we omit the word `diagonal' when  referring to them.\

ii) The superscripts ``$s$" and ``$a$"
used below mean the symmetric, respectively, the skew-symmetric parts of a matrix. }
 \end{notation}

\begin{lem}\label{cond-toric}
The Hamiltonian Killing vector field $X_{0}:= K_{1}= -\frac{\partial}{\partial t^{1}}$
on the the toric generalized K\"{a}hler manifold  associated to
$(M, J_{+}, \omega )$ satisfies  
\begin{align}
\nonumber& \mathrm{pr}_{T} (\mathcal J_{3} X_{0} ) = \sum_{r=1}^{n} \left( (\Psi^{-1})^{a} [ ( \Psi^{-1})^{s}]^{-1} \right)_{1r}
\frac{\partial}{\partial t^{r}}\\
\nonumber&  \mathrm{pr}_{T^{*}} (\mathcal J_{3} X_{0} ) = \sum_{r=1}^{n} \left(
(\Psi^{-1})^{s} -  (\Psi^{-1})^{a} [ ( \Psi^{-1})^{s}]^{-1}(\Psi^{-1})^{a}\right)_{r1}dt^{r}\\
\label{expr-j3}& G(X_{0}, X_{0})= \frac{1}{2} \left( \Psi^{-1} - (\Psi^{-1})^{a} [(\Psi^{-1})^{s}]^{-1} (\Psi^{-1})^{a}
\right)_{11} .  
\end{align}
\end{lem}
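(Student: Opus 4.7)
The plan is to work through the bi-Hermitian dictionary. Recall from Section \ref{complex-geometry} that $g=-\tfrac{1}{2}\omega\circ(J_{+}+J_{-})$ and $b=-\tfrac{1}{2}\omega\circ(J_{+}-J_{-})$, that $G^{\mathrm{end}}$ has the block form (\ref{gend}) with $A=-g^{-1}b$ and $\sigma=g-bg^{-1}b$, and that $\mathcal{J}_{3}=-G^{\mathrm{end}}$. Since $X_{0}=-\partial_{t^{1}}$ has no cotangent component,
$$
\mathrm{pr}_{T}(\mathcal{J}_{3}X_{0})=g^{-1}b(X_{0}),\qquad
\mathrm{pr}_{T^{*}}(\mathcal{J}_{3}X_{0})=-g(X_{0})+b\bigl(g^{-1}b(X_{0})\bigr),
$$
and $G(X_{0},X_{0})=\langle G^{\mathrm{end}}X_{0},X_{0}\rangle=\tfrac{1}{2}\sigma(X_{0},X_{0})$. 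The whole lemma therefore reduces to writing $g$ and $b$ in the coordinate frame $\{\partial_{t^{i}},\partial_{\mu^{i}}\}$ and performing the corresponding linear algebra with $X_{0}=-\partial_{t^{1}}$.

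Using (\ref{o}) and (\ref{j-pm}), a short calculation gives $(J_{+}+J_{-})\partial_{t^{k}}=-2\sum_{i}(\Psi^{-1})^{s}_{ik}\partial_{\mu^{i}}$ and $(J_{+}+J_{-})\partial_{\mu^{k}}=2\sum_{i}\Psi^{s}_{ik}\partial_{t^{i}}$, with the analogous formulas for $J_{+}-J_{-}$ involving $(\Psi^{-1})^{a}$ and $\Psi^{a}$. Combined with $\omega(\partial_{\mu^{i}},\partial_{t^{l}})=\delta^{i}_{l}$ and the vanishing of $\omega$ on pure $\partial_{t}$-pairs and pure $\partial_{\mu}$-pairs, both $g$ and $b$ come out block-diagonal in the splitting $TM=\mathrm{span}\{\partial_{t^{i}}\}\oplus\mathrm{span}\{\partial_{\mu^{i}}\}$, with $t$-blocks
$$
g|_{TT}=(\Psi^{-1})^{s},\qquad b|_{TT}=-(\Psi^{-1})^{a}.
$$
Only the $t$-block will be needed, because $X_{0}$ lies entirely in the $t$-direction.

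The three identities in (\ref{expr-j3}) then reduce to matrix manipulations in the $t$-block. Solving $g(V,\cdot)=b(X_{0},\cdot)$ for $V=g^{-1}b(X_{0})$ immediately produces $V^{r}=\bigl((\Psi^{-1})^{a}[(\Psi^{-1})^{s}]^{-1}\bigr)_{1r}$, which is the first formula. For the second, expanding $-g(X_{0})+b(V)$ yields the covector whose $k$-th coefficient is $(\Psi^{-1})^{s}_{k1}-\bigl((\Psi^{-1})^{a}[(\Psi^{-1})^{s}]^{-1}(\Psi^{-1})^{a}\bigr)_{1k}$. The triple product $(\Psi^{-1})^{a}[(\Psi^{-1})^{s}]^{-1}(\Psi^{-1})^{a}$ is of the form $MNM$ with $M$ skew and $N$ symmetric, hence symmetric, so the $(1,k)$-entry equals the $(k,1)$-entry, matching the stated formula. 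Finally, $G(X_{0},X_{0})=\tfrac{1}{2}\sigma(X_{0},X_{0})=\tfrac{1}{2}\bigl[(\Psi^{-1})^{s}_{11}-\bigl((\Psi^{-1})^{a}[(\Psi^{-1})^{s}]^{-1}(\Psi^{-1})^{a}\bigr)_{11}\bigr]$; since $(\Psi^{-1})^{a}$ has vanishing diagonal, $(\Psi^{-1})^{s}_{11}=(\Psi^{-1})_{11}$, which gives the third formula.

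No genuine obstacle is expected: the proof is linear-algebraic bookkeeping. The two places where one can slip are the sign of $\omega(\partial_{\mu^{i}},\partial_{t^{l}})$ under the convention $\omega=\sum_{i}d\mu^{i}\wedge dt^{i}$, and the order of the non-commuting factors $(\Psi^{-1})^{a}$ and $[(\Psi^{-1})^{s}]^{-1}$ in the final identities.
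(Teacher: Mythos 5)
Your proof is correct and follows essentially the same route as the paper: write $\mathcal J_{3}=-G^{\mathrm{end}}$ in the block form (\ref{gend}), express $g$ and $b$ via (\ref{hermitian-p}), (\ref{o}) and (\ref{j-pm}) (yielding the block-diagonal $t$-blocks $(\Psi^{-1})^{s}$ and $-(\Psi^{-1})^{a}$), and reduce everything to linear algebra in the $t$-directions; the paper also obtains the third identity from $G(X_{0},X_{0})=\tfrac12\,\mathrm{pr}_{T^{*}}(\mathcal J_{3}X_{0})(\partial_{t^{1}})$, which is the same as your $\tfrac12\sigma(X_{0},X_{0})$. The sign conventions and the order of the factors $(\Psi^{-1})^{a}[(\Psi^{-1})^{s}]^{-1}$ all check out.
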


\begin{proof}
 Recall that $\mathcal J_{3} = - G^{\mathrm{end}}$
and that $G^{\mathrm{end}}$ is given by  (\ref{gend}), in terms of the Riemannian metric $g$ and $2$-form $b$
of the generalized  K\"{a}hler structure.
The first two relations (\ref{expr-j3}) follow from the expressions of $g$ and $b$ given by (\ref{hermitian-p}),
combined with  (\ref{o}) and (\ref{j-pm}).  
The expression of $G(X_{0}, X_{0})$ is computed from $\mathrm{pr}_{T^{*}} (\mathcal J_{3} X_{0})$,
using $G(X_{0}, X_{0}) =\frac{1}{2} \mathrm{pr}_{T^{*}} (\mathcal J_{3} X_{0}) (\frac{\partial}{\partial t^{1}})$.    
\end{proof}

Using Lemma \ref{cond-toric}, we construct examples of toric generalized K\"{a}hler (non-K\"{a}hler) manifolds, 
with a Hamiltonian Killing vector field $X_{0}$, for which  $\mathcal J_{3} X_{0}$  is a $1$-form, as required by Proposition \ref{gen-applied-1}.  We remark that such examples do not exist in four dimensions:
with $X_{0} = -\frac{\partial}{\partial t^{1}}$, we obtain, 
from the first relation (\ref{expr-j3}), 
\begin{equation}\label{pr-j3}
\mathrm{pr}_{T} (\mathcal J_{3} X_{0}) = -\frac{C_{12}}{\mathrm{det}( \Psi )- C_{12}^{2}} \left(  (\Psi_{12})^s \frac{\partial}{\partial t^{1}} +\Psi_{22}
\frac{\partial}{\partial t^{2}}\right), 
\end{equation}
where $(\Psi_{jk})^s := (\Psi^s)_{jk}$. We deduce    that $\mathrm{pr}_{T} (\mathcal J_{3} X_{0}) =0$ if and only if  $C_{12}=0$, i.e.  the generalized K\"{a}hler manifold is K\"{a}hler. The four dimensional case will be treated separately in the next section.

\begin{exa}\label{pr-t-ex}{\rm i) Let
$\tau : (\mathbb{R}^{>0})^{n} \rightarrow \mathbb{R}$, defined by
$\tau = \sum_{j=1}^{n}\mu^{j}\mathrm{log} (\mu^{j})$, 
be  the symplectic potential of the standard K\"{a}hler metric on $\mathbb{C}^{n}$ ($n\geq 3$). Let   $C:= (C_{ij})\in M_{n} (\mathbb{R})$  be
any skew-symmetric  matrix, 
with $C_{1j}= C_{j1} =0$, for any $1\leq j\leq n$. With these choices, the matrix function $\Psi$ defined by
(\ref{psi-c}) satisfies 
 $(\Psi^{-1})_{1i} = (\Psi^{-1})_{i1}=0$, for any $2\leq i\leq n$,  and the Hamiltonian Killing 
vector field $X_{0}=-\frac{\partial}{\partial t^{1}}$ on the  toric  generalized K\"{a}hler manifold defined by $\tau$ and $C$
satisfies $\mathrm{pr}_{T} (\mathcal J_{3} X_{0})=0.$\

ii) For a six-dimensional toric   generalized K\"{a}hler manifold, with  symplectic potential $\tau$ and skew-symmetric
matrix $C$, the vector field $X_{0}   = -\frac{\partial}{\partial t^{1}}$
satisfies $\mathrm{pr}_{T} (\mathcal J_{3} X_{0})=0$  if and only  if
\begin{equation}\label{cond-psi-0}
C_{12} \frac{\partial \tau}{\partial \mu^{3}} - C_{13} \frac{\partial \tau}{\partial \mu^{2}} + C_{23}
\frac{\partial \tau}{\partial \mu^{1}}
\end{equation} 
depends only on $\mu^{1}$. This is the case if and only if 
\begin{itemize}
\item[a)] 
$C=0$ or
\item[b)] $\tau$ admits a separation of variables $\tau = \tau_1(\mu^1) + \tau_2(\mu^2,\mu^3)$ and $C_{12}=C_{13}=0$. 
\end{itemize}
For example, the symplectic potential
$$
\tau :=\sum_{i=1}^{3} (\mu^{i} + c) \mathrm{log} (\mu^{i} + c) + (c- \mu^{2} - \mu^{3})
\mathrm{log} (c - \mu^{2} - \mu^{3})
$$
defined on 
$$
\Delta := \{ (\mu^{1}, \mu^{2},\mu^{3})\in \mathbb{R}^{3}: \mu^{1}+ c>0,\ \mu^{2} + c>0,\ \mu^{3} +c>0,\ 
\mu^{2} + \mu^{3} <c\}  
$$
(where $c>0$), together  with any skew-symmetric matrix $C$ for which $C_{12} = C_{13}=0$, 
satisfy  b) and hence (\ref{cond-psi-0}).}
\end{exa}

\subsection{The four-dimensional case}\label{4dim-sect}

In this final section we consider 
the setting formed by a four-dimensional toric generalized K\"{a}hler (non-K\"{a}hler) manifold $(M, G, \mathcal J)$, 
defined 
by a symplectic potential $\tau = \tau (\mu^{1}, \mu^{2})$,  skew-symmetric matrix $C = (C_{ij})$ 
and matrix valued function $\Psi = \mathrm{Hess} (\tau ) + C$, as in the previous section.
As $(M, G, \mathcal J)$ is non-K\"{a}hler, $C_{12} \neq 0.$
Let $f$ and $h$ be two  non-vanishing functions on $M$, independent of $t^{1}.$  
Our aim is to describe all such data,
together with the twist data $(X_{0}, F, a)$ with $X_{0} :=  -\frac{\partial}{\partial t^{1}}$,
for which the conditions from Theorem \ref{gen-KK} are satisfied. Remark that $(G, \mathcal J )$, $f$ and $h$ are
$X_{0}$-invariant. In addition, we assume that $h$ depends only on $\mu^{1}$  (the moment map of $X_{0}$). This hypothesis  
is natural, in view of the second relation (\ref{cond-e}).

\begin{prop}\label{dim2} In the above setting,  the conditions from Theorem \ref{gen-KK} are satisfied if and only if:\

i)  the function $f$ depends only on $\mu^{1}$;\

ii) the function $a$ and the curvature form $F$ of the twist data are given by 
$a= k_{0} h^{-2}$ (where $k_{0}\in \mathbb{R}\setminus \{ 0\}$) and  
\begin{equation}\label{curvature}
F = \frac{2ah^{\prime}}{h} d\mu^{1} \wedge dt^{1} + \left( \frac{(\Psi_{12})^{s}}{\Psi_{22}}(\lambda - \frac{2ah^{\prime}}{h})
d\mu^{1} + \lambda d\mu^{2} \right) \wedge dt^{2} ,
\end{equation}
where $\lambda\in C^{\infty}(M)$  is  independent of $t^{1}$;\

iii) the following relations hold: 
\begin{align}
\nonumber& \frac{\partial}{\partial \mu^{2} }\left( \frac{ (\Psi_{12})^{s}}{\Psi_{22}}\right) = -\frac{1}{(f^{2}-1)a}\left( \lambda
+\frac{2 ah^{\prime}f^{2}}{h}\right)\\
\label{k12}& \frac{( \Psi_{12})^{s}}{\Psi_{22}}\frac{\partial \lambda }{\partial \mu^{2}} -\frac{\partial \lambda}{\partial \mu^{1}} 
=\frac{1}{(f^{2}-1)a} \left( \lambda +\frac{2ah^{\prime}f^{2}}{h}\right) \left( \lambda - \frac{2ah^{\prime}}{h}\right) .
\end{align}
\end{prop}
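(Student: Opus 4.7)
The strategy is to translate each of the conditions i)--v) of Theorem \ref{gen-KK} into an explicit system of PDEs and algebraic identities in the coordinates $(t^1,t^2,\mu^1,\mu^2)$ adapted to $X_0=-\partial_{t^1}$, and then to check that the complete system is equivalent to i)--iii) of the proposition.

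\textbf{Setup in toric coordinates.} First I would write down explicitly all tensorial objects entering Theorem \ref{gen-KK} from the data $\tau$, $C$, $\Psi=\mathrm{Hess}(\tau)+C$. Using (\ref{o}), (\ref{j-pm}), (\ref{hermitian-p}) and Lemma \ref{cond-toric}, one obtains $g$, $b$, $J_\pm$, hence $\mathcal J_1,\mathcal J_2,\mathcal J_3$, $G$, $G^{\mathrm{end}}$. In particular $\mathcal J_2 X_0=d\mu^1$, while (\ref{pr-j3}) shows that $\mathrm{pr}_T(\mathcal J_3X_0)$ is a nonzero real combination of $\partial_{t^1}$ and $\partial_{t^2}$ (since $C_{12}\neq 0$ and $\Psi_{22}>0$). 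Consequently $\mathrm{pr}_T(\mathcal S)=\mathrm{span}(\partial_{t^1},\partial_{t^2})$ and $\mathrm{pr}_T(\mathcal S^\perp)$ is a rank-two real distribution on which $d\mu^1$ vanishes. The bundles $L_1\cap L_2\cap\mathcal S_{\mathbb C}^\perp$ and $L_1\cap\bar L_2\cap\mathcal S_{\mathbb C}^\perp$ are complex rank one, and explicit generating sections $w_+,w_-$ can be written down in terms of $\Psi$; in particular their $T$-projections $X_+,X_-$ provide bases of $\mathrm{pr}_T(L_1\cap L_2\cap\mathcal S_{\mathbb C}^\perp)$ and $\mathrm{pr}_T(L_1\cap\bar L_2\cap\mathcal S_{\mathbb C}^\perp)$.

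\textbf{Constraining $F$, $h$ and $a$.} Since the intersection bundles are complex rank one, the ``$F=0$ on $\Lambda^2\,\mathrm{pr}_T(\cdots)$'' part of i) is vacuous, so the only algebraic constraint from i) is (\ref{F}). Imposing invariance under $X_0$ and the closedness equation $da=-i_{X_0}F$ together with the first identity in (\ref{cond-e}) pins down the general form of $F$: a direct expansion in the basis $\{d\mu^i\wedge dt^j\}$ forces all coefficients except the ones appearing in (\ref{curvature}) to vanish, and determines the $d\mu^1\wedge dt^1$ coefficient to be $2ah'/h$. Integrating $da=-i_{X_0}F$ (which reduces to $da=-\tfrac{2ah'}{h}d\mu^1$ since $i_{X_0}(d\mu^j\wedge dt^2)=0$) yields $a=k_0 h^{-2}$ for some nonzero constant $k_0$. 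The second algebraic relation in (\ref{cond-e}) then holds automatically, because $h=h(\mu^1)$ and $d\mu^1$ annihilates $\mathrm{pr}_T(\mathcal S^\perp)$.

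\textbf{The bracket, moment and curvature conditions.} Condition ii), evaluated on the generator $X_+$, expresses the Courant bracket $[\mathrm{pr}_T(\mathcal J_3X_0),X_+]+\alpha(X_+)\mathrm{pr}_T(\mathcal J_3X_0)+\tfrac{f^2}{a(f^2-1)}F(\mathrm{pr}_T(v_f),X_+)X_0$ as lying in $\mathrm{pr}_T(L_1\cap L_2\cap\mathcal S_{\mathbb C}^\perp)$; decomposing this along $\partial_{t^1},\partial_{t^2},\partial_{\mu^1},\partial_{\mu^2}$ and separating real and imaginary parts produces exactly two independent scalar equations. One of them, the component transverse to the invariant directions, forces $\partial_{\mu^2}f=0$ (hence $f=f(\mu^1)$, proving part i) of the proposition); the other becomes the first line of (\ref{k12}). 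The $\bar L_2$-analogue (\ref{cond-long-1-m}) is equivalent to the same pair of equations by an analogous computation using $v_{if}$. Condition iii) then reduces to $X(af^2h^2)=0$, which is automatic because $a,f,h$ all depend only on $\mu^1$ and $\mathrm{pr}_T(\mathcal S^\perp)$ annihilates $d\mu^1$. Finally, computing the $2$-form $\mathcal D_{\mathrm{pr}_T(\mathcal J_3X_0)}\epsilon_2$ together with the other ingredients of (\ref{cond-de}) on the basis $X_\pm$ of $\mathrm{pr}_T(L_2\cap\mathcal S_{\mathbb C}^\perp)$ --- using Lemma \ref{cond-toric} for $\mathrm{pr}_{T^*}(\mathcal J_3X_0)$ and $G(X_0,X_0)$, and (\ref{epsilon}) for $\epsilon_2$ --- converts (\ref{cond-de}) into the second line of (\ref{k12}).

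\textbf{Main obstacle.} The whole argument is a long bookkeeping exercise in toric coordinates. The genuinely delicate step is condition v): the expression $\mathcal D_{\mathrm{pr}_T(\mathcal J_3X_0)}\epsilon_2$ defined in (\ref{dd-e2}) involves Courant-like brackets combined with the correction term $\alpha(X)\mathrm{pr}_T(\mathcal J_3X_0)$, and the pairing with $\epsilon_2$ must be expanded using the fact that $\epsilon_2$ lives on $E_2$ only. Matching all signs, indices and $\Psi_{ij}$-cofactors in (\ref{cond-de}) against the explicit formulas for $\mathrm{pr}_{T^*}(\mathcal J_3X_0)$, $\mathrm{pr}_T(v_f)$ and the curvature ansatz (\ref{curvature}) is where the calculation is most error-prone; a systematic case split, treating the $(X_+,X_+)$, $(X_+,X_-)$ and $(X_-,X_-)$ components of (\ref{cond-de}) separately, is what I would use to organise it.
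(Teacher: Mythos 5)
Your overall strategy is the one the paper follows: work in the adapted coordinates, observe that $E_1=(TM)^{\mathbb C}$ and that the intersection bundles are rank one with generators $v_2^\pm$, extract the form (\ref{curvature}) of $F$ from the first relation (\ref{cond-e}) and $a=k_0h^{-2}$ from $da=-i_{X_0}F$, and reduce condition ii) of Theorem \ref{gen-KK} to $f=f(\mu^1)$ together with the first equation of (\ref{k12}). Up to that point your plan matches the paper's Lemmas \ref{det-F} and \ref{cond-iv}, and your observations that conditions iii) and iv) become automatic are also correct.

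There is, however, a genuine gap in the final step. You claim that condition v) of Theorem \ref{gen-KK} (relation (\ref{cond-de})), evaluated on the generators, converts into the second line of (\ref{k12}). It does not: when (\ref{cond-de}) is applied to the pair $(v_2^+,\overline{v_2^-})$ it turns out to be equivalent, once again, to the \emph{first} relation (\ref{k12}), so it imposes nothing beyond what condition ii) already gave. The second relation (\ref{k12}), together with the $t^1$-independence of $\lambda$, comes from a requirement your argument never imposes: the closedness $dF=0$ of the curvature form, which is part of the definition of the twist data rather than one of conditions i)--v) of Theorem \ref{gen-KK}. As written, your proof would terminate with only the first equation of (\ref{k12}) in hand (derived twice) and with no source for the second, so the ``if and only if'' of the proposition would not be established. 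To close the gap you must add the (straightforward) computation showing that the $2$-form (\ref{curvature}) is closed exactly when $\lambda$ is independent of $t^1$ and the second relation (\ref{k12}) holds.
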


\begin{proof}
We divide the proof of Proposition \ref{dim2} in several steps.
Let $L = L(E_{1}, \epsilon_{1})$ be the $(1,0)$-bundle of $\mathcal J$
and $(J_{\pm},\omega ,  g, b )$  the complex structures, Riemannian metric and $2$-form on $M$ associated
to $(G, \mathcal J )$, as in Section \ref{complex-geometry}. 
As $E_{1} = (T^{1,0}M)^{J_+} + (T^{1,0}M)^{J_{-}}$, cf.\ Section \ref{complex-geometry},
\begin{align}
\nonumber & (T^{1,0}M)^{J_{+}} =\mathrm{span}_{\mathbb{C}} \left\{ v_{j}^{+}:= \frac{\partial}{\partial \mu^{j}} - i \Psi_{kj}
\frac{\partial}{\partial t^{k}},\ j=1,2\right\} \\
\label{def-vj}& (T^{1,0}M)^{J_{-}} =\mathrm{span}_{\mathbb{C}} \left\{ v_{j}^{-}:= \frac{\partial}{\partial \mu^{j}} - i \Psi_{jk}
\frac{\partial}{\partial t^{k}},\ j=1,2\right\} 
\end{align}
and $C_{12}\neq 0$,  we obtain that  $E_{1} = (TM)^{\mathbb{C}}.$    (Note that here and in the following
we are using Einstein's summation convention.)
More precisely, 
\begin{align}
\nonumber& \frac{\partial}{\partial t^{1}} = \frac{i}{2C_{12}} (v_{2}^{+}- v_{2}^{-}),\    
\frac{\partial}{\partial t^{2}} = - \frac{i}{2C_{12}} (v_{1}^{+}- v_{1}^{-}); \\
\nonumber & \frac{\partial}{\partial \mu^{1}} =
(1+ \frac{\Psi_{21}}{2 C_{12}}) v_{1}^{+} -\frac{\Psi_{21}}{2 C_{12}} v_{1}^{-} -\frac{\Psi_{11}}{ 2 C_{12}} v_{2}^{+} 
+\frac{\Psi_{11}}{ 2 C_{12}} v_{2}^{-}; \\
\label{generates}& \frac{\partial}{\partial \mu^{2}}=  \frac{\Psi_{22}}{2 C_{12}} v_{1}^{+} -\frac{\Psi_{22}}{2 C_{12}} v_{1}^{-} + 
( 1- \frac{\Psi_{12}}{ 2 C_{12}} ) v_{2}^{+}  +\frac{\Psi_{12}}{ 2 C_{12}} v_{2}^{-}.
\end{align}

\begin{lem}\label{det-F}i)  The form $\epsilon_{1}$ is given by
\begin{align}
\nonumber  \epsilon_{1} & =  \frac{1}{C_{12}} \left( dt^{1}\wedge dt^{2} - \mathrm{det}(\Psi) d\mu^{1}\wedge d\mu^{2}\right)\\
\nonumber & + i \left(  (1+\frac{\Psi_{21}}{C_{12}}) dt^{1}\wedge d\mu^{1} +\frac{\Psi_{22}}{C_{12}} dt^{1}\wedge d\mu^{2} -
\frac{\Psi_{11}}{C_{12}} dt^{2}\wedge d\mu^{1} \right)  \\
\label{epsilon1} & +  i(1-\frac{\Psi_{12}}{C_{12}}) dt^{2}\wedge d\mu^{2}.
\end{align}
ii) A real $2$-form $F$ satisfies the first relation 
(\ref{cond-e}), with $\epsilon_{1}$ given by 
(\ref{epsilon1}) and $h, a\in C^{\infty}(M)$, if and only if $F$ is of the form
(\ref{curvature}), for a function $\lambda \in C^{\infty}(M).$\

 iii) The relation $ i_{X_{0}} F = -da $ holds if and only if $a=k_{0} h^{-2}$, where $k_{0}\in \mathbb{R}\setminus \{ 0\}$ is arbitrary.
\end{lem}

\begin{proof}
From the definition of $J_\pm$ in Section \ref{complex-geometry} it follows that
\begin{equation}\label{gEq}
L_{1} \cap C_{\pm} = \{ X + (b\pm g)(X),\ X \in (T^{1,0}M)^{J_{\pm}}\},
\end{equation}
 which together with $v_{j}^{\pm} \in (T^{1,0}M)^{J_{\pm}}$ implies that  
$i_{v_{j}^{\pm}} \epsilon_{1} = (b\pm g) (v_{j}^{\pm}).$
From the definition of $v_{j}^{\pm}$ and relations 
(\ref{hermitian-p}),  (\ref{o}) and (\ref{j-pm}),
\begin{align*}
b(v_{j}^{+}) =(\Psi_{kj})^{a} d\mu^{k} - i \Psi_{kj} (\Psi^{rk})^{a} dt^{r},\ 
g( v_{j}^{+}) = (\Psi_{kj})^{s} d\mu^{k} - i \Psi_{kj} (\Psi^{rk})^{s} dt^{r}\\
b(v_{j}^{-}) =(\Psi_{kj})^{a} d\mu^{k} - i \Psi_{jk} (\Psi^{rk})^{a} dt^{r},\ 
g( v_{j}^{-}) = (\Psi_{kj})^{s} d\mu^{k} - i \Psi_{jk} (\Psi^{rk})^{s} dt^{r},
\end{align*}
from where we deduce that
\begin{equation}\label{i-e}
i_{v_{j}^{+}} \epsilon_{1} = \Psi_{kj}d\mu^{k} - i dt^{j},\ i_{v_{j}^{-}} \epsilon_{1} = 
- \Psi_{jk} d\mu^{k} + i dt^{j}.
\end{equation}
Combining  (\ref{generates}) with  (\ref{i-e})
we obtain (\ref{epsilon1}).  Claim ii) follows from (\ref{epsilon1}), by
computing
$$
i_{X_{0}} \epsilon_{1} = -\frac{1}{C_{12}} ( i (\Psi_{k2})^{s} d\mu^{k} + dt^{2})
$$
and  identifying the 
real and imaginary parts in the first relation (\ref{cond-e}).  Claim iii) can be checked  using  the expression
(\ref{curvature}) of $F$ and that $h = h(\mu^{1}).$ Moreover, $k_{0}\neq 0$  as the function $a$ is non-vanishing. 
\end{proof}

With the above preliminary lemma, we now proceed to the proof of  Proposition  \ref{dim2}. 
The first relation (\ref{cond-e}) from the statement of Theorem  \ref{gen-KK} 
is satisfied if and only if $F$ is of the form (\ref{curvature}). From now on, we assume that $F$ is of this form.
Then the function $a$ is given as in claim iii) of Lemma
\ref{det-F}.
Since 
\begin{align*}
\mathrm{pr}_{T} (L_{1} \cap L_{2} \cap \mathcal S^{\perp}_{\mathbb{C}}) = (T^{1,0}M)^{J_{+}} \cap \mathrm{Ker} \{ d\mu^{1}\} \\
\mathrm{pr}_{T} (L_{1} \cap\bar{L}_{2} \cap \mathcal S^{\perp}_{\mathbb{C}}) = (T^{1,0}M)^{J_{-}} \cap \mathrm{Ker}
\{ d\mu^{1}\} 
\end{align*}
are of rank one, generated by $v_{2}^{+}$ and $v_{2}^{-}$ respectively, and 
$E_{1} = (TM)^{\mathbb{C}}$, condition i) from Theorem \ref{gen-KK} is satisfied. 
 We now consider condition ii) from this theorem.

\begin{lem}\label{cond-iv} Condition ii) from Theorem \ref{gen-KK} holds, with
$F$ given by (\ref{curvature}),   if and only if
$f$ depends only on $\mu^{1}$ and the first relation (\ref{k12})  is satisfied. 
\end{lem}

\begin{proof}
Condition ii) from Theorem \ref{gen-KK} holds if and only if
\begin{equation}\label{expr-dim}
\mathcal E_{1}:= [\mathrm{pr}_{T} (\mathcal J_{3} X_{0}), v_{2}^{+} ] + \alpha ( v_{2}^{+}) \mathrm{pr}_{T} ( \mathcal J_{3} X_{0}) 
+\frac{ f^{2} F(\mathrm{pr}_{T} (v_{f}), v_{2}^{+})}{a (f^{2} -1)} X_{0} 
\end{equation}
is  a multiple of $v_{2}^{+}$, and 
\begin{equation}\label{expr-dim}
\mathcal E_{2} := [\mathrm{pr}_{T} (\mathcal J_{3} X_{0}), v_{2}^{-} ] + \alpha ( v_{2}^{-}) \mathrm{pr}_{T} ( \mathcal J_{3} X_{0}) 
- \frac{ f^{2} F(\mathrm{pr}_{T} (v_{if}), v_{2}^{-})}{a (f^{2} -1)} X_{0} 
\end{equation}
is  a multiple of $v_{2}^{-}.$  
From (\ref{pr-j3}) and the definitions of $v_{2}^{\pm}$,  these conditions are equivalent to
$\mathcal E_{1} = \mathcal E_{2} =0$ (as $\mathcal E_{i}$ are linear combinations of $\frac{\partial }{\partial t^{1}}$ and
$\frac{\partial}{\partial t^{2}}$, while $v_{2}^{\pm}$ involve also $\frac{\partial}{\partial \mu^{2}}$) 
or to
\begin{align}
\nonumber& \frac{\partial E_{1}}{ \mu^{2}} - \alpha ( v_{2}^{+}) E_{1} +\frac{ f^{2} F(\mathrm{pr}_{T} (v_{f}), v_{2}^{+})}{
a(f^{2} -1) } =0\\
\nonumber & \frac{\partial E_{1}}{ \mu^{2}} - \alpha ( v_{2}^{-}) E_{1} - \frac{ f^{2} F(\mathrm{pr}_{T} (v_{if}), v_{2}^{-})}{
a(f^{2} -1) } =0\\
\label{123} & \frac{\partial E_{2}}{\partial \mu^{2}} - \alpha ( v_{2}^{+}) E_{2} = \frac{\partial E_{2}}{\partial \mu^{2}} - \alpha ( v_{2}^{-}) E_{2}=0,
\end{align} 
where 
\begin{equation}\label{j-3-dim}
E_{1} := -\frac{C_{12} (\Psi_{12})^{s}}{\mathrm{det}( \Psi )- C_{12}^{2}} ;\ E_{2}:=  -\frac{C_{12}\Psi_{22}}{\mathrm{det}( \Psi )- C_{12}^{2}}
\end{equation}
are the coordinates of $\mathrm{pr}_{T} (\mathcal J_{3} X_{0}).$ 
The third relations (\ref{123}) are equivalent to
\begin{equation}\label{det-f}
\alpha ( \frac{\partial}{\partial t^{1}}) =\alpha (\frac{\partial}{\partial t^{2}} )= 0,\ 
\alpha (\frac{\partial}{\partial \mu^{2}}) =\frac{1}{E_{2}} \frac{\partial E_{2}}{\partial \mu^{2}}.
\end{equation}
Let $H:= \frac{f^{2} -1}{ f^{2} G(X_{0}, X_{0})}$, so that $\alpha := - \frac{dH}{H}.$ 
The first two relations (\ref{det-f}) are equivalent to $f$ - independent of $t^{i}$ 
(recall that $G(X_{0}, X_{0})$ is independent of $t^{i}$) 
and the third relation (\ref{det-f}) is equivalent to $HE_{2}$ - independent of $\mu^{2}$. 
From Lemma \ref{cond-toric}, 
\begin{align}
\nonumber& \mathrm{pr}_{T^{*}} (\mathcal J_{3} X_{0}) = \frac{1}{ \mathrm{det} (\Psi ) - C_{12}^{2}}
(\Psi_{22} dt^{1} - (\Psi _{12} )^{s}dt^2 )\\
\label{G-dim2}& G(X_{0}, X_{0}) = \frac{ \Psi_{22}}{2( \mathrm{det} (\Psi ) - C_{12}^{2})}.
\end{align}
and from the second relation (\ref{G-dim2}) we obtain that $HE_{2}= -\frac{2 C_{12}(f^{2}-1)}{f^{2}}$.
We proved that relations (\ref{det-f}) hold if and only if   $f$ depends only on $\mu^{1}$, as required.\

We now study the first two relations (\ref{123}).   
From (\ref{det-f}), $\alpha (v_{2}^{+}) = \alpha (v_{2}^{-}) = \frac{1}{E_{2}} \frac{\partial E_{2}}{\partial
\mu^{2}}$ and the first two relations (\ref{123})  are equivalent to
\begin{align}
\nonumber& \mathrm{Im} F (\mathrm{pr}_{T} ( v_{f}), v_{2}^{+}) = 0,\  
\mathrm{Im} F (\mathrm{pr}_{T} ( v_{if}), v_{2}^{-}) = 0;\\
\nonumber & F(\mathrm{pr}_{T} (v_{f}), v_{2}^{+}) = - F(\mathrm{pr}_{T} (v_{if}), v_{2}^{-});\\
\label{123-f} & \frac{\partial}{\partial \mu^{2}} \left(\frac{E_{1}}{E_{2}}\right) 
= -\frac{f^{2} \mathrm{Re} F(\mathrm{pr}_{T} (v_{f}) , v_{2}^{+})}{ a(f^{2} -1)E_{2}}.  
\end{align}
To study these relations, we need to find $\mathrm{pr}_{T} (v_{f})$ and $\mathrm{pr}_{T}(v_{if})$. 
Recall that $\mathrm{pr}_{T} (\mathcal J_{3} X_{0})$ was computed in 
(\ref{pr-j3}) and $\mathrm{pr}_{T} (\mathcal J_{2} X_{0}) =0.$ 
We now compute  
$\mathrm{pr}_{T} (\mathcal J X_{0})$. Since $X_{0} + G^{\mathrm{end}} (X_{0}) \in C_{+}$, from the definition of $J_{+}$ we obtain
$$
J_{+} (X_{0} + \mathrm{pr}_{T} G^{\mathrm{end}} (X_{0})) = \mathrm{pr}_{T} (\mathcal J X_{0} + \mathcal J_{2} X_{0}) 
= \mathrm{pr}_{T} (\mathcal J X_{0}),
$$
i.e. 
$\mathrm{pr}_{T} (\mathcal J X_{0}) = J_{+} ( X_{0}-\mathrm{pr}_{T} (\mathcal J_{3} X_{0})).$
From this relation and (\ref{pr-j3}),  we obtain

\begin{equation}
\mathrm{pr}_{T} (\mathcal J X_{0})= \frac{1}{\mathrm{det} (\Psi ) -C_{12}^{2}} \left( \Psi_{22} 
\frac{\partial}{\partial \mu^{1}} - ( \Psi_{12})^{s}\frac{\partial}{\partial \mu^{2}} \right) .
\end{equation}
The first two relations (\ref{123-f}) are equivalent to 
\begin{align*}
F(\mathrm{pr}_{T}(\mathcal J X_{0}), \frac{\partial}{\partial \mu^{2}}) + \Psi_{k2}F(X_{0} -\frac{1}{f^{2}}
\mathrm{pr}_{T} (\mathcal J_{3} X_{0}) , \frac{\partial}{\partial t^{k}}) =0\\
F(\mathrm{pr}_{T}(\mathcal J X_{0}), \frac{\partial}{\partial \mu^{2}}) + \Psi_{2k}F(X_{0} + \frac{1}{f^{2}}
\mathrm{pr}_{T} (\mathcal J_{3} X_{0}) , \frac{\partial}{\partial t^{k}}) =0
\end{align*}
and are satisfied (from the expressions of $F$, $\mathrm{pr}_{T} (\mathcal J X_{0})$ and
$\mathrm{pr}_{T} (\mathcal J_{3} X_{0})$),  and the third relation (\ref{123-f}) is equivalent to
$$
2\Psi_{22} F (\mathrm{pr}_{T} (\mathcal J X_{0}) , \frac{\partial}{\partial t^{2}}) + (\Psi_{12}+ \Psi_{21}) 
F( \mathrm{pr}_{T} (\mathcal J X_{0}),\frac{\partial}{\partial t^{1}}) =0
$$
and is satisfied as well (again, from the expressions of $F$ and $\mathrm{pr}_{T} (\mathcal J X_{0})$). The fourth relation 
(\ref{123-f}) becomes the first relation (\ref{k12}).  
\end{proof}

According to Lemma \ref{cond-iv}, we assume that $f$ depends only on $\mu^{1}$ and that the first  relation
(\ref{k12}) is satisfied. Since  $f$, $h$  depend only on $\mu^{1}$
and $a = k_{0} h^{-2}$,   the function  $af^{2} h^{2}$ depends only on $\mu^{1}$ as well and
$X( af^{2} h^{2} ) =0$, for any $X\in \mathrm{pr}_{T} (\mathcal S^{\perp}).$ Therefore, condition iii) from
Theorem \ref{gen-KK} is satisfied.  Condition iv) from this theorem is also satisfied: the first relation
(\ref{cond-e}) follows  from Lemma \ref{det-F} ii) and the second relation (\ref{cond-e}) is a consequence of
the fact that $h$ depends only on $\mu^{1}$.  It remains to study condition iv) of Theorem \ref{gen-KK}.
Since 
$$
\mathrm{pr}_{T}( L_{2} \cap \mathcal S^{\perp}_{\mathbb{C}})=\mathrm{pr}_{T} (L_{1} \cap L_{2} \cap \mathcal S^{\perp}_{\mathbb{C}}) + \mathrm{pr}_{T} (\bar{L}_{1} \cap L_{2} \cap \mathcal S^{\perp}_{\mathbb{C}})
$$ 
is generated by $v_{2}^{+}$ and 
$\overline{v_{2}^{-}}$, relation (\ref{cond-de}) holds if and only if it holds when applied to the pair 
$(v_{2}^{+}, \overline{v_{2}^{-}}).$ From 
the first relation (\ref{G-dim2}), 
$$
d\left(  \mathrm{pr}_{T^{*}}(\mathcal J_{3} X_{0}) \right) (v_{2}^{+}, \overline{ v_{2}^{-}})
= -2i \frac{\partial}{\partial \mu^{2}} \left(\frac{(\Psi_{12})^{s}}{\Psi_{22}}\right) \frac{ (\Psi_{22})^{2}}{ \mathrm{det}(\Psi ) - C_{12}^{2}}.
$$
Computations similar to those from the proof of Lemma \ref{cond-iv} show that relation (\ref{cond-de}) is equivalent to the first relation (\ref{k12}). 
Moreover, a  straightforward computation shows that $F$, defined by
(\ref{curvature}), is closed, if and only if 
the second relation (\ref{k12}) holds and $\lambda$ is independent of $t^{1}.$
This concludes the proof of Proposition 
\ref{dim2}.
\end{proof}

We now consider the setting of Proposition \ref{dim2} with $\lambda:=\lambda_{0}$ 
constant. Under this assumption, relations (\ref{k12}) become
\begin{align}
\nonumber& (1- f^{2})a  \frac{\partial}{\partial \mu^{2}} \left( \frac{(\Psi_{12})^{s}}{\Psi_{22}}\right)
= \lambda_{0} +\frac{2ah^{\prime} f^{2}}{h}\\
\label{k12-prime}& (\lambda_{0} +\frac{2ah^{\prime}f^{2}}{h}) ( \lambda_{0}-\frac{2ah^{\prime}}{h}) =0.
\end{align}
The functions $h$ and  $a=k_{0}h^{-2}$  depend only on $\mu^{1}.$ By Proposition \ref{dim2}, $f$ has to depend
only on $\mu^{1}$. From our hypothesis of the generalized KK correspondence, $f^{2}-1$ is non-vanishing.
From the first relation (\ref{k12-prime}), we obtain that  $\frac{\partial}{\partial \mu^{2}} \left( \frac{(\Psi_{12})^{s}}{\Psi_{22}}\right)$ has to depend only on $\mu^{1}$.
On the other hand, from the second relation (\ref{k12-prime})  we distinguish two cases, according to the vanishing of the two factors from its
left hand side.  Corollary \ref{corolar-1} and \ref{corolar-2} below correspond to   $\lambda_{0} +\frac{2ah^{\prime}f^{2}}{h}=0$, respectively to $ \lambda_{0}=\frac{2ah^{\prime}}{h}$. Their proofs 
are straightforward, from Proposition \ref{dim2}, and will be omitted.

\begin{cor}\label{corolar-1}
Consider a $4$-dimensional toric generalized K\"{a}hler structure, determined by a matrix valued function $\Psi$, such that
$\frac{\partial}{\partial \mu^{2}} \left( \frac{(\Psi_{12})^{s}}{\Psi_{22}}\right) =0$. Let $h = h(\mu^{1})$ be any smooth 
non-vanishing function with $h^{\prime}$ non-vanishing
and $\lambda_{0}, k_{0}\in \mathbb{R}\setminus \{ 0\}$,  such that $\frac{\lambda_{0}h^{3}}{ k_{0} h^{\prime}}$ is
(at any point) negative and different from $-1$. Choose
\begin{equation}\label{a-f-4}
 X_{0}:= -\frac{\partial}{\partial t^{1}},\  a:= k_{0} h^{-2},\ f^{2}:=  - \frac{\lambda_{0}h^{3}}{ 2k_{0} h^{\prime}}
\end{equation}
and $F$ of the form (\ref{curvature}), with $\lambda := \lambda_{0}$. Then all conditions from
Theorem \ref{gen-KK} are satisfied.
\end{cor}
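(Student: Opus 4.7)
The plan is to verify directly that the data specified in the corollary satisfies the three conditions i), ii), iii) of Proposition \ref{dim2}, with the constant choice $\lambda := \lambda_{0}$. Most of the work reduces to checking the two algebraic relations (\ref{k12}), since the structural requirements of Proposition \ref{dim2} (shape of $F$, form $a = k_{0}h^{-2}$, and dependence of $f, h$ only on $\mu^{1}$) are built into the statement by design.

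First I would check admissibility of the definitions: the quantity $f^{2} := -\frac{\lambda_{0}h^{3}}{2 k_{0} h'}$ depends only on $\mu^{1}$ and is positive (hence $f$ is a well-defined real non-vanishing function) by the hypothesis that $\frac{\lambda_{0} h^{3}}{k_{0} h'}$ is negative, while $f^{2} - 1 \neq 0$ follows from the hypothesis that this same quantity differs from $-1$. Combining these with $a = k_{0} h^{-2}$ (which is non-vanishing since $k_{0} \neq 0$ and $h$ is non-vanishing) confirms that all non-vanishing assumptions of Theorem \ref{gen-KK} and Proposition \ref{dim2} are in force. Condition i) of Proposition \ref{dim2} is immediate from $f = f(\mu^{1})$, and condition ii) is precisely the prescribed form of $a$ and $F$.

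Next I would verify the two relations (\ref{k12}) with $\lambda = \lambda_{0}$ constant. In the first relation, the left-hand side vanishes identically by the hypothesis $\frac{\partial}{\partial \mu^{2}}\!\left(\frac{(\Psi_{12})^{s}}{\Psi_{22}}\right) = 0$; the right-hand side vanishes because the choice of $f^{2}$ is tailored exactly to make $\lambda_{0} + \frac{2 a h' f^{2}}{h} = 0$. Indeed, substituting $a = k_{0}h^{-2}$ in this equation gives $\lambda_{0} = -\frac{2 k_{0} h' f^{2}}{h^{3}}$, which is equivalent to the prescribed formula for $f^{2}$. For the second relation, the left-hand side is zero since $\lambda_{0}$ is constant, and the right-hand side is the product of $\lambda_{0} + \frac{2 a h' f^{2}}{h}$ (which we just showed is zero) and a bounded factor; hence it vanishes as well. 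This settles condition iii), completing the verification of all conditions from Proposition \ref{dim2}, and therefore of Theorem \ref{gen-KK}.

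The proof is essentially a bookkeeping exercise; there is no substantive obstacle. The only point that requires any care is noticing that the positivity/non-degeneracy hypotheses placed on $\frac{\lambda_{0} h^{3}}{k_{0} h'}$ in the corollary are exactly what is needed so that the defining formula for $f^{2}$ produces an admissible function satisfying the standing non-vanishing assumptions of the generalized KK correspondence (namely $f \neq 0$ and $f^{2} \neq 1$).
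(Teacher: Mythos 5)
Your proposal is correct and follows essentially the same route as the paper, which omits the proof as a straightforward consequence of Proposition \ref{dim2}: with $\lambda=\lambda_0$ constant the relations (\ref{k12}) reduce to (\ref{k12-prime}), and your choice of $f^2$ is exactly the branch on which the common factor $\lambda_0+\frac{2ah'f^2}{h}$ vanishes, killing both right-hand sides while the hypothesis on $\Psi$ kills the remaining left-hand side. One small arithmetic point: since $f^2=-\frac{\lambda_0 h^3}{2k_0h'}=-\frac12\cdot\frac{\lambda_0h^3}{k_0h'}$, the condition $f^2\neq 1$ (a standing assumption of Theorem \ref{gen-KK} and of the denominators in (\ref{k12})) corresponds to $\frac{\lambda_0h^3}{k_0h'}\neq -2$ rather than $\neq -1$ as you assert; this factor of $2$ appears to be inherited from the wording of the corollary's hypothesis, but your sentence ``$f^2-1\neq 0$ follows from the hypothesis that this quantity differs from $-1$'' is not literally correct as written.
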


\begin{cor}\label{corolar-2} 
Consider a $4$-dimensional toric generalized K\"{a}hler structure, determined by a matrix valued function $\Psi$, such that 
$\frac{\partial}{\partial \mu^{2}} \left( \frac{(\Psi_{12})^{s}}{\Psi_{22}}\right)$ depends only on $\mu^{1}.$  
Choose $X_{0}:= -\frac{\partial}{\partial t^{1}}$, 
\begin{equation}\label{a-f-f}
a:= k_{0}k_{1} -\lambda_{0} \mu^{1},\ h^{2}: = \frac{ - k_{0}}{\lambda_{0} \mu^{1} - k_{0} k_{1}},\ 
 f^{2}: =\frac{(\mu^{1} - \frac{k_{0}k_{1}}{\lambda_{0}}) \frac{\partial}{\partial \mu^{2}} \left( \frac{(\Psi_{12})^{s}}{\Psi_{22}}\right)+1}{ (\mu^{1} - \frac{k_{0}k_{1}}{\lambda_{0}})\frac{\partial}{\partial \mu^{2}} \left( \frac{(\Psi_{12})^{s}}{\Psi_{22}}\right)
-1},
\end{equation}
and $F$ given by (\ref{curvature}) with $\lambda := \lambda_{0}$. Then all conditions from 
Theorem \ref{gen-KK} are satisfied. Above $k_{0},\lambda_{0}\in \mathbb{R}\setminus \{ 0\}$, 
$k_{1}\in \mathbb{R}$, and 
we assume that 
the defining expression for 
$a$ in (\ref{a-f-f})  is non-vanishing, while the defining expressions  for  
$h^{2}$ and $f^{2}$ are positive. 
 
\end{cor}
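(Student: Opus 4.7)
The plan is to reduce everything to Proposition \ref{dim2} and identify this statement with the branch of the analysis in which the second factor $\lambda_{0}-\frac{2ah'}{h}$ of the right-hand side of the second relation in (\ref{k12-prime}) vanishes identically (the other branch having been treated in Corollary \ref{corolar-1}). With $\lambda:=\lambda_{0}$ constant, the second relation of (\ref{k12}) from Proposition \ref{dim2} then holds automatically, and the entire verification reduces to: (a) checking that the data $(a,h,f,F)$ in the statement realize the ODE $\lambda_{0}=\frac{2ah'}{h}$, together with $a=k_{0}h^{-2}$; (b) checking that the first relation of (\ref{k12}) holds with $\lambda=\lambda_{0}$; and (c) confirming that $f$ depends only on $\mu^{1}$, as required by Proposition \ref{dim2}.

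For (a), I would substitute $a=k_{0}h^{-2}$ into $\lambda_{0}=\frac{2ah'}{h}$ to get the ODE $\lambda_{0}=2k_{0}h'/h^{3}=-k_{0}\,\frac{d}{d\mu^{1}}(h^{-2})$. Integrating once, one finds $h^{-2}=(k_{0}k_{1}-\lambda_{0}\mu^{1})/k_{0}$ for an integration constant which we name $k_{1}$, i.e.\ the formula
\[
h^{2}=\frac{-k_{0}}{\lambda_{0}\mu^{1}-k_{0}k_{1}},\qquad a=k_{0}k_{1}-\lambda_{0}\mu^{1},
\]
matching the statement. By construction $h$ (hence also $a$) depends only on $\mu^{1}$, so Lemma \ref{det-F} iii) is satisfied and $F$ of the form (\ref{curvature}) with $\lambda=\lambda_{0}$ is a legitimate closed $2$-form once the second relation of (\ref{k12}) is checked, which, with $\lambda$ constant, is trivial.

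For (b), let me write $g'(\mu^{1}):=\frac{\partial}{\partial\mu^{2}}\!\left(\frac{(\Psi_{12})^{s}}{\Psi_{22}}\right)$ and $A:=\mu^{1}-k_{0}k_{1}/\lambda_{0}=-a/\lambda_{0}$. The stated formula for $f^{2}$ reads $f^{2}=(Ag'+1)/(Ag'-1)$, so
\[
1-f^{2}=\frac{-2}{Ag'-1},\qquad 1+f^{2}=\frac{2Ag'}{Ag'-1}.
\]
Since the chosen sub-case yields $\lambda_{0}+\frac{2ah'f^{2}}{h}=\frac{2ah'}{h}(1+f^{2})=\frac{\lambda_{0}}{1}(1+f^{2})\cdot\frac{a}{a}$, the first relation of (\ref{k12}) becomes $(1-f^{2})a\,g'=\lambda_{0}(1+f^{2})$. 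Plugging in the two expressions above and using $\lambda_{0}A=-a$ makes both sides equal to $-2g'/(Ag'-1)$, and the relation holds. Note that $f^{2}$ so defined depends only on $\mu^{1}$, because both $A$ and $g'$ do (the latter by hypothesis), giving (c).

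Thus all the conditions of Proposition \ref{dim2}, and hence of Theorem \ref{gen-KK}, are satisfied. The only genuinely delicate point in the plan is making sure the algebra of (b) is consistent with the standing non-degeneracy hypotheses (that $a$ is non-vanishing and $f^{2},h^{2}$ are positive), which is exactly what is being assumed in the statement; everything else is a routine calculation driven by the ODE $\lambda_{0}=2ah'/h$ and the substitution $A=-a/\lambda_{0}$.
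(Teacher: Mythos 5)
Your proof is correct and follows exactly the route the paper intends: the paper identifies Corollary \ref{corolar-2} with the branch $\lambda_{0}=\frac{2ah'}{h}$ of the second relation (\ref{k12-prime}) and omits the verification as straightforward from Proposition \ref{dim2}, which is precisely what you carry out (integrating the ODE to recover $h^{2}$ and $a=k_{0}h^{-2}$, checking the first relation (\ref{k12}) via $f^{2}=(Ag'+1)/(Ag'-1)$ with $A=-a/\lambda_{0}$, and noting that $f$ depends only on $\mu^{1}$). The only blemish is the reported common value $-2g'/(Ag'-1)$ of the two sides of $(1-f^{2})a\,g'=\lambda_{0}(1+f^{2})$, which should be $2\lambda_{0}Ag'/(Ag'-1)=-2ag'/(Ag'-1)$; since both sides do agree, the conclusion is unaffected.
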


The conditions from Proposition \ref{dim2} (and  Corollaries \ref{corolar-1} and \ref{corolar-2}),
on the toric generalized K\"{a}hler structure, involve only the 
symmetric part of $\Psi$, i.e. the symplectic potential $\tau .$
There are various interesting symplectic potentials 
 for which   $\frac{\partial}{\partial \mu^{2}} ( \frac{\tau_{12}}{\tau_{22}})$ depends only on
$\mu^{1}$, as required by Corollaries \ref{corolar-1} and \ref{corolar-2}  (here $\tau_{ij}:= \frac{\partial^{2} \tau}{\partial \tau^{i}\tau^{j}}$):

\begin{exa}\label{exx-last}{\rm  
Let  $c\in \mathbb{R}$ and  $k\in \mathbb{R}^{>0}$. The symplectic potential 
 $\tau (\mu^{1}, \mu^{2}) := e^{\mu^{1}} ( e^{\mu^{2} + c} + k)$ defined on $\mathbb{R}^{2}$
satisfies  $\frac{\partial}{\partial \mu^{2}} \left( \frac{(\Psi_{12})^{s}}{\Psi_{22}}\right)=0$.}
\end{exa}

\begin{exa}\label{cp}{\rm   {\bf (Symplectic potential of the Fubini-Study metric).} Consider the symplectic potential of the Fubini Study metric on $\mathbb{C}P^{2}$, of constant holomorphic sectional curvature equal to $2$:
\begin{equation}
\tau (\mu^{1}, \mu^{2}):=\sum_{i=1}^{2}  (\mu^{i} + \frac{1}{3}) \mathrm{log} ( \mu^{i} +\frac{1}{3}) 
 + (\frac{1}{3} -\sum_{i=1}^{2} \mu^{i})
\mathrm{log} (\frac{1}{3} - \sum_{i=1}^{2} \mu^{i} ) 
\end{equation}
defined on $\Delta := \{ (\mu^{1}, \mu^{2}) \in \mathbb{R}^{2}:\ \mu^{1} + 1/3>0,\ \mu^{2} + 1/3 >0,\ 
\mu^{1} +\mu^{2} < 1/3\}. $  Then 
$\frac{\partial}{\partial \mu^{2}} ( \frac{\tau_{12}}{\tau_{22}})=\frac{1}{ 2/3 - \mu^{1}}$
depends only on $\mu^{1}$.  
In the notation of Corollary \ref{corolar-2},  choose
$k_{0}, k_{1} \in \mathbb{R}$ such that
$k_{0} <0$ and $k_{0} k_{1} < -\frac{4}{3}$, and let $\lambda_{0} :=1.$ 
Then 
\begin{equation}\label{h-f}
h^{2} = -\frac{k_{0}}{\mu^{1} - k_{0}k_{1}},\ f^{2} = \frac{2/3 - k_{0}k_{1}}{ 2\mu^{1} - ( 2/3 + k_{0}k_{1})}.
\end{equation}
The choice of  $k_{0}, k_{1}$ ensure that
the defining expressions for  $h^{2}$ and $f^{2}$ in (\ref{h-f}) are positive on $\Delta$. Also $a= -\mu^{1} +k_{0}k_{1}$ is negative
on $\Delta$ 
(in particular, non-vanishing).}\end{exa}

\begin{exa}\label{hz}{\rm {\bf (Symplectic potential of admissible metrics on Hirzebruch surfaces).} The $k$-Hirzebruch surface $\mathbb{F}_{k}:= 
\mathbb{P} ( 1 \oplus \mathcal O (-k))$ (where $k$ is a positive integer) admits a class of  toric  K\"{a}hler metrics (called admissible), 
which generalize Calabi's extremal K\"{a}hler metrics \cite{calabi}. 
A detailed description of these metrics can be found in \cite{apostolov, paul, paul-book}.  They are defined in terms of a smooth  function (the momentum profile) 
$\Theta : [a,b] \rightarrow \mathbb{R}$, positive on $(a,b)$, which satisfies the boundary conditions
$$
\Theta (a) =0,\  \Theta (b) = 0,\ \Theta^{\prime} (a) =2,\ \Theta^{\prime} (b) =-2
$$
(where $b>a>0$). 
The symplectic potential 
$$
\tau :  \{ (\mu^{1}, \mu^{2})\in \mathbb{R}^{2}:\ 
\mu^{2} > 0,\ k \mu^{1} - \mu^{2} >0, - \mu^{1} + b>0,\ \mu^{1} - a>0\}\rightarrow \mathbb{R}
$$
of the admissible K\"{a}hler metric with momentum profile $\Theta$ satisfies
$$
\tau_{11}= \frac{1}{\Theta (\mu^{1})} +\frac{ k \mu^{2}}{2\mu^{1} ( k \mu^{1} -\mu^{2})},
\tau_{12}= \frac{- k}{2( k\mu^{1} - \mu^{2})},\ \tau_{22} = \frac{k \mu^{1}}{2\mu^{2} ( k\mu^{1} - \mu^{2})}.
$$
We obtain that $\frac{\partial}{\partial \mu^{2}} (\frac{\tau_{12}}{\tau_{22}})= - \frac{1}{\mu^{1}}$
depends only on $\mu^{1}$. In the notation of Corollary \ref{corolar-2}, let
$\lambda_{0} :=1$. Then $f$ and $h$
are given by
\begin{equation}\label{f-g-2}
f^{2} = \frac{-k_{0} k_{1}}{2\mu^{1}- k_{0} k_{1}},\ h^{2} = \frac{- k_{0}}{\mu^{1} - k_{0}k_{1}}.
\end{equation}
Choosing  $k_{0}, k_{1}\in \mathbb{R}^{>0}$ 
with $k_{0}k_{1} >2b$, we obtain that the defining expressions for $f^{2}$,  $h^{2}$ 
in (\ref{f-g-2}) and $a= k_{0}k_{1} - \mu^{1}$ are positive.}
\end{exa}

\begin{rem}\label{future}{\rm 
It would be interesting  to study the properties of the  generalized K\"{a}hler structures $[\tau_{h} (G^{\prime}, \mathcal J )]_{W}$ produced by 
Proposition \ref{dim2}.  Here we only remark that 
they are not of symplectic type. 
The manifold $W$ inherits the vector fields $(\frac{\partial}{\partial t^{1}})_{W}$ and  $(\frac{\partial}{\partial t^{2}})_{W}$, which commute   (from (\ref{s2}), as 
$\frac{\partial }{\partial t^{1}}$ and $\frac{\partial }{\partial t^{2}}$ commute and $F(\frac{\partial}{\partial t^{1}},
\frac{\partial}{\partial t^{2}} )=0$). However, the abelian Lie algebra generated by   $(\frac{\partial}{\partial t^{1}})_{W}$ and  $(\frac{\partial}{\partial t^{2}})_{W}$  does not necessarily preserve the generalized K\"{a}hler structure  $[\tau_{h} (G^{\prime}, \mathcal J )]_{W}$.  Further investigations in this direction are needed.} 
\end{rem}

\noindent 
{\bf Acknowledgements}: We warmly thank Paul Gauduchon for sharing 
with us his unpublished manuscripts \cite{paul,paul-1,paul-book} and for drawing 
our attention to reference \cite{boulanger}. V.C. was
partly supported by the German Science Foundation (DFG) under the Research Training 
Group 1670 ``Mathematics inspired by String Theory". Part of this work (Sections 1-3) was 
done during L.D.'s  Humboldt Research Fellowship at 
the University of Hamburg. The remaining sections were developed after the end of the 
Humboldt fellowship, when she was supported by a grant of the Ministery of Research and Innovation,
CNCS-UEFISCDI, project no. PN-III-P4-ID-PCE-2016-0019, within PNCDI III.
She thanks the Humboldt Foundation and CNCS-UEFISCDI for financial support and 
University of Hamburg for hospitality.

 V.\ Cort\'es: vicente.cortes@uni-hamburg.de\

Department of Mathematics 
and Center for Mathematical Physics, University of Hamburg,
Bundesstra{\ss}e 55, D-20146 Hamburg, Germany.

L. David: liana.david@imar.ro\

``Simion Stoilow" Institute of Mathematics of the Romanian Academy, 
Calea Grivitei 21, Sector 1, Bucharest, Romania

\end{document}